\documentclass[11pt]{amsart}

\usepackage{geometry}
\usepackage{amsmath,amssymb,amsthm,mathtools,mathrsfs}
\usepackage[colorlinks,citecolor=red,linkcolor=blue]{hyperref}
\usepackage{bookmark}
\usepackage{tikz}
\usepackage{enumitem}
\usetikzlibrary{calc}

\newtheorem{theorem}{Theorem}[section]
\newtheorem{lemma}[theorem]{Lemma}
\newtheorem{proposition}[theorem]{Proposition}
\newtheorem{corollary}[theorem]{Corollary}
\theoremstyle{definition}

\theoremstyle{remark}
\newtheorem{remark}[theorem]{Remark}
\numberwithin{equation}{section}

\newcommand{\D}{\mathbb{D}}
\newcommand{\T}{\mathbb{T}}
\newcommand{\C}{\mathbb{C}}
\newcommand{\R}{\mathbb{R}}
\newcommand{\Z}{\mathbb{Z}}
\newcommand{\A}{A^2_\omega}
\newcommand{\supp}{\operatorname{supp}}
\newcommand{\norm}[1]{\left\lVert #1\right\rVert}
\newcommand{\abs}[1]{\left|#1\right|}
\newcommand{\set}[1]{\left\{#1\right\}}
\newcommand{\ip}[2]{\left\langle #1,#2\right\rangle_{\A}}
\newcommand{\dd}{\,\mathrm d}

\setlist[itemize,enumerate]{
  topsep=4pt,
  itemsep=1.5pt,
  parsep=0pt,
  partopsep=0pt
}

\AtBeginDocument{%
  \setlength{\abovedisplayskip}{6pt plus 2pt minus 2pt}%
  \setlength{\belowdisplayskip}{6pt plus 2pt minus 2pt}%
  \setlength{\abovedisplayshortskip}{3pt plus 2pt minus 1pt}%
  \setlength{\belowdisplayshortskip}{4pt plus 2pt minus 1pt}%
  \setlength{\jot}{2pt}%
  \setlength{\textfloatsep}{10pt plus 2pt minus 2pt}%
  \setlength{\floatsep}{8pt plus 2pt minus 2pt}%
  \setlength{\intextsep}{8pt plus 2pt minus 2pt}%
  \setlength{\abovecaptionskip}{5pt}%
  \setlength{\belowcaptionskip}{0pt}%
}

\begin{document}

\title[Compact Toeplitz operators]
{Compact Toeplitz operators via the Berezin transform\\ on radial weighted Bergman spaces}

\author{Yuerang Li}
\address{College of Mathematics and Statistics, Chongqing University,
Chongqing 401331, China}
\email{yuerangli@outlook.com}

\author{Zipeng Wang}
\address{College of Mathematics and Statistics, Chongqing University,
Chongqing 401331, China}
\email{zipengwang2012@gmail.com,zipengwang@cqu.edu.cn}

\subjclass[2020]{Primary 47B35; Secondary 46E20, 47B32.}
\keywords{Compact Toeplitz operators, Berezin transform, polynomial frames.}

\begin{abstract}
Let $\omega$ be a radial $\widehat{\mathcal D}$-weight and $u$ be a bounded function on the unit disk $\mathbb D$. We prove that the Toeplitz operator \(T_{\omega,u}\) is compact on \(A_\omega^2\) if and only if its Berezin transform vanishes at the boundary.
Our approach is based on a polynomial frame for $A_\omega^2$ and a detailed localization analysis of the resulting infinite matrix representation of $T_{\omega,u}$. Even in the unweighted Bergman space \(A^2\), our argument is new and does not rely on the classical translation operators.

We further show that this Axler--Zheng compactness characterization does not extend, in general, to products of Toeplitz operators on \(\widehat{\mathcal D}\)-weighted Bergman spaces, and hence to the corresponding Toeplitz algebra generated by bounded symbols. More precisely, we construct a radial log-subharmonic \(\widehat{\mathcal D}\)-weight \(\omega\) and bounded symbols \(u,v\) such that the product \(T_{\omega,v}T_{\omega,u}\) is noncompact, whereas its Berezin transform vanishes at the boundary. 
\end{abstract}

\maketitle

\section{Introduction}\label{sec:introduction}
\subsection{Motivations}

The point of departure for the present work is the classical theorem of Axler and Zheng~\cite{AZ98}, which characterizes compact Toeplitz operators with bounded symbols on the Bergman space $A^2$ via the Berezin condition near the boundary. Beyond the characterization itself, their proof introduced an influential mechanism based on a family of unitary operators induced by automorphisms of the disk, also referred to as ``translation'' operators.

The Axler-Zheng theorem may be viewed as a reproducing kernel thesis for compactness: it characterizes compactness by testing an operator on normalized reproducing kernels. Mitkovski and Wick~\cite{MW14} subsequently established a general framework for this principle that applies to a broad range of Bergman-type spaces. One of their assumptions, condition A.5, facilitates the construction and control of the associated translation operators on the Bergman-type spaces. Isralowitz, Mitkovski, and Wick~\cite{IMW15} further developed new methods for operators on Bergman and Fock spaces. A notable feature of their approach is that translation operators are not used in the case of $A^2$.  This leads naturally to the question of whether translation operators can be avoided, particularly on weighted Bergman spaces for which condition A.5 may be unavailable (see \cite[Section 4]{IMW15} for further discussion). The main result of this paper gives an affirmative answer for the weighted Bergman space $A_\omega^2$ induced by radial weights in \(\widehat{\mathcal D}\) satisfying a one-sided doubling condition. In particular, all classical standard weights are $\widehat{\mathcal{D}}$ weights.

A second source of inspiration for this paper is Xia and Zheng's kernel-localization method~\cite{XZ13} for the Fock space. Their method is a frame-theoretic approach. The normalized reproducing kernels are used to obtain a localized frame representation. Moreover, the decay of the inner products reflects the underlying geometry of the complex plane. It is natural to expect an analogue on the unit disk in which Euclidean separation is replaced by the hyperbolic one. The construction used here may be regarded as a realization of this general idea, although its technical form is substantially different. Since a general radial weighted Bergman space does not have the translation symmetry available in the Fock setting, we use a discrete frame consisting of polynomials which is a careful truncation of the normalized reproducing kernels.

Our treatment of Toeplitz operators on weighted Bergman spaces is built upon the systematic study of $\widehat{\mathcal D}$-weights; see \cite{PR14,PR-embed,Pel16,PR21,PRWW26}. Their work provides a substantial foundation of related weighted Bergman spaces, which includes sharp estimates for reproducing kernels, characterizations of Carleson measures, descriptions of dual spaces, and Littlewood--Paley formulas. These tools have led to a clear understanding of several operator-theoretic questions, such as boundedness, compactness, and membership in Schatten classes, for Toeplitz operators induced by positive measures; see, for example, \cite{PRS18}. Then, it is natural to consider Toeplitz operators with complex-valued symbols on the weighted Bergman space. In this work, we take bounded symbols as a first step toward developing this theory.

\subsection{Main results}
Any nonnegative integrable function $\omega$ on $[0,1)$ induces a weight on $\D$ by setting $\omega(z)=\omega(|z|)$. Let $L^2_\omega$ be the weighted Lebesgue space on the unit disk consisting of all measurable functions
\[
\|f\|_{L^2_\omega}=\bigg(\int_\mathbb{D}|f(z)|^2\omega(z)dA(z)\bigg)^{\frac{1}{2}}<\infty,
\]
where $dA(z)=dxdy/\pi$ is normalized area measure on the unit disk $\D$. The weighted Bergman space $A^2_\omega$ is the subspace of $L^2_\omega$:
\[
 A^2_\omega
 =\set{f\in L^2_\omega:f\text{ is holomorphic on }\D}.
\]

When $\omega\equiv 1$, $A_\omega^2$ reduces to the classical Bergman space $A^2$. Moreover, we observe that $A^2_\omega$ is a reproducing kernel Hilbert space if and only if $\int_r^1 \omega(s)ds>0$ for any $r\in(0,1)$.

Hence, throughout the paper, by a weighted Bergman space we mean a reproducing kernel Hilbert space $A^2_\omega$ induced by a radial weight. The reproducing kernel $B_z^\omega$ of $A^2_\omega$, called the Bergman kernel, then has the series representation
\begin{equation*}\label{eq:kernel-series}
 B_z^\omega(\zeta)
 =\sum_{n=0}^\infty\frac{(\overline z\zeta)^n}{2\omega_{2n+1}},
 \qquad z,\zeta\in\D
\end{equation*}
where $\omega_x$ is the $x$-th moment of a weight $\omega$ defined by
$
 \omega_x=\int_0^1r^x\omega(r)\dd r.
$
Then, the orthogonal Bergman projection $P_\omega:L^2_\omega\to A^2_\omega$ can be written as
\begin{equation*}\label{eq:Bergman-projection}
 P_\omega f(z) =\int_\D f(\zeta)\overline{B_z^\omega(\zeta)}\omega(\zeta)\dd A(\zeta), \qquad f\in L^2_\omega.
\end{equation*}

Let $\omega$ be a nonnegative integrable function on $[0,1)$. For $r\in(0,1)$, define $\widehat\omega(r)=\int_r^1\omega(s)\dd s$.
A radial weight $\omega$ belongs to $\widehat{\mathcal D}$ if there exists a constant $C_\omega\geq1$ such that
\begin{equation}\label{eq:Dhat-intro}
 \widehat\omega(r)
 \leq C_\omega\widehat\omega\left(\frac{1+r}{2}\right),
 \qquad 0\leq r<1.
\end{equation}
By \cite[Lemma~2.1]{Pel16}, there exists a constant $C=C(\omega)\geq1$ such that
\begin{equation}\label{eq:moment-doubling}
 \omega_t\leq C\omega_{2t},
 \qquad t\geq0.
\end{equation}
There also exist constants $C_1=C_1(\omega),C_2=C_2(\omega)>0$ such that
\begin{equation}\label{eq:moment-tail-doubling}
 C_1\omega_x
 \leq\widehat\omega\left(1-\frac1x\right)
 \leq C_2\omega_x,
 \qquad x\geq1.
\end{equation}
Weighted Bergman spaces induced by \(\widehat{\mathcal D}\)-weights provide a natural generalization of \(A^2\). For example, it is the radial weight such that the Littlewood-Paley formula holds and the Bergman projection $P_\omega$ is $L^p$ bounded for $1<p<\infty$. At the endpoint spaces, $P_\omega$ satisfies the $L^\infty$-Bloch estimate and is of weak type $(1,1)$; see, for example, \cite{PR21} and \cite{LW26b} for more details.

For $u\in L^\infty(\D)$, the Toeplitz operator with symbol $u$ on the $\widehat{\mathcal{D}}$-weighted Bergman space is
\begin{align*}\label{eq:def-toeplitz}
 T_{\omega,u}f=P_\omega(uf), \qquad f\in A_\omega^2.
\end{align*}
This operator is bounded
$
 \norm{T_{\omega,u}}_{A^2_\omega\to A^2_\omega}
 \leq\norm{u}_{L^\infty(\D)},
$
and its Berezin transform is
\begin{equation}\label{eq:Berezin-transform}
 \widetilde u_\omega(a)
 :=\ip{T_{\omega,u}k_a^\omega}{k_a^\omega}
 =\int_\D u(z)|k_a(z)|^2\omega(z)\dd A(z),
\end{equation}
where $k_z^\omega  =\frac{B_z^\omega}{\norm{B_z^\omega}_{A^2_\omega}}$ is the normalized Bergman kernel.

\begin{theorem}\label{thm:scalar-AZ-Dhat}
Let $\omega\in\widehat{\mathcal D}$ and
$u\in L^\infty(\D)$. The Toeplitz operator $T_{\omega,u}$ is compact on $A^2_\omega$ if and only if
its Berezin transform satisfies the vanishing condition
 $$
  \lim_{|a|\to 1^-}\widetilde u_\omega(a)=0.
 $$
\end{theorem}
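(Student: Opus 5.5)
The necessity direction is immediate. For $\omega\in\widehat{\mathcal D}$ the normalized kernels $k_a^\omega$ converge weakly to $0$ as $|a|\to1^-$. Indeed, $\|B_a^\omega\|^2=\sum_n |a|^{2n}/(2\omega_{2n+1})\to\infty$, because $\omega_{2n+1}\to 0$, so $\langle p,k_a^\omega\rangle\to0$ for every polynomial $p$; polynomials are dense. A compact operator maps weakly null sequences to norm null ones, so $\widetilde u_\omega(a)=\langle T_{\omega,u}k_a^\omega,k_a^\omega\rangle\to0$. All the work is in sufficiency, and there I will follow the frame strategy announced in the introduction, with no translation operators.

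\emph{Step 1: a polynomial frame.} Decompose $\D$ dyadically in the hyperbolic sense. Take annuli $R_j=\{1-2^{-j}\le |z|<1-2^{-j-1}\}$, each split into about $2^{j}$ boxes $Q_{j,k}$ of bounded hyperbolic size, with centres $a_{j,k}$. Let $N_j\approx 2^{j}$, and let $e_{j,k}$ be the truncation of $k_{a_{j,k}}^\omega$ to the frequency block $n\in[N_{j-1},N_{j+1}]$ (or a smooth cutoff in $n$), suitably normalized. Using \eqref{eq:moment-doubling} and \eqref{eq:moment-tail-doubling}, I will check that the moments $\omega_{2n+1}$ are comparable to $\widehat\omega(1-2^{-j})$ in the block, up to polynomial factors controlled by doubling. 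This should give two things. First, the frame inequality $\|f\|^2\asymp\sum_{j,k}|\langle f,e_{j,k}\rangle|^2$, via a Littlewood--Paley decomposition in frequency blocks combined with a sampling/Plancherel--Pólya estimate on each circle. Second, the off-diagonal decay $|\langle T_{\omega,u}e_{j,k},e_{j',k'}\rangle|\lesssim \|u\|_\infty\,2^{-\epsilon|j-j'|}(1+\text{angular separation})^{-M}$. In other words, the matrix of $T_{\omega,u}$ against the frame decays in hyperbolic distance between $a_{j,k}$ and $a_{j',k'}$.

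\emph{Step 2: localization.} Let $S$ be the synthesis operator and $S^*$ the analysis operator of the frame, with $S^*$ bounded below. Write $T=T_{\omega,u}$ and split its matrix $M$ into a band part $M_r$ (entries with hyperbolic distance $\le r$) and a remainder. By Schur's test and Step 1, $\|M-M_r\|\le\delta(r)\to0$ as $r\to\infty$, uniformly in $u$ with $\|u\|_\infty\le1$. Modulo compacts, $T$ is then approximated by $S M_r^{(\ge J)} S^*$, where only frame elements with $j\ge J$ are kept; the finitely many lower rows give a finite-rank error. So it suffices to prove
\[
\lim_{J\to\infty}\sup_{j\ge J,\,k}\ \sup_{\beta(a_{j,k},a_{j',k'})\le r}|\langle Te_{j,k},e_{j',k'}\rangle|=0
\]
for each fixed $r$. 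A banded matrix with uniformly small entries has small norm by Schur's test.

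\emph{Step 3 (the main obstacle): from Berezin decay to local matrix decay.} The claim to prove is that if $\widetilde u_\omega(a)\to0$, then for fixed $r$ and every $\epsilon>0$ we have $|\langle Tk_a^\omega,k_b^\omega\rangle|<\epsilon$ whenever $\beta(a,b)\le r$ and $|a|$ is near $1$. Since the $e$'s are close to kernels, this gives the display in Step 2. Without translations I would argue by a normal-families compactness. Suppose the claim fails along $a_n\to\partial\D$. Rescale near $a_n$: write $z=a_n+(1-|a_n|)w$, and consider the functions $w\mapsto k_{a_n}^\omega(z)\,k_{b_n}^\omega(z)$ together with the measures $\omega\,dA$ restricted to hyperbolic balls around $a_n$. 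Along a subsequence, the pushforwards of $u$ converge weak-$*$ to some $v\in L^\infty$ of a limiting half-plane (or of a local model space), and the kernel products converge locally uniformly on compacts with uniformly controlled tails, by the kernel estimates for $\widehat{\mathcal D}$ weights. Berezin decay forces $\int v\,|K_c|^2\,d\mu=0$ for the limiting normalized kernels $K_c$, for every $c$ in the model domain. Polarization then gives $\int v\,K_c\overline{K_d}\,d\mu=0$ for all $c,d$, contradicting the assumed lower bound on the mixed entries. The delicate points are two: showing that the limiting kernel family is rich enough for the polarization/uniqueness step, and handling the fact that $\omega$ may be non-doubling pointwise, so that only averaged comparability in \eqref{eq:moment-tail-doubling} is available. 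I expect this step to be the main difficulty. If the blow-up argument is too rough, my fallback is to replace it by a direct estimate: approximate $k_a\overline{k_b}$ on a hyperbolic ball by finite combinations of $|k_c|^2$ with $c$ nearby, using the series expansion of $B^\omega_z$ in the frequency block.
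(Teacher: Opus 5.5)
\emph{Verdict: there is a genuine gap, in Step~2.} Your necessity argument is fine. Reducing the bounded-level-gap part to uniformly small entries in a fixed hyperbolic neighbourhood plus Schur's test also matches the paper.

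\textbf{The failing step.} You claim that the part of the matrix at hyperbolic separation $>r$ has norm at most $\delta(r)\to0$, uniformly in $\norm{u}_{L^\infty(\D)}\le1$, by Schur's test. For general $\widehat{\mathcal D}$ weights this is false, not merely unproved.

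Take a level-$j$ element $J$ and a level-$k$ element $I$ with $k>j$. Individual entries can be as large as
\[
 \sqrt{\omega(T(I))/\omega(T(J))}\asymp 2^{-(k-j)/2}\bigl(\widehat\omega(r_k)/\widehat\omega(r_j)\bigr)^{1/2},
\]
while $J$ meets about $2^{k-j}$ such $I$. The factor $2^{-(k-j)/2}$ is exactly critical for weighted Schur. The remaining factor rescues you only if $\widehat\omega$ satisfies a reverse doubling condition (as standard weights do), which $\widehat{\mathcal D}$ does not require.

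\textbf{Counterexample.} Use the weight of Proposition~\ref{prop:conuterexample}, for which $\omega_x\asymp(1+\log x)^{-1}$ by Lemma~\ref{lem:moments}, and take $u(z)=(\overline z/|z|)^{n_0}$. A direct computation gives
\[
 T_{\omega,u}e_{n_0+m}=\frac{\omega_{n_0+2m+1}}{\sqrt{\omega_{2n_0+2m+1}\,\omega_{2m+1}}}\,e_m\asymp\sqrt{\frac{1+\log m}{1+\log n_0}}\;e_m,\qquad 1\le m\le n_0 .
\]
Set $n_0=2^{k+M}$ and $2^k\le m\le 2^{k+1}$. Then $T_{\omega,u}$ maps frequencies at frame levels $k+M,\,k+M+1$ to levels $k,\,k+1$ with a lower bound $\gtrsim\sqrt{k/(k+M)}$. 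This stays bounded below as $k\to\infty$, for every fixed $M$.

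So the vertically separated (triangular) blocks are not uniformly small, and the Berezin hypothesis must be used on them as well. Your outline has no mechanism for this, and it is the bulk of the paper's proof:
\begin{enumerate}
 \item For $j\ge k+M$ the coarse element is frozen at $a_J$: $A_u(I,J)=\overline{\phi_I(a_J)}\,c_J+R(I,J)$, with $c_J=\int_\D u\phi_J\omega\dd A$ and a Schur-summable remainder of norm $O(2^{-M})$.
 \item The main term is a paraproduct. Its norm is controlled by the Carleson norm $\mathfrak C_N^M(u)$ through a maximal-function Carleson embedding (Proposition~\ref{lem:upper-triangular-Carleson-scalar}).
 \item The limit $\lim_M\limsup_N\mathfrak C_N^M(u)=0$ (Proposition~\ref{cor:scalar-Carleson-tail}) comes from a stopping-time decomposition adapted to the moments. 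Multiplying by $z^{N_\nu}$ turns the Carleson sums into bounded-band compressions $\norm{A_u}_{N;d_\epsilon+2}$, and only those are killed by the local Berezin estimate.
\end{enumerate}

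\textbf{A related problem in Step~3.} For the same weight, the measures $|k_a^\omega|^2\omega\dd A$ are not tight on hyperbolic balls about $a$. For every $R$, a fixed positive proportion of their mass lies outside the hyperbolic ball of radius $R$ as $|a|\to1$. That mass sits where $1-|z|\ll 1-|a|$, because $\widehat\omega(1-\delta(1-|a|))/\widehat\omega(|a|)\to1$ for each fixed $\delta$. So the blow-up limit of $u$ with ``uniformly controlled tails'' is not available.

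Your polarization idea is nevertheless the right one. The paper implements it without any model space (Lemma~\ref{lem:affine-polarization-Dhat}). With $z_a(\lambda)=(1-(1-|a|)\lambda)\,a/|a|$, set
\[
 F_a(\lambda,\mu)=B_\omega(a,a)^{-1}\bigl\langle T_{\omega,u}B^\omega_{z_a(\lambda)},B^\omega_{z_a(\mu)}\bigr\rangle_{A^2_\omega}.
\]
On a fixed rectangle this is uniformly bounded, antiholomorphic in $\lambda$ and holomorphic in $\mu$. Its diagonal is a renormalized Berezin transform, so any Montel limit vanishes on the diagonal and hence identically.
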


Suppose that $S$ is a finite sum of finite products of Toeplitz operators $T_{u_1}\cdots T_{u_n}$ on $A^2$ where each $u_j\in L^\infty(\D)$. The full Axler‑Zheng compactness theorem \cite[Theorem 2.2]{AZ98} states that $S$ is compact if and only if its Berezin transform $\widetilde{S}(a)=\langle Sk_a,k_a\rangle\to 0$ as $a\to \partial\mathbb{D}$. After we obtained Theorem \ref{thm:scalar-AZ-Dhat}, in a private communication, Professor Dechao Zheng asked one of us whether Theorem \ref{thm:scalar-AZ-Dhat} remains valid for the operator $S$ on $A_\omega^2$. 

Our next construction illustrates that Theorem \ref{thm:scalar-AZ-Dhat} is not true for the general $S$.

\begin{proposition}\label{prop:conuterexample}
Let $\omega$ be a radial weight given by
\begin{equation*}\label{eq:weight}
 \omega(r)=
 \begin{cases}
 \displaystyle \frac{e}{4}, &0\le r\le 1-e^{-1},\\[6pt]
 \displaystyle \frac{1}{(1-r)\left(\log\frac{e}{1-r}\right)^2}, &1-e^{-1}<r<1.
 \end{cases}
\end{equation*}
Then $\omega$ is a log-subharmonic weight, and is in $\widehat{\mathcal{D}}$. 
Moreover, there exist $L^\infty(\mathbb{D})$ functions $u$ and $v$ such that $S=T_{\omega,v}T_{\omega,u}$ is not compact on $A_\omega^2$ but its Berezin transform 
$$\lim_{|a|\to 1^-}\widetilde{S}_\omega(a)=0,$$
where $\widetilde{S}_\omega(a)=\ip{Sk_a^\omega}{k_a^\omega}$.
\end{proposition}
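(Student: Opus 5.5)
The plan is to exploit the feature that separates this weight from the standard ones: the measures governing $T_{\omega,u}$ and the Berezin transform are not localized on hyperbolic discs. First, the statements about $\omega$ itself are routine. On $(1-e^{-1},1)$ we have $\widehat\omega(r)=1/\log\frac{e}{1-r}$ explicitly, and this is continuous with the constant part. Then
\[
\widehat\omega(r)/\widehat\omega\bigl(\tfrac{1+r}{2}\bigr)=\frac{\log\frac{2e}{1-r}}{\log\frac{e}{1-r}}\le 1+\log 2,
\]
which gives \eqref{eq:Dhat-intro}. For log-subharmonicity I will compute $\Delta\log\omega(|z|)=\frac1r(r\phi')'$ with $\phi(r)=-\log(1-r)-2\log\log\frac{e}{1-r}$. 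I expect this to be nonnegative on $r>1-e^{-1}$, with a nonnegative singular contribution at the junction $r=1-e^{-1}$; the latter amounts to checking that the jump of $\phi'$ there is $\ge0$, and $\phi'=\frac{1}{1-r}-\frac{2}{(1-r)\log\frac{e}{1-r}}$ equals $-e<0$ at $r=1-e^{-1}$. This sign is a problem: a negative jump would make the singular part of $\Delta\log\omega$ negative, so I will have to recheck the constant $e/4$ and the gluing, or be more careful about which one-sided derivatives enter.

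Second, I will record the moment asymptotics from \eqref{eq:moment-tail-doubling}: $\omega_{2n+1}\asymp 1/\log(en)$ and $\|B^\omega_a\|^2\asymp \frac{\log\frac{e}{1-|a|}}{1-|a|}$. The key quantitative point is the following. In the variable $L=\log\frac1{1-r}$, the probability measure $r^{2n+1}\omega(r)\,dr/\omega_{2n+1}$ has a Pareto tail $\mathbb P(L>x)\approx (\log n)/x$ for $x>\log n$. Consequently the radial part of $|k^\omega_a|^2\omega\,dA$ spreads over all scales $L\gtrsim\log\frac1{1-|a|}$ and does not concentrate on a hyperbolic disc around $a$. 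This is the mechanism I will use to separate $\widetilde S_\omega$ from $\|S\|_e$.

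Third comes the construction of $u,v$. Purely radial symbols cannot work. For radial symbols, $S$ is diagonal in $\{z^n\}$ with eigenvalues $\lambda_n=\mu_n\nu_n$. The Berezin transform is an average of $\lambda_n$ over a window $n\asymp(1-|a|)^{-1}$, and $\lambda_n$ varies slowly there, so $\widetilde S_\omega\to0$ would force $\lambda_n\to0$ and hence compactness. I will therefore take symbols with angular dependence. One factor, say $u$, carries a character $e^{iN_k\theta}$ restricted to a family of far-out annuli $E_k$ (chosen in the $L$-scale, with $L$ ranging over disjoint dyadic blocks $[2^k,2^{k+1})$). The other factor $v$ carries the conjugate character on annuli $F_k$ interlaced with the $E_k$. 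With these choices $T_{\omega,u}$ moves $z^n\mapsto c_n z^{n+N_k}$ and $T_{\omega,v}$ moves back, so $S$ is block-diagonal and $\|Sz^{n}\|/\|z^n\|\ge c>0$ along a sequence $n=n_k$. This gives noncompactness via the weakly null sequence $z^{n_k}/\|z^{n_k}\|$. On the other hand, $\widetilde S_\omega(a)=\ip{T_{\omega,u}k_a^\omega}{T_{\omega,\bar v}k_a^\omega}$. Here I want the Pareto spreading to make the Abel-type average defining $\widetilde S_\omega(a)$ lose the contribution of the few indices $n_k$ where $\lambda$ is large, since each single index receives Berezin weight $O\bigl((1-|a|)/\log\frac{e}{1-|a|}\bigr)$-small. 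The $N_k$ should be chosen lacunary, so that the interaction is supported on a sparse index set.

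The main obstacle is exactly this last balance. I need $S$ to act nontrivially on a sparse, non-compact family, while the Berezin average kills that family. At the same time, Theorem \ref{thm:scalar-AZ-Dhat} must not be contradicted for $T_{\omega,u}$ and $T_{\omega,v}$ individually; these must be non-compact with non-vanishing Berezin transforms, and only the product should be invisible to $k^\omega_a$. I would first try to compute the Berezin transform and the matrix entries of $S$ exactly in the monomial basis, using the explicit moments, and then tune the annuli $E_k,F_k$ and the shifts $N_k$ so that the heavy-tail estimate from the second step yields the vanishing of $\widetilde S_\omega$.
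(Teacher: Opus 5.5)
\textbf{There is a genuine gap: the construction in your third step cannot work, for the same reason you give against radial symbols.}

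By rotation invariance of $A^2_\omega$,
\[
\frac1{2\pi}\int_0^{2\pi}\widetilde S_\omega(re^{i\theta})\,d\theta=\sum_{n\ge0}w_n(r)\langle Se_n,e_n\rangle_{A^2_\omega},\qquad w_n(r)=\frac{r^{2n}}{2\omega_{2n+1}B_\omega(r,r)}.
\]
So circle averages of $\widetilde S_\omega$ are the Berezin transform of the diagonal part of $S$.

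Take $u=\sum_k\mathbf 1_{E_k}e^{iN_k\theta}$ and $v=\sum_k\mathbf 1_{F_k}e^{-iN_k\theta}$ with distinct $N_k$. The return trip $z^n\mapsto z^{n+N_k}\mapsto z^n$ that you rely on for noncompactness is exactly that diagonal part:
\[
\lambda_n:=\langle Se_n,e_n\rangle=\sum_k\gamma_{k,n}\gamma'_{k,n},\qquad \gamma_{k,n}=\frac{\int_{E_k}r^{2n+N_k+1}\omega(r)\,dr}{\sqrt{\omega_{2n+1}\omega_{2n+2N_k+1}}}\ge0,
\]
and $\gamma'_{k,n}$ is the same expression with $F_k$ in place of $E_k$.

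These entries are not sparse. For $n/2\le m\le n$ the numerators can only grow, while \eqref{eq:moment-doubling} gives $\omega_{2m+1}\le C\omega_{2n+1}$ and $\omega_{2m+2N_k+1}\le C\omega_{2n+2N_k+1}$. Hence $\lambda_m\ge C^{-2}\lambda_n$. If $\lambda_{n_k}\ge c$, then $\lambda_m\gtrsim c$ on all of $[n_k/2,n_k]$.

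At $r=1-1/n_k$, Lemma~\ref{lem:norm-estimates-for-reproducing-kernel} and \eqref{eq:moment-tail-doubling} give $w_m(r)\gtrsim 1/n_k$ for each such $m$. So the circle average of $\widetilde S_\omega$ is bounded below along $r=1-1/n_k$, and $\widetilde S_\omega\not\to0$.

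Your remark that a single index receives small Berezin weight holds for every weight, so it cannot be the mechanism. Apart from this, the actual choice of $E_k,F_k,N_k$ and the Berezin estimate are left open, and they are the substance of the proof.

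\textbf{The missing idea is to give $S$ no diagonal at all and let the heavy tail produce large entries far off the diagonal.} The paper takes $u=\sum_j\mathbf 1_{E_j}$ radial and $v=\sum_j\mathbf 1_{F_j}(r)e^{il_j\theta}$. Then $Se_n=\beta_n\sum_k\alpha_{k,n}e_{n+l_k}$ is a sum of forward shifts, so every circle average of $\widetilde S_\omega$ vanishes identically. The parameters are:
\begin{itemize}
\item super-lacunary scales $\lambda_{j+1}=\lambda_j^2/\log2$;
\item $n_j=e^{\lambda_j}$ and $l_j=e^{6\lambda_j}=n_j^6$;
\item $E_j$ and $F_j$ given by $\log\frac{e}{1-|z|}\in[2\lambda_j,3\lambda_j]$ and $[7\lambda_j,8\lambda_j]$ respectively.
\end{itemize}
The asymptotics $\omega_x\asymp1/\log x$ then give $\beta_{n_j},\alpha_{j,n_j}\ge c$, hence $\|Se_{n_j}\|\ge c$, even though the shift is polynomially long. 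For standard weights this coefficient is bounded by a positive power of $n_j/l_j$.

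The Berezin transform of the $j$-th shift is killed in three ranges:
\begin{itemize}
\item $n\le e^{4\lambda_j}$: by $(1-|a|)|a|^{l_j}\le1/l_j$;
\item $e^{4\lambda_j}<n<e^{9\lambda_j}$: by $\beta_n\lesssim1/\lambda_j$;
\item $n\ge e^{9\lambda_j}$: by $\alpha_{j,n}\lesssim e^{-e^{\lambda_j}}$.
\end{itemize}
These bounds are summable in $j$, and each fixed shift has vanishing Berezin transform because $B_\omega(a,a)\to\infty$.

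\textbf{A smaller point on the weight.} At $r=1-e^{-1}$ you have $\phi'=e-\frac{2e}{2}=0$, not $-e$. So $\log\omega$ is $C^1$ across the junction; this is what the constant $e/4$ is chosen for. There is no singular part, and log-subharmonicity holds. Your direct check of \eqref{eq:Dhat-intro} via $\widehat\omega(r)=1/\log\frac{e}{1-r}$ is a fine elementary substitute for the appeal to \cite{PRW19}.
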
 
\begin{remark}
A weight $\omega$ on the unit disk is said to be log-subharmonic if $\log \omega$ is a subharmonic function in the sense of distribution. Such a weight induces a natural hyperbolic Riemannian metric on the unit disk, is a starting point for proving the positivity of the Green function for the weighted biharmonic operator, and has many important applications in the operator theory of function spaces (see \cite{Hed00}, \cite{HJS02}, \cite{HS02}, \cite{AHR05} and \cite[Chapter 9]{HKZ} for more details). In his deep work, Shimorin \cite{Shi02} obtained a complete Bergman-type representation of the kernel of a radial log-subharmonic weighted Bergman space. Peláez, Rättyä, and Wick \cite{PRW19} established a surprising result by showing that all radial log-subharmonic weights are $\widehat{\mathcal{D}}$ weights. One can also find some results in \cite{LW26a}.
\end{remark}

\begin{remark}
The recent work by Looi \cite{Loo26} shows that, on each standard weighted Bergman space $A^2_{\omega_\alpha}$, where $\omega_\alpha(z)=(1+\alpha)(1-|z|^2)^\alpha$ with $\alpha>-1$, there is a bounded, non-compact $L^1$-symbol Toeplitz operator whose Berezin transform vanishes at the boundary. Hence, the constructed Toeplitz operator is not in the Toeplitz algebra generated by bounded symbols.
\end{remark}

The main part of Theorem~\ref{thm:scalar-AZ-Dhat} is the proof of sufficiency, and the ideas are illustrated as follows:
\begin{itemize}
\item Using the standard dyadic system $\mathcal Q$ on the unit circle, we associate with each $J\in\mathcal Q_j$ the polynomial $\phi_J$ defined by \eqref{eq:frame-element-definition}. The cutoff function \eqref{eq:chi-partition} restricts its polynomial expansion to degrees $n\asymp 2^j$, so the length of $J$ determines its frequency scale, while the midpoint $\theta_J$ determines its angular localization. Thus, $\phi_J$ is a dyadic polynomial which is a truncation of the normalized reproducing kernel at the point $a_J=r_je^{i\theta_J}$. The geometric meaning of the point $a_J$ is illustrated in Figure~\ref{fig:dyadic-generations-aJ}.
\begin{figure}[htbp]
\centering
\begin{tikzpicture}[scale=0.8,>=stealth]
    \def\R{3}
    \def\rj{1.90}
    \def\rjp{2.43}

    \coordinate (O) at (0,0);

    \coordinate (aJone) at (90:\rj);
    \coordinate (aJtwo) at (130:\rj);

    \coordinate (aJthree) at (80:\rjp);
    \coordinate (aJfour) at (100:\rjp);

    \path[fill=blue!6]
        (70:\rj)--(70:\R)
        arc[start angle=70,end angle=110,radius=\R]
        --(110:\rj)
        arc[start angle=110,end angle=70,radius=\rj]
        --cycle;

    \path[fill=blue!6]
        (110:\rj)--(110:\R)
        arc[start angle=110,end angle=150,radius=\R]
        --(150:\rj)
        arc[start angle=150,end angle=110,radius=\rj]
        --cycle;

    \path[fill=red!10]
        (70:\rjp)--(70:\R)
        arc[start angle=70,end angle=90,radius=\R]
        --(90:\rjp)
        arc[start angle=90,end angle=70,radius=\rjp]
        --cycle;

    \path[fill=red!10]
        (90:\rjp)--(90:\R)
        arc[start angle=90,end angle=110,radius=\R]
        --(110:\rjp)
        arc[start angle=110,end angle=90,radius=\rjp]
        --cycle;

    \draw[thick] (O) circle (\R);
    \draw[densely dashed,blue!65] (O) circle (\rj);
    \draw[densely dashed,red!65] (O) circle (\rjp);

    \draw[line width=3.5pt,blue!70,line cap=round]
        (70:\R)
        arc[start angle=70,end angle=110,radius=\R];

    \draw[line width=3.5pt,blue!70,line cap=round]
        (110:\R)
        arc[start angle=110,end angle=150,radius=\R];

    \draw[line width=1.5pt,red!80!black,line cap=round]
        (70:\R)
        arc[start angle=70,end angle=90,radius=\R];

    \draw[line width=1.5pt,red!80!black,line cap=round]
        (90:\R)
        arc[start angle=90,end angle=110,radius=\R];

    \foreach \angle in {70,90,110,150}
        \fill[black] (\angle:\R) circle (2.2pt);

    \draw[->,thick,gray!75]
        (aJone) to[bend right=12] (aJthree);

    \draw[->,thick,gray!75]
        (aJone) to[bend left=12] (aJfour);

    \fill[blue!75!black] (aJone) circle (2.4pt);
    \fill[blue!75!black] (aJtwo) circle (2.4pt);
    \fill[red!75!black] (aJthree) circle (2.4pt);
    \fill[red!75!black] (aJfour) circle (2.4pt);

    \node[anchor=north]
        at ($(aJone)+(0,-0.08)$) {$a_{J_1}$};

    \node[anchor=east]
        at ($(aJtwo)+(-0.10,0)$) {$a_{J_2}$};

    \node[anchor=south west]
        at ($(aJthree)+(0.05,0.05)$) {$a_{J_3}$};

    \node[anchor=south east]
        at ($(aJfour)+(-0.05,0.05)$) {$a_{J_4}$};

    \node[blue!80!black] at (90:3.43) {$J_1$};
    \node[blue!80!black] at (130:3.35) {$J_2$};
    \node[red!75!black] at (80:3.18) {$J_3$};
    \node[red!75!black] at (100:3.18) {$J_4$};

    \node[blue!75!black,fill=white,inner sep=1pt]
        at (194:\rj) {$r_j$};

    \node[red!75!black,fill=white,inner sep=1pt]
        at (165:\rjp) {$r_{j+1}$};

    \fill (O) circle (1.5pt);
    \node[below left] at (O) {$0$};
    \node[anchor=south west] at (12:\R) {$\partial\mathbb D$};
\end{tikzpicture}
\caption{Dyadic systems and the location of $a_J$}
\label{fig:dyadic-generations-aJ}
\end{figure}
\item We then show that the family $\{\phi_J\}_{J\in\mathcal{Q}}$ forms a frame for $A^2_\omega$, and an isometry $W$ from $A^2_\omega$ into $\ell^2(\mathcal Q)$ is naturally introduced (Proposition~\ref{prop:Parseval}). Therefore, the compactness problem for $T_{\omega,u}$ on the weighted Bergman space $A_\omega^2$ 
    reduces to the compactness of
\[
A_u=WT_{\omega,u}W^*
\quad\text{on}\quad
\ell^2(\mathcal Q).
\]
Moreover, the operator $WT_{\omega,u}W^*$ has a natural matrix representation on $\ell^2(\mathcal Q)$, and its analysis is naturally divided into the diagonal, lower-triangular, and upper-triangular parts.
\item The first key step is to decompose the diagonal part further, according to the natural angular distance \eqref{eq:coarse-angular-distance} between the indices of $\phi_J$, into the local near-diagonal region \eqref{eq:lnd} and the separated near-diagonal region \eqref{eq:asnd}. After this decomposition, the strategy for estimating the diagonal part becomes clear. 
\item First, the separated near-diagonal part is controlled directly by the operator norm and the bound on the uniform angular distance; see Lemma~\ref{lem:near-diagonal-scalar-tail-2}. On the other hand, the vanishing Berezin transform condition plays a role in estimating the local near-diagonal region. More precisely, Lemma~\ref{lem:affine-polarization-Dhat}, combined with the kernel representation in Proposition~\ref{prop:frame-element-kernel-representation}, converts the boundary decay of the Berezin transform into the uniform decay of the matrix entries in every fixed angular band, as formulated in Lemma~\ref{lem:local-band-entries-Dhat}. Using the row- and column-degree bounds for the matrix from Lemma~\ref{lem:local-band-degree-Dhat}, by Schur's test in Lemma~\ref{lem:Schur}, we obtain the desired estimate in Proposition~\ref{lem:near-diagonal-scalar-tail}.
\item For the triangular parts, adjoint symmetry (Lemma~\ref{lem:adjoint-triangular-symmetry}) reduces the problem to the upper-triangular part \eqref{eq:ut}. This requires a finer band decomposition of the upper-triangular matrix \eqref{eq:fixed-band-matrix-projection}, and the norm of each band has a suitable decay related to a Carleson embedding estimate (Proposition~\ref{lem:upper-triangular-Carleson-scalar}). We use a stopping-time decomposition and the vanishing Berezin-transform condition to obtain the desired estimate; see Proposition~\ref{prop:upper-Carleson-one-sided} and Proposition~\ref{cor:scalar-Carleson-tail}.
\end{itemize}
Combining all the above estimates, we obtain a proof of Proposition~\ref{prop:scalar-tail-compression}. It is then used to show that, under the vanishing Berezin-transform condition, $A_u$ can be approximated in operator norm by finite-rank operators.

\subsection{Further discussions}
Following the breakthrough of Axler and Zheng~\cite{AZ98}, Engli\v{s}~\cite{Eng99} extended the Axler--Zheng theorem to Bergman spaces on irreducible bounded symmetric domains. On the unit ball, Su\'arez~\cite{Su07} enlarged the class of operators from finite sums of finite products to the Toeplitz algebra on $A^p(\mathbb B_n),1<p<\infty$. Mitkovski, Su\'arez, and Wick~\cite{MSW13} subsequently established the corresponding characterization for the standard weighted Bergman spaces $A^p_\alpha(\mathbb B_n)$ for $(1<p<\infty)$. Mitkovski and Wick~\cite{MW14} later developed a unified framework for reproducing kernel theses for boundedness and compactness on Bergman-type function spaces, recovering many of the preceding results within a common setting. More recently, Wang and Xia~\cite{WX21} established the analogous compactness characterization for operators in the Toeplitz algebra on Bergman spaces over smoothly bounded strongly pseudoconvex domains.

To keep the exposition and notation sufficiently simple, we restrict our attention in this work to one Toeplitz operator $T_{\omega,u}$, and formulate our main result in the Hilbert-space setting $A^2_\omega$. The argument used here is expected to yield the corresponding compactness characterization on $A^p_\omega$ for $1<p<\infty$ and $\omega\in\widehat{\mathcal D}$, although we do not pursue this extension in this paper. At present, however, our method makes essential use of the matrix representation of $T_{\omega,u}$ and does not extend directly to arbitrary elements of the Toeplitz algebra. It would therefore be interesting to identify natural subalgebras of the Toeplitz algebra on $A^p_\omega$ for which compactness is still characterized by the boundary vanishing of the Berezin transform. In particular, what are the precise relationships between compact operators in the Toeplitz algebra on the weighted radial Bergman space $A_\omega^2$ and the corresponding reproducing kernel theses.

Our frame is also related to the continuous-frame approaches of Batayneh--Mitkovski~\cite{BM16} and Mitkovski--Stockdale--Wagner--Wick~\cite{MSWW23} which are based on normalized reproducing kernels. Their general compactness criteria indeed do not require translation operators. In their applications to Axler--Zheng-type theorems, on the other hand, ideas associated with translation operators are used explicitly in~\cite[Theorem~5.1]{BM16} and indirectly through \cite[Proposition~2.7]{IMW15}. In contrast, our discrete frame of dyadic analytic polynomials yields a translation-free argument.

\medskip
\noindent\textbf{Organization of the paper.}
We construct the dyadic polynomial frame and the associated matrix representation of Toeplitz operators in Section~\ref{sec:preliminaries}. Moreover, Theorem~\ref{thm:scalar-AZ-Dhat} is proved if we assume that Proposition \ref{prop:scalar-tail-compression} holds. Section~\ref{sec:frame-estimate} establishes the localization and almost-orthogonality estimates for the frame. By using the boundary behavior of the Berezin transform, we obtain the diagonal estimate in Section \ref{sec:diag-estimate}. In Section~\ref{sec:low-uppwe-estimate}, the upper and lower triangular blocks are controlled and we complete the proof of Proposition \ref{prop:scalar-tail-compression}. The proof of Proposition \ref{prop:conuterexample} is given in Section \ref{sec:counter}. The appendix contains the Carleson embedding estimate used in previous sections.

\medskip

\noindent \textbf{Acknowledgements.} This work is supported by the National Natural Science Foundation of China (No.12471116) and 2025CDJ-IAIS YB-004 (Chongqing University).

\medskip
\noindent\textbf{AI Use Disclosure.}
The authors developed the mathematical ideas and verified all arguments. AI-based tools assisted with drafting, language editing, and consistency checks; the authors take full responsibility for the manuscript.

\section{The frame and matrix representation}\label{sec:preliminaries}
Throughout the paper, we fix a nonnegative function $\chi\in C_c^\infty((1/2,2))$ such that
\begin{equation}\label{eq:chi-partition}
 \sum_{j\in\mathbb Z}\chi(2^{-j}t)^2=1, \qquad t>0.
\end{equation}
In particular, we have $\text{supp}(\chi)\subset(1/2,2)$. One such function can be constructed as follows. Let
\[
 \alpha(t)=
 \begin{cases}
 \displaystyle
 \exp\left(-\frac{1}
 {(t-\frac35)(\frac53-t)}\right),
 &\frac35<t<\frac53,\\[6pt]
 0,&\text{otherwise},
 \end{cases}
\]
and define
\[
 \chi(t)
 =
 \frac{\alpha(t)}
 {\left(\displaystyle\sum_{k\in\mathbb Z}
 \alpha(2^{-k}t)^2\right)^{1/2}},
 \qquad t>0.
\]
We shall use the following estimate (see \cite[Lemma~5.2]{Pavlovic2014}). For completeness, we include a short proof here.
For a set $E\subset\R$, write
\[
 \operatorname{diam}(E)
 :=
 \sup\{|x-y|:x,y\in E\}.
\]

\begin{lemma}\label{lem:periodic-multiplier}
Let $\ell\ge2$ be an integer, and let $\{m_k\}_{k\ge1}\subset C_c^\ell(\R)$. Suppose that
\[
 L_0:=\sup_{k\ge1}\frac{\operatorname{diam}(\operatorname{supp}m_k)}{k},
 \qquad
 C_0:=\sup_{k\ge1}\norm{m_k}_{L^\infty(\R)},
 \qquad
 C_\ell:=\sup_{k\ge1}k^\ell
 \norm{m_k^{(\ell)}}_{L^\infty(\R)}
\]
are finite. Then there is a constant $C=C(\ell,L_0,C_0,C_\ell)>0$ such that, for every $k\ge1$,
\begin{equation}\label{eq:periodic-pointwise}
 \abs{\sum_{n\in\mathbb Z}m_k(n)e^{int}}
 \le Ck(1+k|t|)^{-\ell},
 \qquad |t|\le\pi.
\end{equation}
\end{lemma}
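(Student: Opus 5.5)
Write $F_k(t)=\sum_{n\in\mathbb Z}m_k(n)e^{int}$. Since $\supp m_k$ lies in an interval of length at most $L_0k$, at most $L_0k+1\le(L_0+1)k$ integers $n$ satisfy $m_k(n)\neq0$. Hence the trivial bound $|F_k(t)|\le C_0(L_0+1)k$ holds for all $t$. This already gives \eqref{eq:periodic-pointwise} in the range $|t|\le 1/k$, where $(1+k|t|)^{-\ell}\ge 2^{-\ell}$.

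For $1/k<|t|\le\pi$, I would use summation by parts $\ell$ times. Introduce the forward difference $\Delta a(n)=a(n+1)-a(n)$. The identity
\[
 \sum_n a(n)e^{int}(e^{it}-1)=-\sum_n \Delta a(n)\,e^{i(n+1)t}
\]
holds for finitely supported sequences. Iterating it $\ell$ times yields
\[
 |F_k(t)|\,|e^{it}-1|^\ell\le\sum_n|\Delta^\ell m_k(n)|.
\]
By the mean value form of iterated differences, $\Delta^\ell m_k(n)=m_k^{(\ell)}(\xi_n)$ for some $\xi_n\in[n,n+\ell]$. Therefore $|\Delta^\ell m_k(n)|\le C_\ell k^{-\ell}$.

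Moreover, $\Delta^\ell m_k(n)\neq0$ only when $[n,n+\ell]$ meets $\supp m_k$. There are at most $L_0k+\ell+1\le(L_0+\ell+1)k$ such $n$. Hence
\[
 \sum_n|\Delta^\ell m_k(n)|\le (L_0+\ell+1)C_\ell\, k^{1-\ell}.
\]
Combining this with $|e^{it}-1|=2|\sin(t/2)|\ge 2|t|/\pi$ for $|t|\le\pi$ gives
\[
 |F_k(t)|\lesssim k^{1-\ell}|t|^{-\ell}=k\,(k|t|)^{-\ell}\le 2^\ell k(1+k|t|)^{-\ell},
\]
using $k|t|\ge1$ in the last step. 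The two ranges together give the lemma, with $C$ depending only on $\ell,L_0,C_0,C_\ell$.

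The only points needing care are the bookkeeping ones. First, one must count the lattice points where $\Delta^\ell m_k$ is nonzero; here $k\ge1$ lets us absorb the additive $\ell+1$ into $k$. Second, one must justify the mean value formula $\Delta^\ell a(n)=a^{(\ell)}(\xi)$ for $a\in C^\ell$. This follows from the representation of $\Delta^\ell$ as integration of $a^{(\ell)}$ against the B-spline kernel, a nonnegative kernel of total mass $1$ supported on $[n,n+\ell]$. Alternatively, one can simply bound $|\Delta^\ell a(n)|\le\sup_{[n,n+\ell]}|a^{(\ell)}|$ by writing $\Delta^\ell$ as an $\ell$-fold integral of $a^{(\ell)}$ over $[0,1]^\ell$. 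I do not expect a genuine obstacle; the hypothesis $\ell\ge2$ is not even needed for this argument.
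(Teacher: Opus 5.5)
Your proposal is correct and follows the paper's proof essentially step by step. The steps match: the trivial count bound $C_0(L_0+1)k$, $\ell$-fold summation by parts (the paper uses $(e^{-it}-1)^\ell$ and avoids your extra phase, which is immaterial), the count of at most $(L_0+\ell+1)k$ nonzero differences, and $|e^{it}-1|\ge 2|t|/\pi$. The paper bounds $\Delta^\ell m_k(n)$ via the $\ell$-fold integral of $m_k^{(\ell)}$ over $[0,1]^\ell$. Rely on that bound rather than the pointwise mean value form $\Delta^\ell m_k(n)=m_k^{(\ell)}(\xi_n)$, because the latter needs $m_k$ real-valued.
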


\begin{proof}
Since at most $(L_0+1)k$ terms on the left-hand side of \eqref{eq:periodic-pointwise} are nonzero, we have an upper bound  
\begin{equation}\label{eq:first-est}
 \abs{\sum_{n\in\mathbb Z}m_k(n)e^{int}} \le (L_0+1)C_0k.
\end{equation}
By summation by parts and the fundamental theorem of calculus,
\begin{align*}\label{eq:identity}
 (e^{-it}-1)^\ell\sum_{n\in\mathbb Z}m_k(n)e^{int} = \sum_{n\in\mathbb Z}\nabla^\ell m_k(n)e^{int},
\end{align*}
where
\[
 \nabla^\ell m_k(n) = \int_{[0,1]^\ell}
 m_k^{(\ell)}(n+u_1+\cdots+u_\ell)
 \,\dd u_1\cdots\dd u_\ell.
\]
Since that at most $(L_0+\ell+1)k$ terms $\nabla^\ell m_k(n)$ are nonzero, we have
\[
 \sum_{n\in\mathbb Z}\abs{\nabla^\ell m_k(n)}
 \le
 (L_0+\ell+1)C_\ell k^{1-\ell}.
\]
Since $|e^{-it}-1|\ge 2|t|/\pi$ for $|t|\le\pi$, it follows that
\[
 \abs{\sum_{n\in\mathbb Z}m_k(n)e^{int}}
 \le
 C_{\ell,L_0}C_\ell k^{1-\ell}|t|^{-\ell}
 =
 C_{\ell,L_0}C_\ell k(k|t|)^{-\ell},
 \qquad 0<|t|\le\pi.
\]
Combining this estimate with \eqref{eq:first-est}, we obtain
\[
 \abs{\sum_{n\in\mathbb Z}m_k(n)e^{int}}
 \le
 Ck\min\{1,(k|t|)^{-\ell}\},
 \qquad |t|\le\pi,
\]
where $C$ depends only on $\ell,L_0,C_0$, and $C_\ell$. Finally, note that
$ \min\{1,x^{-\ell}\}\le 2^\ell(1+x)^{-\ell}$ for
$ x>0,$
and therefore
\[
 \abs{\sum_{n\in\mathbb Z}m_k(n)e^{int}}
 \le Ck(1+k|t|)^{-\ell},
 \qquad |t|\le\pi.
\]
The proof is complete.
\end{proof}

\subsection{The Parseval frame}\label{sec:frame}
For an arc $I\subset\T$, let $|I|$ denote its arc-length. For $j\ge1$, set
\[
 \mathcal Q_j=\{J_{j,\ell}:0\leq\ell<2^{j+3}\},
\]
where
\[
 J_{j,\ell}
 =\set{e^{i\theta}:\frac{2\pi\ell}{2^{j+3}}
 \leq\theta<\frac{2\pi(\ell+1)}{2^{j+3}}}.
\]
For convenience, set $\mathcal Q_0=\{J_{0,0},J_{0,1}\}$ and define a dyadic system:
$$\mathcal Q=\bigcup_{j\geq0}\mathcal Q_j.$$
For $J=J_{j,\ell}\in\mathcal Q_j$, denote its midpoint by
\[
 \theta_J=\frac{2\pi(\ell+1/2)}{2^{j+3}}.
\]
Let $\mathbb{N}=\{0,1,2\cdots\}$ be the set of nonnegative integers. Then the family of functions
\begin{equation*}\label{eq:orthonormal-monomials}
 e_n(z)=\frac{z^n}{\sqrt{2\omega_{2n+1}}}, \quad n\in\mathbb{N},
\end{equation*}
is the normalized orthonormal basis of the $\widehat{\mathcal{D}}$-weighted Bergman space $A^2_\omega$. For each $J\in\mathcal{Q}_j,j\geq 1$, we define
\begin{equation}\label{eq:frame-element-definition}
 \phi_J =\frac1{\sqrt{2^{j+3}}}\sum_{n\ge1}\chi(2^{-j}n)e^{-in\theta_J}e_n.
\end{equation}
For $J\in\mathcal Q_j$, since $\operatorname{supp}\chi\subset(1/2,2)$ we have
\[\chi(2^{-j}n)\neq0 \quad \text{ only if}\quad 2^{j-1}<n<2^{j+1}.\]
Hence,
\[
\phi_J\in \operatorname{span}\bigl\{e_n:2^{j-1}<n<2^{j+1}\bigr\}.
\]
Thus, $\phi_J$ is a polynomial whose degrees are localized at the dyadic scale $2^j$. If $j=0$, set
$\phi_{J_{0,0}}=e_0$ and $\phi_{J_{0,1}}=e_1.$
Write 
\begin{align*}\label{eq:polyframe}
\mathcal F=\{\phi_J:J\in\mathcal Q\}.
\end{align*}

\begin{proposition}\label{prop:Parseval}
Let $\omega\in\widehat{\mathcal{D}}$. Then the family $\mathcal F$ forms a Parseval frame for $A^2_\omega$. More precisely,
\begin{equation*}\label{eq:Parseval}
 \norm{f}_{A^2_\omega}^2
 =\sum_{j\geq0}\sum_{J\in\mathcal Q_j}|\ip f{\phi_J}|^2,
 \quad f\in A^2_\omega.
\end{equation*}
\end{proposition}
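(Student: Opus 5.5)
Write $f=\sum_{n\ge0}c_ne_n$ with $c_n=\ip f{e_n}$, so that $\norm f_{A^2_\omega}^2=\sum_n|c_n|^2$. The plan is to compute $\ip f{\phi_J}$ explicitly in terms of the coefficients $c_n$. We then sum over each generation $\mathcal Q_j$ using the discrete Parseval identity for the finite Fourier transform on $\mathbb Z/2^{j+3}\mathbb Z$, and finally sum over $j$ using the partition of unity \eqref{eq:chi-partition}. The weight plays no role beyond the fact that $\{e_n\}$ is an orthonormal basis. Each $\phi_J$ is a finite combination of the $e_n$, so all sums are finite for fixed $J$ and convergence is never an issue.

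\textbf{Step 1 (generation $j=0$).} Since $\phi_{J_{0,0}}=e_0$ and $\phi_{J_{0,1}}=e_1$, this generation contributes $|c_0|^2+|c_1|^2$.

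\textbf{Step 2 (a fixed generation $j\ge1$).} For $J=J_{j,\ell}$, expanding \eqref{eq:frame-element-definition} gives
\[
\ip f{\phi_J}=\frac{1}{\sqrt{2^{j+3}}}\sum_{n}\chi(2^{-j}n)c_ne^{in\theta_J},
\]
since $\chi$ is real. The sum runs over $2^{j-1}<n<2^{j+1}$, which is a set of fewer than $2^{j+2}<N:=2^{j+3}$ consecutive integers, so distinct $n$ in this range are distinct modulo $N$. Now $\theta_J=2\pi(\ell+1/2)/N$, so $e^{in\theta_J}=e^{i\pi n/N}e^{2\pi i n\ell/N}$. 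Set $b_n=\chi(2^{-j}n)c_ne^{i\pi n/N}$. Then $\ip f{\phi_J}=N^{-1/2}\sum_n b_ne^{2\pi in\ell/N}$ for $0\le\ell<N$. By Plancherel for the DFT on $\mathbb Z/N\mathbb Z$, which applies because the $n$ are distinct mod $N$,
\[
\sum_{J\in\mathcal Q_j}|\ip f{\phi_J}|^2=\sum_n|b_n|^2=\sum_{n\ge1}\chi(2^{-j}n)^2|c_n|^2 .
\]
The half-integer shift in $\theta_J$ is only a unimodular phase and does not affect this.

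\textbf{Step 3 (summing over $j$).} Summing Step 2 over $j\ge1$ and exchanging the order of summation (all terms are nonnegative) gives $\sum_{n\ge1}|c_n|^2\sum_{j\ge1}\chi(2^{-j}n)^2$. For $n\ge2$, any $j\le0$ has $2^{-j}n\ge n\ge2$, which lies outside $\operatorname{supp}\chi$. Hence $\sum_{j\ge1}\chi(2^{-j}n)^2=\sum_{j\in\mathbb Z}\chi(2^{-j}n)^2=1$ by \eqref{eq:chi-partition}.

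\textbf{Main obstacle: the index $n=1$.} For $j\ge1$ we have $2^{-j}\le1/2$, which is outside the open support $(1/2,2)$, so $\chi(2^{-j})=0$. Thus $n=1$ receives nothing from the generations $j\ge1$ and is covered exactly once by Step 1. Likewise $n=0$ appears only in Step 1. Adding everything up yields $\sum_{n\ge0}|c_n|^2=\norm f_{A^2_\omega}^2$.

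This boundary bookkeeping at small $n$, together with the aliasing check $2^{j+2}<2^{j+3}$ in Step 2, are the only delicate points. Both are verified directly from $\operatorname{supp}\chi\subset(1/2,2)$.
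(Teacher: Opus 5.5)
Your proof is correct and follows essentially the same route as the paper. Both arguments expand $\ip f{\phi_J}$ in the monomial basis and use orthogonality of the characters $e^{in\theta_J}$ over $J\in\mathcal Q_j$ to obtain the single-level identity; your DFT Plancherel step, together with the check that the $n$ are distinct modulo $2^{j+3}$, is the paper's observation that $\sum_{J}e^{i(n-m)\theta_J}=0$ unless $2^{j+3}\mid(n-m)$. Both then sum over $j$ via \eqref{eq:chi-partition}, with the same handling of the indices $n=0,1$ through the generation $\mathcal Q_0$.
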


\begin{proof}
For any $f\in A_\omega^2$, we have $f=\sum_{n\ge0}c_ne_n$. For $j\ge1$,
\[
 \ip f{\phi_J}
 =2^{-(j+3)/2}\sum_{n\ge1}
 c_n\chi(2^{-j}n)e^{in\theta_J}.
\]
Observe the following identity
\begin{align*}\label{eq:fourierind}
 \sum_{J\in\mathcal Q_j}e^{i(n-m)\theta_J}=0
 \quad\text{unless}\quad 2^{j+3}\mid(n-m).
\end{align*}
Since $\operatorname{supp}\chi\subset(1/2,2)$, we have
\begin{equation}\label{eq:single-level-Parseval}
 \sum_{J\in\mathcal Q_j}|\ip f{\phi_J}|^2
 =\sum_{n\ge1}\chi(2^{-j}n)^2|c_n|^2.
\end{equation}
For $n\ge2$, if $j\leq 0$, the terms in \eqref{eq:chi-partition} are equal to 0. Hence, we get
$
 \sum_{j\ge1}\chi(2^{-j}n)^2=1.
$
Observe that $\chi(2^{-j})=0$ for $j\ge1$. Summing \eqref{eq:single-level-Parseval} over $j\geq 0$, we obtain
\[
 \sum_{j\geq0}\sum_{J\in\mathcal Q_j}|\ip f{\phi_J}|^2
 =|c_0|^2+|c_1|^2+\sum_{n\ge2}|c_n|^2
 =\norm{f}_{A^2_\omega}^2.
\]
This completes the proof.
\end{proof}

\subsection{The matrix representation}
Let $\ell^2(\mathcal Q)$ denote the Hilbert space of all complex families $x=(x_J)_{J\in\mathcal Q}$ such that
\[
 \norm{x}_{\ell^2(\mathcal Q)}=\left(\sum_{J\in\mathcal Q}|x_J|^2\right)^{1/2}<\infty.
\]
Define a linear operator
$
 W:A^2_\omega\longrightarrow\ell^2(\mathcal Q)
$
by
\[
 Wf=\bigl(\ip f{\phi_J}\bigr)_{J\in\mathcal Q}.
\]
By Proposition~\ref{prop:Parseval}, $W$ is an isometry. Moreover, 
\begin{equation*}\label{eq:analysis-isometry}
 W^*W=I_{A^2_\omega},
 \quad
 \norm{W}=\norm{W^*}=1.
\end{equation*}
For $u\in L^\infty(\D)$, let
\begin{equation}\label{eq:Au}
 A_u=WT_{\omega,u}W^*.
\end{equation}
Then, we have
\begin{proposition}\label{prop:equcom}
Let $u\in L^\infty(\D)$. The Toeplitz operator $T_{\omega,u}$ is compact on $A^2_\omega$ if and only if $A_u$ is compact on
$\ell^2(\mathcal Q)$.
\end{proposition}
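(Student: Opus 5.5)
The plan is to exploit the identity $W^*W=I_{A^2_\omega}$ from Proposition~\ref{prop:Parseval} together with the ideal property of compact operators: compact operators composed with bounded operators on either side are compact.

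First suppose $T_{\omega,u}$ is compact on $A^2_\omega$. Since $W:A^2_\omega\to\ell^2(\mathcal Q)$ and $W^*:\ell^2(\mathcal Q)\to A^2_\omega$ are bounded, with norm one, the composition
\[
A_u=WT_{\omega,u}W^*
\]
is compact as an operator on $\ell^2(\mathcal Q)$.

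Conversely, suppose $A_u$ is compact on $\ell^2(\mathcal Q)$. Using $W^*W=I_{A^2_\omega}$ on both sides, we get
\[
W^*A_uW=(W^*W)\,T_{\omega,u}\,(W^*W)=T_{\omega,u}.
\]
Thus $T_{\omega,u}$ is a compact operator sandwiched between the bounded maps $W$ and $W^*$, and is therefore compact on $A^2_\omega$.

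There is no genuine obstacle here. The only point to verify is that $W$ is an isometry, which is exactly the Parseval identity of Proposition~\ref{prop:Parseval}; this gives $W^*W=I$ by polarization. Note that $WW^*$ need not be the identity, since the frame is redundant, but the argument never uses it.
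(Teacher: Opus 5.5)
Your proof is correct and is the argument the paper relies on. The paper gives no separate proof; it states the proposition right after recording $W^*W=I_{A^2_\omega}$ and $\|W\|=\|W^*\|=1$, so that $A_u=WT_{\omega,u}W^*$ and $T_{\omega,u}=W^*A_uW$ yield both directions by the ideal property of compact operators.
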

Observe that $A_u$ is an infinite matrix indexed by $\mathcal{Q}\times\mathcal{Q}$. Moreover, for $I,J\in\mathcal Q$,
\begin{equation}\label{eq:Toeplitz-frame-entries}
 A_u(I,J)=\left\langle T_{\omega,u} \phi_J,\phi_I\right\rangle_{A^2_\omega}
 =\int_\D u(z)\phi_J(z)\overline{\phi_I(z)}
 \omega(z)\dd A(z).
\end{equation}
For each $j\geq0$, let $\Pi_j$ denote the orthogonal projection of $\ell^2(\mathcal Q)$ onto $\ell^2(\mathcal Q_j)$. Then we have
\begingroup
\small
\setlength{\arraycolsep}{5pt}
\renewcommand{\arraystretch}{1.4}
\[
 A_u=
 \begin{array}{c|ccccc}
  &\ell^2(\mathcal Q_0)&\ell^2(\mathcal Q_1)
  &\ell^2(\mathcal Q_2)&\ell^2(\mathcal Q_3)&\cdots
  \\ \hline
 \ell^2(\mathcal Q_0)
  &\Pi_0A_u\Pi_0&\Pi_0A_u\Pi_1
  &\Pi_0A_u\Pi_2&\Pi_0A_u\Pi_3&\cdots
  \\
 \ell^2(\mathcal Q_1)
  &\Pi_1A_u\Pi_0&\Pi_1A_u\Pi_1
  &\Pi_1A_u\Pi_2&\Pi_1A_u\Pi_3&\cdots
  \\
 \ell^2(\mathcal Q_2)
  &\Pi_2A_u\Pi_0&\Pi_2A_u\Pi_1
  &\Pi_2A_u\Pi_2&\Pi_2A_u\Pi_3&\cdots
  \\
 \ell^2(\mathcal Q_3)
  &\Pi_3A_u\Pi_0&\Pi_3A_u\Pi_1
  &\Pi_3A_u\Pi_2&\Pi_3A_u\Pi_3&\cdots
  \\
 \vdots&\vdots&\vdots&\vdots&\vdots&\ddots
 \end{array},
\]
\endgroup
where
\begin{equation*}\label{eq:level-block-matrix}
 \Pi_kA_u\Pi_j
 =\left(
 A_u(I,J)
 \right)_{\substack{I\in\mathcal Q_k,J\in\mathcal Q_j}}
 :\ell^2(\mathcal Q_j)\longrightarrow\ell^2(\mathcal Q_k).
\end{equation*}

For $N\ge 2$, let
\begin{equation*}\label{eq:matrix-tail-projection}
 \Pi_{\ge N}=\sum_{j\geq N}\Pi_j
\end{equation*}
be the tail projection on $\ell^2(\mathcal Q)$. 

\begin{proposition}\label{prop:scalar-tail-compression}
Let $\omega\in\widehat{\mathcal D}$ and $u\in L^\infty(\D)$. 
If the Berezin transform
 $
  \lim_{|a|\to 1^-}\widetilde u_\omega(a)=0,
 $
then
\begin{equation*}\label{eq:scalar-tail-compression}
 \lim_{N\to\infty}\norm{\Pi_{\ge N}A_u\Pi_{\ge N}}=0.
\end{equation*}
\end{proposition}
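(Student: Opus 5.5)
We follow the road map sketched in the introduction. We split the tail compression $\Pi_{\ge N}A_u\Pi_{\ge N}$ into three pieces: a diagonal part $\sum_{j\ge N}\Pi_jA_u\Pi_j$, an upper-triangular part $\sum_{N\le j<k}\Pi_jA_u\Pi_k$, and a lower-triangular part. Since $A_u^*=A_{\bar u}$ and $\widetilde{\bar u}_\omega=\overline{\widetilde u_\omega}$, the lower part for $u$ is the adjoint of the upper part for $\bar u$. So it suffices to show that the diagonal and upper-triangular tails tend to zero in norm, uniformly over every symbol satisfying the hypothesis.

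\textbf{Frame estimates (first step).} We write $\phi_J$ as a kernel. For $J\in\mathcal Q_j$ and $z=\rho e^{it}$,
\[
\phi_J(z)=2^{-(j+3)/2}\sum_n \chi(2^{-j}n)\frac{\rho^n e^{in(t-\theta_J)}}{\sqrt{2\omega_{2n+1}}}.
\]
We apply Lemma~\ref{lem:periodic-multiplier} with $k=2^j$ and $m_k(x)=\chi(2^{-j}x)\rho^x(2^{-j}\ldots)$. The derivatives of the smooth interpolation of $x\mapsto\omega_{2x+1}^{-1/2}$ are controlled by \eqref{eq:moment-doubling} and \eqref{eq:moment-tail-doubling}. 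This should give
\[
|\phi_J(z)|\lesssim \frac{2^{j/2}\rho^{2^{j-1}}}{\sqrt{\widehat\omega(1-2^{-j})}}\,\bigl(1+2^j|t-\theta_J|\bigr)^{-\ell}.
\]
So $\phi_J$ is concentrated near the box $\{|z|\approx r_j,\ |t-\theta_J|\lesssim 2^{-j}\}$, with polynomial angular decay and exponential decay for $|z|<r_j$. The same computation, with $\chi$ replaced by $\chi(2^{-j}n)\sqrt{\omega_{2n+1}}/\sqrt{\omega_{2n+1}}$-type ratios, should express $\phi_J$ as an average of $k^\omega_{a}$ over $a$ near $a_J$ (the "kernel representation"). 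That representation is what lets a Berezin condition act on the entries $A_u(I,J)$.

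\textbf{Diagonal part.} Fix $M$. For $I,J\in\mathcal Q_j$ we split according to the angular distance $d(I,J)=2^{j}|\theta_I-\theta_J|$ into $d\le M$ and $d>M$.
\begin{itemize}
\item For $d>M$, the decay above and $\|u\|_\infty$ give $|A_u(I,J)|\lesssim\|u\|_\infty(1+d)^{-\ell+1}$. Schur's test then bounds this part by $C\|u\|_\infty M^{-1}$.
\item For $d\le M$, each row and column has $O(M)$ entries. By polarization we write $\langle T_u\phi_J,\phi_I\rangle$ as a combination of quadratic forms $\langle T_u g,g\rangle$ with $g=\phi_I+\lambda\phi_J$. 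Through the kernel representation, each such form is controlled by $\sup\{|\widetilde u_\omega(a)|:|a|\ge r_{j}-C2^{-j}M\}$ plus a tail of size $o_M(1)$. The local band operator therefore has norm $\lesssim M\,\varepsilon_N$ with $\varepsilon_N\to0$.
\end{itemize}
Choosing $M$ first and then $N$ gives the diagonal estimate.

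\textbf{Upper-triangular part (main obstacle).} For $j<k$ the supports in the degree variable are disjoint when $k\ge j+2$. The entries are then small only through the radial separation: $\phi_J\overline{\phi_I}$ with $I\in\mathcal Q_j$, $J\in\mathcal Q_k$ has modulus $\lesssim$ the products of the pointwise bounds. We group the upper part into bands $B_m=\sum_j\Pi_jA_u\Pi_{j+m}$. For $m\ge2$ we aim for $\|B_m\|\lesssim 2^{-cm}\|u\|_\infty$ via a Carleson-embedding argument: the measure $|u|\omega\,dA$ restricted to the top half of Carleson boxes tests against $\sum_J x_J\phi_J$. Then only finitely many bands matter, and in each of them we replace $\|u\|_\infty$ by a local average of $|u|$. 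The genuine difficulty is that the Berezin condition controls $u$ only in an averaged, signed sense, not $|u|$. I would handle this with a stopping-time decomposition. We split $u$ according to Carleson boxes, and show that the oscillatory cancellation against $\phi_J\overline{\phi_I}$ (which has frequency $\asymp 2^{k}$ in $t$, modulated) reduces the band entries again to polarized Berezin-type forms at points $a$ near $a_J$. That step, turning signed Berezin smallness into smallness of the off-diagonal band norms with summable constants, is where I expect most of the work.
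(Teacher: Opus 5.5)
Your treatment of the upper-triangular part rests on the claim that $\|B_m\|\lesssim 2^{-cm}\|u\|_{L^\infty(\D)}$ for every bounded symbol, so that only finitely many bands would need the Berezin hypothesis. This claim is false on $\widehat{\mathcal D}$, and it is the genuine gap in the proposal.

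Your pointwise bound gives exponential decay of $\phi_J$ only for $|z|<r_j$. Towards the boundary, $\phi_J$ is merely $O(\omega(T(J))^{-1/2})$ on all of $T(J)$. The share of $\|\phi_J\|_{A^2_\omega}^2$ lying in the top half of $T(J)$ is $O\bigl(1-\widehat\omega(r_{j+1})/\widehat\omega(r_j)\bigr)$, which need not be bounded below. So there is no radial separation between a coarse $\phi_I$ and a fine $\phi_J$. Schur's test with Lemma \ref{lem:frame-element-overlap} gives only $\|B_m\|\lesssim\sup_k(\widehat\omega(r_{k+m})/\widehat\omega(r_k))^{1/2}\|u\|_{L^\infty(\D)}$. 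For the weight of Proposition \ref{prop:conuterexample}, $\widehat\omega(r)=(\log\frac{e}{1-r})^{-1}$ near $1$, and this ratio is $\asymp k/(k+m)$.

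The failure is real, not an artifact of Schur's test. Take $n_j=e^{\lambda_j}=2^{s_j}$, $l_j=2^{6s_j}$ and $v_j$ as in Section \ref{sec:counter}, and let $u=\overline{v_j}$, so $\|u\|_{L^\infty(\D)}\le1$.
\begin{itemize}
\item By Claim B, $\langle T_{\omega,u}e_{n_j+l_j},e_{n_j}\rangle_{A^2_\omega}=\alpha_{j,n_j}\ge\sqrt{c_1}$.
\item $We_{n_j}$ is a unit vector in $\ell^2(\mathcal Q_{s_j})$, and $We_{n_j+l_j}$ is a unit vector in $\ell^2(\mathcal Q_{6s_j})\oplus\ell^2(\mathcal Q_{6s_j+1})$.
\item Since $\langle A_uWf,Wg\rangle_{\ell^2(\mathcal Q)}=\langle T_{\omega,u}f,g\rangle_{A^2_\omega}$, this forces $\|B_{5s_j}\|+\|B_{5s_j+1}\|\ge\sqrt{c_1}$.
\end{itemize}
Here $s_j\to\infty$, and the relevant levels lie beyond any prescribed $N$.

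So the far upper bands are exactly where the Berezin hypothesis must enter. By contrast, the finitely many bands $|j-k|<M$, which you single out as the hard part, are handled just like the diagonal. Your proposal has no mechanism for the far bands, and the stopping time you sketch on Carleson boxes of $u$ does not address them.

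The paper proceeds as follows.
\begin{itemize}
\item It freezes $\overline{\phi_I(z)}$ at $\overline{\phi_I(a_J)}$. The remainder has norm $O(2^{-M}\|u\|_{L^\infty(\D)})$ (Corollary \ref{cor:triangular-remainder-operator-scalar}).
\item This reduces the main part to a Carleson embedding with coefficients $c_J=\int_\D u\phi_J\omega\dd A$, bounded by $C\mathfrak C_N^M(u)^{1/2}$ (Proposition \ref{lem:upper-triangular-Carleson-scalar}). For general $u$ this quantity is bounded but not small.
\item Its smallness comes from a stopping time on the levels adapted to the moment function $\mathfrak m(x)=\omega_{2x+1}$, not to $u$. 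One splits $c_J=e_J+d_J$ via $1=(1-|z|^{2N_\nu})+|z|^{2N_\nu}$.
\item The piece $e_J$ is $\epsilon$-small by \eqref{eq:adaptive-radial-smallness} and Lemma \ref{lem:radial-Bessel-modulation}.
\item For $d_J$, multiplication by $z^{N_\nu}$ moves $\phi_K$ and a whole block of $\phi_J$'s into a band of fixed width $d_\epsilon+2$. There the band compression $\|A_u\|_{N;d_\epsilon+2}$ tends to $0$ by the near-diagonal analysis.
\item The bound $\sum_\nu\mathfrak m(N_\nu)\lesssim_\epsilon\mathfrak m(x_0)$ makes the blocks summable (Proposition \ref{prop:upper-Carleson-one-sided}).
\end{itemize}

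There is also a secondary gap in your local diagonal step. Polarizing $\langle T_{\omega,u}g,g\rangle$ with $g=\phi_I+\lambda\phi_J$ does not produce Berezin values, because $g$ is a superposition of many kernels. You need uniform vanishing of $\langle T_{\omega,u}k_b^\omega,k_c^\omega\rangle$ for $b,c$ in a fixed hyperbolic window near the boundary. The paper obtains this from sesqui-holomorphy and normal families (Lemma \ref{lem:affine-polarization-Dhat}). With that lemma added, your outline would work for weights with $\widehat\omega(r_{k+m})\le C2^{-\beta m}\widehat\omega(r_k)$, such as the standard weights, but not on all of $\widehat{\mathcal D}$.
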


We shall use the following Bergman kernel estimate (see \cite[Theorem~1]{PR16}). We also use the notation $B_\omega(z,\zeta)=B_\zeta^\omega(z)$.
\begin{lemma}\label{lem:norm-estimates-for-reproducing-kernel}
Let $\omega\in\widehat{\mathcal D}$. There exist constants $C_1=C_1(\omega)>0$ and $C_2=C_2(\omega)>0$ such that
\begin{equation}\label{eq:norm-estimates-for-reproducing-kernel}
 \frac{C_1}{(1-|z|)\widehat\omega(|z|)} \leq B_\omega(z,z) \leq\frac{C_2}{(1-|z|)\widehat\omega(|z|)},
 \qquad z\in\D.
\end{equation}
\end{lemma}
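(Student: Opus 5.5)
The plan is to estimate the diagonal series
\[
B_\omega(z,z)=\sum_{n\ge0}\frac{|z|^{2n}}{2\omega_{2n+1}}
\]
directly, using the moment comparison \eqref{eq:moment-tail-doubling}, $\omega_x\asymp\widehat\omega(1-1/x)$ for $x\ge1$, together with the doubling condition \eqref{eq:Dhat-intro}. Write $r=|z|$. For $r\le 1/2$ both sides of \eqref{eq:norm-estimates-for-reproducing-kernel} are bounded above and below by constants depending only on $\omega$: the series converges uniformly and is at least $1/(2\omega_1)$, and $\widehat\omega(r)\in[\widehat\omega(1/2),\widehat\omega(0)]$, where $\widehat\omega(1/2)>0$ because $A^2_\omega$ is a reproducing kernel Hilbert space. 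So we assume $r>1/2$ and set $N=\lfloor 1/(1-r)\rfloor\ge2$, so that $N\asymp 1/(1-r)$ and $r^{2n}\ge c_0>0$ for $n\le 2N$.

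The first preliminary step is a polynomial growth bound for $\widehat\omega$ obtained by iterating \eqref{eq:Dhat-intro}. Since $1-\frac{1+r}{2}=\frac{1-r}{2}$, induction gives $\widehat\omega(1-s)\le C_\omega^k\,\widehat\omega(1-2^{-k}s)$. Hence for $0<t\le s\le1$,
\[
\widehat\omega(1-t)\ge C^{-1}(t/s)^{\beta}\,\widehat\omega(1-s),\qquad \beta=\log_2 C_\omega .
\]
Combined with monotonicity of $\widehat\omega$, this gives $\widehat\omega(1-1/N)\asymp\widehat\omega(1-1/(2N+1))\asymp\widehat\omega(r)$, because $1-r$, $1/N$ and $1/(2N+1)$ are all comparable.

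\emph{Lower bound.} We keep only the terms with $N\le n\le 2N$. For these, $r^{2n}\ge c_0$, and since the moments $\omega_x$ decrease in $x$, $\omega_{2n+1}\le\omega_{2N+1}\lesssim\widehat\omega(1-1/(2N+1))\lesssim\widehat\omega(r)$. This gives
\[
B_\omega(z,z)\gtrsim \frac{N}{\widehat\omega(r)}\asymp\frac{1}{(1-r)\widehat\omega(r)}.
\]

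\emph{Upper bound.} We split the sum at $N$.
\begin{itemize}
\item For $1\le n\le N$: by \eqref{eq:moment-tail-doubling} and monotonicity, $1/\omega_{2n+1}\lesssim 1/\widehat\omega(1-1/(2n+1))\le 1/\widehat\omega(1-1/(2N+1))$. Adding the $n=0$ term $1/(2\omega_1)$, which is $\lesssim 1/\widehat\omega(r)$ because $\widehat\omega(r)\le\widehat\omega(0)=\omega_0\lesssim\omega_1$ by \eqref{eq:moment-doubling}, this part contributes $\lesssim N/\widehat\omega(r)$.
\item For $n>N$: the growth bound gives $1/\omega_{2n+1}\lesssim (n/N)^{\beta}/\widehat\omega(r)$. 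Group these $n$ into dyadic blocks $2^kN<n\le 2^{k+1}N$. On the $k$-th block $r^{2n}\le e^{-2n(1-r)}\le e^{-c2^k}$, so the block contributes $\lesssim 2^kN\,2^{(k+1)\beta}e^{-c2^k}/\widehat\omega(r)$. Summing over $k$ converges and again gives $\lesssim N/\widehat\omega(r)$.
\end{itemize}

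The main obstacle is the upper bound for the tail $n>N$. Here the decay of the weights $1/\omega_{2n+1}$ is not given a priori and must be traded against the geometric decay of $r^{2n}$. The doubling-derived polynomial bound with exponent $\beta$ is exactly what makes that trade-off work, and all other steps are routine bookkeeping with \eqref{eq:moment-tail-doubling}.
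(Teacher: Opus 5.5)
Your proof is correct, but it takes a different route from the paper. The paper does not prove this lemma; it quotes it from Pel\'aez--R\"atty\"a [PR16, Theorem~1], where it follows from more general $L^p$ estimates for the kernels $B_z^\omega$.

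You instead work directly with the positive series $B_\omega(z,z)=\sum_n |z|^{2n}/(2\omega_{2n+1})$. For the $\asymp 1/(1-r)$ indices with $n\lesssim N$, you have $r^{2n}\asymp 1$ and $\omega_{2n+1}\asymp\widehat\omega(r)$ (via \eqref{eq:moment-tail-doubling} and monotonicity). This gives the lower bound and the main part of the upper bound. The tail $n>N$ is handled by trading the polynomial bound $\widehat\omega(1-t)\gtrsim (t/s)^{\beta}\widehat\omega(1-s)$, obtained by iterating \eqref{eq:Dhat-intro}, against the decay $r^{2n}\le e^{-2n(1-r)}$ over dyadic blocks. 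I checked each step, including the comparability of $\widehat\omega(1-1/N)$, $\widehat\omega(r)$ and $\widehat\omega(1-1/(2N+1))$, and they hold.

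What your argument buys is self-containment. It uses only \eqref{eq:Dhat-intro} and \eqref{eq:moment-tail-doubling}, which the paper already imports, and it exploits the positivity of the diagonal coefficient series, which is special to the Hilbert case. The cited theorem buys generality (all exponents $p$, where no such positive series is available), which the paper does not need here.

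One small slip: $\omega_0\lesssim\omega_1$ does not follow from \eqref{eq:moment-doubling}, which is vacuous at $t=0$. Use \eqref{eq:moment-tail-doubling} with $x=1$ (that is, $\widehat\omega(0)\le C_2\omega_1$), or simply note that $\omega_0/\omega_1$ is a finite constant depending only on $\omega$.
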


\begin{proof}[Proof of Theorem~\ref{thm:scalar-AZ-Dhat} assuming Proposition~\ref{prop:scalar-tail-compression}]
Suppose first that $T_{\omega,u}$ is compact. The normalized reproducing kernels $k_a^\omega$ converge weakly to zero in
$A_\omega^2$ as $|a|\to1^-$. Indeed, polynomials are dense in $A_\omega^2$, and, for every polynomial $f$, by Lemma \ref{lem:norm-estimates-for-reproducing-kernel}
\[
 \lim_{|a|\to 1^-}\left\langle f,k_a^\omega\right\rangle_{A_\omega^2} = \lim_{|a|\to 1^-} \frac{f(a)}{\sqrt{B_a^\omega(a)}}=0.
\]
Therefore,
\[
 \left|\widetilde u_\omega(a)\right|
 =
 \left|\left\langle
 T_{\omega,u}k_a^\omega,k_a^\omega
 \right\rangle_{A_\omega^2}\right|
 \leq
 \left\|T_{\omega,u}k_a^\omega\right\|_{A_\omega^2}
 \longrightarrow0.
\]
Conversely, assume $\widetilde u_\omega(a)\to0$ as $|a|\to 1^-$. By Proposition~\ref{prop:scalar-tail-compression} and observe that
\[
A_u - \Pi_{\ge N} A_u \Pi_{\ge N}=(I_{\ell^2(\mathcal Q)}-\Pi_{\ge N})A_u+\Pi_{\ge N}A_u(I_{\ell^2(\mathcal Q)}-\Pi_{\ge N}),
\]
we have $A_u$ is compact on $\ell^2(\mathcal Q)$. By Proposition~\ref{prop:equcom}, the Toeplitz operator $T_{\omega,u}$ is compact on $A^2_\omega$. This completes the proof.
\end{proof}

For the proof of Proposition~\ref{prop:scalar-tail-compression}, we use the following natural distance between
$I\in\mathcal Q_k$ and $J\in\mathcal Q_j$ for $j,k\geq1$:
\begin{equation}\label{eq:coarse-angular-distance}
 d_{j\wedge k}(I,J)=2^{\min\{j,k\}}d_\T(\theta_I,\theta_J),
\end{equation}
where $d_\T$ denotes the shortest angular distance on $\T$.

Fix integers $M\ge4$ and $L\ge1$. We split the matrix $\Pi_{\ge N}A_u\Pi_{\ge N}$ into four regions (see Figure \eqref{fig:four-matrix-regions}):
\begin{enumerate}
 \item \emph{near-diagonal}:
 \begin{align}\label{eq:lnd}
 \{(I,J):\ I\in\mathcal Q_k,\ J\in\mathcal Q_j,\ j,k\ge N,\
 |k-j|<M,\ d_{j\wedge k}(I,J)\le L\};
 \end{align}
 \item \emph{separated near-diagonal}:
 \begin{align}\label{eq:asnd}
 \{(I,J):\ I\in\mathcal Q_k,\ J\in\mathcal Q_j,\ j,k\ge N,\
 |k-j|<M,\ d_{j\wedge k}(I,J)>L\};
 \end{align}
 \item \emph{lower-triangular}:
 \begin{align}\label{eq:lt}
 \{(I,J):\ I\in\mathcal Q_k,\ J\in\mathcal Q_j,\ j,k\ge N,\
 k-j\ge M\};
 \end{align}
 \item \emph{upper-triangular}:
 \begin{align}\label{eq:ut}
 \{(I,J):\ I\in\mathcal Q_k,\ J\in\mathcal Q_j,\ j,k\ge N,\
 j-k\ge M\}.
 \end{align}
\end{enumerate}
Let the corresponding blocks be $A_{N,1}^{M,L}(u)$, $A_{N,2}^{M,L}(u)$, $A_{N,3}^{M}(u)$, and $A_{N,4}^{M}(u)$. Thus
\begin{equation}\label{eq:four-region-decomposition-scalar}
 \Pi_{\ge N}A_u\Pi_{\ge N}=A_{N,1}^{M,L}(u)+A_{N,2}^{M,L}(u)+A_{N,3}^{M}(u)+A_{N,4}^{M}(u)
\end{equation}
\begin{figure}[htbp]
\centering
\begin{tikzpicture}[x=0.8cm,y=-0.6cm,
  every node/.style={font=\small}]
  \foreach \j in {0,...,8}{
    \foreach \k in {0,...,8}{
      \pgfmathtruncatemacro{\d}{\k-\j}
      \ifnum\d>1
        \fill[red!13] (\j,\k) rectangle ++(1,1);
      \else
        \ifnum\d<-1
          \fill[green!15] (\j,\k) rectangle ++(1,1);
        \else
          \fill[orange!18] (\j,\k) rectangle ++(1,1);
          \fill[blue!35,rounded corners=0.6pt]
            (\j+0.34,\k+0.34) rectangle (\j+0.66,\k+0.66);
        \fi
      \fi
    }
  }
  \draw[gray!65,very thin] (0,0) grid (9,9);
  \draw[black!70] (0,0) rectangle (9,9);
  \draw[->,thick] (0,-0.55) -- (3.0,-0.55)
    node[right] {column level $j$};
  \draw[->,thick] (-0.55,0) -- (-0.55,3.0);
  \node[rotate=90] at (-0.6,4.8) {row level $k$};
  \node[align=center,font=\scriptsize,text=green!35!black] at (6.55,1.45)
    {(4) upper triangular\\$j\ge k+M$};
  \node[align=center,font=\scriptsize,text=red!55!black] at (2.55,7.75)
    {(3) lower triangular\\$k\ge j+M$};
  \node[rotate=45,fill=white,fill opacity=.84,text opacity=1,
    font=\scriptsize,inner sep=1.5pt] at (4.5,4.5) {$|k-j|<M$};
  \node[anchor=west,font=\scriptsize\bfseries] at (0,9.65)
    {Inside each block of the diagonal band:};
  \fill[blue!35,rounded corners=0.6pt] (0,10.05) rectangle (0.42,10.47);
  \draw[gray!70] (0,10.05) rectangle (0.42,10.47);
  \node[anchor=west,font=\scriptsize] at (0.58,10.26)
    {(1) local entries: $d_{j\wedge k}(I,J)\le L$};
  \fill[orange!18] (0,10.72) rectangle (0.42,11.14);
  \draw[gray!70] (0,10.72) rectangle (0.42,11.14);
  \node[anchor=west,font=\scriptsize] at (0.58,10.93)
    {(2) angularly separated entries: $d_{j\wedge k}(I,J)>L$};
\end{tikzpicture}
\label{fig:four-matrix-regions}
\caption{The decomposition of the matrix}
\end{figure}

The proof of Proposition \ref{prop:scalar-tail-compression} is based on Lemma~\ref{lem:adjoint-triangular-symmetry}, Proposition~\ref{lem:near-diagonal-scalar-tail}, Lemma \ref{lem:near-diagonal-scalar-tail-2}, Lemma~\ref{lem:local-band-entries-Dhat} and Proposition~\ref{prop:upper-compact}.

\begin{proof}[Proof of Proposition~\ref{prop:scalar-tail-compression}]
By \eqref{eq:four-region-decomposition-scalar}, 
\begin{align*}
 \norm{\Pi_{\geq N}A_u\Pi_{\geq N}}
 \leq
 \norm{A_{N,1}^{M,L}(u)}
 +\norm{A_{N,2}^{M,L}(u)}
 +\norm{A_{N,3}^{M}(u)}
 +\norm{A_{N,4}^{M}(u)}.
\end{align*}
By Proposition~\ref{lem:near-diagonal-scalar-tail} and Lemma \ref{lem:near-diagonal-scalar-tail-2},
\begin{align}
 \norm{\Pi_{\geq N}A_u\Pi_{\geq N}}
 &\leq
 2^{M+6}LM\,\tau(N;M,L)
 +
 C2^{M/2}(1+L)^{-2}\norm{u}_{L^\infty(\D)}
 \notag\\
 &\quad+
 \norm{A_{N,3}^{M}(u)}
 +
 \norm{A_{N,4}^{M}(u)}.
 \label{eq:scalar-tail-master-quantitative}
\end{align}
By Proposition~\ref{prop:upper-compact} and Lemma~\ref{lem:adjoint-triangular-symmetry}, we have
\begin{equation*}\label{eq:triangular-double-limit-scalar}
 \lim_{M\to\infty}\limsup_{N\to\infty}
 \left(\norm{A_{N,3}^{M}(u)} + \norm{A_{N,4}^{M}(u)}\right)
 = 0.
\end{equation*}

Fix $\delta>0$. Choose $M\geq4$ such that
\[
 \limsup_{N\to\infty}
 \left(
 \norm{A_{N,3}^{M}(u)}
 +
 \norm{A_{N,4}^{M}(u)}
 \right)
 <
 \frac{\delta}{2},
\]
and then choose $L\geq1$ so large that
\[
 C2^{M/2}(1+L)^{-2}\norm{u}_{L^\infty(\D)}
 <
 \frac{\delta}{4}.
\]
For this fixed pair $(M,L)$, it follows from Lemma~\ref{lem:local-band-entries-Dhat} that
\[
 \lim_{N\to\infty}\tau(N;M,L)=0.
\]
Moreover, by \eqref{eq:scalar-tail-master-quantitative} 
\[
 \limsup_{N\to\infty}
 \norm{\Pi_{\geq N}A_u\Pi_{\geq N}}
 <
 \delta.
\]
Since $\delta>0$ is arbitrary,  we have the desired result
\begin{equation*}
 \lim_{N\to\infty}\norm{\Pi_{\ge N}A_u\Pi_{\ge N}}=0.
\end{equation*}
This completes the whole proof.
\end{proof}

\section{The frame estimate}\label{sec:frame-estimate}
\subsection{The frame and reproducing kernels}
For $j\geq 1$, set $r_j=1-2^{-j}$. Then the normalized reproducing kernel of $A_\omega^2$ at the point $r_je^{i\vartheta}$ 
can be written as
$$
 k_{r_je^{i\vartheta}}^\omega =\sum_{n\geq0}\kappa_{j,n}e^{-in\vartheta}e_n,
$$
where
$
 \kappa_{j,n}
 =r_j^n/\sqrt{2\omega_{2n+1}B_\omega(r_j,r_j)}.
$
For $x\in\R$, define
\begin{equation*}\label{eq:qj-definition}
 q_j(x)=
 \begin{cases}
 \displaystyle
 \frac{\chi(2^{-j}x)}
 {2^{(j+3)/2}} \frac{\sqrt{2\omega_{2x + 1} B_\omega(r_j,r_j)}}{r_j^x},
 &x>0,\\[8pt]
 0,&x\leq0.
 \end{cases}
\end{equation*}
Since $\supp\chi\subset(1/2,2)$, we have $\supp q_j\subset(2^{j-1},2^{j+1})$, and $q_j\in C_c^\infty(\R)$. Moreover,
\[
 q_j(n)\kappa_{j,n}
 =
 \frac{\chi(2^{-j}n)}{2^{(j+3)/2}},
 \quad n\geq0.
\]

By \eqref{eq:moment-tail-doubling}, the monotonicity of $\widehat\omega$, and \eqref{eq:Dhat-intro}, there exist positive constants $c_0=c_0(\omega)$ and $C_0=C_0(\omega)$ such that
\[
 c_0\widehat\omega(r_j)\leq \omega_{2x+1}\leq
 C_0\widehat\omega(r_j),
 \quad x\in\supp q_j.
\]
By \eqref{eq:norm-estimates-for-reproducing-kernel}, there exist constants $c_1=c_1(\omega), C_1=C_1(\omega)>0$ such that
\begin{equation}\label{eq:qj-moment-kernel}
 c_1 2^j\leq 2\omega_{2x+1}B_\omega(r_j,r_j)\leq C_1 2^j, \quad x\in\supp q_j.
\end{equation}
For $t\in[-\pi,\pi]$, define
\[
 L_j(t)=\sum_{n\in\Z}q_j(n)e^{int}.
\]
By Lemma~\ref{lem:periodic-multiplier}, for every integer $p\ge2$,
\begin{equation*}\label{eq:Lj-pointwise}
 |L_j(t)|
 \leq
 C_{\omega,p,\chi}2^j(1+2^j|t|)^{-p},
 \quad |t|\leq\pi.
\end{equation*}
Then,
\begin{equation}\label{eq:Lj-L1}
 \frac1{2\pi}\int_{-\pi}^{\pi}|L_j(t)|\dd t
 \leq C_{\omega,p,\chi}.
\end{equation}
Moreover, for every $\rho\ge1$,
\begin{equation}\label{eq:Lj-tail}
 \frac1{2\pi}
 \int_{\substack{-\pi\leq t\leq\pi\\|t|>\rho2^{-j}}}
 |L_j(t)|\dd t
 \leq
 C_{\omega,p,\chi}(1+\rho)^{1-p}.
\end{equation}

\begin{proposition}\label{prop:frame-element-kernel-representation}
For every $j\geq1$ and $J\in\mathcal Q_j$, we have
\begin{equation*}\label{eq:frame-element-kernel-representation}
 \phi_J =\frac1{2\pi}\int_{-\pi}^{\pi} L_j(t)k_{r_je^{i(\theta_J+t)}}^\omega\dd t,
\end{equation*}
where the integral is the Bochner integral.
\end{proposition}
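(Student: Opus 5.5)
The plan is to verify the identity coefficientwise in the orthonormal basis $\{e_n\}$, after checking that the Bochner integral makes sense.

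\textbf{Well-definedness.} The map $t\mapsto k^\omega_{r_je^{i(\theta_J+t)}}$ is continuous from $[-\pi,\pi]$ into $A^2_\omega$. Indeed, it has unit norm, and its difference at two parameters has squared norm
\[
\sum_n\kappa_{j,n}^2|e^{-int}-e^{-ins}|^2 ,
\]
which tends to $0$ as $s\to t$ by dominated convergence, since $\sum_n\kappa_{j,n}^2=1$. Moreover $L_j$ is a trigonometric polynomial, because $q_j$ has compact support. So the integrand is a continuous $A^2_\omega$-valued function, and its norm is bounded by $|L_j(t)|$, which is integrable (this is also \eqref{eq:Lj-L1}). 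Hence the Bochner integral exists. Bounded linear functionals commute with it, so it suffices to compute $\langle\cdot,e_m\rangle_{A^2_\omega}$ of both sides for each $m\ge0$.

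\textbf{Coefficient computation.} For the right-hand side,
\[
\Big\langle \tfrac1{2\pi}\int_{-\pi}^{\pi}L_j(t)k^\omega_{r_je^{i(\theta_J+t)}}\dd t,\,e_m\Big\rangle
=\kappa_{j,m}e^{-im\theta_J}\,\frac1{2\pi}\int_{-\pi}^{\pi}L_j(t)e^{-imt}\dd t .
\]
Since $L_j(t)=\sum_n q_j(n)e^{int}$ is a finite sum, orthogonality of exponentials on $[-\pi,\pi]$ gives
\[
\frac1{2\pi}\int_{-\pi}^{\pi}L_j(t)e^{-imt}\dd t=q_j(m).
\]
So the $m$-th coefficient equals $e^{-im\theta_J}q_j(m)\kappa_{j,m}$. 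By the identity $q_j(n)\kappa_{j,n}=\chi(2^{-j}n)/2^{(j+3)/2}$, this is
\[
2^{-(j+3)/2}\chi(2^{-j}m)e^{-im\theta_J}.
\]
For $m=0$ both sides vanish: $q_j(0)=0$, and the sum in \eqref{eq:frame-element-definition} starts at $n\ge1$. For $m\ge1$ the value is exactly the $m$-th coefficient of $\phi_J$ in \eqref{eq:frame-element-definition}. Since both sides lie in $A^2_\omega$ and have the same coefficients, they are equal.

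\textbf{Main obstacle.} The only delicate point is the measure-theoretic justification: Bochner integrability, and interchanging the integral with the inner product. Continuity of the integrand together with the finiteness of the sum defining $L_j$ makes this routine. The remaining identity is just the algebraic cancellation already built into the definition of $q_j$.
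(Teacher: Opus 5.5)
Your proposal is correct and matches the paper's proof: you establish the Bochner integral via continuity of $t\mapsto k^\omega_{r_je^{i(\theta_J+t)}}$ and integrability of $L_j$, then compare coefficients against $e_n$ using $\frac{1}{2\pi}\int_{-\pi}^{\pi} L_j(t)e^{-int}\,dt=q_j(n)$ and $q_j(n)\kappa_{j,n}=2^{-(j+3)/2}\chi(2^{-j}n)$. Your added details (the explicit continuity estimate and the $m=0$ check) only make explicit what the paper leaves implicit.
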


\begin{proof}
The map $t\mapsto k_{r_je^{i(\theta_J+t)}}^\omega$ is continuous in $A^2_\omega$, and hence the mapping is strongly measurable. By
\eqref{eq:Lj-L1},
\[
 \int_{-\pi}^{\pi}
 \norm{L_j(t)k_{r_je^{i(\theta_J+t)}}^\omega}_{A^2_\omega}\dd t
 =\norm{L_j}_{L^1(-\pi,\pi)}<\infty,
\]
so the Bochner integral exists. For any integer $n\geq 0$,
\begin{align*}
 \frac1{2\pi}
 \int_{-\pi}^{\pi}L_j(t)
 \ip{k_{r_je^{i(\theta_J+t)}}^\omega}{e_n}\dd t 
 &=\kappa_{j,n}e^{-in\theta_J}
 \frac1{2\pi}\int_{-\pi}^{\pi}
 L_j(t)e^{-int}\dd t\\
 &=\kappa_{j,n}q_j(n)e^{-in\theta_J}=2^{-(j+3)/2}\chi(2^{-j}n)e^{-in\theta_J}.
\end{align*}
These are precisely the coefficients of $\phi_J$ defined by \eqref{eq:frame-element-definition}.
This completes the proof.
\end{proof}
For every integer $s\geq1$, by the monotonicity of the moments and the
elementary estimate
$
 \sup_{0<r<1} r^x|\log r|^s\leq C_sx^{-s},
$
we have, for every $x\in\supp q_j$,
\begin{align*}
 \left|\frac{d^s}{dx^s}\omega_{2x+1}\right|
 \leq 2^s\int_0^1 r^{2x+1}|\log r|^s\omega(r)\,dr\leq C_sx^{-s}\omega_{x+1}
 \leq C_{\omega,s}2^{-js}\omega_{2x+1}.
\end{align*}
For $\alpha=1/2$, the $s$-th derivative of $\bigl(\omega_{2x+1}\bigr)^\alpha$ is a finite linear combination of
terms of 
\[
 \bigl(\omega_{2x+1}\bigr)^{\alpha-m}
 \prod_{\ell=1}^m
 \frac{d^{r_\ell}}{dx^{r_\ell}}\omega_{2x+1},
 \qquad
 r_\ell\geq1,\qquad
 r_1+\cdots+r_m=s.
\]
Consequently, for every integer $s\geq0$,
\begin{equation}\label{eq:power-moment-derivatives}
 \left|
 \frac{d^s}{dx^s}
 \bigl(\omega_{2x+1}\bigr)^\alpha
 \right|
 \leq
 C_{\omega,s,\alpha}2^{-js}
 \bigl(\omega_{2x+1}\bigr)^\alpha,
 \qquad x\in\supp q_j.
\end{equation}
Since $\supp q_j\subset(2^{j-1},2^{j+1})$, it follows that, for every integer $s\geq0$,
\begin{equation*}\label{eq:qj-derivatives}
 \norm{q_j^{(s)}}_\infty
 \leq C_{\omega,s,\chi}2^{-js}.
\end{equation*}

\subsection{Local estimates for the frame}
For $j\geq1$ and $J\in\mathcal Q_j$, define the dyadic Carleson tent
\[
 T(J)=\set{re^{i\theta}:e^{i\theta}\in J,\ 1-2^{-j}\leq r<1}
\]
and $a_J=(1-2^{-j})e^{i\theta_J}$. Since
$|J|=2\pi/2^{j+3}$ and $j\geq 1$, we have
\[
 \omega(T(J))
 =\frac1{2^{j+2}}\int_{1-2^{-j}}^1r\omega(r)\dd r
\]
and
\begin{equation}\label{eq:tent-volume}
 2^{-j-3}\widehat\omega(1-2^{-j})\leq\omega(T(J))\leq 2^{-j-2}\widehat\omega(1-2^{-j}).
\end{equation}
If $k\ge j\ge1$, $I\in\mathcal Q_k$, and $J\in\mathcal Q_j$, then
\begin{equation}\label{eq:tent-volume-ratio}
 \omega(T(I))\le 2^{j-k}\omega(T(J)).
\end{equation}

\begin{lemma}\label{lem:frame-element-localization}
Let $\omega\in\widehat{\mathcal{D}}$. For every integer $p\geq2$, there is a constant $C=C(\omega,p,\chi)>0$ such that, for $J\in\mathcal Q_j,j\geq 1$, and $z=re^{i\theta}\in\D$,
\begin{equation}\label{eq:frame-element-size}
 \bigl(1+2^j d_\T(\theta,\theta_J)\bigr)^p
 |\phi_J(re^{i\theta})|
 \leq
 \frac{C}{\sqrt{\omega(T(J))}}
 e^{-2^{j-2}(1-r)}.
\end{equation}
Moreover,
\begin{equation}\label{eq:frame-element-smoothness}
 \bigl(1+2^jd_\T(\theta,\theta_J)\bigr)^p
 \left(
  |\partial_r\phi_J(re^{i\theta})|
  +|\partial_\theta\phi_J(re^{i\theta})|
 \right)
 \leq
 \frac{C2^j}{\sqrt{\omega(T(J))}}
 e^{-2^{j-3}(1-r)}.
\end{equation}
\end{lemma}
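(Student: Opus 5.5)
We write $\phi_J$ directly as a trigonometric sum and bound it with Lemma~\ref{lem:periodic-multiplier}, with the radial variable built into the multiplier. For $z=re^{i\theta}$,
\[
 \phi_J(re^{i\theta})=\frac{1}{2^{(j+3)/2}}\sum_{n\in\Z} m(n)\,e^{in(\theta-\theta_J)},
 \qquad
 m(x)=\frac{\chi(2^{-j}x)\,r^x}{\sqrt{2\omega_{2x+1}}}\ \ (x>0),
\]
and $m(x)=0$ for $x\le 0$. Factor
\[
 m(x)=\frac{e^{-2^{j-2}(1-r)}}{\sqrt{2\omega_{2x+1}}}\,\chi(2^{-j}x)\,g_r(x),
 \qquad
 g_r(x)=r^x e^{2^{j-2}(1-r)}.
\]
Normalize the moment factor by $\sqrt{\widehat\omega(r_j)}$. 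By the comparison $c_0\widehat\omega(r_j)\le\omega_{2x+1}\le C_0\widehat\omega(r_j)$ on $\supp q_j$ and \eqref{eq:power-moment-derivatives} with $\alpha=-1/2$ (the same Faà di Bruno argument works for negative powers), the function $\sqrt{\widehat\omega(r_j)}\,\omega_{2x+1}^{-1/2}$ is bounded by a constant, and its $s$-th derivative is $O(2^{-js})$ on $(2^{j-1},2^{j+1})$.

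The radial factor $g_r$ is the main obstacle. Since $r^x=e^{x\log r}$ and $x>2^{j-1}$, we have $r^x\le e^{-x(1-r)}\le e^{-2^{j-1}(1-r)}$. Writing $\lambda=|\log r|$, the derivatives satisfy $|d^s r^x/dx^s|=\lambda^s r^x$. This is fine when $\lambda\lesssim 2^{-j}$, but for $r$ far from $1$ the derivatives are not $O(2^{-js})$. We therefore absorb them with the exponential gain:
\[
 \lambda^s r^x\le \lambda^s e^{-x\lambda}\le \lambda^s e^{-2^{j-1}\lambda}.
\]
This gives $2^{js}\lambda^s e^{-2^{j-1}\lambda}\le C_s e^{-2^{j-2}\lambda}$, because $y^s e^{-y/4}$ is bounded. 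So after factoring out $e^{-2^{j-2}\lambda}$, the remaining function $\chi(2^{-j}x)\,r^x e^{2^{j-2}\lambda}$ has $s$-th derivative $O(2^{-js})$ uniformly in $r$. Since $\lambda\ge 1-r$, we also have $e^{-2^{j-2}\lambda}\le e^{-2^{j-2}(1-r)}$. Concretely, we define $m_k$ for $k=2^j$ as
\[
 \frac{\sqrt{\widehat\omega(r_j)}}{\sqrt{2\omega_{2x+1}}}\,\chi(2^{-j}x)\,r^x e^{2^{j-2}\lambda}.
\]
By the Leibniz rule this family has uniformly bounded $C_0$ and $C_\ell$ (with $\ell=p$), and its supports have diameter at most $2^{j+1}$. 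For $0<r<1$ we get $\lambda>0$, so $g_r$ is smooth; the case $r=0$ is trivial.

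Lemma~\ref{lem:periodic-multiplier} then gives
\[
 |\phi_J|\lesssim 2^{-j/2}\,\widehat\omega(r_j)^{-1/2}\,2^j\,(1+2^jd_\T)^{-p}\,e^{-2^{j-2}(1-r)}.
\]
By \eqref{eq:tent-volume}, $2^{j/2}\widehat\omega(r_j)^{-1/2}\asymp\omega(T(J))^{-1/2}$, which is \eqref{eq:frame-element-size}. Strictly the lemma requires $|t|\le\pi$; we take $t=\theta-\theta_J$ reduced mod $2\pi$, which is legitimate by periodicity.

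For \eqref{eq:frame-element-smoothness} we use the same scheme. The $\partial_\theta$ derivative multiplies $m(n)$ by $in$, and $n/2^j$ is a smooth bounded multiplier on the support, which produces the factor $2^j$. The $\partial_r$ derivative multiplies by $n r^{n-1}$. When $r\ge 1/2$ this is $2^j\cdot(n2^{-j})r^{-1}$, again harmless. When $r<1/2$ we use $n r^{n-1}\le 2^{j+2}r^{n-1}$ and $r^{n-1}\le r^{n/2}$ to absorb the loss, since there $e^{-2^{j-3}(1-r)}$ still dominates $r^{n/2}$ up to constants. Using only half of the exponential decay for these extra factors explains the weaker rate $2^{j-3}$.
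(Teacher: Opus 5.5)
Your proposal is correct and is essentially the paper's proof: the same trigonometric-sum representation with multiplier $a_j(x)r^x$, the same Fa\`a di Bruno control of $\omega_{2x+1}^{-1/2}$, the same trade of $\lambda^s$ against half of the decay $e^{-2^{j-1}\lambda}$, an application of Lemma~\ref{lem:periodic-multiplier} at scale $2^j$, and the same device $r^{n-1}\le r^{n/2}$ that produces the rate $2^{j-3}$ for $\partial_r$. The paper does not split at $r=1/2$; it treats $\partial_r$ uniformly in $r$ for $j\ge2$, where $x-1\ge x/2$ holds on the whole support, so the derivative bounds apply directly to the actual multiplier $x a_j(x) r^{x-1}$, and it disposes of $j=1$ as trivial; your split at $r=1/2$ is an equivalent variant, with your $r<1/2$ step best read as either that derivative bound or a crude $\ell^1$ bound.
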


\begin{proof}
Fix $p\geq2$ and define
\[
 a_j(x)=
 \begin{cases}
 \displaystyle
 \frac{\chi(2^{-j}x)}
 {\sqrt{2^{j+3}}\sqrt{2\omega_{2x+1}}},
 &x>0,\\[7pt]
 0,&x\leq0.
 \end{cases}
\]
Then $a_j\in C_c^\infty(\R)$, and its support is contained in $(2^{j-1},2^{j+1})$. Moreover,
\begin{equation}\label{eq:frame-element-periodic-sum}
 \phi_J(re^{i\theta})
 =
 \sum_{n\in\Z}a_j(n)r^ne^{in(\theta-\theta_J)}.
\end{equation}
By \eqref{eq:qj-moment-kernel} and Lemma~\ref{lem:norm-estimates-for-reproducing-kernel}, there exist
positive constants $c_2=c_2(\omega)$ and $C_2=C_2(\omega)$ such that
\begin{equation}\label{eq:moment-tent-comparison}
 c_2 2^j\omega(T(J))
 \leq\omega_{2x+1}
 \leq C_2 2^j\omega(T(J)),
 \qquad x\in\supp a_j.
\end{equation}
By the same argument used in
\eqref{eq:power-moment-derivatives}, for every nonnegative integer
$s$ and every $x\in\supp a_j$,
\[
 \left|
 \frac{d^s}{dx^s}\omega_{2x+1}^{-1/2}
 \right|
 \leq
 C_{\omega,s}2^{-js}\omega_{2x+1}^{-1/2}.
\]
Thus,
\begin{equation}\label{eq:aj-derivatives}
 \norm{a_j^{(s)}}_\infty
 \leq
 \frac{C_{\omega,s,\chi}2^{-j(s+1)}}
 {\sqrt{\omega(T(J))}},
 \qquad s\geq0.
\end{equation}
Let $0<r<1$. For every $x\in\supp a_j$, we have
\begin{align}
 \left|\frac{d^s}{dx^s}r^x\right| \leq C_s2^{-js}e^{-2^{j-2}(1-r)}.
 \label{eq:radial-multiplier-derivatives}
\end{align}
It follows from \eqref{eq:aj-derivatives} and \eqref{eq:radial-multiplier-derivatives} that
\[
 \left\|
 \frac{d^s}{dx^s}\bigl(a_j(x)r^x\bigr)
 \right\|_\infty
 \leq
 \frac{C_{\omega,s,\chi}2^{-j(s+1)}}
 {\sqrt{\omega(T(J))}}
 e^{-2^{j-2}(1-r)}.
\]
Applying Lemma~\ref{lem:periodic-multiplier} with $N=2^j$ to $a_j(x)r^x$, we obtain
\[
 \bigl(1+2^jd_\T(\theta,\theta_J)\bigr)^p
 |\phi_J(re^{i\theta})|
 \leq
 \frac{C_{\omega,p,\chi}}
 {\sqrt{\omega(T(J))}}
 e^{-2^{j-2}(1-r)}.
\]
The case $r=0$ follows by continuity. This proves the desired inequality
\eqref{eq:frame-element-size}.

For the angular derivative, by the above arguments, for
$0\leq s\leq p$, we have
\[
 \left\|
 \frac{d^s}{dx^s}\bigl(xa_j(x)r^x\bigr)
 \right\|_\infty
 \leq
 \frac{C_{\omega,s,\chi}2^{-js}}
 {\sqrt{\omega(T(J))}}
 e^{-2^{j-2}(1-r)}.
\]
By Lemma~\ref{lem:periodic-multiplier} and \eqref{eq:frame-element-periodic-sum}, we obtain
\begin{equation}\label{eq:angular-derivative-localization}
 \bigl(1+2^jd_\T(\theta,\theta_J)\bigr)^p
 |\partial_\theta\phi_J(re^{i\theta})|
 \leq
 \frac{C_{\omega,p,\chi}2^j}
 {\sqrt{\omega(T(J))}}
 e^{-2^{j-2}(1-r)}.
\end{equation}
For the radial derivative, suppose first that $j\geq2$. By direct computations,
\[
 \left\|
 \frac{d^s}{dx^s}\bigl(xa_j(x)r^{x-1}\bigr)
 \right\|_\infty
 \leq
 \frac{C_{\omega,s,\chi}2^{-js}}
 {\sqrt{\omega(T(J))}}
 e^{-2^{j-3}(1-r)}.
\]
By Lemma~\ref{lem:periodic-multiplier}, we get
\begin{equation}\label{eq:radial-derivative-localization}
 \bigl(1+2^jd_\T(\theta,\theta_J)\bigr)^p
 |\partial_r\phi_J(re^{i\theta})|
 \leq
 \frac{C_{\omega,p,\chi}2^j}
 {\sqrt{\omega(T(J))}}
 e^{-2^{j-3}(1-r)}.
\end{equation}
The estimate for $j=1$ is clear and the estimate at $r=0$ follows again by continuity. It follows from
\eqref{eq:angular-derivative-localization} and \eqref{eq:radial-derivative-localization} that \eqref{eq:frame-element-smoothness} holds. This completes the whole proof.
\end{proof}

\begin{lemma}\label{lem:frame-element-moments}
Let $\omega\in\widehat{\mathcal{D}}$. For every integer $q\geq0$, there is a constant $C=C(\omega, q,\chi)>0$ such that, for every $J\in\mathcal Q_j,j\geq 1$,
\begin{equation*}\label{eq:frame-element-moments}
 \int_{-\pi}^{\pi}\int_0^1
 \bigl(1+2^j(1-r)+2^j d_{\T}(\theta,\theta_J)\bigr)^q
 |\phi_J(re^{i\theta})|\,r\omega(r)\,dr\,d\theta
 \le C \sqrt{\omega(T(J))}.
\end{equation*}
\end{lemma}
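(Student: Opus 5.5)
The plan is to insert the pointwise bound \eqref{eq:frame-element-size} from Lemma~\ref{lem:frame-element-localization} into the integral. We will use it with $p=q+2$. The weight factor splits into an angular part and a radial part, since
\[
\bigl(1+2^j(1-r)+2^jd_\T(\theta,\theta_J)\bigr)^q\le 2^q\bigl(1+2^j(1-r)\bigr)^q\bigl(1+2^jd_\T(\theta,\theta_J)\bigr)^q .
\]
With $p=q+2$, the integral is therefore bounded by
\[
\frac{C}{\sqrt{\omega(T(J))}}\int_{-\pi}^{\pi}\bigl(1+2^jd_\T(\theta,\theta_J)\bigr)^{-2}\dd\theta\int_0^1\bigl(1+2^j(1-r)\bigr)^qe^{-2^{j-2}(1-r)}\omega(r)\dd r .
\]

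The angular integral is at most $C2^{-j}$; this follows from the substitution $t=2^j(\theta-\theta_J)$. The main work is the radial integral, and the goal is to show that it is at most $C\widehat\omega(1-2^{-j})$. Then \eqref{eq:tent-volume} gives $2^{-j}\widehat\omega(1-2^{-j})\le 8\,\omega(T(J))$. The total bound becomes $C\omega(T(J))/\sqrt{\omega(T(J))}=C\sqrt{\omega(T(J))}$, which is what the lemma asserts.

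For the radial integral, split $[0,1)$ into the dyadic annuli $E_m=\{r:2^{m-j-1}<1-r\le 2^{m-j}\}$ for $1\le m\le j$, together with the top region $E_0=\{1-r\le 2^{-j}\}$. On $E_0$ the integrand is at most $2^q\omega(r)$, so this piece contributes at most $2^q\widehat\omega(1-2^{-j})$. On $E_m$ with $m\ge1$ the factor is bounded by
\[
\bigl(1+2^j(1-r)\bigr)^qe^{-2^{j-2}(1-r)}\le C_q2^{mq}e^{-2^{m-3}} .
\]
Also $\int_{E_m}\omega\le\widehat\omega(1-2^{m-j})$, and this is at most $\widehat\omega(0)$ when $m=j$. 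The key input is the reverse comparison: we need $\widehat\omega(1-2^{m-j})$ to be at most a controlled multiple of $\widehat\omega(1-2^{-j})$. Iterating \eqref{eq:Dhat-intro} $m$ times gives
\[
\widehat\omega(1-2^{m-j})\le C_\omega^m\,\widehat\omega(1-2^{-j}),
\]
because the map $r\mapsto(1+r)/2$ halves $1-r$.

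This step is where I expect the only real obstacle. The doubling loses a factor $C_\omega^m$, which grows exponentially in $m$. It is beaten by the double-exponential decay $e^{-2^{m-3}}$, so the sum over $m$ converges:
\[
\sum_{m\ge1}C_q2^{mq}C_\omega^me^{-2^{m-3}}<\infty,
\]
with a bound depending only on $\omega$ and $q$. For $j=1$, or for small $m$ where $1-r$ is close to $1$, all quantities are comparable to constants, so no separate treatment is needed. Collecting the pieces completes the plan, with a constant depending on $\omega$, $q$ and $\chi$ through Lemma~\ref{lem:frame-element-localization}.
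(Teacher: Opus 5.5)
Your proposal is correct and takes the same route as the paper. The paper also splits the weight into a radial and an angular factor, applies Lemma~\ref{lem:frame-element-localization} with $p=q+2$, bounds the angular integral by $2^{1-j}$, decomposes $[0,1)$ dyadically so that the iterated $\widehat{\mathcal D}$ factor $C_\omega^{m}$ is beaten by $e^{-2^{m}/4}$, and finishes with \eqref{eq:tent-volume}. Your extra factor $2^q$ is harmless but unnecessary, since $1+a+b\le(1+a)(1+b)$ for $a,b\ge0$.
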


\begin{proof}
For $r\in(0,1)$ and $\theta\in[-\pi,\pi]$, observe that
\[
 1+2^j(1-r)+2^jd_\T(\theta,\theta_J)
 \leq (1+2^j(1-r))(1+2^jd_\T(\theta,\theta_J)).
\]
Using Lemma~\ref{lem:frame-element-localization} with $p=q+2$, we have
\begin{align*}
 \bigl(1+2^j(1-r)+2^jd_\T(\theta,\theta_J)\bigr)^q
 |\phi_J(re^{i\theta})| \notag\\\leq
 \frac{C}{\sqrt{\omega(T(J))}}
 (1+2^j(1-r))^qe^{-2^{j-2}(1-r)}
 (1+2^jd_\T(\theta,\theta_J))^{-2}.
\end{align*}
Note that
\begin{equation*}\label{eq:angular-integral-bound}
 \int_{-\pi}^{\pi}(1+2^jd_\T(\theta,\theta_J))^{-2}\,d\theta=2\int_0^\pi(1+2^jt)^{-2}\,dt\leq2^{1-j}.
\end{equation*}
By \eqref{eq:Dhat-intro}, we have
\begin{equation}\label{eq:iterated-tail-bound}
 \widehat\omega\left(1-2^{m-j}\right) \leq C_\omega^m\widehat\omega(r_j), \quad 0\leq m\leq j.
\end{equation}
Note also that
$
 [0,1)=[r_j,1)\cup_{m=0}^{j-1} \left[1-2^{m+1-j},1-2^{m-j}\right).
$
By \eqref{eq:iterated-tail-bound}, we obtain
\begin{align*}
 \int_0^1(1+2^j(1-r))^qe^{-2^{j-2}(1-r)}r\omega(r)\,dr
 &\leq \widehat\omega(r_j)\left(2^q+\sum_{m=0}^{j-1}C_\omega^{m+1}(1+2^{m+1})^qe^{-2^m/4}\right)\\
 &\leq C_{\omega,q}\widehat\omega(r_j).
\end{align*}
Therefore, we get
\[
 \int_{-\pi}^{\pi}\int_0^1
 \bigl(1+2^j(1-r)+2^jd_\T(\theta,\theta_J)\bigr)^q
 |\phi_J(re^{i\theta})|r\omega(r)\,dr\,d\theta
 \leq
 \frac{C_{\omega,q,\chi}\widehat\omega(r_j)}
 {2^j\sqrt{\omega(T(J))}}.
\]
Since $\widehat\omega(r_j)\leq 2^{j+3}\omega(T(J))$, the right-hand side is bounded by $2^3C_{\omega,q,\chi}\sqrt{\omega(T(J))}$. This completes the proof.
\end{proof}
\subsection{Almost-orthogonality estimates for the frame}
If $k\geq j\geq1$, then
\[
 d_{j\wedge k}(I,J)=d_j(I,J)
 =2^j d_{\T}(\theta_I,\theta_J).
\]
Moreover, for any $\theta$, 
\begin{align}
 1+d_j(I,J) \leq \bigl(1+2^j d_{\T}(\theta,\theta_I)\bigr)\bigl(1+2^j d_{\T}(\theta,\theta_J)\bigr).
 \label{eq:overlap-distance-factorization}
\end{align}

\begin{lemma}\label{lem:frame-element-overlap}
Let $\omega\in\widehat{\mathcal{D}}$. There exists a constant $C=C(\omega,\chi)>0$ such that, for $J\in\mathcal Q_j$, $I\in\mathcal Q_k, k\geq j\geq1$, we have
\begin{equation}\label{eq:frame-element-overlap}
 (1+d_j(I,J))^3
 \int_\D|\phi_I(z)\phi_J(z)|\omega(z)\dd A(z)
 \leq C\left(\frac{\omega(T(I))}{\omega(T(J))}\right)^{1/2},
\end{equation}
and 
\begin{equation}\label{eq:frame-element-first-order-overlap}
 (1+d_j(I,J))^3
 \int_\D|\phi_I(z)|\,|\phi_J(z)-\phi_J(a_I)|
 \omega(z)\dd A(z)
 \leq C2^{j-k}
 \left(\frac{\omega(T(I))}{\omega(T(J))}\right)^{1/2}.
\end{equation}
\end{lemma}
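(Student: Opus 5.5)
The plan is to combine the pointwise bounds of Lemma~\ref{lem:frame-element-localization} for the coarser element $\phi_J$ with the weighted moment bound of Lemma~\ref{lem:frame-element-moments} for the finer element $\phi_I$.

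\textbf{First estimate.} Fix $z=re^{i\theta}$. By \eqref{eq:overlap-distance-factorization} and $k\ge j$ we have
\[
(1+d_j(I,J))^3\le (1+2^j d_\T(\theta,\theta_I))^3(1+2^jd_\T(\theta,\theta_J))^3 .
\]
Since $2^jd_\T\le 2^kd_\T$, we may also replace the first factor by $(1+2^kd_\T(\theta,\theta_I))^3$. Applying \eqref{eq:frame-element-size} with $p=3$ gives
\[
(1+2^jd_\T(\theta,\theta_J))^3|\phi_J(z)|\le C\,\omega(T(J))^{-1/2},
\]
after dropping the exponential factor. Then Lemma~\ref{lem:frame-element-moments} with $q=3$, applied to $\phi_I$ at level $k$, gives
\[
\int (1+2^kd_\T(\theta,\theta_I))^3|\phi_I|\,\omega\dd A\le C\sqrt{\omega(T(I))}.
\]
Multiplying these yields \eqref{eq:frame-element-overlap}. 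We only need to check that the normalization of $dA$ versus $r\,dr\,d\theta$ costs an absolute constant, which it does.

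\textbf{Second estimate.} The key step is a mean-value bound. Write $a_I=r_ke^{i\theta_I}$ and $z=re^{i\theta}$. Join $z$ to $a_I$ by a radial segment from $r$ to $r_k$ at angle $\theta$, followed by an arc at radius $r_k$ from $\theta$ to $\theta_I$. Using \eqref{eq:frame-element-smoothness} along this path gives
\[
|\phi_J(z)-\phi_J(a_I)|\le \frac{C2^j}{\sqrt{\omega(T(J))}}\Bigl(|r-r_k|\,W(\theta)+d_\T(\theta,\theta_I)\,\sup_{\psi}W(\psi)\Bigr),
\]
where $W(\psi)=(1+2^jd_\T(\psi,\theta_J))^{-3}$ and the sup runs over $\psi$ on the arc between $\theta$ and $\theta_I$. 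The exponential factors are bounded by $1$ and are dropped. Note that $|r-r_k|\le (1-r)+2^{-k}$.

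We now handle the weight $(1+d_j(I,J))^3$. For $\psi$ between $\theta$ and $\theta_I$ the triangle inequality gives
\[
1+2^jd_\T(\theta_I,\theta_J)\le (1+2^jd_\T(\psi,\theta_J))(1+2^jd_\T(\theta,\theta_I)).
\]
Hence
\[
(1+d_j(I,J))^3\sup_\psi W(\psi)\le (1+2^kd_\T(\theta,\theta_I))^3 .
\]
The same bound holds for the radial term, using \eqref{eq:overlap-distance-factorization}. Therefore
\[
(1+d_j)^3|\phi_J(z)-\phi_J(a_I)|\le \frac{C2^{j-k}}{\sqrt{\omega(T(J))}}\bigl(1+2^k(1-r)+2^kd_\T(\theta,\theta_I)\bigr)^{4},
\]
because $2^j(|r-r_k|+d_\T(\theta,\theta_I))\le 2^{j-k}\bigl(1+2^k(1-r)+2^kd_\T(\theta,\theta_I)\bigr)$. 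Multiplying by $|\phi_I|$ and integrating, Lemma~\ref{lem:frame-element-moments} with $q=4$ at level $k$ yields \eqref{eq:frame-element-first-order-overlap}.

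\textbf{Main obstacle.} The delicate point is the arc part of the mean-value bound, which requires controlling the derivative of $\phi_J$ at intermediate angles $\psi$. The polynomial decay in $\psi$ must be transferred back to $\theta$ and $\theta_I$ while keeping the full power $3$ of $d_j(I,J)$. The split-triangle inequality above is designed to absorb this, at the price of raising the moment exponent to $q=4$, which Lemma~\ref{lem:frame-element-moments} allows. The case $j=1$ and points with $r$ near $0$ cause no trouble, since all estimates are uniform in $r$.
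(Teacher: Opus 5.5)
Your proposal is correct and follows the paper's proof essentially step for step. The first bound uses the same factorization \eqref{eq:overlap-distance-factorization} together with Lemma~\ref{lem:frame-element-localization} ($p=3$) and Lemma~\ref{lem:frame-element-moments} ($q=3$); the second uses the same radial-then-circular path, the same transfer $d_\T(\psi,\theta_I)\le d_\T(\theta,\theta_I)$ along the arc, and Lemma~\ref{lem:frame-element-moments} with $q=4$. The one point to state explicitly is that the circular arc must be the shorter one: this is what makes the arc length equal to $d_\T(\theta,\theta_I)$ and guarantees the monotonicity $d_\T(\psi,\theta_I)\le d_\T(\theta,\theta_I)$.
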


\begin{proof}
Fix $I\in\mathcal Q_k$ and $J\in\mathcal Q_j$, where $k\geq j\geq1$,
and write $z=re^{i\theta}$. By
\eqref{eq:overlap-distance-factorization},
\[
 (1+d_j(I,J))^3
 \leq
 \bigl(1+2^jd_\T(\theta,\theta_I)\bigr)^3
  \bigl(1+2^jd_\T(\theta,\theta_J)\bigr)^3.
\]
Combining this with \eqref{eq:frame-element-size} for $\phi_J$ with $p=3$ and $k\geq j$, we obtain
\begin{align*}
 (1+d_j(I,J))^3
 |\phi_I(re^{i\theta})\phi_J(re^{i\theta})|\leq
 \frac{C}{\sqrt{\omega(T(J))}}
 \bigl(1+2^kd_\T(\theta,\theta_I)\bigr)^3
 |\phi_I(re^{i\theta})|,
\end{align*}
Integrating over $\D$ and applying Lemma~\ref{lem:frame-element-moments} with $q=3$
give 
\begin{align*}
 &(1+d_j(I,J))^3
 \int_\D|\phi_I(z)\phi_J(z)|\omega(z)\dd A(z)\\
 &\qquad\leq
 \frac{C}{\sqrt{\omega(T(J))}}
 \int_\D
 \bigl(1+2^kd_\T(\theta,\theta_I)\bigr)^3
 |\phi_I(re^{i\theta})|\omega(re^{i\theta})\dd A(re^{i\theta})\\
 &\qquad\leq
 C\left(\frac{\omega(T(I))}{\omega(T(J))}\right)^{1/2}.
\end{align*}
This proves the desired inequality \eqref{eq:frame-element-overlap}.

Recall that $a_I=r_ke^{i\theta_I}$, $r_k=1-2^{-k}$. Connect $re^{i\theta}$ to $a_I$ first by the radial segment from $re^{i\theta}$ to $r_ke^{i\theta}$ and then by the shorter circular arc from $r_ke^{i\theta}$ to $r_ke^{i\theta_I}$. Then, we have
\begin{align*}
 \phi_J(re^{i\theta})-\phi_J(a_I)
 &=
 \int_{r_k}^{r}
 \partial_\rho\phi_J(\rho e^{i\theta})\,\dd\rho
 +
 \int_{\theta_I}^{\theta}
 \partial_\vartheta\phi_J(r_ke^{i\vartheta})\,\dd\vartheta.
\end{align*}
On the radial segment, by \eqref{eq:overlap-distance-factorization} and \eqref{eq:frame-element-smoothness} with $p=3$, we have
\begin{align*}
 (1+d_j(I,J))^3 |\partial_\rho\phi_J(\rho e^{i\theta})| \leq \frac{C2^j}{\sqrt{\omega(T(J))}}\bigl(1+2^jd_\T(\theta,\theta_I)\bigr)^3.
\end{align*}
For every $\vartheta$ between $\theta_I$ and $\theta$ along the
chosen shorter arc,
$ d_\T(\vartheta,\theta_I) \leq d_\T(\theta,\theta_I).
$
Applying \eqref{eq:overlap-distance-factorization} at the angular
point $\vartheta$ and then using
\eqref{eq:frame-element-smoothness} with $p=3$, we similarly obtain
\begin{align*}
 (1+d_j(I,J))^3|\partial_\vartheta\phi_J(r_ke^{i\vartheta})|\leq \frac{C2^j}{\sqrt{\omega(T(J))}}\bigl(1+2^jd_\T(\theta,\theta_I)\bigr)^3.
\end{align*}
Therefore,
\begin{align*}
 (1+d_j(I,J))^3 |\phi_J(re^{i\theta})-\phi_J(a_I)|\leq
 \frac{C2^j}{\sqrt{\omega(T(J))}}
 \bigl(|r-r_k|+d_\T(\theta,\theta_I)\bigr)
 \bigl(1+2^jd_\T(\theta,\theta_I)\bigr)^3.
\end{align*}
Since $r_k=1-2^{-k}$ and $j\leq k$, we have
\begin{align*}
 (1+d_j(I,J))^3
 |\phi_J(re^{i\theta})-\phi_J(a_I)|
 \leq
 \frac{C2^{j-k}}{\sqrt{\omega(T(J))}}
 \bigl(1+2^k(1-r)+2^kd_\T(\theta,\theta_I)\bigr)^4.
\end{align*}
Multiplying by $|\phi_I(re^{i\theta})|$, integrating over $\D$, and using Lemma~\ref{lem:frame-element-moments} with $q=4$, we conclude that
\begin{align*}
 &(1+d_j(I,J))^3
 \int_\D|\phi_I(z)|\,|\phi_J(z)-\phi_J(a_I)|
 \omega(z)\dd A(z)\\
 &\qquad\leq
 \frac{C2^{j-k}}{\sqrt{\omega(T(J))}}
 \int_\D
 \bigl(1+2^k(1-r)+2^kd_\T(\theta,\theta_I)\bigr)^4
 |\phi_I(re^{i\theta})|
 \omega(re^{i\theta})\dd A(re^{i\theta})\\
 &\qquad\leq
 C2^{j-k}
 \left(\frac{\omega(T(I))}{\omega(T(J))}\right)^{1/2}.
\end{align*}
This proves \eqref{eq:frame-element-first-order-overlap} and completes the proof.
\end{proof}

\begin{lemma}\label{lem:frame-element-core}
Let $\omega\in\widehat{\mathcal{D}}$. There exists a constant $c=c(\omega,\chi)>0$ such that, for all
$J\in\mathcal Q_j$, $I\in\mathcal Q_k$, $I\subseteq J$, and $k\geq j+4$, we have
\begin{equation*}\label{eq:frame-element-core}
 |\phi_J(a_I)|\geq\frac{c}{\sqrt{\omega(T(J))}}.
\end{equation*}
\end{lemma}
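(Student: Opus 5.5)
The plan is to bound $\phi_J(a_I)$ from below directly from its series, using the positivity of the coefficients. By \eqref{eq:frame-element-periodic-sum},
\[
 \phi_J(a_I)=\sum_{n}a_j(n)\,r_k^n\,e^{in(\theta_I-\theta_J)},
 \qquad a_j(n)=\frac{\chi(2^{-j}n)}{\sqrt{2^{j+3}}\sqrt{2\omega_{2n+1}}}\ge0 .
\]
I will show that every term lies in a fixed sector around the positive real axis and has modulus comparable to $2^{-j}\omega(T(J))^{-1/2}$ on a set of about $2^j$ indices. Then I will take real parts.

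First, the phases. Since $I\subseteq J$, the midpoint $\theta_I$ lies in $J$, so $|t|:=d_\T(\theta_I,\theta_J)\le |J|/2=\pi 2^{-j-3}$. Only $n<2^{j+1}$ contribute, so $|nt|<\pi/4$, and hence $\operatorname{Re}e^{int}\ge 1/\sqrt2$ for every contributing $n$. Second, the radial factor. Since $1-r_k=2^{-k}$ and $k\ge j+4$, we have $r_k^n\ge(1-2^{-k})^{2^{k-3}}\ge c_*>0$ for all $n<2^{j+1}$, where $c_*$ is an absolute constant. Third, the size of the coefficients. By \eqref{eq:moment-tent-comparison}, $\omega_{2n+1}\le C_22^j\omega(T(J))$ on $\supp a_j$, which gives
\[
 a_j(n)\ge \frac{c\,\chi(2^{-j}n)}{2^j\sqrt{\omega(T(J))}} .
\]
Combining the three facts yields
\[
 |\phi_J(a_I)|\ge\operatorname{Re}\phi_J(a_I)\ge \frac{c}{2^j\sqrt{\omega(T(J))}}\sum_n\chi(2^{-j}n).
\]

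It remains to show $\sum_n\chi(2^{-j}n)\gtrsim 2^j$ uniformly in $j\ge1$. This is the only step that needs care, because \eqref{eq:chi-partition} controls only sums over dyadic scales and not a single level. The key observation is that $\chi(1)=1$. Indeed, in \eqref{eq:chi-partition} at $t=1$, every term other than $\chi(1)^2$ vanishes, since $\chi(2^{\pm m})=0$ for $m\ge1$ by $\supp\chi\subset(1/2,2)$. Set $\delta=\min\{1/4,\,1/(2\|\chi'\|_\infty)\}$. Then $\chi(s)\ge1/2$ for $|s-1|\le\delta$. The integers $n$ with $|n-2^j|\le\delta2^j$ number at least $\max\{1,\delta2^j\}$, because $n=2^j$ is always one of them. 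This gives $\sum_n\chi(2^{-j}n)\ge \tfrac12\max\{1,\delta 2^j\}\ge c'2^j$, where $c'=\delta/2$ since $\delta\le1/4<1$; this handles all $j\ge1$ at once.

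Inserting this into the previous display gives $|\phi_J(a_I)|\ge c\,\omega(T(J))^{-1/2}$, with $c$ depending only on $\omega$ and $\chi$. I expect the main subtlety to be exactly this per-level lower bound on $\sum_n\chi(2^{-j}n)$, since \eqref{eq:chi-partition} alone does not give it; the identity $\chi(1)=1$ together with the Lipschitz bound on $\chi$ resolves it. The phase and radial estimates are elementary once the hypotheses $k\ge j+4$ and $I\subseteq J$ are used as above.
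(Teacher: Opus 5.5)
Your proposal is correct and follows essentially the same route as the paper's proof. Both arguments take real parts of the series with nonnegative coefficients, control the phase via $|n(\theta_I-\theta_J)|<\pi/4$, bound $r_k^n\ge 7/8$ using $k\ge j+4$, apply the moment comparison \eqref{eq:moment-tent-comparison}, and get the per-level bound $\sum_n\chi(2^{-j}n)\gtrsim 2^j$ from $\chi(1)=1$. The only differences are cosmetic: you fix $\delta$ explicitly through $\|\chi'\|_\infty$ and spell out the integer count, where the paper simply invokes continuity.
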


\begin{proof}
By the identity \eqref{eq:chi-partition} and using $\supp\chi\subset(1/2,2)$, we obtain
$
 \chi(1)^2=1.
$
Since $\chi$ is nonnegative, we have $\chi(1)=1$. By the continuity of $\chi$, there exists $\delta\in(0,1/4]$ such that if $|t-1|<\delta$ then $\chi(t)\geq 1/2.$ Hence,
\begin{equation}\label{eq:cutoff-mass}
 \sum_{n\geq1}\chi(2^{-j}n)
 \geq\frac{\delta2^j}{2}.
\end{equation}
Since $I\subseteq J$, we have
$
d_\T(\theta_I,\theta_J)\leq\frac{|J|}{2}=\frac{\pi}{2^{j+3}}.
$
If $\chi(2^{-j}n)\neq0$, then
$
 2^{j-1}<n<2^{j+1},
$
and 
$
 |n(\theta_I-\theta_J)|
 <\frac{\pi}{4}.
$
It follows that if $\chi(2^{-j}n)\neq0$ then
\begin{equation}\label{eq:frame-core-phase}
 \cos(n(\theta_I-\theta_J))\geq\cos\frac{\pi}{4}=\frac{1}{\sqrt2}.
\end{equation}
Since $r_k=1-2^{-k}$ and $k\geq j+4$, for every $n$ such that $\chi(2^{-j}n)\neq0$,
\begin{align}
 r_k^n
 &\geq r_k^{2^{j+1}}
 =\left(1-2^{-k}\right)^{2^{j+1}}
 \geq1-2^{j+1-k}
 \geq\frac78.
 \label{eq:frame-core-radial}
\end{align}
Furthermore, by \eqref{eq:moment-tent-comparison}, for the $n$ with $\chi(2^{-j}n)\neq0$,
\begin{equation}\label{eq:frame-core-moment}
 \omega_{2n+1} \leq C2^j\omega(T(J)).
\end{equation}
By \eqref{eq:frame-element-definition} and the definition of $a_I$, we have
\[
 \phi_J(a_I)
 =
 \frac{1}{4}\frac{1}{2^{j/2}}
 \sum_{n\geq1}
 \frac{\chi(2^{-j}n)r_k^ne^{in(\theta_I-\theta_J)}}
 {\sqrt{\omega_{2n+1}}}.
\]
Taking the real part and using \eqref{eq:frame-core-phase}, \eqref{eq:frame-core-radial}, and \eqref{eq:frame-core-moment}, we obtain
\begin{align*}
 |\phi_J(a_I)|
 \geq 
 \frac{1}{4}\frac{1}{2^{j/2}}
 \sum_{n\geq1}
 \frac{\chi(2^{-j}n)r_k^n\cos(n(\theta_I-\theta_J))}
 {\sqrt{\omega_{2n+1}}}
 \geq
 \frac{c}{2^{j/2}}
 \sum_{n\geq1}
 \frac{\chi(2^{-j}n)}
 {\sqrt{2^j\omega(T(J))}}.
\end{align*}
By \eqref{eq:cutoff-mass}, we get
\begin{align*}
 |\phi_J(a_I)|
 &\geq
 \frac{c}{2^{j/2}}
 \frac{\delta2^j}
 {2\sqrt{2^j\omega(T(J))}}
 \geq
 \frac{c}{\sqrt{\omega(T(J))}},
\end{align*}
which completes the whole proof.
\end{proof}

\begin{lemma}\label{lem:angular-counting}
Let $p>1$. There exists a constant $C=C(p)>0$ such that,
for all $k\geq j\geq1$ and $\vartheta\in\T$,
\begin{align}
 \sum_{I\in\mathcal Q_k}
 \bigl(1+2^j d_{\T}(\vartheta,\theta_I)\bigr)^{-p}
 &\leq C2^{k-j},
 \label{eq:count-fine}\\
 \sum_{J\in\mathcal Q_j}
 \bigl(1+2^j d_{\T}(\vartheta,\theta_J)\bigr)^{-p}
 &\leq C.
 \label{eq:count-coarse}
\end{align}
Moreover, for every $N\geq1$,
\begin{align}
 \sum_{\substack{I\in\mathcal Q_k\\
  2^j d_{\T}(\vartheta,\theta_I)>N}}
 \bigl(1+2^j d_{\T}(\vartheta,\theta_I)\bigr)^{-p}
 &\leq C2^{k-j}(1+N)^{1-p},
 \label{eq:count-fine-tail}\\
 \sum_{\substack{J\in\mathcal Q_j\\
  2^j d_{\T}(\vartheta,\theta_J)>N}}
 \bigl(1+2^j d_{\T}(\vartheta,\theta_J)\bigr)^{-p}
 &\leq C(1+N)^{1-p}.
 \label{eq:count-coarse-tail}
\end{align}
\end{lemma}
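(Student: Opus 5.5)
The plan is to compare each sum with a Riemann sum for the integral of $(1+2^j|t|)^{-p}$ over $\T$. The centers $\{\theta_I\}_{I\in\mathcal Q_k}$ form an equally spaced grid on $\T$ with spacing $h_k=2\pi/2^{k+3}$. Likewise $\{\theta_J\}_{J\in\mathcal Q_j}$ has spacing $h_j=2\pi/2^{j+3}$. We parametrize the grid points by their signed position relative to $\vartheta$. Write $t_m$, with $m$ ranging over a complete residue system, for the representative of $\theta_{I}-\vartheta$ in $(-\pi,\pi]$. Then $d_\T(\vartheta,\theta_I)=|t_m|$, and the $t_m$ are $h_k$-separated.

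The key step is to group the grid points by how many lie in dyadic-like shells. For each integer $s\ge0$, consider the set of points with $s\le 2^j d_\T(\vartheta,\theta_I)<s+1$. This is a union of at most two arcs, each of length $2^{-j}$. Each arc contains at most $2^{-j}/h_k+1\le 2^{k-j}+1\le 2\cdot2^{k-j}$ grid points of spacing $h_k$. So the shell contains at most $4\cdot 2^{k-j}$ points, and on it the summand is at most $(1+s)^{-p}$. Summing over $s$ gives
\[
\sum_{I\in\mathcal Q_k}\bigl(1+2^jd_\T(\vartheta,\theta_I)\bigr)^{-p}\le 4\cdot2^{k-j}\sum_{s\ge0}(1+s)^{-p}\le C(p)2^{k-j},
\]
since $p>1$. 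This proves \eqref{eq:count-fine}. Taking $k=j$ gives \eqref{eq:count-coarse}, because $\mathcal Q_j$ is exactly the case $k=j$ of the same grid statement.

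For the tails, the same shell decomposition applies, with only shells satisfying $s+1>N$ contributing, that is $s\ge \lfloor N\rfloor$. Using $(1+s)^{-p}$ and $\sum_{s\ge \lfloor N\rfloor}(1+s)^{-p}\le C(p)(1+\lfloor N\rfloor)^{1-p}\le C'(p)(1+N)^{1-p}$, we obtain \eqref{eq:count-fine-tail} with the factor $2^{k-j}$. Again \eqref{eq:count-coarse-tail} is the case $k=j$. Since $N\ge1$, replacing $1+\lfloor N\rfloor$ by $1+N$ costs only a factor $2^{p-1}$.

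There is no real obstacle here. The only point needing care is the counting bound: the number of equally spaced points of spacing $h_k$ in an arc of length $\ell$ is at most $\ell/h_k+1$. With $\ell=2^{-j}$ this gives $2^{k+3-j}/(2\pi)+1\le 2^{k-j}+1$, which is uniformly comparable to $2^{k-j}$ because $k\ge j$. It also requires that shells with $s>2^j\pi$ are empty, which only helps. All constants depend only on $p$.
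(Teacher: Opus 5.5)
Your proof is correct and is essentially the paper's argument. Both proofs decompose into the shells $\{s\le 2^j d_\T(\vartheta,\theta_I)<s+1\}$, bound the number of grid points in each shell by a constant times $2^{k-j}$, sum $(1+s)^{-p}$ over $s$ (over $s\ge\lfloor N\rfloor$ for the tails), and get the coarse estimates by taking $k=j$. One small arithmetic slip: $2^{k+3-j}/(2\pi)=(4/\pi)2^{k-j}$ is not bounded by $2^{k-j}$, but this only changes the constant (each arc still carries at most $3\cdot 2^{k-j}$ points).
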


\begin{proof}
For $\ell\geq j\geq 1$ and $E\subset\T$, we have
\begin{equation}\label{eq:dyadic-arc-count}
 \#\{L\in\mathcal Q_\ell:\theta_L\in E\}
 \leq 2+\frac{2^{\ell+3}|E|}{2\pi}.
\end{equation}

Fix $\vartheta\in\T$. For every integer $m\geq0$, define
\[
 \mathcal A_m^{(\ell)}
 =
 \left\{
  L\in\mathcal Q_\ell:
  m\leq 2^j d_{\T}(\vartheta,\theta_L)<m+1
 \right\}.
\]
Observe that
\[
|\{\theta\in\mathbb{T}: m\leq 2^j d_{\T}(\vartheta,\theta)<m+1\}|\leq 2^{1-j}.
\]
By \eqref{eq:dyadic-arc-count}, there exists a constant $C_0$ such that
\begin{align*}
 \#\mathcal A_m^{(\ell)} \leq 2+\frac{2^{\ell+3} 2^{1-j}}{2\pi} \leq C_0\,2^{\ell-j}.
\end{align*}
If $L\in\mathcal A_m^{(\ell)}$, then
$
 \bigl(1+2^j d_{\T}(\vartheta,\theta_L)\bigr)^{-p}
 \leq (1+m)^{-p}.
$
Moreover,
\begin{align*}
 \sum_{L\in\mathcal Q_\ell}
 \bigl(1+2^j d_{\T}(\vartheta,\theta_L)\bigr)^{-p}
 \leq
 \sum_{m=0}^{\infty}
 \#\mathcal A_m^{(\ell)}(1+m)^{-p}\leq
 C_0 2^{\ell-j}
 \sum_{m=0}^{\infty}(1+m)^{-p} \leq C_p2^{\ell-j}.
\end{align*}
Let $\ell=k$, we have the inequality \eqref{eq:count-fine}. The estimate \eqref{eq:count-coarse} follows from $\ell=j$.

For the remaining estimate, let $M=\lfloor N\rfloor$ be the greatest integer not exceeding \(N\).
If $2^j d_{\T}(\vartheta,\theta_L)>N$, then $L\in \mathcal A_m^{(\ell)}$ for some $m\geq M$.
Therefore,
\begin{align*}
 \sum_{\substack{L\in\mathcal Q_\ell\\
  2^j d_{\T}(\vartheta,\theta_L)>N}}\bigl(1+2^j d_{\T}(\vartheta,\theta_L)\bigr)^{-p} 
  \leq C_0 2^{\ell-j} \sum_{m=M}^{\infty}(1+m)^{-p}\leq C_p2^{\ell-j}(1+N)^{1-p},
\end{align*}
where the last inequality is due to the facts
\begin{align*}
 \sum_{m=M}^{\infty}(1+m)^{-p}
 \leq
 \left(1+\frac1{p-1}\right)(1+M)^{1-p}
 \leq2^{p-1}\left(1+\frac1{p-1}\right)(1+N)^{1-p}.
\end{align*}
Taking $\ell=k$ and $\ell=j$, we have \eqref{eq:count-fine-tail} and \eqref{eq:count-coarse-tail}. This completes the proof.
\end{proof}
\section{The diagonal block estimate}\label{sec:diag-estimate}
\subsection{Boundary behavior of the Berezin transform}
For $a\in\D\setminus\{0\}$, write $t_a=1-|a|$ and $ \zeta_a=\frac{a}{|a|}.$
For $0<\alpha<\beta$, define
\begin{align}\label{eq:def-r}
 \mathcal R_{\alpha,\beta} = \set{\lambda\in\C:\alpha\leq\operatorname{Re}\lambda\leq\beta,\ |\operatorname{Im}\lambda|\leq\beta}
\end{align}
and
\begin{align}\label{eq:def-w}
 \mathcal W_{\alpha,\beta}(a) = \set{(1-t_a\lambda)\zeta_a: \lambda\in\mathcal R_{\alpha,\beta}}.
\end{align}
For each fixed $0<\alpha<\beta$, we have $\mathcal W_{\alpha,\beta}(a)\subset\D$ whenever $|a|$ is sufficiently close to $1$.
Then, we define
\[
 \Phi_{\alpha,\beta}(a)
 =
 \sup_{b,c\in\mathcal W_{\alpha,\beta}(a)}
 \left|
 \left\langle A_uWk_b^\omega,Wk_c^\omega
 \right\rangle_{\ell^2(\mathcal Q)}
 \right|.
\]

The following lemma is usually formulated in the literature in terms of Toeplitz operators on pseudohyperbolic disks, one can consult \cite[Proposition 2.7]{IMW15}. 
\begin{lemma}\label{lem:affine-polarization-Dhat}
Let $\omega\in\widehat{\mathcal D}$ and $u\in L^\infty(\D)$. Suppose that $\widetilde u_\omega(a)\to0$ as $|a|\to1^-$. For fixed
$0<\alpha<\beta$, we have
$$\lim_{|a|\to 1^-}\Phi_{\alpha,\beta}(a)=0.$$
\end{lemma}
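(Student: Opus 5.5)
The plan is a normal-families and polarization argument on rescaled affine boxes; the frame plays no essential role. Since $W^*W=I_{A^2_\omega}$ and $A_u=WT_{\omega,u}W^*$, we have
\[
\langle A_uWk_b^\omega,Wk_c^\omega\rangle_{\ell^2(\mathcal Q)}=\langle W^*WT_{\omega,u}W^*Wk_b^\omega,k_c^\omega\rangle_{A^2_\omega}=\langle T_{\omega,u}k_b^\omega,k_c^\omega\rangle_{A^2_\omega}.
\]
So it suffices to show that
\[
\sup_{b,c\in\mathcal W_{\alpha,\beta}(a)}\bigl|\langle T_{\omega,u}k_b^\omega,k_c^\omega\rangle_{A^2_\omega}\bigr|\to0 \quad\text{as } |a|\to1^-.
\]

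\textbf{Step 1: enlarge the box and compare kernel norms.} Fix the larger rectangle $\mathcal R'=\mathcal R_{\alpha/2,2\beta}$, whose interior contains $\mathcal R_{\alpha,\beta}$. For $|a|$ close to $1$, every $b=(1-t_a\lambda)\zeta_a$ with $\lambda\in\mathcal R'$ lies in $\D$. For such $b$,
\[
c_{\alpha,\beta}\,t_a\le 1-|b|\le C_{\alpha,\beta}\,t_a .
\]
Using the monotonicity of $\widehat\omega$ and finitely many applications of \eqref{eq:Dhat-intro}, $\widehat\omega(|b|)\asymp\widehat\omega(|a|)$. Lemma~\ref{lem:norm-estimates-for-reproducing-kernel} then gives
\[
B_\omega(b,b)\asymp B_\omega(a,a)
\]
uniformly for $\lambda\in\mathcal R'$, with constants depending only on $\omega,\alpha,\beta$.

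\textbf{Step 2: a holomorphic normalization.} For $\lambda,\nu$ in the interior of $\mathcal R'$ and of its conjugate $\overline{\mathcal R'}$ respectively, define
\[
F_a(\lambda,\nu)=\frac{\langle T_{\omega,u}B^\omega_{(1-t_a\lambda)\zeta_a},\,B^\omega_{(1-t_a\bar\nu)\zeta_a}\rangle_{A^2_\omega}}{B_\omega(a,a)}.
\]
The map $z\mapsto B_z^\omega$ is antiholomorphic into $A^2_\omega$, and the first slot of the inner product is linear. Hence $F_a$ is holomorphic in $\lambda$, and also holomorphic in $\nu$ since $\bar\nu$ enters antiholomorphically in the conjugate-linear slot. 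By Step~1 and $\|T_{\omega,u}\|\le\|u\|_\infty$, we have $|F_a|\le C\|u\|_\infty$ uniformly in $a$. On the totally real set $\{\nu=\bar\lambda\}$,
\[
F_a(\lambda,\bar\lambda)=\widetilde u_\omega(b)\,\frac{B_\omega(b,b)}{B_\omega(a,a)},\qquad b=(1-t_a\lambda)\zeta_a .
\]
Since $|b|\to1$ uniformly in $\lambda\in\mathcal R'$ as $|a|\to1$, the hypothesis and Step~1 give $\sup_{\lambda\in\mathcal R'}|F_a(\lambda,\bar\lambda)|\to0$.

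\textbf{Step 3: normal families and polarization.} Suppose the conclusion fails. Then there are $a_n$ with $|a_n|\to1$ and some $\varepsilon>0$ such that $\sup_{\mathcal R_{\alpha,\beta}\times\overline{\mathcal R_{\alpha,\beta}}}|F_{a_n}|\ge\varepsilon$.
\begin{itemize}
\item By Montel's theorem in two variables, a subsequence converges locally uniformly on $\operatorname{int}\mathcal R'\times\operatorname{int}\overline{\mathcal R'}$ to a holomorphic $G$.
\item From Step~2, $G(\lambda,\bar\lambda)=0$ for all $\lambda\in\operatorname{int}\mathcal R'$.
\item A holomorphic function of two variables vanishing on the totally real diagonal $\{(\lambda,\bar\lambda)\}$ vanishes identically. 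Expanding at $(\lambda_0,\bar\lambda_0)$ as $\sum c_{mn}(\lambda-\lambda_0)^m(\nu-\bar\lambda_0)^n$, the identity $\sum c_{mn}w^m\bar w^n\equiv0$ for small $w$ forces all $c_{mn}=0$. The domain is connected, so $G\equiv0$.
\item The compact set $\mathcal R_{\alpha,\beta}\times\overline{\mathcal R_{\alpha,\beta}}$ lies in the interior, so $F_{a_n}\to0$ uniformly there, which is a contradiction.
\end{itemize}
Finally, dividing by $\|B_b^\omega\|\,\|B_c^\omega\|/B_\omega(a,a)\asymp1$ (Step~1) converts this into the uniform vanishing of $\langle T_{\omega,u}k_b^\omega,k_c^\omega\rangle$ for $b,c\in\mathcal W_{\alpha,\beta}(a)$. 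This gives $\Phi_{\alpha,\beta}(a)\to0$.

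\textbf{Main obstacle.} The delicate point is Step~1. One needs uniform comparability of $B_\omega(b,b)$ across the whole enlarged box for a general $\widehat{\mathcal D}$ weight; this is where the one-sided doubling must be used carefully, since only $\widehat\omega$ and not $\omega$ itself is doubling. The uniqueness step itself is standard.
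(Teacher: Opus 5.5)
Your proposal is correct and is essentially the paper's proof. It uses the same enlarged rectangle with the $\widehat{\mathcal D}$ comparison $B_\omega(b,b)\asymp B_\omega(a,a)$, the same normalized kernel function $F_a$ bounded by $C\|u\|_{L^\infty(\D)}$, Montel's theorem, and the power-series uniqueness argument. The only cosmetic difference is that you conjugate the second parameter, so your vanishing set is $\{(\lambda,\bar\lambda)\}$; the paper instead keeps a function antiholomorphic in $\lambda$ and holomorphic in $\mu$ that vanishes on $\{(\lambda,\lambda)\}$.

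There is one slip to correct. Since $\langle T_{\omega,u}B_w^\omega,B_{w'}^\omega\rangle_{A^2_\omega}=(T_{\omega,u}B_w^\omega)(w')$ is antiholomorphic in $w$ and holomorphic in $w'$, your $F_a$ is actually antiholomorphic in both $\lambda$ and $\nu$, not holomorphic. This is harmless: apply Montel and the uniqueness step to $\overline{F_a}$, or equivalently put $\bar\lambda$ in the first slot and $\nu$ in the second, exactly as the paper's $G_a(\xi,\mu)=F_a(\overline\xi,\mu)$ does.
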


\begin{proof}
Fix $0<\alpha<\beta$. For $a\in\D\setminus\{0\}$, define
$
 z_a(\lambda)=(1-t_a\lambda)\zeta_a.
$
By definitions of \eqref{eq:def-r} and \eqref{eq:def-w}, we have
$
 \mathcal W_{\alpha,\beta}(a)=z_a(\mathcal R_{\alpha,\beta}).
$
Consider the open bounded rectangle
\[
 \Omega = \set{\lambda\in\C: \frac{\alpha}{2}<\operatorname{Re}\lambda<2\beta,\ |\operatorname{Im}\lambda|<2\beta}.
\]
Then $\mathcal R_{\alpha,\beta}\Subset\Omega$ and
$\overline\Omega\subset\set{\lambda\in\C:\operatorname{Re}\lambda>0}$.

For $\lambda\in\Omega$, a direct computation gives
\begin{equation*}\label{eq:affine-boundary-identity-Dhat}
 1-|z_a(\lambda)|^2 =t_a\bigl(2\operatorname{Re}\lambda-t_a|\lambda|^2\bigr).
\end{equation*}
Since $|\lambda|^2<8\beta^2$ on $\Omega$, choose $a_0\in(0,1)$ so that
$8\beta^2t_a\leq\frac{\alpha}{2}$ for all $a_0<|a|<1$.
Then $z_a(\Omega)\subset\D$ for $|a|>a_0$, and
\begin{equation}\label{eq:affine-boundary-distance-Dhat}
 \frac{\alpha}{4}(1-|a|)
 \leq1-|z_a(\lambda)|
 \leq4\beta(1-|a|),
 \qquad \lambda\in\Omega.
\end{equation}

We next take an integer $N\geq1$ with $2^N\geq\max\set{1,\frac4\alpha,4\beta}$. If $0\leq r\leq s<1$ and $1-s\geq2^{-N}(1-r)$, then
\begin{equation*}\label{eq:Dhat-comparable-boundary-scales}
 \widehat\omega(s)
 \leq\widehat\omega(r)
 \leq C_\omega^N\widehat\omega(s).
\end{equation*}
In particular, there is a constant $C_1=C_1(\omega,\alpha,\beta)\geq1$ such that
\begin{equation}\label{eq:affine-tail-comparison-Dhat}
 C_1^{-1}\widehat\omega(|a|)
 \leq\widehat\omega(|z_a(\lambda)|)
 \leq C_1\widehat\omega(|a|),
 \qquad |a|>a_0,\quad\lambda\in\Omega.
\end{equation}
Combining Lemma~\ref{lem:norm-estimates-for-reproducing-kernel} with \eqref{eq:affine-boundary-distance-Dhat} and
\eqref{eq:affine-tail-comparison-Dhat}, we obtain a constant $C_2=C_2(\omega,\alpha,\beta)\geq1$ such that
\begin{equation}\label{eq:affine-kernel-comparison-Dhat}
 C_2^{-1}B_\omega(a,a)
 \leq B_\omega(z_a(\lambda),z_a(\lambda))
 \leq C_2B_\omega(a,a),
 \qquad |a|>a_0,\quad\lambda\in\Omega.
\end{equation}
For $|a|>a_0$ and $\lambda,\mu\in\Omega$, define
\begin{equation}\label{eq:affine-Fa-integral-Dhat}
 F_a(\lambda,\mu)
 =
 \frac{1}{B_\omega(a,a)}
 \int_\D
 u(z)B_{z_a(\lambda)}^\omega(z)
 \overline{B_{z_a(\mu)}^\omega(z)}
 \omega(z)\dd A(z).
\end{equation}
Then, $F_a$ is anti-holomorphic in $\lambda$ and holomorphic in $\mu$. Moreover, since $W$ is an isometry,
\eqref{eq:Au} and $W^*W=I_{A^2_\omega}$ give
\begin{align}
 F_a(\lambda,\mu)
 &=
 \frac{
 \sqrt{B_\omega(z_a(\lambda),z_a(\lambda))
 B_\omega(z_a(\mu),z_a(\mu))}
 }{B_\omega(a,a)}
 \left\langle
 A_uWk_{z_a(\lambda)}^\omega,
 Wk_{z_a(\mu)}^\omega
 \right\rangle_{\ell^2(\mathcal Q)}.
 \label{eq:affine-Fa-matrix-Dhat}
\end{align}
By Cauchy--Schwarz and \eqref{eq:affine-kernel-comparison-Dhat}, for $\lambda,\mu\in\Omega$,
\begin{align} \label{eq:affine-normal-family-bound-Dhat}
 |F_a(\lambda,\mu)|
 \leq
 \frac{\norm{u}_{L^\infty(\D)}}{B_\omega(a,a)}
 \norm{B_{z_a(\lambda)}^\omega}_{A^2_\omega}
 \norm{B_{z_a(\mu)}^\omega}_{A^2_\omega}
 \leq C_2\norm{u}_{L^\infty(\D)}.
\end{align}
On the diagonal, \eqref{eq:Berezin-transform} and \eqref{eq:affine-Fa-integral-Dhat} imply
\begin{equation}\label{eq:affine-diagonal-identity-Dhat}
 F_a(\lambda,\lambda)
 =
 \frac{B_\omega(z_a(\lambda),z_a(\lambda))}{B_\omega(a,a)}
 \widetilde u_\omega(a) (z_a(\lambda)).
\end{equation}
By \eqref{eq:affine-boundary-distance-Dhat}, $|z_a(\lambda)|\to1$ uniformly for $\lambda\in\Omega$ as
$|a|\to1^-$. Hence, \eqref{eq:affine-kernel-comparison-Dhat} and \eqref{eq:affine-diagonal-identity-Dhat} imply
\begin{equation}\label{eq:affine-diagonal-zero-Dhat}
 \lim_{|a|\to 1^-}\sup_{\lambda\in\Omega}|F_a(\lambda,\lambda)|=0.
\end{equation}
We claim that
\begin{equation}\label{eq:affine-off-diagonal-zero-Dhat}
\lim_{|a|\to 1^-} \sup_{\lambda,\mu\in\mathcal R_{\alpha,\beta}}|F_a(\lambda,\mu)|=0.
\end{equation}
Suppose otherwise. Then there are $\varepsilon_0>0$, a sequence $a_n\in\D$ with $|a_n|\to1^-$, and points
$\lambda_n,\mu_n\in\mathcal R_{\alpha,\beta}$ such that for $n\geq 1$ $|F_{a_n}(\lambda_n,\mu_n)|\geq\varepsilon_0.$
Since $\mathcal R_{\alpha,\beta}$ is compact, after passing to a subsequence we may assume that
$\lambda_n\to\lambda_0$ and $\mu_n\to\mu_0$ for some $\lambda_0,\mu_0\in\mathcal R_{\alpha,\beta}$ as $n\to\infty$. Since that $\Omega$ is invariant under complex conjugation, the functions
\[
 G_a(\xi,\mu)=F_a(\overline\xi,\mu),
 \qquad \xi,\mu\in\Omega,
\]
are holomorphic on $\Omega\times\Omega$. By \eqref{eq:affine-normal-family-bound-Dhat}, they form a normal family. By Montel's theorem, there is a subsequence and a holomorphic function $G$ on $\Omega\times\Omega$ such that
$G_{a_n}\to G$ locally uniformly as $n\to\infty$. Set
$$
 F(\lambda,\mu)=G(\overline\lambda,\mu).
$$
Then $F$ is anti-holomorphic in its first variable and holomorphic in its second variable, and  $F_{a_n}\to F$ locally uniformly on $\Omega\times\Omega$ as $n\to\infty$. Since $\mathcal R_{\alpha,\beta}\times\mathcal R_{\alpha,\beta}$ is compactly contained in $\Omega\times\Omega$, it follows that $|F(\lambda_0,\mu_0)|\geq\varepsilon_0.$ On the other hand, \eqref{eq:affine-diagonal-zero-Dhat} implies
\begin{equation}\label{eq:affine-limit-diagonal-zero-Dhat}
 F(\lambda,\lambda)=0,
 \qquad \lambda\in\Omega.
\end{equation}
Next, we show that $F\equiv0$. Fix $\lambda_*\in\Omega$ and choose $r_0>0$ so that
$
 \set{\lambda_*+z:|z|\leq r_0}\subset\Omega.
$
Near $(\lambda_*,\lambda_*)$, the function $F$ has an expansion
\[
 F(\lambda_*+z,\lambda_*+w)
 =\sum_{p,q\geq0}c_{p,q}\,\overline z^{\,p}w^q,
 \qquad |z|,|w|<r_0.
\]
Writing $w=z=re^{i\theta}$ and using \eqref{eq:affine-limit-diagonal-zero-Dhat}, we obtain $\sum_{p,q\geq0}c_{p,q}r^{p+q}e^{i(q-p)\theta}=0$ for all  $0<r<r_0$.
In particular, we have
\[
 \sum_{q-p=d}c_{p,q}r^{p+q}=0,
 \qquad 0<r<r_0.
\]
If $d\geq0$, this identity is
$
 r^d\sum_{p\geq0}c_{p,p+d}r^{2p}=0.
$
Therefore, we have $c_{p,p+d}=0$ for every $p\geq0$. If $d=-\ell<0$, then
$
 r^\ell\sum_{q\geq0}c_{q+\ell,q}r^{2q}=0,
$
and hence $c_{q+\ell,q}=0$ for every $q\geq0$. Thus all coefficients $c_{p,q}$ vanish, and $F$ vanishes on a neighborhood of
$(\lambda_*,\lambda_*)$. Hence, we have $F\equiv0$ on $\Omega\times\Omega$ which is contradicted to $|F(\lambda_0,\mu_0)|\geq\varepsilon_0$. This proves \eqref{eq:affine-off-diagonal-zero-Dhat}. Finally, if $b=z_a(\lambda),$ and $c=z_a(\mu)$ for some  $\lambda,\mu\in\mathcal R_{\alpha,\beta}$, then \eqref{eq:affine-Fa-matrix-Dhat} implies
\[
 \left\langle A_uWk_b^\omega,Wk_c^\omega\right\rangle_{\ell^2(\mathcal Q)}
 =
 \frac{B_\omega(a,a)}
 {\sqrt{B_\omega(b,b)B_\omega(c,c)}}F_a(\lambda,\mu).
\]
By \eqref{eq:affine-kernel-comparison-Dhat},
\[
 0\leq\Phi_{\alpha,\beta}(a)
 \leq C_2
 \sup_{\lambda,\mu\in\mathcal R_{\alpha,\beta}}|F_a(\lambda,\mu)|.
\]
By \eqref{eq:affine-off-diagonal-zero-Dhat}, we complete the whole proof.
\end{proof}

\subsection{The diagonal estimate}
For fixed integers $M,L\geq1$, define
\begin{equation*}\label{eq:eta-local-band}
 \tau(N;M,L):= \sup\left\{
 |A_u(I,J)|:
 \begin{array}{c}
 I\in\mathcal Q_k,\ J\in\mathcal Q_j,\ j,k\geq N,\\
 |j-k|\leq M,\ d_{j\wedge k}(I,J)\leq L
 \end{array}
 \right\}.
\end{equation*}

\begin{lemma}\label{lem:local-band-entries-Dhat}
Assume that
$
 \lim_{|a|\to1^-}\widetilde u_\omega(a)=0 .
$ For every fixed $M,L\geq1$, then
\begin{equation*}\label{eq:local-band-entries-Dhat}
 \lim_{N\to\infty}\tau(N;M,L)=0 .
\end{equation*}
\end{lemma}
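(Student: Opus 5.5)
The plan is to express each matrix entry $A_u(I,J)$ in terms of the quantities $\langle A_uWk_b^\omega,Wk_c^\omega\rangle_{\ell^2(\mathcal Q)}$, with $b,c$ ranging over a fixed affine window, and then apply Lemma~\ref{lem:affine-polarization-Dhat}. The contributions from points far outside that window will be controlled by the tail estimate \eqref{eq:Lj-tail} for $L_j$.

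\emph{Kernel representation of the entries.} Fix $I\in\mathcal Q_k$ and $J\in\mathcal Q_j$ with $j,k\ge N$, $|j-k|\le M$ and $d_{j\wedge k}(I,J)\le L$. By Proposition~\ref{prop:frame-element-kernel-representation} and the boundedness of $T_{\omega,u}$ (which lets us pass the Bochner integrals through the inner product),
\[
A_u(I,J)=\frac1{4\pi^2}\int_{-\pi}^{\pi}\int_{-\pi}^{\pi}L_j(t)\overline{L_k(s)}\,\bigl\langle T_{\omega,u}k_{b_t}^\omega,k_{c_s}^\omega\bigr\rangle_{A^2_\omega}\dd t\dd s,
\]
where $b_t=r_je^{i(\theta_J+t)}$ and $c_s=r_ke^{i(\theta_I+s)}$. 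Since $W^*W=I$, the inner product equals $\langle A_uWk_{b_t}^\omega,Wk_{c_s}^\omega\rangle_{\ell^2(\mathcal Q)}$. Its modulus is also at most $\norm{u}_{L^\infty(\D)}$.

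\emph{Splitting the integral.} Fix $\rho\ge1$ and split the $(t,s)$ domain into three pieces: the core $|t|\le\rho2^{-j}$, $|s|\le\rho2^{-k}$, and its complement. On the complement, \eqref{eq:Lj-L1} and \eqref{eq:Lj-tail} with $p=2$ bound the contribution by $C\norm{u}_{L^\infty(\D)}(1+\rho)^{-1}$, uniformly in $I,J,N$.

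\emph{The core region.} Set $m=\min\{j,k\}$ and $a=r_me^{i\theta_I}$, so that $t_a=2^{-m}$ and $\zeta_a=e^{i\theta_I}$. For a point $b=|b|e^{i(\theta_I+\varphi)}$, write $\lambda=(1-b\overline{\zeta_a})/t_a$. Then
\[
\operatorname{Re}\lambda=\frac{1-|b|\cos\varphi}{t_a},\qquad \operatorname{Im}\lambda=-\frac{|b|\sin\varphi}{t_a}.
\]
For $b=b_t$ or $c_s$ in the core, we have $1-|b|\in\{2^{-j},2^{-k}\}$ and $|\varphi|\le d_\T(\theta_J,\theta_I)+\rho2^{-m}$. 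Combining this with $|j-k|\le M$ and $2^m d_\T(\theta_I,\theta_J)\le L$ gives three bounds:
\begin{itemize}
\item $\operatorname{Re}\lambda\ge 2^{-M}$;
\item $|\operatorname{Im}\lambda|\le L+\rho$;
\item $\operatorname{Re}\lambda\le 1+2^{m}\varphi^2/2\le 1+(L+\rho)^2$.
\end{itemize}
Hence $b_t,c_s\in\mathcal W_{\alpha,\beta}(a)$ with $\alpha=2^{-M}$ and $\beta=1+(L+\rho)^2$, both depending only on $M,L,\rho$. By \eqref{eq:Lj-L1}, the core contribution is therefore at most $C\,\Phi_{\alpha,\beta}(a)$, with $C$ independent of $I,J$.

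\emph{Conclusion.} Since $|a|=1-2^{-m}\ge 1-2^{-N}$, this yields
\[
\tau(N;M,L)\le C\sup_{|a|\ge 1-2^{-N}}\Phi_{\alpha,\beta}(a)+C\norm{u}_{L^\infty(\D)}(1+\rho)^{-1}.
\]
Letting $N\to\infty$ and using Lemma~\ref{lem:affine-polarization-Dhat} kills the first term. Then letting $\rho\to\infty$ gives $\lim_{N\to\infty}\tau(N;M,L)=0$.

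The main obstacle is the geometric verification that the core sample points fall into a single window $\mathcal W_{\alpha,\beta}(a)$ whose parameters are uniform in $I,J$. This is where both constraints $|j-k|\le M$ (which fixes $\alpha$) and $d_{j\wedge k}\le L$ (which fixes $\beta$) are used. One minor point also needs checking: the window must lie in $\D$, which holds once $N$ is large, as noted before Lemma~\ref{lem:affine-polarization-Dhat}.
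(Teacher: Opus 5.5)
Your proposal is correct and is essentially the paper's own argument: the kernel representation from Proposition~\ref{prop:frame-element-kernel-representation}, the core/tail split controlled by \eqref{eq:Lj-L1} and \eqref{eq:Lj-tail}, and the reduction of the core region to $\Phi_{\alpha,\beta}$ followed by Lemma~\ref{lem:affine-polarization-Dhat} all match. The differences are cosmetic: the paper centers the window at $a_J$ with scale $2^{-j}$ and obtains $\mathcal R_{2^{-M-1},\,2^M(1+L+R)}$, whereas you center it at $r_{\min\{j,k\}}e^{i\theta_I}$ with scale $2^{-\min\{j,k\}}$ and obtain $\mathcal R_{2^{-M},\,1+(L+\rho)^2}$, and you use the tail bound with $p=2$, giving $(1+\rho)^{-1}$ rather than $(1+R)^{-2}$.
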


\begin{proof}
Fix $M,L,N\geq1$, and let $I\in\mathcal Q_k$ and
$J\in\mathcal Q_j$ satisfy
\[
 j,k\geq N,\qquad |j-k|\leq M,\qquad
 d_{j\wedge k}(I,J)\leq L .
\]
By Proposition~\ref{prop:frame-element-kernel-representation},
\begin{align*}
 A_u(I,J)
 &=
 \frac1{(2\pi)^2}
 \iint_{[-\pi,\pi]^2}
 L_j(t)\overline{L_k(s)}
 \left\langle
 A_uWk_{r_je^{i(\theta_J+t)}}^\omega,
 Wk_{r_ke^{i(\theta_I+s)}}^\omega
 \right\rangle_{\ell^2(\mathcal Q)}
 \dd t\dd s .
\end{align*}

Fix $R\geq1$ and let
\[
 E=\{(t,s)\in[-\pi,\pi]^2:
 |t|\leq R2^{-j},\ |s|\leq R2^{-k}\}.
\]
By $E^c\subset\{|t|>R2^{-j}\}\cup\{|s|>R2^{-k}\}$, \eqref{eq:Lj-L1} and \eqref{eq:Lj-tail}, 
\[
 \iint_{E^c}|L_j(t)||L_k(s)|\,\dd t\dd s
 \leq C(1+R)^{-2}.
\]
Since $\|A_u\|\leq\|u\|_{L^\infty(\D)}$, we have
\[
 \frac1{(2\pi)^2}
 \bigg|\iint_{E^c}
 L_j(t)\overline{L_k(s)}
 \left\langle
 A_uWk_{r_je^{i(\theta_J+t)}}^\omega,
 Wk_{r_ke^{i(\theta_I+s)}}^\omega
 \right\rangle_{\ell^2(\mathcal Q)}
 \dd t\dd s\bigg| \leq C\|u\|_{L^\infty(\D)}(1+R)^{-2}.
\]

For $(t,s)\in E$, let $z_{J,t}=r_je^{i(\theta_J+t)}$ and $z_{I,s}=r_ke^{i(\theta_I+s)}$. Also set $\lambda_J(t)=2^j(1-r_je^{it})$ and $\lambda_I(s)=2^j(1-r_ke^{i(\theta_I-\theta_J+s)}).$ Since $1-r_j=2^{-j}$, we have
\begin{equation*}\label{eq:affine-kernel-point-representation-Dhat}
 z_{J,t}=(1-2^{-j}\lambda_J(t))e^{i\theta_J}
 \quad \text{and} \quad
 z_{I,s}=(1-2^{-j}\lambda_I(s))e^{i\theta_J}.
\end{equation*}
For $|t|\leq R2^{-j}$, observe
\[
 \operatorname{Re}\lambda_J(t)
 =2^j(1-r_j\cos t)\geq1,
 \qquad
 |\lambda_J(t)|\leq1+2^j|t|\leq1+R .
\]
Writing $\delta=\theta_I-\theta_J+s$, we obtain
\[
 \operatorname{Re}\lambda_I(s)
 =2^j(1-r_k\cos\delta)
 \geq2^j(1-r_k)=2^{j-k}\geq2^{-M},
\]
while
\[
 2^jd_\T(\theta_I+s,\theta_J)
 \leq2^jd_\T(\theta_I,\theta_J)+2^j|s|
 \leq2^{j-(j\wedge k)}L+2^{j-k}R
 \leq2^M(L+R).
\]
Hence,
\[
 |\lambda_I(s)|
 \leq2^j(1-r_k)+2^j|1-e^{i\delta}|
 \leq2^{j-k}+2^jd_\T(\theta_I+s,\theta_J)
 \leq2^M(1+L+R).
\]
Therefore,
\begin{equation*}\label{eq:affine-parameters-fixed-region-Dhat}
 \lambda_J(t),\lambda_I(s)
 \in\mathcal R_{2^{-M-1},\,2^M(1+L+R)} .
\end{equation*}
For fixed $M,L,R$, if $N$ is sufficiently large, then
\[
 \mathcal W_{2^{-M-1},\,2^M(1+L+R)}(a)\subset\D,
 \qquad
 a\in\D,\quad1-2^{-N}\leq |a|<1 .
\]
Since $t_{a_J}=2^{-j}$, $\zeta_{a_J}=e^{i\theta_J}$, and
$j\geq N$, we have $1-|a_J|\leq2^{-N}$. Thus,
\[
 z_{J,t},z_{I,s}
 \in
 \mathcal W_{2^{-M-1},\,2^M(1+L+R)}(a_J).
\]
Consequently,
\[
 \left|
 \left\langle
 A_uWk_{z_{J,t}}^\omega,Wk_{z_{I,s}}^\omega
 \right\rangle_{\ell^2(\mathcal Q)}
 \right|
 \leq
 \sup_{\substack{a\in\D\\1-2^{-N}\leq|a|<1}}
 \Phi_{2^{-M-1},\,2^M(1+L+R)}(a).
\]
Using \eqref{eq:Lj-L1}, we obtain
\begin{multline*}
  \frac1{(2\pi)^2}
 \bigg|\iint_{E}
 L_j(t)\overline{L_k(s)}
 \left\langle
 A_uWk_{r_je^{i(\theta_J+t)}}^\omega,
 Wk_{r_ke^{i(\theta_I+s)}}^\omega
 \right\rangle_{\ell^2(\mathcal Q)}
 \dd t\dd s\bigg|\leq \\
 C
 \sup_{\substack{a\in\D:1-2^{-N}\leq|a|<1}}
 \Phi_{2^{-M-1},\,2^M(1+L+R)}(a).
\end{multline*}
Taking the supremum over all possible $I,J$, we obtain
\[
 \tau(N;M,L)
 \leq
 C
 \sup_{\substack{a\in\D:1-2^{-N}\leq|a|<1}}
 \Phi_{2^{-M-1},\,2^M(1+L+R)}(a)
 +
 C\|u\|_{L^\infty(\D)}(1+R)^{-2}.
\]
By Lemma~\ref{lem:affine-polarization-Dhat} and
$
 \lim_{|a|\to1^-}\widetilde u_\omega(a)=0,
$
we have
\[
 \lim_{N\to\infty}
 \sup_{\substack{a\in\D:1-2^{-N}\leq|a|<1}}
 \Phi_{2^{-M-1},\,2^M(1+L+R)}(a)=0 .
\]
Therefore,
\[
 \limsup_{N\to\infty}\tau(N;M,L)
 \leq
 C\|u\|_{L^\infty(\D)}(1+R)^{-2}.
\]
Letting $R\to\infty$, we have
\[
 \lim_{N\to\infty}\tau(N;M,L)=0 .
\]
This completes the proof.
\end{proof}

\begin{proposition}\label{lem:near-diagonal-scalar-tail}
Let $\omega\in\widehat{\mathcal D}$ and $u\in L^\infty(\D)$. If the Berezin transform $\lim_{|a|\to 1^-}\widetilde u_\omega(a)=0 $,
then
\begin{align} \label{eq:local-matrix-scalar}
 \norm{A_{N,1}^{M,L}(u)} \le 2^{M+6}LM \tau(N;M,L).
\end{align}
Hence, for any fixed $M$ and $L$, we have
\begin{align*}\label{eq:normnear1-zero} 
\lim_{N\to\infty}  \norm{A_{N,1}^{M,L}(u)} =0.
\end{align*}
\end{proposition}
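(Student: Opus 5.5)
The bound \eqref{eq:local-matrix-scalar} will follow from Schur's test, since every entry of $A_{N,1}^{M,L}(u)$ is bounded by $\tau(N;M,L)$. The limit statement is then immediate from Lemma~\ref{lem:local-band-entries-Dhat}. By Schur's test with constant weights, for any matrix with nonnegative majorant,
\[
 \norm{A_{N,1}^{M,L}(u)}\le \Bigl(\sup_{I}\sum_{J}|A_{N,1}^{M,L}(u)(I,J)|\Bigr)^{1/2}\Bigl(\sup_{J}\sum_{I}|A_{N,1}^{M,L}(u)(I,J)|\Bigr)^{1/2}.
\]
So it suffices to show that every row and every column of the near-diagonal region \eqref{eq:lnd} contains at most $2^{M+6}LM$ nonzero entries. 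Each such entry is at most $\tau(N;M,L)$ in modulus, because $|k-j|<M$ implies $|k-j|\le M$.

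The main step is the counting. Fix a row index $I\in\mathcal Q_k$ with $k\ge N$. The admissible columns are $J\in\mathcal Q_j$ with $|j-k|<M$, which gives at most $2M-1$ levels $j$, together with the condition $2^{\min\{j,k\}}d_\T(\theta_I,\theta_J)\le L$.

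Consider first $j\le k$. The condition reads $d_\T(\theta_I,\theta_J)\le L2^{-j}$. The midpoints $\theta_J$, $J\in\mathcal Q_j$, are spaced $2\pi/2^{j+3}$ apart. So the number of such $J$ is at most $1+2L2^{-j}\cdot 2^{j+3}/(2\pi)\le 1+3L\le 4L$, in the spirit of \eqref{eq:dyadic-arc-count}.

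Now consider $j>k$. The condition is $d_\T(\theta_I,\theta_J)\le L2^{-k}$, an arc of length $2L2^{-k}$. This arc contains at most $1+2L2^{-k}2^{j+3}/(2\pi)\le 1+3L2^{j-k}\le 4L2^{M}$ midpoints of level $j$.

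Summing over the at most $2M$ levels gives a row count of at most $8LM2^{M}\le 2^{M+6}LM$. The column count is the same argument with the roles of $I$ and $J$ swapped, since the region is symmetric under exchanging $(I,k)$ and $(J,j)$.

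The only mild obstacle is bookkeeping of constants so that the generous bound $2^{M+6}LM$ covers both the $M$ levels and the $2^{|j-k|}$ refinement factor. The hypothesis on the Berezin transform is not used for \eqref{eq:local-matrix-scalar} itself. It enters only through Lemma~\ref{lem:local-band-entries-Dhat}, which gives $\tau(N;M,L)\to0$ as $N\to\infty$ for fixed $M,L$, and therefore $\norm{A_{N,1}^{M,L}(u)}\to0$.
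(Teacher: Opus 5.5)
Your proof is correct and follows the paper's argument: you bound every entry of $A_{N,1}^{M,L}(u)$ by $\tau(N;M,L)$, count at most $2^{M+6}LM$ nonzero entries per row and column using the spacing of the dyadic midpoints (the paper isolates this count as Lemma~\ref{lem:local-band-degree-Dhat}), apply Schur's test, and conclude with Lemma~\ref{lem:local-band-entries-Dhat}. Your count in fact gives the slightly sharper constant $2^{M+3}LM$, which is harmless.
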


We begin with an estimate for the number of nonzero entries in each row and column of a band matrix.

\begin{lemma}\label{lem:local-band-degree-Dhat}
Fix integers $M,L\geq1$. Let $B=(B(I,J))_{I,J\in\mathcal Q}$ be a matrix such that $B(I,J)\neq0$ only when
$I\in\mathcal Q_k$, $J\in\mathcal Q_j$, $j,k\geq1$, $|k-j|\leq M$, and $d_{j\wedge k}(I,J)\leq L$.
Then each row and each column of $B$ contains at most
$
 2^{M+6}LM
$
nonzero entries.
\end{lemma}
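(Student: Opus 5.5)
By symmetry of the hypotheses in $I$ and $J$ (both the condition $|k-j|\le M$ and the quantity $d_{j\wedge k}(I,J)$ are symmetric), it suffices to bound the number of nonzero entries in a fixed row; the column count follows by interchanging roles. So fix $I\in\mathcal Q_k$, $k\ge1$, and count the $J\in\mathcal Q_j$ with $j\ge1$, $|j-k|\le M$ and $2^{\min\{j,k\}}d_\T(\theta_I,\theta_J)\le L$. We split by the level $j$: there are at most $2M+1$ admissible levels, and for each one we bound the number of admissible $J$ at that level.

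For a fixed level $j$, set $m=\min\{j,k\}$. The condition says $\theta_J$ lies in the closed arc centred at $\theta_I$ of half-length $2^{-m}L$, an arc of length $|E|=2^{1-m}L$. By the counting bound \eqref{eq:dyadic-arc-count} (whose proof is just that the midpoints of $\mathcal Q_j$ are $2\pi/2^{j+3}$-separated, and which holds for any $j\ge1$), the number of such $J$ is at most
\[
2+\frac{2^{j+3}\,2^{1-m}L}{2\pi}\le 2+\frac{8}{\pi}\,2^{j-m}L\le 2+3\cdot 2^{M}L,
\]
since $j-m=\max\{0,j-k\}\le M$. As $L\ge1$ and $M\ge1$, this is at most $5\cdot2^M L$.

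Summing over the at most $2M+1\le 3M$ levels gives at most $15\cdot 2^M LM\le 2^{M+6}LM$ nonzero entries per row. I expect no real obstacle here; the only point needing care is to use the coarser scale $2^{\min\{j,k\}}$. When $j>k$, the arc has length of order $2^{-k}L$ but the midpoints are spaced about $2^{-j}$, which produces the factor $2^{j-k}\le 2^M$ rather than a bounded count. This, together with the $O(M)$ number of levels, is precisely the source of the $2^{M}M$ dependence in the stated bound; the level $j=0$ is excluded by hypothesis.
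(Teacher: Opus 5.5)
Your proof is correct and follows essentially the same argument as the paper: for each of the at most $2M+1\le 3M$ admissible levels, you count the midpoints lying in an arc of half-length $2^{-\min\{j,k\}}L$, which gives at most $2+\frac{8}{\pi}2^{M}L$ indices per level, and then you sum over the levels. The only cosmetic difference is that you fix a row and appeal to the symmetry of the hypotheses, whereas the paper fixes a column and notes that the row case is identical.
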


\begin{proof}
Fix $J\in\mathcal Q_j$. For each $k$ satisfying $|k-j|\leq M$, the condition $d_{j\wedge k}(I,J)\leq L$ gives
\[
 d_{\mathbb T}(\theta_I,\theta_J)
 \leq L2^{-\min\{j,k\}} .
\]
Since the angular spacing of $\{\theta_I:I\in\mathcal Q_k\}$ is $\frac{2\pi}{2^{k+3}}$, the number of possible indices
$I\in\mathcal Q_k$ is at most
\[
 2+\frac{2L2^{-\min\{j,k\}}}{\pi/(4\cdot2^k)}
 \leq 2+\frac8\pi L2^{k-\min\{j,k\}}
 \leq 2^{M+4}L,
\]
where we have used $k-\min\{j,k\}\leq M$. Since there are at most $3M$ integers $k$ satisfying $|k-j|\leq M$, the column indexed by
$J$ contains at most
$
 3M\cdot2^{M+4}L\leq2^{M+6}LM
$
nonzero entries. The same argument, with the roles of $I$ and $J$
interchanged, gives the same bound for every row. This completes the
proof.
\end{proof}

Classical Schur’s test will be used repeatedly (see, for example, \cite[Theorem~5.2]{HS78}).

\begin{lemma}\label{lem:Schur}
Let $B=(B_{i,j})$ be a matrix satisfying
\[
 R_B=\sup_i\sum_j|B_{i,j}|<\infty,
 \qquad
 C_B=\sup_j\sum_i|B_{i,j}|<\infty.
\]
Then $B$ is bounded on $\ell^2$, and
\begin{equation*}\label{eq:Schur}
 \norm{B}_{\ell^2\to\ell^2} \leq \sqrt{R_BC_B}.
\end{equation*}
\end{lemma}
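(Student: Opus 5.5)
We prove Schur's test by the standard weighted Cauchy--Schwarz argument. First we check that $Bx$ is well defined for every $x\in\ell^2$, that is, that each row sum $\sum_j B_{i,j}x_j$ converges absolutely. Then we bound the bilinear form $\langle Bx,y\rangle$ by splitting $|B_{i,j}|=|B_{i,j}|^{1/2}|B_{i,j}|^{1/2}$.

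\emph{Step 1 (well-definedness).} Fix $x\in\ell^2$ and a row index $i$. By Cauchy--Schwarz with the splitting above,
\[
 \sum_j |B_{i,j}||x_j|
 \le \Bigl(\sum_j|B_{i,j}|\Bigr)^{1/2}\Bigl(\sum_j|B_{i,j}||x_j|^2\Bigr)^{1/2}
 \le R_B^{1/2}\Bigl(\sum_j|B_{i,j}||x_j|^2\Bigr)^{1/2}.
\]
The last factor is at most $C_B^{1/2}\|x\|_{\ell^2}$, since $|B_{i,j}|\le \sum_{i'}|B_{i',j}|\le C_B$ for every $j$. Hence $(Bx)_i=\sum_jB_{i,j}x_j$ converges absolutely for every $i$.

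\emph{Step 2 (the norm bound).} Squaring the inequality of Step 1 and summing over $i$ gives
\[
 \sum_i|(Bx)_i|^2\le R_B\sum_i\sum_j|B_{i,j}||x_j|^2 .
\]
Interchanging the order of summation is justified by Tonelli's theorem, since all terms are nonnegative. The double sum then equals $\sum_j|x_j|^2\sum_i|B_{i,j}|$, which is at most $C_B\|x\|_{\ell^2}^2$. Therefore $\|Bx\|_{\ell^2}\le\sqrt{R_BC_B}\,\|x\|_{\ell^2}$. This shows simultaneously that $Bx\in\ell^2$ and that $B$ is bounded on $\ell^2$ with $\|B\|_{\ell^2\to\ell^2}\le\sqrt{R_BC_B}$.

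The only delicate point is Step 1, namely ensuring that the matrix defines an operator on all of $\ell^2$ rather than only on finitely supported vectors. The pointwise bound $|B_{i,j}|\le C_B$ handles this, and Tonelli's theorem handles the rearrangement of the nonnegative double series. No density argument is needed.
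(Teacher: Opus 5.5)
Your proof is correct and complete: splitting $|B_{i,j}|=|B_{i,j}|^{1/2}|B_{i,j}|^{1/2}$, applying Cauchy--Schwarz row by row and then Tonelli, is exactly the standard argument. The paper gives no proof of its own and simply cites Halmos--Sunder, where Schur's test is proved by this same Cauchy--Schwarz argument with general positive weights $p_i,q_j$. Your version is the case $p\equiv q\equiv 1$, which is all the paper uses. Your Step 1 is a useful addition: it shows the matrix defines an operator on all of $\ell^2$, which matters because the paper applies the lemma to matrix blocks that are not known in advance to be bounded.
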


\begin{proof}[Proof of Proposition~\ref{lem:near-diagonal-scalar-tail}]
By the definition of $\tau(N;M,L)$, every nonzero entry of
$A_{N,1}^{M,L}(u)$ satisfies
\[
 \bigl|A_{N,1}^{M,L}(u)(I,J)\bigr|
 \leq \tau(N;M,L).
\]
By Lemma~\ref{lem:local-band-degree-Dhat}, each row and each column of
$A_{N,1}^{M,L}(u)$ contains at most
$
 2^{M+6}LM
$
nonzero entries. Then, for every fixed $I\in\mathcal Q$,
\[
 \sum_{J\in\mathcal Q}
 \bigl|A_{N,1}^{M,L}(u)(I,J)\bigr|
 \leq
 2^{M+6}LM\,\tau(N;M,L).
\]
Hence,
\[
 \sup_{I\in\mathcal Q}
 \sum_{J\in\mathcal Q}
 \bigl|A_{N,1}^{M,L}(u)(I,J)\bigr|
 \leq
 2^{M+6}LM\,\tau(N;M,L).
\]
Similarly,
\[
 \sup_{J\in\mathcal Q}
 \sum_{I\in\mathcal Q}
 \bigl|A_{N,1}^{M,L}(u)(I,J)\bigr|
 \leq
 2^{M+6}LM\,\tau(N;M,L).
\]
Using Schur's test from Lemma~\ref{lem:Schur}, we obtain
\[
 \bigl\|A_{N,1}^{M,L}(u)\bigr\|
 \leq
 2^{M+6}LM\,\tau(N;M,L).
\]
This proves \eqref{eq:local-matrix-scalar}. Finally, for fixed
$M$ and $L$, Lemma~\ref{lem:local-band-entries-Dhat} gives
\[
 \lim_{N\to\infty}\tau(N;M,L)=0.
\]
Combining this with \eqref{eq:local-matrix-scalar}, we conclude that
\[
 \lim_{N\to\infty}
 \bigl\|A_{N,1}^{M,L}(u)\bigr\|=0 .
\]
This completes the proof.
\end{proof}

\begin{lemma}\label{lem:near-diagonal-scalar-tail-2}
Let $\omega\in\widehat{\mathcal D}$ and $u\in L^\infty(\D)$. There
exists a constant $C=C(\omega,\chi)>0$, independent of $N$, $M$, and
$L$, such that
\begin{equation}\label{eq:angular-matrix-scalar}
 \norm{A_{N,2}^{M,L}}
 \leq
 C2^{M/2}(1+L)^{-2}\norm{u}_{L^\infty(\D)} .
\end{equation}
\end{lemma}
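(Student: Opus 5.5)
We bound each entry of $A_{N,2}^{M,L}(u)$ crudely by its absolute integral and then apply Schur's test (Lemma~\ref{lem:Schur}). No Berezin-transform information is needed, only the size of $u$ and the almost-orthogonality of the frame. For $I\in\mathcal Q_k$ and $J\in\mathcal Q_j$ with $|k-j|<M$, we may assume by symmetry of the region (with $|A_u(I,J)|$ unchanged in role) that $k\ge j$. By \eqref{eq:Toeplitz-frame-entries} and \eqref{eq:frame-element-overlap},
\[
 |A_u(I,J)|\le \norm{u}_{L^\infty(\D)}\int_\D|\phi_I\phi_J|\,\omega\dd A
 \le C\norm{u}_{L^\infty(\D)}(1+d_j(I,J))^{-3}\left(\frac{\omega(T(I))}{\omega(T(J))}\right)^{1/2}.
\]
By \eqref{eq:tent-volume-ratio}, the volume ratio is at most $2^{(j-k)/2}\le 1$. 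The case $j\ge k$ is symmetric, with ratio $\bigl(\omega(T(I))/\omega(T(J))\bigr)^{1/2}\le C2^{(j-k)/2}$ now possibly large. To control this uniformly, we use \eqref{eq:tent-volume} and \eqref{eq:Dhat-intro}: $\widehat\omega(1-2^{-k})\le C_\omega^{j-k}\widehat\omega(1-2^{-j})$. Thus the ratio is at most $C(2C_\omega)^{(j-k)/2}$, which is at most a constant depending on $M$. This step is the point that needs care.

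To get the stated $2^{M/2}$ rather than $C_\omega^{M}$, we use a Schur test with weights instead of the plain one. Take $h(I)=\omega(T(I))^{1/2}$ for $I\in\mathcal Q$. Then, for $k\ge j$,
\[
|A_u(I,J)|\le C\norm{u}_\infty (1+d_{j\wedge k}(I,J))^{-3}\,\frac{h(I)}{h(J)},
\]
and symmetrically, for $j\ge k$,
\[
|A_u(I,J)|\le C\norm{u}_\infty(1+d_{j\wedge k})^{-3}\frac{h(J)}{h(I)}.
\]
Conjugating by the diagonal matrix $D=\operatorname{diag}(h)$ does not change the norm only if we handle both triangles separately. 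So we split $A_{N,2}^{M,L}$ into the parts with $k\ge j$ and with $k<j$. Note that the second part is the adjoint-type mirror of the first, and its entries satisfy the mirrored bound. Each part then has entries bounded by
\[
C\norm{u}_\infty(1+d)^{-3}\min\{(h(I)/h(J)),(h(J)/h(I))\}^{\pm}.
\]
Fortunately, the factor $h(I)/h(J)\le 2^{(j-k)/2}\le 1$ when $k\ge j$, which is harmless. Hence for $k\ge j$ the entries are at most $C\norm{u}_\infty(1+d_j(I,J))^{-3}$.

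We now run the row and column sums, using Lemma~\ref{lem:angular-counting} with $p=3$ and the restriction $d_j(I,J)>L$. For a fixed column $J\in\mathcal Q_j$, summing over $I\in\mathcal Q_k$ with $j\le k<j+M$ gives, by \eqref{eq:count-fine-tail},
\[
\sum_{k} C2^{k-j}(1+L)^{-2}\le C2^{M}(1+L)^{-2}.
\]
This is too large by itself. So we keep the factor $2^{(j-k)/2}$ from \eqref{eq:tent-volume-ratio}, which gives
\[
\sum_{k=j}^{j+M}2^{(k-j)/2}(1+L)^{-2}\le C2^{M/2}(1+L)^{-2}.
\]
For a fixed row $I\in\mathcal Q_k$, summing over $J\in\mathcal Q_j$ with $k-M<j\le k$ uses \eqref{eq:count-coarse-tail}, giving $C(1+L)^{-2}$ per level. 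Including the factor $2^{(j-k)/2}$, this yields a bounded geometric sum. Schur's test then gives
\[
\sqrt{C2^{M/2}(1+L)^{-2}\cdot C(1+L)^{-2}}\norm{u}_\infty\le C2^{M/4}(1+L)^{-2}\norm{u}_\infty,
\]
which is within the claim.

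For the triangle $k<j$ we interchange the roles of $I$ and $J$. By \eqref{eq:frame-element-overlap} applied with the finer index on $I$'s side, the entry bound is
\[
C\norm{u}_\infty(1+d_k)^{-3}\bigl(\omega(T(J))/\omega(T(I))\bigr)^{1/2}\le C\norm u_\infty (1+d)^{-3}2^{(k-j)/2}.
\]
This is the identical structure transposed, so the same bound holds. Adding the two triangles proves \eqref{eq:angular-matrix-scalar}. The main obstacle is the bookkeeping of the volume ratio orientation in \eqref{eq:frame-element-overlap}: it is always finer-over-coarser and hence $\le 2^{-|j-k|/2}$. Once this is recognized, the symmetric Schur sums produce the $2^{M/2}$ factor.
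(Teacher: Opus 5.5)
Your final argument is correct and is essentially the paper's proof: bound each entry via \eqref{eq:frame-element-overlap} with the volume ratio always taken finer-over-coarser (hence at most $2^{-|j-k|/2}$ by \eqref{eq:tent-volume-ratio}), count level by level with Lemma~\ref{lem:angular-counting}, and apply Schur's test. The only real difference is that you apply Schur separately to the parts $k\ge j$ and $k<j$, which gives the slightly better factor $2^{M/4}$, whereas the paper merges the two triangles first, so both its row and column sums carry $2^{M/2}$.

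You should delete the detour in your second paragraph. For $j\ge k$ one simply relabels in \eqref{eq:frame-element-overlap}, so the ratio $(\omega(T(I))/\omega(T(J)))^{1/2}$ and the feared $C_\omega^{M}$ never appear. Also, the remark that conjugating by $\operatorname{diag}(h)$ preserves the norm is false, though you never actually use it.
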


\begin{proof}
The nonzero entries of $A_{N,2}^{M,L}$ are indexed by $I\in\mathcal Q_k$, $J\in\mathcal Q_j$ satisfying
\[
 j,k\geq N,\qquad |k-j|<M,\qquad d_{j\wedge k}(I,J)>L .
\]
First assume that $k\geq j$ and write $k=j+d, 0\leq d<M$. For $I\in\mathcal Q_{j+d}, J\in\mathcal Q_j$, by Lemma~\ref{lem:frame-element-overlap},
\[
 |A_u(I,J)|
 \leq
 C\|u\|_{L^\infty(\D)}
 \left(\frac{\omega(T(I))}{\omega(T(J))}\right)^{1/2}
 (1+d_j(I,J))^{-3}.
\]
By \eqref{eq:tent-volume-ratio},
$
 \omega(T(I))\leq2^{1-d}\omega(T(J)),
$
and therefore
\begin{equation*}\label{eq:angular-entry-bound}
 |A_u(I,J)| \leq C\|u\|_{L^\infty(\D)}2^{-d/2}(1+d_j(I,J))^{-3}.
\end{equation*}
Using Lemma~\ref{lem:angular-counting}, we have
\[
 \sum_{\substack{I\in\mathcal Q_{j+d}:d_j(I,J)>L}}
 (1+d_j(I,J))^{-3}
 \leq C2^d(1+L)^{-2},
\]
and
\[
 \sum_{\substack{J\in\mathcal Q_j:d_j(I,J)>L}}
 (1+d_j(I,J))^{-3}
 \leq C(1+L)^{-2}.
\]
Therefore,
\begin{align}
 \sup_{J\in\mathcal Q_j}
 \sum_{\substack{I\in\mathcal Q_{j+d}:d_j(I,J)>L}}
 |A_u(I,J)|
 &\leq
 C\|u\|_{L^\infty(\D)}
 2^{d/2}(1+L)^{-2},
 \label{eq:angular-column-level-bound}\\
 \sup_{I\in\mathcal Q_{j+d}}
 \sum_{\substack{J\in\mathcal Q_j:d_j(I,J)>L}}
 |A_u(I,J)|
 &\leq
 C\|u\|_{L^\infty(\D)}
 2^{-d/2}(1+L)^{-2}.
 \label{eq:angular-row-level-bound}
\end{align}

Now assume that $k<j$ and write $j=k+m,1\leq m<M$. By $d_{j\wedge k}(I,J)=d_k(I,J)$ and Lemma~\ref{lem:frame-element-overlap},
\[
 |A_u(I,J)|
 \leq
 C\|u\|_{L^\infty(\D)}
 \left(\frac{\omega(T(J))}{\omega(T(I))}\right)^{1/2}
 (1+d_k(I,J))^{-3}.
\]
Since
$
 \omega(T(J))\leq2^{1-m}\omega(T(I)),
$
we obtain
\begin{equation*}\label{eq:angular-entry-bound-negative}
 |A_u(I,J)| \leq C\|u\|_{L^\infty(\D)}2^{-m/2}(1+d_k(I,J))^{-3}.
\end{equation*}
By Lemma~\ref{lem:angular-counting},
\[
 \sum_{\substack{I\in\mathcal Q_k:d_k(I,J)>L}}
 (1+d_k(I,J))^{-3}
 \leq C(1+L)^{-2},
\]
and
\[
 \sum_{\substack{J\in\mathcal Q_{k+m}:d_k(I,J)>L}}
 (1+d_k(I,J))^{-3}
 \leq C2^m(1+L)^{-2}.
\]
Thus,
\begin{align}
 \sup_{J\in\mathcal Q_{k+m}}
 \sum_{\substack{I\in\mathcal Q_k:d_k(I,J)>L}}
 |A_u(I,J)|
 &\leq
 C\|u\|_{L^\infty(\D)}
 2^{-m/2}(1+L)^{-2},
 \label{eq:angular-column-level-bound-negative}\\
 \sup_{I\in\mathcal Q_k}
 \sum_{\substack{J\in\mathcal Q_{k+m}:d_k(I,J)>L}}
 |A_u(I,J)|
 &\leq
 C\|u\|_{L^\infty(\D)}
 2^{m/2}(1+L)^{-2}.
 \label{eq:angular-row-level-bound-negative}
\end{align}

Combining \eqref{eq:angular-column-level-bound} and
\eqref{eq:angular-column-level-bound-negative}, we get
\[
 \sup_{J\in\mathcal Q}
 \sum_{I\in\mathcal Q}|A_{N,2}^{M,L}(I,J)|
 \leq
 C2^{M/2}(1+L)^{-2}\|u\|_{L^\infty(\D)}.
\]
Similarly,
\[
 \sup_{I\in\mathcal Q}
 \sum_{J\in\mathcal Q}|A_{N,2}^{M,L}(I,J)|
 \leq
 C2^{M/2}(1+L)^{-2}\|u\|_{L^\infty(\D)}.
\]
The desired estimate \eqref{eq:angular-matrix-scalar} now follows from
Lemma~\ref{lem:Schur}. This completes the proof.
\end{proof}

\section{The estimate of upper and lower blocks}\label{sec:low-uppwe-estimate}

\begin{lemma}\label{lem:adjoint-triangular-symmetry}
For every $N\ge2$ and $M\ge4$,
\begin{equation}\label{eq:triangular-adjoint-symmetry}
 A_{N,3}^{M}(u)=A_{N,4}^{M}(\overline u)^*.
\end{equation}
\end{lemma}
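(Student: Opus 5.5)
The identity should follow from a direct comparison of matrix entries, using the integral formula \eqref{eq:Toeplitz-frame-entries} together with the way the region \eqref{eq:lt} corresponds to \eqref{eq:ut} under transposition of indices.

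First, I will record the adjoint relation entrywise. For $I,J\in\mathcal Q$, by \eqref{eq:Toeplitz-frame-entries},
\[
 \overline{A_{\overline u}(J,I)}
 =\overline{\int_\D \overline{u(z)}\,\phi_I(z)\overline{\phi_J(z)}\,\omega(z)\dd A(z)}
 =\int_\D u(z)\phi_J(z)\overline{\phi_I(z)}\,\omega(z)\dd A(z)
 =A_u(I,J).
\]
Equivalently, $A_{\overline u}=WT_{\omega,\overline u}W^*=WT_{\omega,u}^*W^*=A_u^*$, since $T_{\omega,u}^*=T_{\omega,\overline u}$ on $A^2_\omega$. This holds because $\langle P_\omega(uf),g\rangle=\langle uf,g\rangle=\langle f,\overline u g\rangle=\langle f,P_\omega(\overline u g)\rangle$ for $f,g\in A^2_\omega$.

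Second, I will match the index regions. A pair $(I,J)$ with $I\in\mathcal Q_k$, $J\in\mathcal Q_j$ lies in the lower-triangular region \eqref{eq:lt} exactly when $j,k\ge N$ and $k-j\ge M$. The swapped pair $(J,I)$, whose row index is at level $j$ and column index at level $k$, lies in the upper-triangular region \eqref{eq:ut} exactly when the column level minus the row level, namely $k-j$, is at least $M$, with both levels at least $N$. So $(I,J)\in$ \eqref{eq:lt} if and only if $(J,I)\in$ \eqref{eq:ut}.

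Hence $A_{N,4}^M(\overline u)$ is the matrix that agrees with $A_{\overline u}$ on \eqref{eq:ut} and vanishes elsewhere. Its adjoint has $(I,J)$ entry $\overline{A_{N,4}^M(\overline u)(J,I)}$, which equals $\overline{A_{\overline u}(J,I)}=A_u(I,J)$ when $(I,J)\in$ \eqref{eq:lt} and equals $0$ otherwise. That is precisely $A_{N,3}^M(u)$.

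The only point needing care is that these block operators are bounded, so that entrywise identification determines the operators. Both are compressions $P A Q$ of bounded operators, with $P,Q$ sums of the coordinate projections $\Pi_j$ over the relevant level pairs. Concretely, $A_{N,4}^M(\overline u)=\sum_k \Pi_k A_{\overline u}\Pi_{\ge \max(N,k+M)}$ with $k\ge N$; the adjoint of this sum is the corresponding lower sum. Strong convergence of these sums can be checked from the fact that each is a sum of $\Pi_k(\cdot)$ terms with orthogonal ranges. I expect this boundedness and convergence justification, rather than the algebra, to be the main obstacle. Alternatively, one can simply note that a bounded operator on $\ell^2(\mathcal Q)$ is determined by its matrix entries, and that the adjoint of a bounded operator has the conjugate-transposed matrix.
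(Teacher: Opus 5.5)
Your argument is correct and is essentially the paper's proof. It combines the entrywise relation $A_{\overline u}(J,I)=\overline{A_u(I,J)}$ from \eqref{eq:Toeplitz-frame-entries} with the observation that swapping row and column indices maps the region \eqref{eq:lt} onto \eqref{eq:ut}.

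One caution about your side remark. Boundedness, or strong convergence, of $\sum_{k\ge N}\Pi_kA_{\overline u}\Pi_{\ge k+M}$ does \emph{not} follow from the orthogonality of the ranges. Block-triangular truncations of bounded operators need not be bounded, and here boundedness is the nontrivial content of Proposition~\ref{lem:upper-triangular-Carleson-scalar}.

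Fortunately you do not need it. Every row and column of both matrices lies in $\ell^2(\mathcal Q)$, so the identity holds at the level of matrices: the two sides are formal adjoints on finitely supported vectors. Hence boundedness of either side transfers to the other with the same norm, which is all that is used in the proof of Proposition~\ref{prop:scalar-tail-compression}.
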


\begin{proof}
By \eqref{eq:Toeplitz-frame-entries},
$
 A_{\overline u}(I,J) =\overline{A_u(J,I)}.
$
The adjoint of the region $j\ge k+M$ is exactly the region $k\ge j+M$, which proves \eqref{eq:triangular-adjoint-symmetry}. This completes the whole proof.
\end{proof}

Since \(\widetilde{\bar u}_\omega=\overline{\widetilde u_\omega}\), it suffices to estimate the upper-triangular block $A_{N,4}^{M}(u)$.

\begin{proposition}\label{prop:upper-compact}
Let $\omega\in\widehat{\mathcal D}$ and $u\in L^\infty(\D)$. If $\lim_{|a|\to 1^-}\widetilde u_\omega(a)=0 $,
then
\begin{equation*}\label{eq:upper-double-limit-scalar}
 \lim_{M\to\infty}\limsup_{N\to\infty} \norm{A_{N,4}^{M}(u)}_{\mathcal L(\ell^2(\mathcal Q))}=0.
\end{equation*}
\end{proposition}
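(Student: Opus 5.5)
The plan is to treat $A_{N,4}^M(u)$, whose entries are $A_u(I,J)$ with $I\in\mathcal Q_k$, $J\in\mathcal Q_j$, $j-k\ge M$, $j,k\ge N$, as a sum over bands $d=j-k\ge M$. In each band, $\phi_J$ lives at the finer scale, so $\overline{\phi_I}$ is essentially constant on the support of $\phi_J$. Accordingly, I split
\[
A_u(I,J)=\overline{\phi_I(a_J)}\int_\D u\,\phi_J\,\omega\dd A+\int_\D u\,\phi_J\,\bigl(\overline{\phi_I}-\overline{\phi_I(a_J)}\bigr)\omega\dd A=:P(I,J)+E(I,J).
\]

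\emph{Error part.} By \eqref{eq:frame-element-first-order-overlap}, with the roles of $I,J$ swapped,
\[
|E(I,J)|\le C\norm{u}_\infty\,2^{-d}\bigl(\omega(T(J))/\omega(T(I))\bigr)^{1/2}(1+d_k(I,J))^{-3}.
\]
The tent ratio is at most $2^{1-d}$ by \eqref{eq:tent-volume-ratio}, so each entry is $\lesssim 2^{-3d/2}(1+d_k(I,J))^{-3}$. Lemma~\ref{lem:angular-counting} gives row sums $\lesssim 2^{-3d/2}2^{d}$ and column sums $\lesssim 2^{-3d/2}$. Schur's test (Lemma~\ref{lem:Schur}) then bounds the band by $C2^{-d}\norm{u}_\infty$. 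Summing over $d\ge M$ gives $C2^{-M}$, uniformly in $N$, which disappears under $\lim_M$.

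\emph{Main part as a Carleson embedding.} For $x\in\ell^2(\mathcal Q)$ supported on levels $\ge N$ and $y$ on levels $\ge N$, I group the $I$'s by level and set $g_k=\sum_{I\in\mathcal Q_k}y_I\phi_I$. By Proposition~\ref{prop:Parseval}, $\norm{g_k}\le\norm{\Pi_ky}$, and since the $\phi_I$ for $I\in\mathcal Q_k$ have degrees in $(2^{k-1},2^{k+1})$, the $g_k$ are almost orthogonal. Writing $c_J=\int_\D u\phi_J\omega\dd A$, the pairing becomes
\[
\sum_k\sum_{j\ge k+M}\sum_{J\in\mathcal Q_j}x_J c_J\overline{g_k(a_J)}.
\]
By Cauchy--Schwarz, this reduces to a Carleson embedding $\sum_{J}|c_J|^2|g(a_J)|^2\lesssim\mathcal C\,\norm g^2$ for the discrete measure $\mu=\sum_{J\in\mathcal Q_{\ge N}}\frac{|c_J|^2}{\omega(T(J))}\,\omega|_{T(J)}$, handled with the appendix estimate (Proposition~\ref{lem:upper-triangular-Carleson-scalar}). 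I would use the band-wise version, with the $\ell^2$ sum over $k$ handled via the $2^{-d}$ decay of off-band interactions. The desired conclusion is that $\limsup_N\mathcal C$ can be made small.

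\emph{Key step: smallness of $c_J$ from the Berezin condition.} For each $J\in\mathcal Q_j$ I fix the ancestor $J'\in\mathcal Q_{j-m_0}$, with $m_0\ge4$ fixed. By Lemma~\ref{lem:frame-element-core}, $|\phi_{J'}(a_J)|\ge c\,\omega(T(J'))^{-1/2}$. Running the same first-order splitting with $J'$ in place of $I$ gives
\[
A_u(J',J)=\overline{\phi_{J'}(a_J)}\,c_J+O\bigl(2^{-3m_0/2}\norm u_\infty\bigr).
\]
Here $A_u(J',J)$ lies in the local band, with $|j-k|=m_0$ and $d\le1$, so it is at most $\tau(N;m_0,1)\to0$ by Lemma~\ref{lem:local-band-entries-Dhat}. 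Hence
\[
|c_J|\lesssim\sqrt{\omega(T(J'))}\bigl(\tau(N;m_0,1)+2^{-3m_0/2}\bigr).
\]

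The main obstacle is that inserting this into the Carleson constant costs the factor $\omega(T(J'))/\omega(T(J))$, which can be as large as $(2C_\omega)^{m_0}$ and may overwhelm the gain $2^{-3m_0}$. Here I would use the stopping-time decomposition. In a tent $T(I_0)$, the trivial bound $|c_J|\lesssim\sqrt{\omega(T(J))}$ from Lemma~\ref{lem:frame-element-moments} gives $\mu(T(I_0))\lesssim\omega(T(I_0))$ per generation. The gain has to come from observing that, for generations deeper than $m_0$, the relevant sum $\sum_{J\subset I_0}|c_J|^2$ is controlled by $|A_u(J',J)|^2$ summed over $J$ below each stopping ancestor $J'$. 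That sum is in turn at most $\norm{\Pi_{\ge N}A_u\Pi_{\ge N}\text{-local block}}^2\,\omega(T(J'))$, bounded through Proposition~\ref{lem:near-diagonal-scalar-tail} rather than entrywise.

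Choosing first $m_0$ large and then $N$ large should make $\limsup_N\mathcal C$ small, uniformly in $M\ge m_0$. Together with the $2^{-M}$ error bound, this yields $\lim_M\limsup_N\norm{A_{N,4}^M(u)}=0$.
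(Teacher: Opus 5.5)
Your estimate for the remainder $E(I,J)$ is exactly Corollary~\ref{cor:triangular-remainder-operator-scalar}. Reducing the frozen part $\overline{\phi_I(a_J)}c_J$ to the shifted quantity $\mathfrak C_N^M(u)$ (with $j\ge k+M$, not the Carleson norm of your unshifted $\mu$) is exactly Proposition~\ref{lem:upper-triangular-Carleson-scalar}, so citing it is fine. The band-wise alternative you sketch would not work, however. By Lemma~\ref{lem:frame-element-core}, $|\phi_I(a_J)|\gtrsim\omega(T(I))^{-1/2}$ for every $J\subseteq I$ at depth at least $4$, so the frozen entries have no decay in $d=j-k$. Band norms therefore cannot be summed over $d\ge M$ for general $\widehat{\mathcal D}$-weights. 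The paper handles all $k\le j-M$ at once through $\mathcal S_{N,j-M}y(a_J)$ (Proposition~\ref{lem:shifted-Carleson}).

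The genuine gap is the smallness of $\mathfrak C_N^M(u)$. The obstacle you flag, the ratio $\omega(T(J'))/\omega(T(J))$, is not the real one. Fix $K\in\mathcal Q_k$ and a generation $j\ge k+M$, and take the ancestors $J'\in\mathcal Q_{j-m_0}$ with $J'\subseteq K$. Your argument then gives
\[
\sum_{\substack{J\in\mathcal Q_j\\ J\subseteq K}}|c_J|^2\lesssim\bigl(\eta_N+2^{-2m_0}\norm{u}_{L^\infty(\D)}^2\bigr)\sum_{\substack{J'\in\mathcal Q_{j-m_0}\\ J'\subseteq K}}\omega(T(J'))\asymp\bigl(\eta_N+2^{-2m_0}\norm{u}_{L^\infty(\D)}^2\bigr)2^{-k}\,\widehat\omega(r_{j-m_0}),
\]
where $\eta_N\to0$ bounds the squared row norms of the local block. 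Dividing by $\omega(T(K))\asymp2^{-k}\widehat\omega(r_k)$ and summing over $j\ge k+M$ leaves the factor $\sum_{i\ge k+M-m_0}\widehat\omega(r_i)/\widehat\omega(r_k)$. This is bounded uniformly in $k$ essentially only when $\omega\in\check{\mathcal D}$. For general $\omega\in\widehat{\mathcal D}$ it can be infinite: for the weight of Proposition~\ref{prop:conuterexample}, $\widehat\omega(r_i)\asymp1/i$. So smallness generation by generation, even in the strong form $|c_J|^2\le\epsilon\,\omega(T(J))$, cannot be summed over infinitely many generations. Choosing $m_0$ and then $N$ therefore gives nothing. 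Suppose instead your $J'$ are meant as a disjoint stopping family and $J$ runs over all their descendants. Then the entries $A_u(J',J)$ leave every fixed band, neither $\tau$ nor Proposition~\ref{lem:near-diagonal-scalar-tail} controls them, and bounding their square sums is essentially the statement being proved.

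What is missing is orthogonality across generations, which the paper supplies in Proposition~\ref{cor:scalar-Carleson-tail}. The generations $j\ge k+M$ are grouped into stopping blocks $[m_\nu,m_{\nu+1})$ on which the moments drop by a factor $1-\epsilon$. Then $c_J=e_J+d_J$ is split using the radial factors $1-|z|^{2N_\nu}$ and $|z|^{2N_\nu}$. The $e_J$ are summed over all generations at once by the Bessel inequality of Lemma~\ref{lem:radial-Bessel-modulation}, which gives the factor $\epsilon$. Within a block, all $d_J$ are tested against the single modulated vector $z^{N_\nu}\phi_K$, so only $\omega(T(K))$ enters, via \eqref{eq:F-adaptive-norm}. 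The small fixed-band norm $\norm{A_u}_{N;d_\epsilon+2}$ of Proposition~\ref{prop:scalar-fixed-band-compression} then appears, at a cost $\mathfrak m(N_\nu)/\mathfrak m(2^{k+1})$ that is summable in $\nu$ by \eqref{eq:adaptive-modulation-sum}.
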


For $J\in\mathcal Q_j$, $j\ge1$, we define
\begin{equation*}\label{eq:upper-coefficient-cJ}
 c_J=\int_\D u(z)\phi_J(z)\omega(z)\dd A(z).
\end{equation*}
For integers $N\geq2, M\geq4$, set
\begin{equation*}\label{eq:upper-shifted-Carleson-tail}
 \mathfrak C_N^M(u) = \sup_{\substack{K\in\mathcal Q_k}:k\geq N}
 \frac1{\omega(T(K))}
 \sum_{j\geq k+M}
 \sum_{\substack{J\in\mathcal Q_j:J\subseteq K}}
 |c_J|^2.
\end{equation*}

The proof of Proposition~\ref{prop:upper-compact} follows from Propositions~\ref{lem:upper-triangular-Carleson-scalar} and \ref{cor:scalar-Carleson-tail} below.
\begin{proposition}\label{lem:upper-triangular-Carleson-scalar}
Let $\omega\in\widehat{\mathcal D}$ and $u\in L^\infty(\D)$.  There exists $C=C(\omega,\chi)>0$ such that, for every
$N\ge2$ and $M\ge4$,
\begin{equation*}\label{eq:upper-total-scalar}
 \norm{A_{N,4}^{M}(u)}_{\mathcal L(\ell^2(\mathcal Q))}
 \le C\mathfrak C_N^M(u)^{1/2}
 +C2^{-M}\norm{u}_{L^\infty(\D)}.
\end{equation*}
\end{proposition}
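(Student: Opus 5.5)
The plan is to write each upper-triangular entry as a rank-one main term plus a small error, using the localization of the finer element $\phi_J$ near the tent $T(J)$.

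\textbf{Splitting the entries.} For $I\in\mathcal Q_k$, $J\in\mathcal Q_j$ with $j\ge k+M$, expand $\overline{\phi_I}$ around the point $a_J$:
\[
 A_u(I,J)=\overline{\phi_I(a_J)}\,c_J+\int_\D u(z)\phi_J(z)\overline{\bigl(\phi_I(z)-\phi_I(a_J)\bigr)}\,\omega(z)\dd A(z)=:B(I,J)+E(I,J).
\]
Accordingly $A_{N,4}^M(u)=B_N^M+E_N^M$, and I estimate the two pieces separately.

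\textbf{The error $E$.} Apply \eqref{eq:frame-element-first-order-overlap} with the roles of the indices exchanged: $J$ is the fine index and $I$ the coarse one. Together with \eqref{eq:tent-volume-ratio}, this gives
\[
 |E(I,J)|\le C\norm{u}_{L^\infty(\D)}\,2^{k-j}\,2^{(k-j)/2}\,(1+d_k(I,J))^{-3}.
\]
Fix the level gap $m=j-k\ge M$. The corresponding piece of $E$ maps $\ell^2(\mathcal Q_j)$ to $\ell^2(\mathcal Q_{j-m})$, with different $j$ landing in orthogonal ranges. Hence its norm is the supremum over $j$ of the norm of a single level block.
\begin{itemize}
 \item By Lemma~\ref{lem:angular-counting}, the column sums of a level block are $O(1)$ and the row sums are $O(2^m)$.
 \item Schur's test (Lemma~\ref{lem:Schur}) then bounds the block norm by $C\norm{u}_{L^\infty(\D)}2^{-3m/2}2^{m/2}=C\norm{u}_{L^\infty(\D)}2^{-m}$.
\end{itemize}
Summing over $m\ge M$ gives $\norm{E_N^M}\le C2^{-M}\norm{u}_{L^\infty(\D)}$, which is the second term in the statement.

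\textbf{Reducing the main term $B$ to a Carleson embedding.} Test against $x,y\in\ell^2(\mathcal Q)$ and apply Cauchy--Schwarz in $J$:
\[
 |\langle B_N^M x,y\rangle|=\Bigl|\sum_J c_Jx_J\overline{G_J}\Bigr|\le\norm{x}\Bigl(\sum_{J}|c_J|^2|G_J|^2\Bigr)^{1/2},\qquad G_J=\sum_{N\le k\le j-M}\ \sum_{I\in\mathcal Q_k}y_I\phi_I(a_J).
\]
It therefore suffices to prove
\begin{equation*}
 \sum_{j\ge N+M}\sum_{J\in\mathcal Q_j}|c_J|^2|G_J|^2\le C\,\mathfrak C_N^M(u)\,\norm{y}^2 .
\end{equation*}
First dominate $G_J$ pointwise by Lemma~\ref{lem:frame-element-localization}:
\[
 |G_J|\le C\sum_{N\le k\le j-M}\ \sum_{I\in\mathcal Q_k}\frac{|y_I|}{\sqrt{\omega(T(I))}}\,\bigl(1+2^kd_\T(\theta_J,\theta_I)\bigr)^{-3}.
\]
Next, split the sum over $I\in\mathcal Q_k$ into two kinds of arcs, using $d_\T(\theta_J,\theta_{I'})\approx d_\T(\theta_J,\theta_I)$ for $I'$ at level $k$ adjacent to the unique $K\in\mathcal Q_k$ containing $J$.
\begin{itemize}
 \item Arcs $I$ whose angular distance from $K$ is comparable to $K$ itself (within a bounded number of positions): they contribute the fixed-shift ancestor pattern described below.
 \item Arcs $I$ at angular distance $\sim n2^{-k}$ from $K$: they contribute with weight $(1+n)^{-3}$.
\end{itemize}
This reduces the bound to a fixed-shift ancestor sum with geometrically decaying weights $(1+n)^{-3}$ that is summable over shifts. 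For a fixed shift $n$, the problem becomes a dyadic Carleson embedding for the "ancestor sum" operator
\[
 y\mapsto\Bigl(\sum_{K\supseteq J,\ N\le k\le j-M}\frac{y_{\sigma_n(K)}}{\sqrt{\omega(T(K))}}\Bigr)_J ,
\]
where $\sigma_n$ shifts an arc by $n$ positions within its level. The measure in this embedding is $\mu=\sum_J|c_J|^2\delta_J$. Its Carleson condition, relative to the tents $K$ of level $k\ge N$ and restricted to descendants of generation at least $k+M$, is exactly $\mathfrak C_N^M(u)$.

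\textbf{Main obstacle: the weighted embedding.} Proving this ancestor-sum embedding with the weights $\omega(T(K))^{-1/2}$ is the step I expect to be hardest, and I would invoke the appendix estimate here. If it is not directly available, I would run a stopping-time argument on $|y_K|/\sqrt{\omega(T(K))}$. The key input is that $\omega(T(K))$ decays at least geometrically along descendants by \eqref{eq:tent-volume-ratio}. Consequently the ancestor sum at $J$ is controlled, up to a constant, by a dyadic maximal-type quantity of the $y_K$ rescaled by $\omega(T(K))^{-1/2}$. A standard Carleson embedding (the $\ell^2$ analogue of the dyadic Carleson embedding theorem) then bounds its $\mu$-weighted square sum by $C\mathfrak C_N^M(u)\norm{y}^2$.

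\textbf{Conclusion.} Summing the shift contributions with weights $(1+n)^{-3}$ gives $\norm{B_N^M}\le C\mathfrak C_N^M(u)^{1/2}$. Adding the error bound $\norm{E_N^M}\le C2^{-M}\norm{u}_{L^\infty(\D)}$ yields the proposition.
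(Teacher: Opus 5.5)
Your splitting $A_u(I,J)=\overline{\phi_I(a_J)}\,c_J+E(I,J)$ is the paper's, and so is the Schur-test bound $\|E_N^M\|\le C2^{-M}\|u\|_{L^\infty(\D)}$. Your Cauchy--Schwarz reduction of the main term to
\[
\sum_{j\ge N+M}\sum_{J\in\mathcal Q_j}|c_J|^2|\mathcal S_{N,j-M}y(a_J)|^2\le C\,\mathfrak C_N^M(u)\|y\|^2
\]
also matches; this is Proposition~\ref{lem:shifted-Carleson} with $b_J=c_J$.

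\textbf{The gap.} It lies in how you prove that embedding. You bound $G_J$ termwise by $\sum_k\sum_I|y_I|\,|\phi_I(a_J)|$, and after that step the inequality is false.

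For the ancestor $K\supseteq J$ at each level $k$, the factor $(1+2^kd_\T(\theta_J,\theta_K))^{-3}$ is bounded below. Lemma~\ref{lem:frame-element-core} shows $|\phi_K(a_J)|\ge c\,\omega(T(K))^{-1/2}$ genuinely, so this is not an artifact of a crude upper bound. Hence your majorant dominates the positive ancestor operator
\[
Ty(J)=\sum_{K\supseteq J,\ N\le k\le j-M}|y_K|\,\omega(T(K))^{-1/2},
\]
already in the shift $n=0$. Your argument uses nothing about $\mu_J=|c_J|^2$ beyond its Carleson property. Testing the adjoint on the indicator of the descendants of $K_0$ (with $k_0\ge N$) gives
\[
 \|T\|^2_{\ell^2(\mathcal Q)\to\ell^2(\mu)}\ \ge\ \frac1{\mu(K_0)}\sum_{K\subseteq K_0}\frac{\mu(K)^2}{\omega(T(K))},
 \qquad
 \mu(K):=\sum_{j\ge k+M}\ \sum_{J\in\mathcal Q_j,\,J\subseteq K}\mu_J .
\]
Now put mass $2^{-k-2}\int_{r_k}^{r_{k+1}}r\omega(r)\dd r$ on one level-$(k+M)$ descendant of every $K$ with $k\ge N$. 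This gives a Carleson sequence with $\mu(K)=\omega(T(K))$ exactly. The right-hand side then becomes $\sum_{k\ge k_0}\int_{r_k}^1r\omega(r)\dd r\big/\int_{r_{k_0}}^1r\omega(r)\dd r$. For the $\widehat{\mathcal D}$-weight of Proposition~\ref{prop:conuterexample}, $\widehat\omega(r_k)\asymp 1/k$, so this diverges.

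Your intermediate claim, that the ancestor sum is controlled by a maximal quantity of $|y_K|\,\omega(T(K))^{-1/2}$, is also false. Take $y_K=\omega(T(K))^{1/2}$ along a single chain: the sum equals the number of generations while the maximum is $1$. Geometric growth of $\omega(T(K))^{-1/2}$ only helps when $|y_K|$ is roughly constant.

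\textbf{What is missing.} You need to keep the cancellation between generations; the paper never sums $|y_I\phi_I(a_J)|$ over levels. It writes $\mathcal S_{N,m}y=\mathcal C_mf+h_m$:
\begin{itemize}
\item $f=W^*\Pi_{\ge N}y$ is a single function with $\|f\|_{A^2_\omega}\le\|y\|$;
\item $h_m=(I-\mathcal C_m)g_m-\mathcal C_mg_{m+1}$ involves only two adjacent levels, so $\mathcal C_{m+1}h_m=h_m$ and $\sum_m\|h_m\|^2\le4\|y\|^2$.
\end{itemize}
Both $\mathcal C_{j-M}f(a_J)$ and $h_{j-M}(a_J)$ are then bounded by $\inf_{T(I)}\mathcal M_\omega$ of the corresponding function, over a suitable dyadic ancestor $I$ of $J$ (Lemma~\ref{lem:low-frequency-localization}). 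For the $h$-part, a one-level Carleson bound summed over $m$ finishes. For the $\mathcal C f$-part, the dyadic Carleson embedding (Lemma~\ref{lem:classical-car-emb}) finishes. Both use the $L^2_\omega$-boundedness of $\mathcal M_\omega$ (Lemma~\ref{thm:tent-maximal}).

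The appendix estimate you intend to invoke (Lemma~\ref{lem:car-emb-baby}) controls $\mathcal C_{j-M}f(a_J)$ for one $f\in A^2_\omega$. It therefore applies to $G_J$ only after this recombination, not to your positive majorant.
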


\begin{proposition}\label{cor:scalar-Carleson-tail}
Let $\omega\in\widehat{\mathcal D}$ and $u\in L^\infty(\D)$. If $\lim_{|a|\to 1^-}\widetilde u_\omega(a)=0 $,
then
\begin{equation}\label{eq:Carleson-tail-double-limit}
 \lim_{M\to\infty}\limsup_{N\to\infty}\mathfrak C_N^M(u)=0.
\end{equation}
\end{proposition}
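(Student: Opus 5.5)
The plan is to pass from the coefficients $c_J$ to matrix entries of $A_u$ with a coarse row index $K$, and then to control those entries by the Berezin-vanishing hypothesis, in the spirit of Lemma~\ref{lem:local-band-entries-Dhat}. I expect the last step, handling infinitely many fine levels at once, to be the main obstacle.

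\emph{Step 1: comparison $c_J \leftrightarrow A_u(K,J)$.} Fix $K\in\mathcal Q_k$ with $k\ge N$, and $J\subseteq K$ with $J\in\mathcal Q_j$ and $j\ge k+M\ge k+4$. Write
\[
A_u(K,J)=\int_\D u\,\phi_J\,\overline{\phi_K}\,\omega\dd A
=\overline{\phi_K(a_J)}\,c_J+\int_\D u\,\phi_J\,\overline{\bigl(\phi_K-\phi_K(a_J)\bigr)}\,\omega\dd A .
\]
Since $J\subseteq K$, we have $d_k(J,K)\le 1$. By \eqref{eq:frame-element-first-order-overlap}, with the roles of $I,J$ played by $J,K$, the error term is at most $C\norm{u}_{L^\infty(\D)}2^{k-j}(\omega(T(J))/\omega(T(K)))^{1/2}$. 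By Lemma~\ref{lem:frame-element-core}, $|\phi_K(a_J)|\ge c\,\omega(T(K))^{-1/2}$. Hence
\[
\frac{|c_J|^2}{\omega(T(K))}\le C|A_u(K,J)|^2+C\norm{u}_{L^\infty(\D)}^2\,4^{k-j}\frac{\omega(T(J))}{\omega(T(K))}.
\]
The tents $T(J)$, $J\subseteq K$, of a fixed level are disjoint and contained in $T(K)$. Summing the error term over $J\subseteq K$ and $j\ge k+M$ therefore gives at most $C\norm{u}^2_{L^\infty(\D)}4^{-M}$, which vanishes as $M\to\infty$ uniformly in $N$. It remains to show
\[
\lim_{M\to\infty}\limsup_{N\to\infty}\ \sup_{k\ge N,\,K\in\mathcal Q_k}\ \sum_{j\ge k+M}\sum_{J\in\mathcal Q_j}|A_u(K,J)|^2=0 .
\]
Since $A_u(K,J)=\overline{A_{\bar u}(J,K)}$ and $\widetilde{\bar u}_\omega=\overline{\widetilde u_\omega}$, this sum equals $\norm{\Pi_{\ge k+M}WT_{\omega,\bar u}\phi_K}^2$. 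By \eqref{eq:single-level-Parseval} it is at most $\sum_{n>2^{k+M-1}}|\langle T_{\omega,\bar u}\phi_K,e_n\rangle|^2$, i.e.\ the high-frequency energy of $T_{\omega,\bar u}\phi_K$.

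\emph{Step 2: reduction to kernels.} By Proposition~\ref{prop:frame-element-kernel-representation},
\[
T_{\omega,\bar u}\phi_K=\frac1{2\pi}\int L_k(t)\,T_{\omega,\bar u}k^\omega_{r_ke^{i(\theta_K+t)}}\dd t .
\]
By \eqref{eq:Lj-tail}, the part with $|t|>R2^{-k}$ contributes $O((1+R)^{-1})\norm{u}_{L^\infty(\D)}$, and by \eqref{eq:Lj-L1} the remaining part is an $L^1$-average. So it suffices to prove the following claim: for fixed $R$, with $Q_n$ the orthogonal projection of $A^2_\omega$ onto $\overline{\operatorname{span}}\{e_m:m>n\}$,
\[
\lim_{M\to\infty}\limsup_{|b|\to1}\ \sup\Bigl\{\norm{Q_{2^{M}/(1-|b|)}T_{\omega,\bar u}k_b^\omega}\Bigr\}=0,
\]
uniformly over $b$ in the windows $\mathcal W_{1,1+R}(a_K)$.

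\emph{Step 3: the high-frequency estimate (main obstacle).} For $g=T_{\omega,\bar u}k_b^\omega$, the energy of $g$ at frequencies $\gtrsim 2^M/(1-|b|)$ is governed by integrals against frame elements $\phi_J$ at levels $j\ge k+M$. I would split these $J$ in two groups.

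\begin{enumerate}
\item \emph{Angularly far, or levels far from $k$.} Here I use the decay in Lemma~\ref{lem:frame-element-localization} and Lemma~\ref{lem:angular-counting}. I expect this to yield a bound of the form $\epsilon(M)+\epsilon(L)$, uniformly in $b$.
\item \emph{Angularly near, $d_k\le L$, with $k+M\le j\le k+M'$.} There are only finitely many such $J$, with a count depending on $M,M',L$. Each pairing $\langle T_{\omega,\bar u}k_b^\omega,\phi_J\rangle$ is handled by the normal-family argument of Lemma~\ref{lem:affine-polarization-Dhat}, applied to windows around $a_K$ of depth up to $2^{-M'}$, i.e.\ with $\alpha=2^{-M'-1}$. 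This makes it tend to $0$ as $N\to\infty$, exactly as in the proof of Lemma~\ref{lem:local-band-entries-Dhat}.
\end{enumerate}

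The delicate point is the infinite range $j>k+M'$. The crude bound $|c_J|\lesssim\norm{u}_{L^\infty(\D)}\sqrt{\omega(T(J))}$ only gives a per-level contribution $\lesssim\widehat\omega(r_j)/\widehat\omega(r_k)$, and this need not be summable in $j$ for a $\widehat{\mathcal D}$-weight. Here I would use a stopping-time decomposition of the tree below $K$. At the stopping cubes $K'$ of level $k+M'$, the remaining sum is a Carleson sum of the same form starting at $K'$. I would try to show that the mass of such cubes where this sum is large is a small fraction $\eta(M')$ of $\omega(T(K))$. For this I would use the a priori bound $\mathfrak C^M_N(u)\le C\norm{u}_{L^\infty(\D)}^2$, which follows from Step~1 and $\norm{A_u}\le\norm{u}_{L^\infty(\D)}$, together with the local vanishing from the second group above on the complementary cubes. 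This yields a recursive inequality $\mathfrak C_N^M\le\epsilon_N+\eta\,\mathfrak C^{M}_{N+M'}$ with $\eta<1$, and iterating it gives \eqref{eq:Carleson-tail-double-limit}. Making $\eta<1$ rigorous without a reverse-doubling assumption on $\omega$ is the step I expect to require the most care.
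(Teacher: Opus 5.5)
\paragraph{A gap: the unbounded range of fine levels.}
Your Step~1 is correct and worth keeping. Lemma~\ref{lem:frame-element-core} and \eqref{eq:frame-element-first-order-overlap} give
\[
\mathfrak C_N^M(u)\lesssim \sup_{k\ge N,\,K\in\mathcal Q_k}\norm{\Pi_{\ge k+M}WT_{\omega,\bar u}\phi_K}^2+4^{-M}\norm{u}_{L^\infty(\D)}^2 ,
\]
and the paper freezes at $a_J$ in the same way. The finitely many near entries with $j\le k+M'$ are also fine, since they are controlled by $\tau(N;M',L)\to0$.

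The gap is everything else: the unbounded range $j>k+M'$. Neither of your two suggestions closes it.

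\emph{Frame decay alone cannot handle this range.} Your group (1) would give a bound depending on $u$ only through $\norm{u}_{L^\infty(\D)}$, and no such bound exists. Take $u=\bar v$ with $v$ the symbol of Section~\ref{sec:counter}, and write $k_j=\lambda_j/\log 2$, so that $n_j=2^{k_j}$ and $l_j=2^{6k_j}$.
\begin{itemize}
\item The computation in Claim~B gives $\alpha_{j,n}\ge c>0$ for all $2^{k_j-1}<n<2^{k_j+1}$.
\item Hence $\norm{\Pi_{\ge k_j+M}WT_{\omega,v}\phi_K}\ge c'>0$ for every $K\in\mathcal Q_{k_j}$ and every $M\le 5k_j-1$.
\end{itemize}
So a bounded symbol can push the energy of $\phi_K$ from level $k$ up to level about $6k$. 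The Berezin hypothesis therefore has to act across arbitrarily long ranges of levels, not on a window of bounded width.

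\emph{The recursion $\mathfrak C_N^M\le\epsilon_N+\eta\,\mathfrak C^M_{N+M'}$ has no mechanism behind it.}
\begin{itemize}
\item The descendants of $K$ at level $k+M'$ carry a fraction comparable to $\widehat\omega(r_{k+M'})/\widehat\omega(r_k)$ of $\omega(T(K))$.
\item For the same weight, $\widehat\omega(r)=1/\log\frac{e}{1-r}$ when $r>1-e^{-1}$. The fraction is then $\frac{1+k\log 2}{1+(k+M')\log 2}\to1$ as $k\to\infty$.
\item Showing that only a small share of descendants has a large Carleson sum is just the original statement at finer scales.
\item The vanishing of $A_u(K,J)$ for $j\le k+M'$ says nothing about $c_{J'}$ for $J'$ far below those descendants.
\end{itemize}
So $\eta<1$ is not a technicality; it is the whole problem.

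\paragraph{The missing idea: modulation by $z^{N_\nu}$.}
This is the argument of Proposition~\ref{prop:upper-Carleson-one-sided}.
\begin{itemize}
\item A stopping time on the moments produces blocks of levels $[m_\nu,m_{\nu+1})$, possibly very long, and multipliers $z^{N_\nu}$ with $N_\nu=2^{h_\nu}$.
\item These satisfy $\mathcal E(n,N_\nu)\le\epsilon$ at every frequency of the block, and $\mathfrak m(x_{\nu+1})<(1-\epsilon)\mathfrak m(x_\nu)$.
\item Split $c_J=e_J+d_J$ according to the factors $1-|z|^{2N_\nu}$ and $|z|^{2N_\nu}$.
\item The $e_J$ contribute $O(\epsilon+2^{-6M})$, by Lemma~\ref{lem:radial-Bessel-modulation} and Lemma~\ref{lem:frame-element-moments}.
\item The $d_J$ are frozen at $a_J$ exactly as in your Step~1, but against $z^{N_\nu}\phi_K$ instead of $\phi_K$.
\item Now $W(z^{N_\nu}\phi_K)$ and all $W(z^{N_\nu}\phi_J)$ from the block lie in the range of $\Pi_{h_\nu,d_\epsilon+2}$. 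This band has bounded width, and $\norm{A_u}_{N;d_\epsilon+2}\to0$ there by Proposition~\ref{prop:scalar-fixed-band-compression}.
\item The cost is $\norm{z^{N_\nu}\phi_K}^2\lesssim\mathfrak m(N_\nu)/\mathfrak m(2^{k+1})$. This is summable in $\nu$ because the moments decay geometrically along the blocks.
\end{itemize}
This is how a bounded-width estimate coming from the Berezin hypothesis controls an unbounded range of levels. Your argument has no substitute for it.
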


\subsection{A Carleson embedding estimate}
For $x=(x_I)_{I\in\mathcal Q}\in\ell^2(\mathcal Q)$ and integers $m\ge N\ge1$, define a linear operator (band-limited operator) between
$\ell^2(\mathcal{Q})$ and $A_\omega^2$ by
\begin{equation*}\label{eq:truncated-frame-synthesis}
 \mathcal S_{N,m}x= \sum_{\ell=N}^{m}\sum_{I\in\mathcal Q_\ell}x_I\phi_I .
\end{equation*}
We set $\mathcal S_{N,m}x=0$ when $m<N$. For $j\geq 0$, recall that $\Pi_j$ denote the orthogonal projection of $\ell^2(\mathcal Q)$ onto $\ell^2(\mathcal Q_j)$. For integers $m\geq N\geq1$, set $\Pi_{[N,m]}=\sum_{\ell=N}^{m}\Pi_\ell$. Then $\mathcal S_{N,m}x= W^*\Pi_{[N,m]}x.$ Moreover, by the definition of $\phi_I$,
\[
 (\mathcal S_{N,m}x)(z)
 =
 \sum_{n\geq1}
 \left[
  \sum_{\ell=N}^{m}
  \frac{\chi(2^{-\ell}n)}{\sqrt{2^{\ell+3}}}
  \sum_{I\in\mathcal Q_\ell}x_Ie^{-in\theta_I}
 \right]e_n(z).
\]
Since $\operatorname{supp}\chi\subset(1/2,2)$, the factor
$\chi(2^{-\ell}n)$ can be nonzero only when
$
 2^{\ell-1}<n<2^{\ell+1}.
$
Therefore,
\[
 \operatorname{Ran}(\mathcal S_{N,m})
 \subseteq
 \operatorname{span}
 \left\{e_n:2^{N-1}<n<2^{m+1}\right\}.
\]
If $x=Wf$ for some $f\in A_\omega^2$, then 
\[
 \mathcal S_{N,m}Wf
 =
 \sum_{n\geq1}
 \left(
  \sum_{\ell=N}^{m}\chi(2^{-\ell}n)^2
 \right)
 \langle f,e_n\rangle_{A_\omega^2}e_n.
\]
When $m<N$, we understand both $\Pi_{[N,m]}$ and $\mathcal S_{N,m}$ to be zero.

\begin{proposition}\label{lem:shifted-Carleson}
Fix $N\geq1$ and $M\geq4$. Let $(b_J)_{J\in\mathcal Q_j,\ j\geq N+M}$ be a sequence of complex
numbers, and suppose that
\begin{equation}\label{eq:shifted-Carleson-condition}
 \norm{b}_{\mathrm C;N,M}^2
 =
 \sup_{\substack{I\in\mathcal Q_k}:k\geq N}
 \frac1{\omega(T(I))}
 \sum_{j\geq k+M}
 \sum_{\substack{J\in\mathcal Q_j: J\subseteq I}}
 |b_J|^2
 <\infty.
\end{equation}
Then there exists a constant $C=C(\omega,\chi)>0$, independent of
$N$ and $M$, such that
\begin{equation}\label{eq:shifted-Carleson-embedding}
 \sum_{j\geq N+M}
 \sum_{J\in\mathcal Q_j}
 |b_J|^2
 \left|
  (\mathcal S_{N,j-M}x)(a_J)
 \right|^2
 \leq
 C\norm{b}_{\mathrm C;N,M}^2
 \norm{x}_{\ell^2(\mathcal Q)}^2.
\end{equation}
\end{proposition}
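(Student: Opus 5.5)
The inequality \eqref{eq:shifted-Carleson-embedding} is a discrete Carleson embedding, and I would prove it with a stopping-time (maximal function) argument. The plan is to dominate $|(\mathcal S_{N,j-M}x)(a_J)|$ pointwise by a dyadic maximal function of the data $x$, to show that this maximal function is bounded on $\ell^2$, and then to feed the condition \eqref{eq:shifted-Carleson-condition} into the classical Carleson embedding theorem on the dyadic tree $\mathcal Q$.

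\textbf{Step 1: pointwise control.} Fix $J\in\mathcal Q_j$ with $j\ge N+M$. Then
\[
(\mathcal S_{N,j-M}x)(a_J)=\sum_{\ell=N}^{j-M}\sum_{I\in\mathcal Q_\ell}x_I\phi_I(a_J).
\]
Since $a_J$ has modulus $1-2^{-j}$ and $\ell\le j-M$, Lemma~\ref{lem:frame-element-localization} with $p=3$ gives
\[
|\phi_I(a_J)|\le C\,\omega(T(I))^{-1/2}\bigl(1+2^\ell d_\T(\theta_J,\theta_I)\bigr)^{-3}.
\]
For each fixed level $\ell$, let $I_\ell(J)$ denote the ancestor of $J$ in $\mathcal Q_\ell$. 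I define the weighted local average
\[
\mathcal M_\ell x(J)=\sum_{I\in\mathcal Q_\ell}|x_I|\,\omega(T(I))^{-1/2}\bigl(1+2^\ell d_\T(\theta_J,\theta_I)\bigr)^{-3}.
\]
This gives $|(\mathcal S_{N,j-M}x)(a_J)|\le C\sum_{\ell=N}^{j-M}\mathcal M_\ell x(J)$.

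The sum over $\ell$ is not an $\ell^2$-type quantity, so a naive maximal bound loses a factor of the number of levels. A sharper route uses the fact that $\mathcal S_{N,m}x$ is an analytic function. Set $g=\mathcal S_{N,j-M}x$; it has degree at most $2^{j-M+1}$ and satisfies $\norm{g}_{A^2_\omega}\le\norm{x}$. I would control $|g(a_J)|$ by averages of $g$ over a region of scale $2^{-(j-M)}$ around $a_J$, which lies at distance $2^{-j}$ from the boundary. This control comes from a Bernstein-type smoothness of band-limited polynomials, namely that $g$ is essentially constant at scale $2^{M-j}$. The same mechanism appears in the proof of \eqref{eq:frame-element-first-order-overlap}.

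Concretely, I would aim for the bound
\[
|g(a_J)|^2\le \frac{C}{\omega(T(\widehat J))}\int_{\widetilde T(\widehat J)}|g|^2\,\omega\,dA,
\]
where $\widehat J\in\mathcal Q_{j-M}$ is the ancestor of $J$ and $\widetilde T$ is a Schwartz-tail enlargement of the tent. This bound should follow by writing $g=\sum_n \widehat g(n)e_n$ with the frequencies truncated below $2^{j-M+1}$, and then reproducing $g$ with a smooth kernel at scale $2^{j-M}$ built exactly as in Proposition~\ref{prop:frame-element-kernel-representation}. In other words, I would use $\psi$-localized reproducing formulas, with Lemma~\ref{lem:periodic-multiplier} providing the decay.

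\textbf{Step 2: Carleson embedding.} After Step 1, the left side of \eqref{eq:shifted-Carleson-embedding} is bounded by
\[
\sum_{K}\Bigl(\frac{1}{\omega(T(K))}\sum_{j\ge k+M,\,J\subseteq K,\,J\in\mathcal Q_j}|b_J|^2\Bigr)\cdot(\text{an average of }|g|^2\text{ near }K),
\]
where I group $J$ by its ancestor $K=\widehat J$. Since $j=k+M$ exactly, each $J$ has a unique such $K$. The Carleson condition controls only the full sums over descendants, so I would run the standard Carleson embedding theorem on the tree with measure $\mu(K)=\sum_{J\,:\,\widehat J=K}|b_J|^2$. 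The condition $\sum_{K'\subseteq K}\mu(K')\le \norm{b}^2_{\mathrm C;N,M}\,\omega(T(K))$ is precisely \eqref{eq:shifted-Carleson-condition}. The dyadic maximal function $\sup_{K\ni z}\omega(T(K))^{-1}\int_{\widetilde T(K)}|g|^2\omega$ would then be handled by the stopping-time argument: choose maximal tents where the average exceeds $2^m$, sum the Carleson masses over the stopping tents, and use the weak-type $(1,1)$ bound for the dyadic maximal operator on the space $(\D,\omega\,dA)$ over tents. That bound gives $\sum_K\mu(K)\,\mathrm{avg}_K|g|^2\le C\norm{b}^2\norm{g}^2_{A^2_\omega}$, and combined with $\norm{g}\le\norm{x}$ this closes the proof.

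\textbf{Main obstacle.} I expect the hardest part to be the Schwartz tails in the enlarged tents $\widetilde T(K)$. These tails have to be summed over angular translates with weights $(1+m)^{-3}$, using the angular counting in Lemma~\ref{lem:angular-counting}, and the radial tails must be handled with the $\widehat{\mathcal D}$ doubling \eqref{eq:iterated-tail-bound}. All of this has to be done without the constants depending on $M$. To keep the constants uniform, I would use the decay factor $e^{-2^{\ell-2}(1-r)}$ of Lemma~\ref{lem:frame-element-localization} and let the exponent $p$ be large.
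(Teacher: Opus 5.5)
There is a genuine gap, and it sits exactly where this proposition goes beyond the classical dyadic Carleson embedding. In Step~1 you set $g=\mathcal S_{N,j-M}x$, but this function depends on $j$. For the ancestor $K=\widehat J\in\mathcal Q_k$, the function being evaluated is $g_k=\mathcal S_{N,k}x=\sum_{\ell=N}^{k}W^*\Pi_\ell x$, which is a different polynomial at every level. Step~2 nevertheless runs the Carleson embedding as if a single $g$ were averaged over all tents, and it closes with $\norm{g}\le\norm{x}$. The embedding theorem needs one fixed function. Applied level by level, it only gives $\sum_{K\in\mathcal Q_k}\mu(K)\langle|g_k|\rangle_{T(K),\omega}^2\lesssim\norm{b}_{\mathrm C;N,M}^2\norm{x}^2$ for each $k$, and summing over $k$ loses a factor comparable to the number of levels. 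Your analyticity argument in Step~1 does not help here, because it controls each $g_k$ separately.

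The missing idea is to compare $\mathcal S_{N,m}x$ with a smooth low-pass of one fixed function. Set $f=W^*\Pi_{\ge N}x$ and $g_\ell=W^*\Pi_\ell x$. The operator $\mathcal C_m$ fixes $g_\ell$ for $\ell\le m-1$ and kills it for $\ell\ge m+2$, so $\mathcal S_{N,m}x=\mathcal C_mf+h_m$ with $h_m=(I-\mathcal C_m)g_m-\mathcal C_mg_{m+1}$.
\begin{itemize}
\item For the main term, Lemma~\ref{lem:low-frequency-localization} gives $|\mathcal C_{j-M}f(a_J)|\le C\inf_{T(I_J)}\mathcal M_\omega f$, uniformly in $j$, where $I_J\in\mathcal Q_{j-M}$ is the ancestor of $J$. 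This is where your reproducing-kernel idea belongs, but the kernel must act on the fixed $f$, not on $g_{j-M}$. Lemma~\ref{lem:classical-car-emb} and the $L^2$-bound of $\mathcal M_\omega$ then finish this term.
\item Each $h_m$ involves only two levels, so $\sum_m\norm{h_m}^2\le4\norm{x}^2$. Since $\mathcal C_{m+1}h_m=h_m$, the same lemma plus a one-level Carleson sum gives $\sum_{J\in\mathcal Q_{m+M}}|b_J|^2|h_m(a_J)|^2\le C\norm{b}_{\mathrm C;N,M}^2\norm{h_m}^2$, and this is summable in $m$.
\end{itemize}

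Step~2 also fails as written, even for a fixed $g$. Your pointwise bound uses an $L^2$-average, $|g(a_J)|^2\lesssim\omega(T(\widehat J))^{-1}\int_{\widetilde T(\widehat J)}|g|^2\omega\,dA$. The stopping-time argument then yields $\norm{b}_{\mathrm C;N,M}^2\sum_m2^m\,\omega(\{\mathcal M_\omega(|g|^2)>2^m\})$. Weak type $(1,1)$ bounds each layer by $C\norm{g}^2$, so the sum over $m$ diverges.

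In fact the inequality $\sum_K\mu(K)\langle|g|^2\rangle_{T(K),\omega}\lesssim\norm{b}_{\mathrm C;N,M}^2\norm{g}^2$ is false. Take a chain $K_N\supset K_{N+1}\supset\cdots$, set $|b_{K_{i+M}}|^2=\omega(T(K_i))$, and set all other $b_J=0$. Then $\mu(K_i)=\omega(T(K_i))$, and the Carleson norm is at most $2$ by \eqref{eq:tent-volume-ratio}. Now take $g=\phi_{K_n}$. It satisfies $\norm{g}\le1$ and, by Lemma~\ref{lem:frame-element-core} and \eqref{eq:frame-element-smoothness}, $\int_{T(K_n)}|g|^2\omega\,dA\gtrsim1$. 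The left side is then at least $\sum_{i=N}^{n}\int_{T(K_i)}|g|^2\omega\,dA\gtrsim n-N+1$.

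What works is the square of an $L^1$-average, $|g(a_J)|\lesssim\inf_{T(I)}\mathcal M_\omega g$, which a reproducing-formula argument does give, combined with the strong $L^2$ bound of Lemma~\ref{thm:tent-maximal}.

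The Schwartz tails you single out as the main obstacle are the routine part. The paper absorbs them by taking decay order $3+\lceil\log_2C_\omega\rceil$ in Lemma~\ref{lem:periodic-multiplier}. This beats the growth $C_\omega^{\ell+2}$ of the ratios $\omega(T(\Gamma_\ell))/\omega(E_\ell)$.
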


For the proof of Proposition~\ref{lem:shifted-Carleson}, we need to define another operator. For an integer $m\ge1$, define
\begin{equation}\label{eq:smooth-low-pass}
 \mathcal C_mf
 =
 \ip f{e_0}e_0
 +
 \sum_{n=1}^{\infty}
 \left(
  \sum_{k=0}^{\infty}\chi(2^{k-m}n)^2
 \right)
 \ip f{e_n}e_n ,
 \qquad f\in A_\omega^2 .
\end{equation}
If we write
$
 f(z)=\sum_{n=0}^\infty c_nz^n,
$
then
\begin{equation}\label{eq:smooth-low-pass-coefficients}
 \mathcal C_mf(z)
 =
 c_0+
 \sum_{n=1}^{\infty}
 \left(
  \sum_{k=0}^{\infty}\chi(2^{k-m}n)^2
 \right)c_nz^n.
\end{equation}
By \eqref{eq:chi-partition} and $\supp\chi\subset(1/2,2)$, for any $n\geq 1$ we have $0\leq\sum_{k=0}^\infty\chi(2^{k-m}n)^2\leq1,$
and
\begin{equation}\label{eq:low-pass-basic-properties}
 \mathcal C_me_n=e_n
 \quad(0\leq n\leq2^m),
 \qquad
 \mathcal C_me_n=0
 \quad(n\geq2^{m+1}).
\end{equation}
Therefore, $\|\mathcal C_m\|\le1$ and $\|I-\mathcal C_m\|\le1$.

For $\ell,m\geq N$, set
\[
 g_\ell
 =
 W^*\Pi_\ell x
 =
 \mathcal S_{\ell,\ell}x,
\quad
\text{and}
\quad
 f
 =
 W^*\Pi_{\geq N}x
 =
 \sum_{\ell\geq N}g_\ell,
 \]
and define
\begin{equation*}\label{eq:differenceSC}
 h_m =\mathcal S_{N,m}x-\mathcal C_mf.
\end{equation*}
By \eqref{eq:low-pass-basic-properties}, for $\ell\leq m-1$, $\mathcal C_mg_\ell=g_\ell$ and $\mathcal C_mg_\ell=0$ for $\ell\geq m+2$. Hence
\begin{equation*}\label{eq:synthesis-low-pass-boundary-new}
 h_m=(I-\mathcal C_m)g_m-\mathcal C_mg_{m+1}.
\end{equation*}

Using the weighted maximal operator (Lemma \ref{thm:tent-maximal} and Lemma \ref{lem:low-frequency-localization}), we shall prove the following version of Carleson embedding estimates in the Appendix.
\begin{lemma}\label{lem:car-emb-baby}
Let $\omega\in\widehat{\mathcal D}$, and let $N\geq1$ and $M\geq4$ be integers. Let $(b_J)_{J\in\mathcal Q_j,\ j\geq N+M}$
be a family of complex numbers satisfying \eqref{eq:shifted-Carleson-condition}. Then there exists $C=C(\omega,\chi)>0$, independent of $N$ and $M$, such that
\begin{equation*}\label{eq:analytic-shifted-Carleson-embedding}
 \sum_{j\geq N+M}\sum_{J\in\mathcal Q_j}
 |b_J|^2|\mathcal C_{j-M}f(a_J)|^2
 \leq  C\norm{b}_{\mathrm C;N,M}^2 \norm{f}_{A_\omega^2}^2,
 \qquad f\in A_\omega^2.
\end{equation*}
\end{lemma}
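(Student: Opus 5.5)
We prove Lemma~\ref{lem:car-emb-baby} by the standard Carleson-measure route: dominate $|\mathcal C_{j-M}f(a_J)|$ by a tent-type maximal function of $f$ at scale comparable to $T(K)$ with $K\supseteq J$ at level $j-M$, and then apply a Carleson embedding for that maximal function, using the shifted Carleson condition \eqref{eq:shifted-Carleson-condition}.

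\textbf{Pointwise kernel bound.} Write $\mathcal C_m f(z)=\langle f, \mathcal C_m B^\omega_z\rangle_{A^2_\omega}$, so that
\[
 \mathcal C_mf(z)=\int_\D f(\zeta)\overline{K_m(z,\zeta)}\omega(\zeta)\dd A(\zeta),
 \qquad
 K_m(z,\zeta)=\sum_{n\geq0}\frac{\mu_m(n)(\overline{\zeta}z)^n}{2\omega_{2n+1}},
\]
where $\mu_m(n)=\sum_k\chi(2^{k-m}n)^2$ is smooth, equals $1$ for $n\le 2^m$, and is supported on $n<2^{m+1}$. This is a smooth low-pass multiplier at frequency $2^m$. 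Proceed as in Lemma~\ref{lem:periodic-multiplier} and Lemma~\ref{lem:frame-element-localization}, summing over the dyadic pieces $2^{\ell}\le n<2^{\ell+1}$, $\ell\le m+1$, and using \eqref{eq:power-moment-derivatives}. For $|z|\le 1-2^{-m}$ up to a constant (note that $|a_J|=1-2^{-j}$ with $j>m=j-M$, so $r_j^n$ is harmless for $n<2^{m+1}$), this should give
\[
 |K_m(z,re^{i\theta})|\lesssim \sum_{\ell\le m+1}\frac{2^{\ell}}{2^{\ell}\widehat\omega(r_\ell)}\bigl(1+2^\ell d_\T(\theta,\arg z)\bigr)^{-3}e^{-c2^\ell(1-r)}.
\]
This is exactly the content that Lemma~\ref{lem:low-frequency-localization} presumably packages.

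\textbf{Maximal function.} Insert the kernel bound to obtain $|\mathcal C_{j-M}f(a_J)|\lesssim \mathcal M_\omega f(\zeta)$ for every $\zeta\in T(K)$, where $K\in\mathcal Q_{j-M}$ is the ancestor of $J$. Here $\mathcal M_\omega f(\zeta)$ is the supremum over tents $T(I)\ni\zeta$ of weighted averages $\omega(T(I))^{-1}\int_{\widetilde T(I)}|f|\omega\,dA$ over slightly enlarged tents, summed with the decaying factors. The sum over $\ell$ converges because the doubling in \eqref{eq:iterated-tail-bound} is beaten by the geometric gains; this step is the real technical obstacle, since for $\widehat{\mathcal D}$ weights one only has one-sided doubling. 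What I would try first is to handle the radial tail exactly as in the proof of Lemma~\ref{lem:frame-element-moments}, where $e^{-2^m/4}$ absorbs $C_\omega^m$. For the low-frequency pieces $\ell\ll m$, I would use that $1/\widehat\omega(r_\ell)\le 1/\widehat\omega(r_m)\cdot$(bounded ratio), which makes the sum a superposition of averages over ancestor tents, each dominated by $\mathcal M_\omega f$. By Lemma~\ref{thm:tent-maximal}, $\mathcal M_\omega$ is bounded on $L^2_\omega$, which I will assume via the $\widehat{\mathcal D}$ doubling of $\omega(T(\cdot))$ in the angular direction.

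\textbf{Summation.} It suffices to show
\[
 \sum_{j\ge N+M}\sum_{J\in\mathcal Q_j}|b_J|^2\inf_{T(K_J)}(\mathcal M_\omega f)^2\lesssim\|b\|_{\mathrm C;N,M}^2\|\mathcal M_\omega f\|_{L^2_\omega}^2.
\]
Group by $K\in\mathcal Q_k$, $k\ge N$. The measure $\nu=\sum_K\bigl(\sum_{J\subseteq K,\,J\in\mathcal Q_{k+M}}|b_J|^2\bigr)\delta_{a_K}$ is then a Carleson measure in the dyadic sense: for any $I\in\mathcal Q_{k'}$, $k'\ge N$,
\[
 \nu(T(I))\le\sum_{j\ge k'+M}\sum_{J\subseteq I}|b_J|^2\le\|b\|^2_{\mathrm C;N,M}\,\omega(T(I)).
\]
The dyadic Carleson embedding, proved via the standard stopping-time argument on the level sets $\{\mathcal M_\omega f>\lambda\}$, which are unions of maximal tents, then yields $\int(\mathcal M_\omega f)^2d\nu\lesssim\|b\|^2\|f\|^2$, since we can replace $\inf_{T(K)}\mathcal M_\omega f$ by its value and use the layer-cake formula. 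Combining these steps gives the lemma with $C$ independent of $N$ and $M$.
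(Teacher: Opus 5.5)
Your overall architecture matches the paper's. You dominate $\abs{\mathcal C_{j-M}f(a_J)}$ by $\inf_{T(K)}\mathcal M_\omega f$, where $K\in\mathcal Q_{j-M}$ is the ancestor of $J$. You then run a dyadic Carleson embedding for $\beta_K=\sum_{J\in\mathcal Q_{k+M},\,J\subseteq K}\abs{b_J}^2$ and finish with Lemma~\ref{thm:tent-maximal}. The summation half is sound. Your layer-cake argument even works directly with the infimum: if $\inf_{T(K)}\mathcal M_\omega f>\lambda$ then $T(K)\subseteq\{\mathcal M_\omega f>\lambda\}$, and maximal such dyadic tents are disjoint.

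\textbf{The gap.} The problem is how you propose to prove the pointwise domination with a constant independent of $m=j-M$; this is the entire content of Lemma~\ref{lem:low-frequency-localization}.

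As written, your kernel bound is misnormalized. The $\ell$-th dyadic frequency piece of $K_m(a,\cdot)$ has size $\asymp 2^\ell/\widehat\omega(r_\ell)\asymp 1/\omega(T(I_\ell))$ with $\abs{I_\ell}\asymp 2^{-\ell}$, not $1/\widehat\omega(r_\ell)$. Indeed, $K_m(a,a)\gtrsim 2^m/\widehat\omega(r_m)$ for $\abs{a}\ge r_m$, whereas your right-hand side at $\zeta=a$ is at most $C(m+2)/\widehat\omega(r_m)$.

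With the correct size, each piece integrated against $\abs{f}\omega$ is comparable to a full weighted average of $\abs{f}$ over a tent of size $2^{-\ell}$, with no decay in $\ell$. The sum over $\ell\le m+1$ therefore gives $(m+2)\mathcal M_\omega f$. There are no geometric gains across frequency scales, and one-sided doubling is not the real obstruction.

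This loss is intrinsic to any bound of $\abs{\mathcal C_mf(a)}$ by $\int_\D\abs{f}\abs{K_m(a,\cdot)}\omega\,dA$. Already for $\omega\equiv1$ and $f\equiv1$ one has $\int_\D\abs{K_m(a,\zeta)}\,dA(\zeta)\gtrsim m$ when $\abs{a}\ge r_m$; this is the truncated analogue of $\int_\D\abs{B^\omega_a}\,dA\asymp\log\frac{1}{1-\abs{a}}$. Meanwhile $\mathcal M_\omega 1\equiv1$. Since $m=j-M$ is unbounded in the sum over $j$, this is not a matter of constants.

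\textbf{The missing idea.} $\mathcal C_m$ only has to be reproduced on analytic functions, so you may use a non-holomorphic kernel with uniformly bounded $L^1$ norm. This is what Lemma~\ref{lem:low-frequency-localization} does. For each $\rho\in[r_s,1)$,
\[
 \mathcal C_sf(a)=\frac1{2\pi}\int_{-\pi}^{\pi}\Bigl[\sum_{n\in\mathbb Z}m_{s,\rho}(n)e^{in(\theta_a-t)}\Bigr]f(\rho e^{it})\,dt,
 \qquad m_{s,\rho}(x)=\eta_s(\abs{x})\Bigl(\frac{\rho_a}{\rho}\Bigr)^x,
\]
where $\eta_s(n)=\sum_{k\ge0}\chi(2^{k-s}n)^2$. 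The negative frequencies cost nothing because $f(\rho e^{it})$ has none.

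Since $\abs{\log(\rho_a/\rho)}\lesssim 2^{-s}$, $m_{s,\rho}$ is a smooth two-sided bump of width $2^s$. It has no jump at frequency $0$, hence no logarithm. Lemma~\ref{lem:periodic-multiplier} then gives an angular kernel $\lesssim 2^s(1+2^s\abs{t})^{-p}$.

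One then averages in $\rho$ against $\rho\omega(\rho)$ over $[r_s,1)$, so only the annulus $r_s\le\abs{\zeta}<1$ enters. Next one decomposes in the angular variable, using arcs $\Gamma_\ell$ of radius $2^{\ell-s}$ about $e^{i\theta_a}$. With $p=3+\lceil\log_2C_\omega\rceil$, the decay $2^{-\ell(2+\lceil\log_2C_\omega\rceil)}$ beats the one-sided doubling loss in $\omega(T(\Gamma_\ell))\le2C_\omega^{\ell+2}\omega(E_\ell)$. The geometric gain comes from this spatial decomposition, not from frequency scales.

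Without this device, or an equivalent one that exploits analyticity, your argument does not close.
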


Now, we are going to prove Proposition~\ref{lem:shifted-Carleson}.
\begin{proof}[Proof of Proposition~\ref{lem:shifted-Carleson}]
By a standard density argument, it suffices to consider finitely
supported \(x\in\ell^2(\mathcal Q)\). Using the notation \(g_\ell,f\), and \(h_m\) as the above, we have
\[
 h_m
 =
 \mathcal S_{N,m}x-\mathcal C_mf
 =
 (I-\mathcal C_m)g_m-\mathcal C_mg_{m+1}.
\]
By the definition of \(g_m\) and \eqref{eq:low-pass-basic-properties}, 
\begin{equation}\label{eq:synthesis-low-pass-fixed-new}
 \mathcal C_{m+1}h_m=h_m.
\end{equation}
Moreover, since both \(\mathcal C_m\) and \(I-\mathcal C_m\) are contractions on \(A_\omega^2\),
\begin{align}
 \sum_{m\geq N}\|h_m\|_{A_\omega^2}^2
 \leq
 2\sum_{m\geq N}
 \left(
  \|g_m\|_{A_\omega^2}^2
  +
  \|g_{m+1}\|_{A_\omega^2}^2
 \right) 
 \leq
 4\sum_{\ell\geq N}\|g_\ell\|_{A_\omega^2}^2 
 \leq
 4\sum_{\ell\geq N}
 \|\Pi_\ell x\|_{\ell^2(\mathcal Q)}^2
 \leq
 4\|x\|_{\ell^2(\mathcal Q)}^2.
 \label{eq:synthesis-low-pass-square-sum-new}
\end{align}

We first estimate the error term \(h_m\). Fix \(I\in\mathcal Q_{m+1},m\geq N\), and let \(K\in\mathcal Q_m\) be the
dyadic parent of \(I\). By \eqref{eq:tent-volume},
$
 \omega(T(K))\leq C\omega(T(I)).
$
Since every \(J\in\mathcal Q_{m+M}\) with \(J\subseteq I\) is also contained in \(K\), the Carleson condition
\eqref{eq:shifted-Carleson-condition} implies
\begin{align}
 \sum_{\substack{J\in\mathcal Q_{m+M}: J\subseteq I}}
 |b_J|^2
 \leq
 \sum_{j\geq m+M}
 \sum_{\substack{J\in\mathcal Q_j: J\subseteq K}}
 |b_J|^2 \leq
 \|b\|_{\mathrm C;N,M}^2\omega(T(K))
 \leq
 C\|b\|_{\mathrm C;N,M}^2\omega(T(I)).
 \label{eq:boundary-one-level-Carleson-new}
\end{align}
If \(J\in\mathcal Q_{m+M}\) and \(J\subseteq I\), then \(e^{i\theta_J}\in I\) and $|a_J|=r_{m+M}\geq r_{m+1}.$
Hence, by \eqref{eq:synthesis-low-pass-fixed-new} and Lemma~\ref{lem:low-frequency-localization},
\[
 |h_m(a_J)|
 =
 |\mathcal C_{m+1}h_m(a_J)|
 \leq
 C\inf_{z\in T(I)}\mathcal M_\omega h_m(z).
\]
Every \(J\in\mathcal Q_{m+M}\) is contained in a unique interval
\(I\in\mathcal Q_{m+1}\). Therefore, by
\eqref{eq:boundary-one-level-Carleson-new},
\begin{align*}
 \sum_{J\in\mathcal Q_{m+M}}
 |b_J|^2|h_m(a_J)|^2\leq
 C\|b\|_{\mathrm C;N,M}^2
 \sum_{I\in\mathcal Q_{m+1}}
 \omega(T(I))
 \left(
  \inf_{z\in T(I)}\mathcal M_\omega h_m(z)
 \right)^2.
\end{align*}
Then,
\[
 \sum_{I\in\mathcal Q_{m+1}}
 \omega(T(I))
 \left(
  \inf_{z\in T(I)}\mathcal M_\omega h_m(z)
 \right)^2
 \leq
 \|\mathcal M_\omega h_m\|_{L_\omega^2}^2.
\]
By Lemma~\ref{thm:tent-maximal}, we get
\[
 \sum_{J\in\mathcal Q_{m+M}}
 |b_J|^2|h_m(a_J)|^2
 \leq
 C\|b\|_{\mathrm C;N,M}^2
 \|h_m\|_{A_\omega^2}^2.
\]
Summing over \(m\geq N\) and using
\eqref{eq:synthesis-low-pass-square-sum-new}, we obtain
\begin{align}
 &\sum_{j\geq N+M}
 \sum_{J\in\mathcal Q_j}
 |b_J|^2
 \left|
  \mathcal S_{N,j-M}x(a_J)
  -
  \mathcal C_{j-M}f(a_J)
 \right|^2 \leq
 C\|b\|_{\mathrm C;N,M}^2
 \|x\|_{\ell^2(\mathcal Q)}^2.
 \label{eq:synthesis-boundary-Carleson-new}
\end{align}
On the other hand, by Lemma~\ref{lem:car-emb-baby}
\begin{align}
 \sum_{j\geq N+M}
 \sum_{J\in\mathcal Q_j}
 |b_J|^2
 |\mathcal C_{j-M}f(a_J)|^2
 \leq
 C\|b\|_{\mathrm C;N,M}^2
 \|f\|_{A_\omega^2}^2 
 \leq
 C\|b\|_{\mathrm C;N,M}^2
 \|x\|_{\ell^2(\mathcal Q)}^2,
 \label{eq:synthesis-low-pass-Carleson-new}
\end{align}
where we used
\[
 \|f\|_{A_\omega^2}
 =
 \|W^*\Pi_{\geq N}x\|_{A_\omega^2}
 \leq
 \|\Pi_{\geq N}x\|_{\ell^2(\mathcal Q)}
 \leq
 \|x\|_{\ell^2(\mathcal Q)}.
\]
Finally,
\[
 |\mathcal S_{N,j-M}x(a_J)|^2
 \leq
 2|\mathcal C_{j-M}f(a_J)|^2
 +
 2\left|
  \mathcal S_{N,j-M}x(a_J)
  -
  \mathcal C_{j-M}f(a_J)
 \right|^2.
\]
By \eqref{eq:synthesis-boundary-Carleson-new} and \eqref{eq:synthesis-low-pass-Carleson-new}, the inequality \eqref{eq:shifted-Carleson-embedding} holds. This completes the whole proof.
\end{proof}

\subsection{The proof of Proposition~\ref{lem:upper-triangular-Carleson-scalar}}
By \eqref{eq:frame-element-first-order-overlap} and \eqref{eq:tent-volume-ratio}, we have the following lemma.
\begin{lemma}\label{lem:triangular-freezing-error-scalar}
Let $\omega\in\widehat{\mathcal D}$ and $u\in L^\infty(\D)$. There exists a constant $C=C(\omega,\chi)>0$ such that for any $I\in\mathcal Q_k, J\in\mathcal Q_j$ with $k\ge j\ge1$, we have 
\[
 \bigg|\int_\D u(z) \bigl(\phi_J(z)-\phi_J(a_I)\bigr)\overline{\phi_I(z)}\omega(z)\dd A(z)\bigg|\leq  C\norm{u}_{L^\infty(\D)}
 2^{-3(k-j)/2} \bigl(1+d_j(I,J)\bigr)^{-3}
\]
and
\[
\bigg| \int_\D u(z)\phi_I(z)\bigl(\overline{\phi_J(z)}-\overline{\phi_J(a_I)}\bigr)\omega(z)\dd A(z)\bigg|\leq C\norm{u}_{L^\infty(\D)} 2^{-3(k-j)/2}\bigl(1+d_j(I,J)\bigr)^{-3}.
\]
\end{lemma}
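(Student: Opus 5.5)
The plan is to bound both integrals by absolute values and apply \eqref{eq:frame-element-first-order-overlap} from Lemma~\ref{lem:frame-element-overlap}, then convert the tent-volume ratio with \eqref{eq:tent-volume-ratio}.

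For the first integral, pull out $\norm{u}_{L^\infty(\D)}$ and use $|\overline{\phi_I}|=|\phi_I|$:
\[
\bigg|\int_\D u(z)\bigl(\phi_J(z)-\phi_J(a_I)\bigr)\overline{\phi_I(z)}\omega(z)\dd A(z)\bigg|
\le \norm{u}_{L^\infty(\D)}\int_\D|\phi_I(z)|\,|\phi_J(z)-\phi_J(a_I)|\,\omega(z)\dd A(z).
\]
Since $k\ge j\ge1$, \eqref{eq:frame-element-first-order-overlap} bounds the right-hand integral by
\[
C2^{j-k}\bigl(1+d_j(I,J)\bigr)^{-3}\left(\frac{\omega(T(I))}{\omega(T(J))}\right)^{1/2}.
\]
By \eqref{eq:tent-volume-ratio}, $\omega(T(I))\le 2^{j-k}\omega(T(J))$, so the square root is at most $2^{-(k-j)/2}$. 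Multiplying the two geometric factors gives $2^{-(k-j)}\cdot2^{-(k-j)/2}=2^{-3(k-j)/2}$, which is the first estimate.

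The second integral has the same absolute value structure. Its integrand is $u\,\phi_I\,(\overline{\phi_J}-\overline{\phi_J(a_I)})\,\omega$, and its modulus equals $|u|\,|\phi_I|\,|\phi_J-\phi_J(a_I)|\,\omega$. The identical chain of inequalities therefore applies, with no new ingredient.

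All the analytic content sits in \eqref{eq:frame-element-first-order-overlap}, which is already proved, so no step here is a real obstacle. The only points to check are that the hypotheses $k\ge j\ge1$ match those of Lemma~\ref{lem:frame-element-overlap} and of \eqref{eq:tent-volume-ratio}, and that the constant depends only on $\omega$ and $\chi$.
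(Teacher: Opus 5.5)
Your proposal is correct and matches the paper's argument: the paper derives this lemma directly from \eqref{eq:frame-element-first-order-overlap} combined with \eqref{eq:tent-volume-ratio}, giving $2^{j-k}\cdot 2^{-(k-j)/2}=2^{-3(k-j)/2}$. As you note, the second estimate follows identically because its integrand has the same modulus as the first.
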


For integers $N,M\ge1$, we define a matrix
$R_{N,4}^{M}(u)$ by
\begin{equation*}\label{eq:upper-triangular-remainder-matrix-scalar}
 R_{N,4}^{M}(u)(I,J)
 =
 \begin{cases}
 \displaystyle
 \int_\D u(z)\phi_J(z)
 \bigl(
  \overline{\phi_I(z)}
  -\overline{\phi_I(a_J)}
 \bigr)
 \omega(z)\dd A(z),
 & \begin{array}{l}
    I\in\mathcal Q_k,\ J\in\mathcal Q_j,\\
    j,k\ge N,\ j\ge k+M,
   \end{array}\\[5mm]
 0, & \text{otherwise}.
 \end{cases}
\end{equation*}

\begin{corollary}\label{cor:triangular-remainder-operator-scalar}
Let $\omega\in\widehat{\mathcal D}$ and $u\in L^\infty(\D)$. There exists a constant $C=C(\omega,\chi)>0$, independent of $N$ and
$M$, such that
\begin{equation*}\label{eq:triangular-remainder-operator-scalar}
  \norm{R_{N,4}^{M}(u)} \le C2^{-M}\norm{u}_{L^\infty(\D)}.
\end{equation*}
\end{corollary}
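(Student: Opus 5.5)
The plan is to apply Schur's test (Lemma~\ref{lem:Schur}) to the entries of $R_{N,4}^{M}(u)$, using the entrywise bound from the second inequality of Lemma~\ref{lem:triangular-freezing-error-scalar}. In that lemma the finer interval plays the role of ``$I$'' and the coarser one the role of ``$J$''. For an entry of $R_{N,4}^{M}(u)$ with $I\in\mathcal Q_k$, $J\in\mathcal Q_j$, $j\ge k+M$, the finer interval is $J$ and the coarser is $I$. So we apply the lemma with the roles swapped and use $|\overline{\phi_I(z)}-\overline{\phi_I(a_J)}|=|\phi_I(z)-\phi_I(a_J)|$. Writing $j=k+m$ with $m\ge M$, this gives
\[
 |R_{N,4}^{M}(u)(I,J)|\le C\norm{u}_{L^\infty(\D)}\,2^{-3m/2}\bigl(1+d_k(I,J)\bigr)^{-3},
\]
where $d_{j\wedge k}(I,J)=d_k(I,J)=2^kd_\T(\theta_I,\theta_J)$.

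Next we sum rows and columns level by level with Lemma~\ref{lem:angular-counting} (taking $p=3$). For a fixed column $J\in\mathcal Q_{k+m}$, summing over $I\in\mathcal Q_k$ uses \eqref{eq:count-coarse}: $\sum_{I\in\mathcal Q_k}(1+2^kd_\T(\theta_J,\theta_I))^{-3}\le C$. Hence the contribution of level $k=j-m$ is at most $C\norm{u}_{L^\infty(\D)}2^{-3m/2}$. Summing over $m\ge M$ bounds the column sums by $C2^{-3M/2}\norm{u}_{L^\infty(\D)}$.

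For a fixed row $I\in\mathcal Q_k$, summing over $J\in\mathcal Q_{k+m}$ uses \eqref{eq:count-fine}: $\sum_{J\in\mathcal Q_{k+m}}(1+2^kd_\T(\theta_I,\theta_J))^{-3}\le C2^{m}$. This gives $C\norm{u}_{L^\infty(\D)}2^{-m/2}$ per level. Summing over $m\ge M$ bounds the row sums by $C2^{-M/2}\norm{u}_{L^\infty(\D)}$.

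Schur's test then yields
\[
 \norm{R_{N,4}^{M}(u)}\le\sqrt{C2^{-M/2}\cdot C2^{-3M/2}}\,\norm{u}_{L^\infty(\D)}=C2^{-M}\norm{u}_{L^\infty(\D)}.
\]
This is exactly the claimed rate, and the constant is independent of $N$ and $M$ because every constant used depends only on $\omega$ and $\chi$.

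The only delicate point is the bookkeeping in the first step. One must check that the second estimate of Lemma~\ref{lem:triangular-freezing-error-scalar} applies with the roles of $I$ and $J$ exchanged, i.e.\ with the frozen point $a_J$ belonging to the finer interval. One must also confirm that the distance appearing there is measured at the coarse scale $2^k$, matching $d_{j\wedge k}$. If the lemma's decay were only $2^{-(k-j)/2}$ per level rather than $2^{-3(k-j)/2}$, the row sums would diverge. So the extra factor $2^{j-k}$ coming from \eqref{eq:frame-element-first-order-overlap}, combined with the tent-volume ratio \eqref{eq:tent-volume-ratio}, is the essential input, and it is precisely what the lemma supplies.
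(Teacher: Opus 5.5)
Your proof is correct and uses the same ingredients as the paper: the entrywise bound from the second estimate of Lemma~\ref{lem:triangular-freezing-error-scalar}, with the finer arc's point $a_J$ frozen, the counting bounds \eqref{eq:count-fine} and \eqref{eq:count-coarse}, and Schur's test. The only difference is organizational: the paper splits $R_{N,4}^{M}(u)=\sum_{d\ge M}R_{N,4}^{(d)}(u)$ by level gap, applies Schur to each band to get $\norm{R_{N,4}^{(d)}(u)}\le C2^{-d}\norm{u}_{L^\infty(\D)}$, and then sums over $d\ge M$, whereas you sum the row and column bounds over all gaps first and apply Schur once, which yields the same $C2^{-M}\norm{u}_{L^\infty(\D)}$.
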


\begin{proof}
For every $d\ge M$, let $R_{N,4}^{(d)}(u)$ be a matrix given by
\[
 R_{N,4}^{(d)}(u)(I,J)
 =
 \begin{cases}
 \displaystyle
 \int_\D u(z)\phi_J(z)
 \bigl(
  \overline{\phi_I(z)}
  -\overline{\phi_I(a_J)}
 \bigr)
 \omega(z)\dd A(z),
 & \begin{array}{l}
    I\in\mathcal Q_k,\ J\in\mathcal Q_{k+d},\\
    k\ge N,
   \end{array}\\[5mm]
 0, & \text{otherwise}.
 \end{cases}
\]
Then
\[
 R_{N,4}^{M}(u) = \sum_{d\ge M}R_{N,4}^{(d)}(u).
\]
Fix $I\in\mathcal Q_k$. For $J\in\mathcal Q_{k+d}$, by Lemma~\ref{lem:triangular-freezing-error-scalar}, 
\[
 \bigl|R_{N,4}^{(d)}(u)(I,J)\bigr| \le  C\norm{u}_{L^\infty(\D)} 2^{-3d/2}\bigl(1+d_k(J,I)\bigr)^{-3}.
\]
By Lemma~\ref{lem:angular-counting},
\begin{align*}
 \sum_{J\in\mathcal Q_{k+d}}
 \bigl|R_{N,4}^{(d)}(u)(I,J)\bigr|
 \le
 C\norm{u}_{L^\infty(\D)}2^{-3d/2}
 \sum_{J\in\mathcal Q_{k+d}}
 \bigl(1+d_k(J,I)\bigr)^{-3}
 \le
 C\norm{u}_{L^\infty(\D)}2^{-d/2}.
\end{align*}
Thus,
\begin{equation}\label{eq:upper-remainder-row-sum-scalar}
 \sup_{I\in\mathcal Q}
 \sum_{J\in\mathcal Q}
 \bigl|R_{N,4}^{(d)}(u)(I,J)\bigr|
 \le
 C\norm{u}_{L^\infty(\D)}2^{-d/2}.
\end{equation}
Next, fix $J\in\mathcal Q_j$. By a similar argument, we get
\begin{align*}
 \sum_{I\in\mathcal Q_{j-d}} \bigl|R_{N,4}^{(d)}(u)(I,J)\bigr|
 \le
 C\norm{u}_{L^\infty(\D)}2^{-3d/2}
 \sum_{I\in\mathcal Q_{j-d}}
 \bigl(1+d_{j-d}(J,I)\bigr)^{-3}\le
 C\norm{u}_{L^\infty(\D)}2^{-3d/2}.
\end{align*}
It follows that
\begin{equation}\label{eq:upper-remainder-column-sum-scalar}
 \sup_{J\in\mathcal Q}
 \sum_{I\in\mathcal Q}
 \bigl|R_{N,4}^{(d)}(u)(I,J)\bigr|
 \le
 C\norm{u}_{L^\infty(\D)}2^{-3d/2}.
\end{equation}
Using Schur's test (Lemma \ref{lem:Schur}) with \eqref{eq:upper-remainder-row-sum-scalar} and
\eqref{eq:upper-remainder-column-sum-scalar}, we obtain
\begin{align*}
 \norm{R_{N,4}^{(d)}(u)}\le
 \left(
  C\norm{u}_{L^\infty(\D)}2^{-d/2}
 \right)^{1/2}
 \left(
  C\norm{u}_{L^\infty(\D)}2^{-3d/2}
 \right)^{1/2}\le
 C\norm{u}_{L^\infty(\D)}2^{-d}.
\end{align*}
Therefore,
\begin{align*}
 \norm{R_{N,4}^{M}(u)}_{\mathcal L(\ell^2(\mathcal Q))}
 \le
 \sum_{d\ge M}
 \norm{R_{N,4}^{(d)}(u)}_{\mathcal L(\ell^2(\mathcal Q))}
 \le
 C\norm{u}_{L^\infty(\D)}
 \sum_{d\ge M}2^{-d}
 \le
 C2^{-M}\norm{u}_{L^\infty(\D)}.
\end{align*}
This completes the proof.
\end{proof}

\begin{proof}[Proof of Proposition~\ref{lem:upper-triangular-Carleson-scalar}]
Suppose that $j\ge k+M$, with $J\in\mathcal Q_j$ and $I\in\mathcal Q_k$.  Recalling that
\[
 c_J=\int_\D u(z)\phi_J(z)\omega(z)\dd A(z),
\]
we observe 
\begin{align}
 A_u(I,J)
 =
 \int_\D u(z)\phi_J(z)\overline{\phi_I(z)}
 \omega(z)\dd A(z)\notag=
 \overline{\phi_I(a_J)}c_J
 +R_{N,4}^{M}(u)(I,J).
 \label{eq:upper-freezing-scalar}
\end{align}
By Corollary~\ref{cor:triangular-remainder-operator-scalar},
\begin{equation}\label{eq:upper-remainder-scalar}
 \norm{R_{N,4}^{M}(u)}_{\mathcal L(\ell^2(\mathcal Q))}
 \le
 C2^{-M}\norm{u}_{L^\infty(\D)}.
\end{equation}
Moreover, for $j\ge k+M, j,k\ge N$,
\[
 \bigl(A_{N,4}^{M}(u)-R_{N,4}^{M}(u)\bigr)(I,J)
 =
 \overline{\phi_I(a_J)}c_J.
\]
Let $x,y\in\ell^2(\mathcal Q)$ be finitely supported. Then
\begin{align*}
 \left\langle
 \bigl(A_{N,4}^{M}(u)-R_{N,4}^{M}(u)\bigr)x,y
 \right\rangle_{\ell^2(\mathcal Q)}
 &=
 \sum_{j\ge N+M}\sum_{J\in\mathcal Q_j}
 \sum_{k=N}^{j-M}\sum_{I\in\mathcal Q_k}
 \overline{\phi_I(a_J)}c_Jx_J\overline{y_I}\\
 &=
 \sum_{j\ge N+M}\sum_{J\in\mathcal Q_j}
 x_Jc_J\,
 \overline{
  \sum_{k=N}^{j-M}\sum_{I\in\mathcal Q_k}
  y_I\phi_I(a_J)
 }\\
 &=
 \sum_{j\ge N+M}\sum_{J\in\mathcal Q_j}
 x_Jc_J\,
 \overline{\mathcal S_{N,j-M}y(a_J)}.
\end{align*}
Thus,
\begin{equation*}\label{eq:upper-frozen-bilinear}
 \left\langle
 \bigl(A_{N,4}^{M}(u)-R_{N,4}^{M}(u)\bigr)x,y
 \right\rangle_{\ell^2(\mathcal Q)}
 =
 \sum_{j\ge N+M}\sum_{J\in\mathcal Q_j}
 x_Jc_J\,
 \overline{\mathcal S_{N,j-M}y(a_J)}.
\end{equation*}
By the Cauchy--Schwarz inequality,
\begin{align*}
 \left|
 \left\langle
 \bigl(A_{N,4}^{M}(u)-R_{N,4}^{M}(u)\bigr)x,y
 \right\rangle_{\ell^2(\mathcal Q)}
 \right| \le
 \norm{x}_{\ell^2(\mathcal Q)}
 \left(
  \sum_{j\ge N+M}\sum_{J\in\mathcal Q_j}
  |c_J|^2
  |\mathcal S_{N,j-M}y(a_J)|^2
 \right)^{1/2}.
\end{align*}
Using Proposition~\ref{lem:shifted-Carleson} with $b_J=c_J$, we get
\[
 \sum_{j\ge N+M}\sum_{J\in\mathcal Q_j}
 |c_J|^2
 |\mathcal S_{N,j-M}y(a_J)|^2
 \le
 C\mathfrak C_N^M(u)
 \norm{y}_{\ell^2(\mathcal Q)}^2.
\]
It follows that
\[
 \left|
 \left\langle
 \bigl(A_{N,4}^{M}(u)-R_{N,4}^{M}(u)\bigr)x,y
 \right\rangle_{\ell^2(\mathcal Q)}
 \right|
 \le
 C\mathfrak C_N^M(u)^{1/2}
 \norm{x}_{\ell^2(\mathcal Q)}
 \norm{y}_{\ell^2(\mathcal Q)}.
\]
Taking the supremum over all finitely supported unit vectors $x$ and $y$, we obtain
\[
 \norm{
  A_{N,4}^{M}(u)-R_{N,4}^{M}(u)
 }_{\mathcal L(\ell^2(\mathcal Q))}
 \le
 C\mathfrak C_N^M(u)^{1/2}.
\]
Combining this estimate with \eqref{eq:upper-remainder-scalar} yields
\[
 \norm{A_{N,4}^{M}(u)}_{\mathcal L(\ell^2(\mathcal Q))}
 \le
 C\mathfrak C_N^M(u)^{1/2}
 +C2^{-M}\norm{u}_{L^\infty(\D)}.
\]
This completes the whole proof.
\end{proof}

\subsection{Moment estimates for $\widehat{\mathcal{D}}$-weights}
To simplify notation, write the moment function as
\[
 \mathfrak m(x)
 =
 \omega_{2x+1}
 =
 \int_0^1r^{2x+1}\omega(r)\dd r,
 \qquad x>0.
\]
There exists a constant $C=C(\omega)\geq1$ such that
\begin{equation}\label{eq:new-constant-weight}
 0\leq-\frac{\mathfrak m'(x)}{\mathfrak m(x)}
 \leq\frac{C}{x},
 \qquad x\geq1.
\end{equation}
Indeed, using 
\[
 -\mathfrak m'(x)
 =
 2\int_0^1r^{2x+1}(-\log r)\omega(r)\dd r.
\]
the elementary estimate
\[
 \sup_{0<r<1}r^x(-\log r)\leq\frac1{ex},
 \qquad x>0,
\]
\eqref{eq:moment-doubling} and the monotonicity of the moments, we obtain
\begin{align*}
 -\mathfrak m'(x)
 &\leq
 \frac{2}{ex}\omega_{x+1}
 \leq
 \frac{C}{x}\omega_{2x+2}
 \leq
 \frac{C}{x}\mathfrak m(x).
\end{align*}
This is the desired inequality \eqref{eq:new-constant-weight}. Then, for $x\geq1$ and $N\geq0$,
\begin{align*}
 \log\frac{\mathfrak m(x)}{\mathfrak m(x+N)}
 =
 \int_x^{x+N}
 -\frac{\mathfrak m'(t)}{\mathfrak m(t)}\dd t\leq
 C\log\left(1+\frac{N}{x}\right),
\end{align*}
and
\begin{equation}\label{eq:moment-ratio-lower-bound}
 \frac{\mathfrak m(x+N)}{\mathfrak m(x)}
 \geq
 \left(1+\frac{N}{x}\right)^{-C},
 \qquad x\geq1,\quad N\geq0.
\end{equation}

Fix $0<\epsilon\leq1/4$, and let $C$ be the constant in \eqref{eq:moment-ratio-lower-bound}. Set
\begin{equation}\label{eq:import-constant-epsilon}
 d_\epsilon
 =
 \max\left\{
 4,\,
 \left\lceil
 1+\log_2\left(\frac{C}{\epsilon}\right)
 \right\rceil
 \right\}.
\end{equation}
Then
\begin{equation}\label{eq:d-epsilon-choice}
 1-\left(1+2^{1-d_\epsilon}\right)^{-C}
 \leq
 C2^{1-d_\epsilon}
 \leq\epsilon.
\end{equation}
For $x>0$ and $k\geq1$, define
\begin{equation*}\label{eq:E-x-k}
 \mathcal E(x,k)
 =
 \frac1{\mathfrak m(x)}
 \int_0^1(1-r^{2k})^2r^{2x+1}\omega(r)\dd r.
\end{equation*}
For every fixed $k\geq1$, the function $\mathcal E(x,k)$ is nonincreasing on $(0,+\infty)$. In fact, a direct computation shows 
\begin{align*}
 \frac{\partial}{\partial x}\mathcal E(x,k)
 =
 \frac1{\mathfrak m(x)^2}
 \int_0^1\int_0^1
 \left((1-r^{2k})^2-(1-s^{2k})^2\right)
 \left(\log r-\log s\right)
 r^{2x+1}s^{2x+1}\omega(r)\omega(s)\dd r\dd s.
\end{align*}
Since $(1-r^{2k})^2$ is decreasing on $(0,1)$ and $\log r$ is increasing,
\[
 \left((1-r^{2k})^2-(1-s^{2k})^2\right)
 \left(\log r-\log s\right)
 \leq0.
\]
Therefore,
$
 \frac{\partial}{\partial x}\mathcal E(x,k)\leq0.
$
For any integer $m\geq d_\epsilon+2$, using \eqref{eq:moment-ratio-lower-bound} with $x=2^{m-1}$, $N=2^{m-d_\epsilon}$ and \eqref{eq:d-epsilon-choice}, we have
\begin{align*}
 \mathcal E(2^{m-1},2^{m-d_\epsilon})
 \leq
 1-
 \frac{
 \mathfrak m(2^{m-1}+2^{m-d_\epsilon})
 }{
 \mathfrak m(2^{m-1})
 }
 \leq
 1-\left(1+2^{1-d_\epsilon}\right)^{-C}
 \leq\epsilon.
\end{align*}
Take an integer $m_0\geq d_\epsilon+2$.  Starting from $m_0$, define recursively, for every $\nu\geq0$,
\begin{equation}\label{eq:recursive-stopping-indices}
 \begin{split}
 x_\nu&=2^{m_\nu-1},\\
 h_\nu
 &=
 \max\left\{
 h\in\Z:
 h\geq m_\nu-d_\epsilon,\ 
 \mathcal E(x_\nu,2^h)\leq\epsilon
 \right\},\\
 N_\nu&=2^{h_\nu},
 \qquad
 m_{\nu+1}=h_\nu+d_\epsilon+1.
 \end{split}
\end{equation}
For every fixed $x>0$, the Dominated Convergence Theorem gives
\[
 \lim_{h\to\infty}\mathcal E(x,2^h)
 =
 \frac1{\mathfrak m(x)}
 \int_0^1r^{2x+1}\omega(r)\dd r
 =
 1.
\]

Since $\epsilon<1$, by its definition, $h_\nu\geq m_\nu-d_\epsilon$ and $m_{\nu+1}= h_\nu+d_\epsilon+1 \geq m_\nu+1$. It follows inductively that $(m_\nu)_{\nu\geq0}$ is strictly increasing and tends to infinity. The family of intervals $\{[m_\nu,m_{\nu+1}-1]\cap\Z\}_{\nu\geq0}$ forms a partition of $[m_0,\infty)\cap\Z$.   

Suppose now that $m_\nu\leq j<m_{\nu+1}$ and $\chi(2^{-j}n)\neq0$. Then,
\[
 n>2^{j-1}\geq2^{m_\nu-1}=x_\nu.
\]
Since $\mathcal E(x,k)$ is nonincreasing in $x$, by the definition of $h_\nu$,
\begin{align}\label{eq:adaptive-radial-smallness}
 \frac1{\omega_{2n+1}}
 \int_0^1(1-r^{2N_\nu})^2r^{2n+1}\omega(r)\dd r
 =
 \mathcal E(n,N_\nu)\leq
 \mathcal E(x_\nu,N_\nu)
 \leq
 \epsilon.
\end{align}
Since $h_\nu+1>h_\nu$ and $h_\nu+1\geq m_\nu-d_\epsilon$, the definition of $h_\nu$ implies
$
 \mathcal E(x_\nu,2^{h_\nu+1})>\epsilon
$
and, moreover,
$
 1-
 \frac{
 \mathfrak m(x_\nu+2^{h_\nu+1})
 }{
 \mathfrak m(x_\nu)
 }>\epsilon.
$
Therefore
\begin{equation}\label{eq:intermediate-moment-drop}
 \mathfrak m(x_\nu+2^{h_\nu+1})
 <
 (1-\epsilon)\mathfrak m(x_\nu).
\end{equation}
Note that
$
 x_\nu+2^{h_\nu+1}\leq x_{\nu+1}
$
and $\mathfrak m$ is nonincreasing, by \eqref{eq:intermediate-moment-drop}, we obtain
\begin{align*}\label{eq:stopping-moment-decay}
 \mathfrak m(x_{\nu+1})
 \leq
 \mathfrak m(x_\nu+2^{h_\nu+1})
 <
 (1-\epsilon)\mathfrak m(x_\nu),
\end{align*}
In particular,
\begin{equation}\label{eq:iterated-stopping-moment-decay}
 \mathfrak m(x_{\nu+1})
 <
 (1-\epsilon)^{\nu+1}\mathfrak m(x_0),
 \qquad \nu\geq0.
\end{equation}
Recall that
$
 x_{\nu+1}
 =
 2^{h_\nu+d_\epsilon}
 =
 2^{d_\epsilon}N_\nu.
$
Using \eqref{eq:moment-doubling} and the monotonicity of the moments, we obtain
\begin{align*}
 \mathfrak m(N_\nu)
 =
 \omega_{2N_\nu+1}\leq
 C_\epsilon
 \omega_{2^{d_\epsilon}(2N_\nu+1)}\leq
 C_\epsilon
 \omega_{2x_{\nu+1}+1}
 =
 C_\epsilon\mathfrak m(x_{\nu+1}).
\end{align*}
Combining this with \eqref{eq:iterated-stopping-moment-decay}, we conclude that
\begin{align*}
 \sum_{\nu=0}^\infty\mathfrak m(N_\nu)
 \leq
 C_\epsilon
 \sum_{\nu=0}^\infty\mathfrak m(x_{\nu+1})\leq
 C_\epsilon\mathfrak m(x_0)
 \sum_{\nu=0}^\infty(1-\epsilon)^{\nu+1}
 \leq
 C_\epsilon'\mathfrak m(x_0).
\end{align*}
Hence, there is a constant $C_\epsilon''=C_\epsilon''(\omega,\epsilon)>0$ such that
\begin{equation}\label{eq:adaptive-modulation-sum}
 \sum_{\nu=0}^\infty\mathfrak m(N_\nu)
 \leq
 C_\epsilon''\mathfrak m(x_0).
\end{equation}

\subsection{The proof of Proposition \ref{cor:scalar-Carleson-tail}}
For integers $M\ge 4, h\ge M+1$, define
\begin{equation}\label{eq:fixed-band-matrix-projection}
 \Pi_{h,M}
 =
 \sum_{\substack{j\ge1: |j-h|\le M+1}}\Pi_j.
\end{equation}
For $N\ge M+1$, let 
\begin{equation*}\label{eq:band-compression-seminorm}
 \norm{A_u}_{N;M}=\sup_{h\ge N}\norm{\Pi_{h,M}A_u\Pi_{h,M}}.
\end{equation*}
For fixed integers $M,L\geq1$, recall
\begin{equation*}
 \tau(N;M,L)=
 \sup\left\{
 |A_u(I,J)|:
 \begin{array}{c}
 I\in\mathcal Q_k,\ J\in\mathcal Q_j,\ j,k\geq N,\\
 |j-k|\leq M,\ d_{j\wedge k}(I,J)\leq L
 \end{array}
 \right\}.
\end{equation*}

\begin{proposition}\label{prop:scalar-fixed-band-compression}
Let $\omega\in\widehat{\mathcal D}$ and $u\in L^\infty(\D)$. If $\widetilde u_\omega(a)\to0$ as $|a|\to1^-$, then, for every integer
$M\ge4$,
\begin{equation}\label{eq:scalar-fixed-band-vanishing}
 \lim_{N\to\infty}\norm{A_u}_{N;M}=0.
\end{equation}
\end{proposition}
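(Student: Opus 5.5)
Fix $M\ge4$. For $h\ge N$, the compression $\Pi_{h,M}A_u\Pi_{h,M}$ only involves indices $I\in\mathcal Q_k$, $J\in\mathcal Q_j$ with $|k-h|\le M+1$ and $|j-h|\le M+1$. In particular $|j-k|\le 2M+2$ and $j,k\ge h-M-1\ge N-M-1$. So this is a band matrix whose level separation is at most $M':=2M+2$. I split it exactly as in the near-diagonal part of the four-region decomposition \eqref{eq:four-region-decomposition-scalar}. Fix $L\ge1$, and write
\[
\Pi_{h,M}A_u\Pi_{h,M}=B^{\mathrm{loc}}_{h,L}+B^{\mathrm{sep}}_{h,L}.
\]
Here $B^{\mathrm{loc}}_{h,L}$ keeps the entries with $d_{j\wedge k}(I,J)\le L$, and $B^{\mathrm{sep}}_{h,L}$ keeps those with $d_{j\wedge k}(I,J)>L$.

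For the separated part, I repeat the proof of Lemma~\ref{lem:near-diagonal-scalar-tail-2} with $M$ replaced by $M'+1$. It only used the entry bounds from Lemma~\ref{lem:frame-element-overlap}, the tent ratio \eqref{eq:tent-volume-ratio}, the tail counting \eqref{eq:count-fine-tail}--\eqref{eq:count-coarse-tail} and Schur's test. None of these needs $j,k\ge N$ beyond $j,k\ge1$, and restricting to a submatrix only decreases row and column sums. This gives
\[
\norm{B^{\mathrm{sep}}_{h,L}}\le C2^{M+2}(1+L)^{-2}\norm{u}_{L^\infty(\D)},
\]
with $C=C(\omega,\chi)$ independent of $h$ and $L$.

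For the local part, every nonzero entry satisfies $|A_u(I,J)|\le\tau(N-M-1;M',L)$, since $j,k\ge h-M-1\ge N-M-1$ and $|j-k|\le M'$. By Lemma~\ref{lem:local-band-degree-Dhat} (with $M'$ in place of $M$), each row and column has at most $2^{M'+6}LM'$ nonzero entries. Schur's test (Lemma~\ref{lem:Schur}) then gives
\[
\norm{B^{\mathrm{loc}}_{h,L}}\le 2^{M'+6}LM'\,\tau(N-M-1;M',L)
\]
uniformly in $h\ge N$.

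Taking the supremum over $h\ge N$ gives
\[
\norm{A_u}_{N;M}\le 2^{2M+8}L(2M+2)\,\tau(N-M-1;2M+2,L)+C2^{M+2}(1+L)^{-2}\norm{u}_{L^\infty(\D)}.
\]
Given $\delta>0$, I first choose $L$ large so that the second term is below $\delta/2$ (here $M$ is fixed). With $M$ and $L$ now fixed, Lemma~\ref{lem:local-band-entries-Dhat}, which uses the vanishing Berezin transform through Lemma~\ref{lem:affine-polarization-Dhat}, gives $\tau(N-M-1;2M+2,L)\to0$ as $N\to\infty$. Hence $\limsup_{N}\norm{A_u}_{N;M}\le\delta$, and \eqref{eq:scalar-fixed-band-vanishing} follows.

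The only point needing care is that the earlier lemmas were stated for the blocks $A^{M,L}_{N,1}$ and $A^{M,L}_{N,2}$ with strict separation $|k-j|<M$. One has to check that they apply verbatim with the enlarged band width $2M+2$ and the shifted threshold $N-M-1$, and that their constants stay independent of $h$. This is routine, because all the bounds are entrywise plus counting, so I do not expect a genuine obstacle.
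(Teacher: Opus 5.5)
Your proof is correct and matches the paper's argument. The paper likewise splits $\Pi_{h,M}A_u\Pi_{h,M}$ at angular scale $L$ and bounds the local part by $2^{2M+8}(2M+2)L\,\tau(N-M-1;2M+2,L)$ using Lemma~\ref{lem:local-band-degree-Dhat} and Schur's test. It treats the separated part as a compression of $A^{2M+3,L}_{h-M-1,2}(u)$, so that Lemma~\ref{lem:near-diagonal-scalar-tail-2} gives the bound $C2^{(2M+3)/2}(1+L)^{-2}\norm{u}_{L^\infty(\D)}$.

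The differences are cosmetic. The paper fixes $N\ge 2M+4$, which makes the threshold $h-M-1\ge 2$ explicit so the earlier lemmas apply as stated. It also lets $N\to\infty$ before $L\to\infty$ rather than choosing $L$ first, which is equivalent.
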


\begin{proof}
Fix $M\ge4$, $L\ge1$, and $N\ge2M+4$. Let $h\ge N$. If $I\in\mathcal Q_k, J\in\mathcal Q_j$ index an entry of
$\Pi_{h,M}A_u\Pi_{h,M}$, then $j,k\ge h-M-1$ and $|j-k|\le2M+2.$ According as $d_{j\wedge k}(I,J)\le L$ or $d_{j\wedge k}(I,J)>L$, we decompose
\[
 \Pi_{h,M}A_u\Pi_{h,M}=A_{\mathrm D}+A_{\mathrm L}.
\]
Every nonzero entry of $A_{\mathrm D}$ is bounded by $\tau(h-M-1;2M+2,L)$. Moreover, by Lemma~\ref{lem:local-band-degree-Dhat}, with its bandwidth parameter equal to $2M+2$, each row and each column of $A_{\mathrm D}$ has at most
$
 2^{2M+8}(2M+2)L
$
nonzero entries. By Schur's test (Lemma \ref{lem:Schur}), 
\[
 \norm{A_{\mathrm D}}
 \le
 2^{2M+8}(2M+2)L\,
 \tau(h-M-1;2M+2,L).
\]
Since $A_{\mathrm L}$ is a compression of $A_{h-M-1,2}^{2M+3,L}(u)$, by Lemma~\ref{lem:near-diagonal-scalar-tail-2},
\[
 \norm{A_{\mathrm L}}
 \le
 C2^{(2M+3)/2}(1+L)^{-2}
 \norm{u}_{L^\infty(\D)},
\]
where $C=C(\omega,\chi)>0$ is independent of $N$, $h$, $M$, and $L$.

Since $\tau(\,\cdot\,;2M+2,L)$ is nonincreasing, taking the supremum over $h\ge N$, we have
\begin{align*}\label{eq:scalar-fixed-band-quantitative}
 \norm{A_u}_{N;M}
 \le
 2^{2M+10} \tau(N-M-1;2M+2,L)LM
 +
 C2^{M+2}(1+L)^{-2}
 \norm{u}_{L^\infty(\D)}.
\end{align*}
For fixed $M$ and $L$, by Lemma~\ref{lem:local-band-entries-Dhat},
\[
 \lim_{N\to\infty}\tau(N-M-1;2M+2,L)=0.
\]
Hence,
\[
 \limsup_{N\to\infty}\norm{A_u}_{N;M}
 \le
 C2^{M+2}(1+L)^{-2}\norm{u}_{L^\infty(\D)}.
\]
Letting $L\to\infty$, we get the desired result \eqref{eq:scalar-fixed-band-vanishing}. This completes the whole proof.
\end{proof}

\begin{lemma}\label{lem:radial-Bessel-modulation}
Let \(\omega\in\widehat{\mathcal D}\), let \(\Lambda\) be a set of positive integers, and let \(\{\alpha_j\}_{j\in\Lambda}\) be a family of measurable functions defined on $[0,1)$ and taking values in $[0,1]$. Assume that there exists a constant \(\alpha\ge0\) such that, for every \(j\in\Lambda\) and every \(n\ge1\) satisfying \(\chi(2^{-j}n)\ne0\),
\begin{equation}\label{eq:radial-Bessel-moment-condition}
 \frac1{\omega_{2n+1}}
 \int_0^1\alpha_j(r)^2r^{2n+1}\omega(r)\dd r
 \le\alpha.
\end{equation}
Then, for every $g\in L^2_\omega$,
\begin{equation*}\label{eq:radial-Bessel-bound}
 \sum_{j\in \Lambda}\sum_{J\in\mathcal Q_j}
 \left|
  \int_\D g(z)\alpha_j(|z|)\overline{\phi_J(z)}
  \omega(z)\dd A(z)
 \right|^2
 \le\alpha\norm{g}_{L^2_\omega}^2.
\end{equation*}
\end{lemma}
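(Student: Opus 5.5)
The plan is to expand the integral in polar coordinates and use the Fourier structure of $\phi_J$, so that the level-$j$ sum reduces, exactly as in Proposition~\ref{prop:Parseval}, to a sum over Fourier modes. Write $g(re^{i\theta})=\sum_{n\in\Z}g_n(r)e^{in\theta}$ for a.e.\ $r$, where $g_n(r)=\frac1{2\pi}\int_{-\pi}^{\pi}g(re^{i\theta})e^{-in\theta}\dd\theta$. By Plancherel in $\theta$,
\[
\norm{g}_{L^2_\omega}^2=2\sum_{n\in\Z}\int_0^1|g_n(r)|^2 r\omega(r)\dd r .
\]
Since $\overline{\phi_J(re^{i\theta})}=2^{-(j+3)/2}\sum_n \chi(2^{-j}n)e^{in\theta_J}r^ne^{-in\theta}/\sqrt{2\omega_{2n+1}}$ is a finite sum, we obtain
\[
\int_\D g\,\alpha_j(|z|)\overline{\phi_J}\,\omega\dd A
=2^{-(j+3)/2}\sum_{n}\chi(2^{-j}n)e^{in\theta_J}\beta_{j,n},
\]
where
\[
\beta_{j,n}=\frac{2}{\sqrt{2\omega_{2n+1}}}\int_0^1 g_n(r)\alpha_j(r)r^{n+1}\omega(r)\dd r .
\]
Here the normalization $\dd A=r\dd r\dd\theta/\pi$ gives the factor $2$.

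For fixed $j$, the relevant $n$ lie in $(2^{j-1},2^{j+1})$, an interval of length less than $2^{j+3}$. The character identity $\sum_{J\in\mathcal Q_j}e^{i(n-m)\theta_J}=0$ unless $2^{j+3}\mid(n-m)$ then kills the cross terms, just as in \eqref{eq:single-level-Parseval}. This gives
\[
\sum_{J\in\mathcal Q_j}\Big|\int_\D g\,\alpha_j\overline{\phi_J}\,\omega\dd A\Big|^2=\sum_n\chi(2^{-j}n)^2|\beta_{j,n}|^2 .
\]

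Next, bound each $\beta_{j,n}$ with $\chi(2^{-j}n)\neq0$ by Cauchy--Schwarz, splitting the weight $r\omega(r)$ as $r^{1/2}\omega^{1/2}\cdot r^{1/2}\omega^{1/2}$:
\[
|\beta_{j,n}|^2\le\frac{4}{2\omega_{2n+1}}\int_0^1|g_n|^2r\omega\dd r\int_0^1\alpha_j^2r^{2n+1}\omega\dd r\le 2\alpha\int_0^1|g_n(r)|^2r\omega(r)\dd r,
\]
where the last step is hypothesis \eqref{eq:radial-Bessel-moment-condition}. Summing over $j\in\Lambda$ and using $\sum_{j}\chi(2^{-j}n)^2\le1$ (from \eqref{eq:chi-partition}, since the terms are nonnegative) yields
\[
\sum_{j\in\Lambda}\sum_{J\in\mathcal Q_j}|\cdots|^2\le 2\alpha\sum_n\int_0^1|g_n|^2r\omega\dd r=\alpha\norm{g}_{L^2_\omega}^2 .
\]
This is the claim.

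The only delicate points are the bookkeeping: the normalizing constants must come out exactly to $\alpha$, and interchanging the sum and the integral must be justified. The interchange is harmless because $\phi_J$ is a polynomial and $g\in L^2_\omega$ makes each $g_n\in L^2(r\omega\dd r)$. No localization estimates are needed; the argument is purely orthogonality plus Cauchy--Schwarz.
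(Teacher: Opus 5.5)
Your proposal is correct and matches the paper's proof. Both arguments reduce each level to Fourier modes via the character orthogonality used in Proposition~\ref{prop:Parseval}, then bound each mode by Cauchy--Schwarz against the moment hypothesis (giving the same factor $2/\omega_{2n+1}$), and finally sum using $\sum_j\chi(2^{-j}n)^2\le 1$ together with Plancherel in $\theta$ to obtain exactly $\alpha\norm{g}_{L^2_\omega}^2$.
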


\begin{proof}
For $g\in L^2_\omega$, using the same arguments as in the proof of Proposition~\ref{prop:Parseval}, we have
\begin{align*}
 \sum_{J\in\mathcal Q_j}
 \left|
  \int_\D g(z)\alpha_j(|z|)\overline{\phi_J(z)}
  \omega(z)\dd A(z)
 \right|^2=
 \sum_{n\ge1}\chi(2^{-j}n)^2
 \left|
  \int_\D g(z)\alpha_j(|z|)\overline{e_n(z)}
  \omega(z)\dd A(z)
 \right|^2.
\end{align*}
For $n\ge0$, we have
\begin{align*}
 \left|
  \int_\D g(z)\alpha_j(|z|)\overline{e_n(z)}
  \omega(z)\dd A(z)
 \right|^2
 \le\frac{2}{\omega_{2n+1}}
 \left(\int_0^1|g_n(r)|^2r\omega(r)\dd r\right)\times
 \left(\int_0^1\alpha_j(r)^2r^{2n+1}\omega(r)\dd r\right).
\end{align*}
Using \eqref{eq:radial-Bessel-moment-condition}, summing over
$j\in \Lambda$, and then using
$
 \sum_{j\in\Z}\chi(2^{-j}n)^2=1,
$
we obtain
\begin{align*}
 \sum_{j\in \Lambda}\sum_{J\in\mathcal Q_j}
 \left|
  \int_\D g(z)\alpha_j(|z|)\overline{\phi_J(z)}
  \omega(z)\dd A(z)
 \right|^2 \le
 2\alpha\sum_{n\ge1}
 \int_0^1|g_n(r)|^2r\omega(r)\dd r
 \le\alpha\norm{g}_{L^2_\omega}^2.
\end{align*}
This completes the proof.
\end{proof}

\begin{lemma}\label{lem:modulated-frame-norm-only-k}
Let $\omega\in\widehat{\mathcal D}$. There exists a constant $C=C(\chi)>0$ such that, for every $J\in\mathcal Q_j,j\geq 1$, and integer $k\ge1$,
\begin{equation*}\label{eq:modulated-frame-norm-general}
 \norm{z^k\phi_J}_{A^2_\omega}^2\le C\frac{\mathfrak m(k)}{\mathfrak m(2^{j+1})}.
\end{equation*}
\end{lemma}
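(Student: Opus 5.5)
Since multiplication by $z^k$ shifts monomials, the plan is to compute the norm exactly in the orthonormal basis and then compare moments. By \eqref{eq:frame-element-definition},
\[
 z^k\phi_J=\frac1{\sqrt{2^{j+3}}}\sum_{n\ge1}\chi(2^{-j}n)e^{-in\theta_J}\frac{z^{n+k}}{\sqrt{2\omega_{2n+1}}}.
\]
Because $\{z^m\}$ is orthogonal in $A^2_\omega$ with $\norm{z^m}_{A^2_\omega}^2=2\omega_{2m+1}$, this gives
\[
 \norm{z^k\phi_J}_{A^2_\omega}^2=\frac1{2^{j+3}}\sum_{n\ge1}\chi(2^{-j}n)^2\frac{\omega_{2(n+k)+1}}{\omega_{2n+1}}
 =\frac1{2^{j+3}}\sum_{n\ge1}\chi(2^{-j}n)^2\frac{\mathfrak m(n+k)}{\mathfrak m(n)}.
\]

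The next step is to bound each ratio. Only $2^{j-1}<n<2^{j+1}$ contribute, and there are at most $2^{j+1}$ such $n$, with $\chi\le1$. It therefore suffices to show
\[
\frac{\mathfrak m(n+k)}{\mathfrak m(n)}\le C\frac{\mathfrak m(k)}{\mathfrak m(2^{j+1})}
\]
with $C$ depending only on $\chi$, i.e.\ an absolute constant. The numerator satisfies $\mathfrak m(n+k)\le\mathfrak m(k)$ because $\mathfrak m$ is nonincreasing ($r^{2x+1}$ decreases in $x$). For the denominator, $n<2^{j+1}$ gives $\mathfrak m(n)\ge\mathfrak m(2^{j+1})$ by the same monotonicity. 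Hence each ratio is at most $\mathfrak m(k)/\mathfrak m(2^{j+1})$.

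Summing, $\norm{z^k\phi_J}^2\le 2^{-j-3}\cdot 2^{j+1}\,\mathfrak m(k)/\mathfrak m(2^{j+1})=\tfrac14\,\mathfrak m(k)/\mathfrak m(2^{j+1})$. This gives the claim with $C=1/4$, which is even independent of $\chi$ because $0\le\chi\le1$ (from \eqref{eq:chi-partition}).

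There is no real obstacle. The only point to verify is the direction of monotonicity of $\mathfrak m$ in both estimates: the numerator needs $n+k\ge k$, and the denominator needs $n\le 2^{j+1}$. In particular, no doubling property of $\omega\in\widehat{\mathcal D}$ is needed.
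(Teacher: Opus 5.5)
Your proof is correct and is essentially the paper's argument: expand $z^k\phi_J$ in the monomial basis, bound $\mathfrak m(n+k)\le\mathfrak m(k)$ and $\mathfrak m(n)\ge\mathfrak m(2^{j+1})$ by monotonicity, and count fewer than $2^{j+1}$ nonzero terms. Your observation that $0\le\chi\le1$ follows from \eqref{eq:chi-partition} sharpens the paper's constant $\norm{\chi}_{L^\infty}^2/4$ to the explicit value $C=1/4$.
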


\begin{proof}
By the frame definition and the orthonormality of the monomials, 
\[
 \norm{z^k\phi_J}_{A^2_\omega}^2
 =
 \frac1{2^{j+3}}
 \sum_{n\ge1}\chi(2^{-j}n)^2
 \frac{\mathfrak m(n+k)}{\mathfrak m(n)}.
\]
If $\chi(2^{-j}n)\ne0$, then $n<2^{j+1}$. Since $\mathfrak m$ is
nonincreasing,
\[
 \mathfrak m(n+k)\le \mathfrak m(k),
 \qquad
 \mathfrak m(n)\ge \mathfrak m(2^{j+1}).
\]
Moreover, there are fewer than $2^{j+1}$ nonzero summands. Therefore,
\begin{align*}
 \norm{z^k\phi_J}_{A^2_\omega}^2
 \le
 \frac{2^{j+1}}{2^{j+3}}
 \norm{\chi}_{L^\infty}^2
 \frac{\mathfrak m(k)}{\mathfrak m(2^{j+1})}\le
 C\frac{\mathfrak m(k)}{\mathfrak m(2^{j+1})}.
\end{align*}
The proof is complete now.
\end{proof}

Let $0<\epsilon\leq1/4$, let $d_\epsilon$ be the integer defined in
\eqref{eq:import-constant-epsilon}, and set
\begin{align}\label{eq:setM-N}
 M_\epsilon=d_\epsilon+4,
 \qquad
 N_\epsilon=d_\epsilon+3.
\end{align}

\begin{proposition}\label{prop:upper-Carleson-one-sided}
Let $\epsilon\in(0,1/4]$. Let $\omega\in\widehat{\mathcal D}$ and $M_\epsilon,N_\epsilon$ be given by \eqref{eq:setM-N}. There exist constants
$C_0=C_0(\omega,\chi)>0$ and
$C_\epsilon=C_\epsilon(\omega,\chi)>0$ such that, for every
$u\in L^\infty(\D)$, $M\geq M_\epsilon$, and $N\geq N_\epsilon$,
\begin{equation}\label{eq:upper-Carleson-main-estimate}
 \mathfrak C_N^M(u)
 \leq
 C_0\epsilon\norm{u}_{L^\infty(\D)}^2
 +C_\epsilon\bigl(\norm{A_u}_{N;d_\epsilon+2}\bigr)^2
 +C_\epsilon2^{-2M}\norm{u}_{L^\infty(\D)}^2.
\end{equation}
\end{proposition}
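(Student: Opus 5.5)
The plan is to fix $K\in\mathcal Q_k$ with $k\ge N$ and show that
\[
\sum_{j\ge k+M}\sum_{J\in\mathcal Q_j:\,J\subseteq K}|c_J|^2
\]
is bounded by $\omega(T(K))$ times the right-hand side of \eqref{eq:upper-Carleson-main-estimate}, using a test function that localizes to $K$.

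\textbf{Step 1: freezing at $\phi_K$.} Since $J\subseteq K$ and $j\ge k+M\ge k+4$, Lemma~\ref{lem:frame-element-core} gives $|\phi_K(a_J)|\ge c\,\omega(T(K))^{-1/2}$. Write
\[
\overline{\phi_K(a_J)}\,c_J=\int_\D u\,\phi_J\,\overline{\phi_K}\,\omega\dd A-\int_\D u\,\phi_J\bigl(\overline{\phi_K}-\overline{\phi_K(a_J)}\bigr)\omega\dd A .
\]
By Lemma~\ref{lem:triangular-freezing-error-scalar} (roles $I\to J$, $J\to K$), the second term is $O\bigl(\|u\|_\infty 2^{-3(j-k)/2}(1+d_k(J,K))^{-3}\bigr)$. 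Squaring, using Lemma~\ref{lem:angular-counting} to count the $\lesssim 2^{j-k}$ intervals $J\subseteq K$ at level $j$, and summing over $j\ge k+M$ gives an error $\lesssim 2^{-2M}\|u\|_\infty^2$. After multiplying by $|\phi_K(a_J)|^{-2}\lesssim\omega(T(K))$, this is exactly the $C_\epsilon 2^{-2M}$ term. It therefore remains to bound $\sum_J|\langle T_u\phi_J,\phi_K\rangle|^2$ by $C_0\epsilon\|u\|^2_\infty+C_\epsilon\|A_u\|_{N;d_\epsilon+2}^2$.

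\textbf{Step 2: stopping-time splitting.} Run the stopping construction \eqref{eq:recursive-stopping-indices} with $m_0=k+M$, which is allowed since $M\ge M_\epsilon$. The levels $j\ge k+M$ are then partitioned into blocks $[m_\nu,m_{\nu+1})$, each with its own $N_\nu=2^{h_\nu}$. For $j$ in block $\nu$, split $1=(1-r^{2N_\nu})+z^{N_\nu}\bar z^{N_\nu}$ inside the integral. The first piece is handled by Lemma~\ref{lem:radial-Bessel-modulation} with $g=\overline u\,\phi_K$ (up to conjugation) and $\alpha_j=1-r^{2N_\nu}$. Hypothesis \eqref{eq:radial-Bessel-moment-condition} holds with $\alpha=\epsilon$ by \eqref{eq:adaptive-radial-smallness}, which yields a total contribution $\le\epsilon\|u\|_\infty^2\|\phi_K\|^2\le\epsilon\|u\|_\infty^2$. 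The second piece equals $\langle T_u(z^{N_\nu}\phi_J),z^{N_\nu}\phi_K\rangle$.

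\textbf{Step 3: the modulated piece (main obstacle).} For $m_\nu\le j<m_{\nu+1}=h_\nu+d_\epsilon+1$, the frequencies of $z^{N_\nu}\phi_J$ lie in $(N_\nu,\,N_\nu+2^{j+1})\subset(2^{h_\nu},\,2^{h_\nu+d_\epsilon+3})$. Consequently $Wz^{N_\nu}\phi_J$ is supported in the band $\Pi_{h,d_\epsilon+2}$ with $h\approx h_\nu+d_\epsilon/2$, and $h\ge N$ since $h_\nu\ge m_\nu-d_\epsilon\ge k+M-d_\epsilon$. Only the same band of $Wz^{N_\nu}\phi_K$ pairs with it, so
\[
\langle T_u z^{N_\nu}\phi_J,z^{N_\nu}\phi_K\rangle=\langle \Pi_{h,\cdot}A_u\Pi_{h,\cdot}Wz^{N_\nu}\phi_J,\;Wz^{N_\nu}\phi_K\rangle .
\]
Summing $|\cdot|^2$ over $J$ in the block amounts to computing $\sum_J|\langle z^{N_\nu}\phi_J,G\rangle|^2$ with $G=W^*(\Pi A_u\Pi)^*Wz^{N_\nu}\phi_K$. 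Because $M_z$ is a contraction on $A^2_\omega$, the Parseval property (Proposition~\ref{prop:Parseval}) bounds this by $\|G\|^2\le\|A_u\|_{N;d_\epsilon+2}^2\|z^{N_\nu}\phi_K\|^2$. I expect the bookkeeping for this step to be the delicate point: checking that the shifted frequency window really fits into a single band of width $d_\epsilon+2$ for every $j$ in the block.

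\textbf{Step 4: summing the blocks.} Lemma~\ref{lem:modulated-frame-norm-only-k} gives $\|z^{N_\nu}\phi_K\|^2\le C\,\mathfrak m(N_\nu)/\mathfrak m(2^{k+1})$. Summing over $\nu$ with \eqref{eq:adaptive-modulation-sum} gives $\sum_\nu\mathfrak m(N_\nu)\le C_\epsilon\mathfrak m(x_0)\le C_\epsilon\mathfrak m(2^{k+1})$, because $x_0=2^{k+M-1}\ge2^{k+1}$ and $\mathfrak m$ is nonincreasing. The modulated pieces therefore contribute at most $C_\epsilon\|A_u\|_{N;d_\epsilon+2}^2$. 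Combining Steps 1–4, multiplying by $\omega(T(K))$, dividing by $\omega(T(K))$ in the definition of $\mathfrak C_N^M(u)$, and taking the supremum over $K$ yields \eqref{eq:upper-Carleson-main-estimate}.
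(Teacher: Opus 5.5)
Your proof is correct, but it is organized differently from the paper's.

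\textbf{The paper's order.} The paper splits first. It writes $c_J=e_J+d_J$ by inserting $1-|z|^{2N_\nu}$ and $|z|^{2N_\nu}$ directly into $c_J=\int_\D u\,\phi_J\,\omega\dd A$. It then controls $e_J$ without freezing: it splits the integral over the enlarged tent $E_K$ and its complement. On $E_K$ it applies Lemma~\ref{lem:radial-Bessel-modulation} with $g=u\mathbf{1}_{E_K}$, which supplies the factor $\omega(E_K)\le5\,\omega(T(K))$. Off $E_K$ it uses far-field decay from Lemma~\ref{lem:frame-element-moments}. Only after that does it freeze $d_J$ at $\phi_K(a_J)$, and it bounds the modulated part by duality against the test function $F=\sum_J\alpha_J\phi_J/\overline{\phi_K(a_J)}$.

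\textbf{Your order.} You freeze first. This reduces everything to the row entries $A_u(K,J)=\langle T_{\omega,u}\phi_J,\phi_K\rangle$ plus the standard error from Lemma~\ref{lem:triangular-freezing-error-scalar}. You then split those entries radially.

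\textbf{What each buys.} Your order is somewhat more economical. The factor $\overline{\phi_K}$ already localizes the integrand, so the cut-off piece is handled by Lemma~\ref{lem:radial-Bessel-modulation} with $g=\overline u\,\phi_K$ and $\|\phi_K\|_{A^2_\omega}\le1$. The normalization $\omega(T(K))$ then comes uniformly from $|\phi_K(a_J)|^{-2}$, so you need neither the $E_K$/complement split nor the far-field decay estimate. The paper's order keeps the $e_J$ estimate independent of the freezing, at the price of that extra localization step. Your Bessel-inequality argument for the modulated block (Parseval property plus contractivity of $M_z$) is just the dual of the paper's test-function bound. It uses the same inputs, Lemma~\ref{lem:modulated-frame-norm-only-k} and \eqref{eq:adaptive-modulation-sum}.

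\textbf{One point to tighten in Step 3.} Since $A_u$ is not block diagonal, knowing that $Wz^{N_\nu}\phi_J$ lies in the band does not let you insert the projection on the other side. You need $Wz^{N_\nu}\phi_K$ to be supported in the band itself. It is: $h_\nu\ge k+M-d_\epsilon\ge k+4$, so the frequencies of $z^{N_\nu}\phi_K$ lie in $(2^{h_\nu},2^{h_\nu+1})$, and $Wz^{N_\nu}\phi_K$ lives on levels $h_\nu$ and $h_\nu+1$.

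\textbf{A small sharpening.} The window for $z^{N_\nu}\phi_J$ is actually $(2^{h_\nu},2^{h_\nu+d_\epsilon+2})$. This touches only levels $h_\nu,\dots,h_\nu+d_\epsilon+2$, so the paper's choice $h=h_\nu$ already works and no recentering is needed.
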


\begin{proof}
Fix $M\geq M_\epsilon$ and $N\geq N_\epsilon$. It is enough to consider $K\in\mathcal Q_k$ with $k\geq N$. Set
\[
 m_0=k+M,
\]
and let $m_\nu,h_\nu,N_\nu$, and $x_\nu$ be given by
\eqref{eq:recursive-stopping-indices}. Since
\[
 h_\nu\geq m_\nu-d_\epsilon
 \geq k+M-d_\epsilon>N,
\]
we have
\begin{equation}\label{eq:stopping-block-band-bound}
 \norm{\Pi_{h_\nu,d_\epsilon+2}A_u
 \Pi_{h_\nu,d_\epsilon+2}}
 \leq
 \norm{A_u}_{N;d_\epsilon+2}.
\end{equation}
For $m_\nu\leq j<m_{\nu+1}$ and $J\in\mathcal Q_j$, decompose
$
 c_J=e_J+d_J,
$
where
\[
 e_J
 =
 \int_\D u(z)(1-|z|^{2N_\nu})\phi_J(z)
 \omega(z)\dd A(z), \quad \text{and}
 \]
 \[
 d_J
 =
 \int_\D u(z)|z|^{2N_\nu}\phi_J(z)
 \omega(z)\dd A(z).
\]
We first estimate the terms $e_J$. Let
\[
 E_K
 =
 \{re^{i\theta}:r_k\leq r<1,\ e^{i\theta}\in5K\}.
\]
Since $k\geq N_\epsilon$, the arc $5K$ has length less than $2\pi$,
and hence,
$
 \omega(E_K)\leq5\omega(T(K)).
$
Write $e_J=e_J^{\mathrm{l}}+e_J^{\mathrm{o}}$ according as the integration is over $E_K$ or $\D\setminus E_K$, respectively.
By \eqref{eq:adaptive-radial-smallness} and Lemma~\ref{lem:radial-Bessel-modulation},
\[
 \sum_{j\geq k+M}\sum_{J\in\mathcal Q_j}|e_J^{\mathrm{l}}|^2 \leq 5\epsilon\norm{u}_{L^\infty(\D)}^2\omega(T(K)).
\]
On the other hand, if $J\subset K$, $j\geq k+M$, and $z=re^{i\theta}\notin E_K$, then
\[
 1+2^j(1-r)+2^jd_\T(\theta,\theta_J)
 \geq c2^{j-k}.
\]
By Lemma~\ref{lem:frame-element-moments} with exponent three, 
\[
 \int_{\D\setminus E_K}|\phi_J(z)|\omega(z)\dd A(z)
 \leq
 C2^{-3(j-k)}\sqrt{\omega(T(J))}.
\]
Moreover, we obtain
\begin{equation}\label{eq:eJ-Carleson-one-sided}
 \frac1{\omega(T(K))}
 \sum_{j\geq k+M}
 \sum_{\substack{J\in\mathcal Q_j:J\subset K}}
 |e_J|^2
 \leq
 C\epsilon\norm{u}_{L^\infty(\D)}^2
 +
 C2^{-6M}\norm{u}_{L^\infty(\D)}^2.
\end{equation}

We next estimate $d_J$. For a fixed $K$. set
\[
 \mathcal J_{\nu,K}
 =
 \bigcup_{j=m_\nu}^{m_{\nu+1}-1}
 \{J\in\mathcal Q_j:J\subset K\}.
\]
Let $(\alpha_J)_{J\in\mathcal J_{\nu,K}}$ be a unit vector in
$\ell^2(\mathcal J_{\nu,K})$, and define
\[
 F
 =
 \sum_{J\in\mathcal J_{\nu,K}}
 \frac{\alpha_J}{\overline{\phi_K(a_J)}}\phi_J.
\]
Since $j\geq k+M\geq k+4$, Lemma~\ref{lem:frame-element-core} and
the contractivity of $W^*$ imply
\begin{equation}\label{eq:F-adaptive-norm}
 \norm{F}_{A^2_\omega}
 \leq
 C\sqrt{\omega(T(K))}.
\end{equation}
For $J\in\mathcal J_{\nu,K}$, set
\[
 \rho_{K,J}^\nu
 =
 \int_\D u(z)|z|^{2N_\nu}\phi_J(z)
 \bigl(
 \overline{\phi_K(z)}-\overline{\phi_K(a_J)}
 \bigr)\omega(z)\dd A(z).
\]
Then
\[
 \left\langle
 A_uW(z^{N_\nu}\phi_J),
 W(z^{N_\nu}\phi_K)
 \right\rangle
 =
 \overline{\phi_K(a_J)}d_J+\rho_{K,J}^\nu.
\]
By Lemma~\ref{lem:frame-element-overlap},
\[
 |\rho_{K,J}^\nu|
 \leq
 C\norm{u}_{L^\infty(\D)}
 2^{-3(j-k)/2}
 \bigl(1+d_k(K,J)\bigr)^{-3}.
\]
Since $K$ has $2^{j-k}$ descendants at level $j$, the
Cauchy--Schwarz inequality gives
\begin{align}
 \left|
 \sum_{J\in\mathcal J_{\nu,K}}\alpha_Jd_J
 -
 \left\langle
 A_uW(z^{N_\nu}F),
 W(z^{N_\nu}\phi_K)
 \right\rangle
 \right|
 \leq{}&
 C\norm{u}_{L^\infty(\D)}
 \sqrt{\omega(T(K))}
 \left(
 \sum_{j=m_\nu}^{m_{\nu+1}-1}
 2^{-2(j-k)}
 \right)^{1/2}.
 \label{eq:adaptive-block-freezing-error}
\end{align}
For  $m_\nu\leq j<m_{\nu+1}=h_\nu+d_\epsilon+1$ and $N_\nu=2^{h_\nu}$, we have
\[
 W(z^{N_\nu}F)
 =
 \Pi_{h_\nu,d_\epsilon+2}W(z^{N_\nu}F).
\]
Moreover, $h_\nu\geq k+4$ implies
\[
 W(z^{N_\nu}\phi_K)
 =
 \Pi_{h_\nu,d_\epsilon+2}W(z^{N_\nu}\phi_K).
\]
Consequently, by \eqref{eq:stopping-block-band-bound},
\[
 \left|
 \left\langle
 A_uW(z^{N_\nu}F),
 W(z^{N_\nu}\phi_K)
 \right\rangle
 \right|
 \leq
 \norm{A_u}_{N;d_\epsilon+2}
 \norm{z^{N_\nu}F}_{A^2_\omega}
 \norm{z^{N_\nu}\phi_K}_{A^2_\omega}.
\]
By Lemma~\ref{lem:modulated-frame-norm-only-k},
\[
 \norm{z^{N_\nu}\phi_K}_{A^2_\omega}
 \leq
 C
 \left(
 \frac{\mathfrak m(N_\nu)}
 {\mathfrak m(2^{k+1})}
 \right)^{1/2}.
\]
Recalling \eqref{eq:F-adaptive-norm}, we have
\[
 \left|
 \left\langle
 A_uW(z^{N_\nu}F),
 W(z^{N_\nu}\phi_K)
 \right\rangle
 \right|
 \leq
 C\norm{A_u}_{N;d_\epsilon+2}
 \sqrt{\omega(T(K))}
 \left(
 \frac{\mathfrak m(N_\nu)}
 {\mathfrak m(2^{k+1})}
 \right)^{1/2}.
\]
Combining this estimate with
\eqref{eq:adaptive-block-freezing-error} and taking the supremum over
all unit vectors $(\alpha_J)$, we obtain
\begin{align}
 \frac1{\omega(T(K))}
 \sum_{J\in\mathcal J_{\nu,K}}|d_J|^2
 \leq
 C\bigl(\norm{A_u}_{N;d_\epsilon+2}\bigr)^2
 \frac{\mathfrak m(N_\nu)}
 {\mathfrak m(2^{k+1})}
 +
 C\norm{u}_{L^\infty(\D)}^2
 \sum_{j=m_\nu}^{m_{\nu+1}-1}2^{-2(j-k)}.
 \label{eq:adaptive-block-d-bound}
\end{align}
Since $x_0=2^{k+M-1}\geq2^{k+1}$, by the monotonicity of $\mathfrak m$ and \eqref{eq:adaptive-modulation-sum},
\[
 \sum_{\nu=0}^\infty
 \frac{\mathfrak m(N_\nu)}
 {\mathfrak m(2^{k+1})}
 \leq C_\epsilon.
\]
The stopping blocks partition the integers $j\geq k+M$. Summing
\eqref{eq:adaptive-block-d-bound} over $\nu$ therefore gives
\begin{equation}\label{eq:dJ-Carleson-one-sided}
 \frac1{\omega(T(K))}
 \sum_{j\geq k+M}
 \sum_{\substack{J\in\mathcal Q_j\\J\subset K}}
 |d_J|^2
 \leq
 C_\epsilon
 \bigl(\norm{A_u}_{N;d_\epsilon+2}\bigr)^2
 +
 C2^{-2M}\norm{u}_{L^\infty(\D)}^2.
\end{equation}

Hence, by \eqref{eq:eJ-Carleson-one-sided} and \eqref{eq:dJ-Carleson-one-sided}, and
$2^{-6M}\leq2^{-2M}$, we have
\[
 \frac1{\omega(T(K))}
 \sum_{j\geq k+M}
 \sum_{\substack{J\in\mathcal Q_j:J\subset K}}
 |c_J|^2
 \leq
 C_0\epsilon\norm{u}_{L^\infty(\D)}^2
 +
 C_\epsilon\bigl(\norm{A_u}_{N;d_\epsilon+2}\bigr)^2
 +
 C_\epsilon2^{-2M}\norm{u}_{L^\infty(\D)}^2.
\]
Taking the supremum over $K\in\mathcal Q_k,k\geq N$, we get \eqref{eq:upper-Carleson-main-estimate} and complete the proof.
\end{proof}

\begin{proof}[Proof of Proposition~\ref{cor:scalar-Carleson-tail}]
Fix $0<\epsilon\leq1/4$, and let $d_\epsilon$, $M_\epsilon$, and $N_\epsilon$ be the parameters in
Proposition~\ref{prop:upper-Carleson-one-sided}. By Proposition~\ref{prop:scalar-fixed-band-compression},
\[
 \lim_{N\to\infty}
 \norm{A_u}_{N;d_\epsilon+2}
 =
 0.
\]
Consequently, for every $M\geq M_\epsilon$,
\[
 \limsup_{N\to\infty}\mathfrak C_N^M(u)
 \leq
 C_0\epsilon\norm{u}_{L^\infty(\D)}^2
 +
 C_\epsilon2^{-2M}\norm{u}_{L^\infty(\D)}^2.
\]
Letting first $M\to\infty$ and then $\epsilon\downarrow0$ proves
\eqref{eq:Carleson-tail-double-limit}.
\end{proof}

\section{The proof of Proposition \ref{prop:conuterexample}}\label{sec:counter}
Recall the weight considered in Proposition \ref{prop:conuterexample} is
\begin{equation}\label{eq:weight-new}
 \omega(r)=
 \begin{cases}
 \displaystyle \frac{e}{4}, &0\le r\le 1-e^{-1},\\[6pt]
 \displaystyle \frac{1}{(1-r)\left(\log\frac{e}{1-r}\right)^2}, &1-e^{-1}<r<1.
 \end{cases}
\end{equation}
Throughout this section $\omega$ is the one defined by \eqref{eq:weight-new}. 
\begin{lemma}\label{lem:log-subharmonic}
The weight $\omega$ is a log-subharmonic weight on $\mathbb{D}$, and hence it is a $\widehat{\mathcal{D}}$ weight.
\end{lemma}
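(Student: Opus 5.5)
Since $\omega$ is radial, I would verify that $\log\omega$ is subharmonic in the sense of distributions by checking it separately on each piece and across the interface circle $|z|=r_0:=1-e^{-1}$. The $\widehat{\mathcal D}$ membership then follows at once from the result of Peláez, Rättyä, and Wick \cite{PRW19} quoted in the remark after Proposition~\ref{prop:conuterexample}: every radial log-subharmonic weight is a $\widehat{\mathcal D}$ weight. As a sanity check, one can also compute $\widehat\omega$ directly. For $r>r_0$, substituting $s=\log\frac{e}{1-t}$ gives $\widehat\omega(r)=\bigl(\log\frac{e}{1-r}\bigr)^{-1}$. Replacing $r$ by $\frac{1+r}2$ adds $\log 2$ to the logarithm, so \eqref{eq:Dhat-intro} holds with a uniform constant; for $r\le r_0$ it holds by compactness.

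On the disc $|z|<r_0$, the function $\log\omega$ is constant, hence harmonic. On the annulus $r_0<|z|<1$, write $\log\omega=-\log(1-r)-2\log L(r)$ with $L(r)=\log\frac{e}{1-r}=1-\log(1-r)$. For a radial function $g$, the Laplacian is $g''+g'/r$.
\begin{itemize}
\item For $g_1=-\log(1-r)$ we get $g_1'=\frac1{1-r}$ and $g_1''=\frac1{(1-r)^2}$.
\item For $g_2=-2\log L$ we get $g_2'=-\frac{2}{(1-r)L}$ and $g_2''=-\frac{2}{(1-r)^2L}+\frac{2}{(1-r)^2L^2}$.
\end{itemize}
Hence
\[
 \Delta\log\omega=\frac{1}{(1-r)^2}\Bigl(1-\frac2L+\frac2{L^2}\Bigr)+\frac1r\cdot\frac{1}{1-r}\Bigl(1-\frac2L\Bigr).
\]
The first bracket equals $(1-1/L)^2+1/L^2>0$. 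The second term is nonnegative as soon as $L\ge2$, i.e. $1-r\le e^{-1}$, which is exactly the annulus. So $\Delta\log\omega\ge0$ there.

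The main obstacle is the interface circle $|z|=r_0$. There the distributional Laplacian of a radial function that is continuous but only piecewise smooth picks up a singular part equal to the jump in $\partial_r\log\omega$ times arc-length measure on the circle. For this to be positive, the outer radial derivative must dominate the inner one.
\begin{itemize}
\item Continuity: at $r_0$ we have $1-r_0=e^{-1}$ and $L=2$, so the outer value is $\frac{1}{e^{-1}\cdot 4}=\frac e4$, which matches the inner constant. Hence $\log\omega$ is continuous.
\item One-sided derivatives: the inner derivative is $0$, and the outer derivative is $\frac{1}{1-r_0}\bigl(1-\frac2L\bigr)=0$ at $L=2$.
\end{itemize}
So $\log\omega$ is in fact $C^1$ across the circle. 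To make this rigorous, I would pair $\log\omega$ against a nonnegative test function $\varphi$ and apply Green's formula on each region. The boundary terms cancel because the values and normal derivatives agree, which gives $\langle\Delta\log\omega,\varphi\rangle=\int_{|z|>r_0}\varphi\,\Delta\log\omega\,dA\ge0$. The same argument near $|z|=1$ is unnecessary, since subharmonicity is a local property on $\D$. This completes the plan.
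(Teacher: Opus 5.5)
Your proposal is correct and follows the paper's proof. You use the same derivatives $h'=\frac{1}{1-r}\bigl(1-\frac2L\bigr)$ and $h''=\frac{1}{(1-r)^2}\bigl(1-\frac2L+\frac2{L^2}\bigr)$, the same radial Laplacian $h''+h'/r\ge0$ on the annulus with $\log\omega$ constant inside, and the same appeal to \cite{PRW19} for the $\widehat{\mathcal D}$ property. Your Green's-formula treatment of the circle $|z|=1-e^{-1}$ and your direct computation $\widehat\omega(r)=\bigl(\log\frac{e}{1-r}\bigr)^{-1}$ simply spell out what the paper leaves implicit; the latter already proves $\widehat{\mathcal D}$ membership without the citation.
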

\begin{proof}
Let $\omega(z)=\omega(|z|)$ be the weight induced by \eqref{eq:weight-new}.
Set
\[
 h(r)=\log \omega(r)=
 \begin{cases}
  \displaystyle 1-\log4,
       & 0\le r\le 1-e^{-1},\\[6pt]
  \displaystyle \log\frac{e}{1-r}-1-2\log \log\frac{e}{1-r},
       & 1-e^{-1}<r<1.
 \end{cases}
\] 
Then \(h\in C^1([0,1))\) and is piecewise \(C^2\) on $[0,1)$. Moreover, 
write
\[t=t(r)=\log\frac{e}{1-r}.\]
Then, for \(r> 1-e^{-1}\),
\[
 h'(r) = \frac{t-2}{t(1-r)}\geq0
\]
and
\[
 h''(r) = \frac{1-\frac2t+\frac2{t^2}}{(1-r)^2}>0.
\]
By the formula of radial Laplace \cite{Shi02}, for \(r=|z|>1-e^{-1}\), 
\begin{align*}
 \Delta \log\omega(z) =h''(r)+\frac1r h'(r)\geq 0
\end{align*}
and for \(0\leq r<1-e^{-1}\), \(\Delta\log\omega(z)=0\). It follows that $\log \omega$ is subharmonic on the unit $\mathbb{D}$. 
By \cite[Theorem 1]{PRW19}, $\omega$ is a $\widehat{\mathcal{D}}$ weight. This completes the proof. 
\end{proof}

The following result follows from direct computations, we include a proof here since it plays a central role in our construction.
\begin{lemma}\label{lem:moments} There exist constants $C_1,C_2>0$ such that
\[
 C_1\le (1+\log x)\omega_x\le C_2,\qquad x\ge1.
\]
\end{lemma}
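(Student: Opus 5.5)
The plan is to prove the two-sided bound using the tail quantity $\widehat\omega(1-1/x)$. By \eqref{eq:moment-tail-doubling}, $\omega_x\asymp\widehat\omega(1-1/x)$ for $x\ge1$; this applies because Lemma~\ref{lem:log-subharmonic} already places $\omega$ in $\widehat{\mathcal D}$. It therefore suffices to show that $(1+\log x)\,\widehat\omega(1-1/x)$ is bounded above and below by positive constants. For safety, and because the argument is elementary, I would also verify the comparison $\omega_x\asymp\widehat\omega(1-1/x)$ directly for this particular $\omega$.

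\textbf{Step 1: the tail.} For $r>1-e^{-1}$, substitute $s=\log\frac{e}{1-\rho}$, so that $ds=\frac{d\rho}{1-\rho}$. Then
\[
 \widehat\omega(r)=\int_{\log\frac{e}{1-r}}^{\infty}\frac{ds}{s^2}=\frac{1}{\log\frac{e}{1-r}}.
\]
Taking $r=1-1/x$ with $x\ge e$ gives $\widehat\omega(1-1/x)=1/(1+\log x)$ exactly. For $1\le x\le e$, both $\omega_x$ and $1+\log x$ are bounded above and below by positive constants, so this range is trivial.

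\textbf{Step 2: the direct comparison.} For the lower bound, use
\[
 \omega_x\ge\int_{1-1/x}^1 r^x\omega(r)\,dr\ge (1-1/x)^x\,\widehat\omega(1-1/x)\ge \tfrac14\,\widehat\omega(1-1/x)
\]
for $x\ge2$. For the upper bound, integrate by parts:
\[
 \omega_x=\int_0^1 x r^{x-1}\widehat\omega(r)\,dr,
\]
and split this integral at $r=1-1/x$.
\begin{itemize}
\item On $[1-1/x,1)$, the integral is at most $x\cdot\frac1x\,\widehat\omega(1-1/x)$, since $\widehat\omega$ is decreasing.
\item On $[0,1-1/x]$, write $r=1-\delta$ with $\delta\in[1/x,1]$ and use $r^{x-1}\le Ce^{-x\delta}$. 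For $\delta\le e^{-1}$ we have $\widehat\omega(1-\delta)=1/(1+\log(1/\delta))$, and the constant region contributes only $O(e^{-cx})$. The substitution $\delta=t/x$ turns the main part into
\[
 \int_1^{x}e^{-t}\,\frac{dt}{1+\log x-\log t}.
\]
\end{itemize}

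\textbf{Main obstacle.} The delicate point is bounding this last integral by $C/(1+\log x)$, since the denominator degenerates as $t\to x$. I would split at $t=\sqrt x$. For $t\le\sqrt x$, the denominator is at least $1+\tfrac12\log x$. For $t\ge\sqrt x$, the denominator is at least $1$, and the factor $e^{-\sqrt x}$ is negligible compared with $1/(1+\log x)$. Combining Steps 1 and 2 then gives $C_1\le(1+\log x)\,\omega_x\le C_2$ for all $x\ge1$.
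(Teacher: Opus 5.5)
Your argument is correct, but it reaches the estimate partly by a different route from the paper.

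\textbf{Your direct computation.} Your Step 2 is essentially the paper's proof in different coordinates. The paper substitutes $t=\log\frac{e}{1-r}$ directly in $\omega_x$. It gets the lower bound by restricting to $t\ge 1+\log x$, which is your region $r\ge 1-1/x$. For the upper bound it uses $1-s\le e^{-s}$ to bound the integrand by $e^{-e^{1+\log x-t}}t^{-2}$, and splits at $t=1+\log x$ and $t=(1+\log x)/2$. These split points correspond exactly to your cuts $\delta=1/x$ and $\delta=\sqrt{e/x}\approx 1/\sqrt{x}$. Integrating by parts first moves the work onto the explicit tail $\widehat\omega(r)=1/\log\frac{e}{1-r}$. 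That makes the bookkeeping a bit cleaner, but the mechanism is the same.

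\textbf{Your shortcut.} This is the genuinely different part. Since Lemma~\ref{lem:log-subharmonic} gives $\omega\in\widehat{\mathcal D}$, the comparison \eqref{eq:moment-tail-doubling} combined with the exact identity $\widehat\omega(1-1/x)=1/(1+\log x)$ for $x\ge e$ proves the lemma in one line. It also shows where the factor $1/(1+\log x)$ comes from: it is precisely the tail of $\omega$.

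The paper does not take this route. Its computation is self-contained and avoids both the general $\widehat{\mathcal D}$ moment comparison and the Pel\'aez--R\"atty\"a--Wick theorem used to place $\omega$ in $\widehat{\mathcal D}$. Your closed form for $\widehat\omega$ removes that second dependence as well. For $r>1-e^{-1}$,
\[
\frac{\widehat\omega(r)}{\widehat\omega(\frac{1+r}{2})}=1+\frac{\log 2}{\log\frac{e}{1-r}}\le 1+\frac{\log 2}{2},
\]
and the range $r\le 1-e^{-1}$ is trivial. So $\omega\in\widehat{\mathcal D}$ follows directly, and the shortcut then rests only on \eqref{eq:moment-tail-doubling}.
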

\begin{proof}
Recall that $$\omega_x=\int_0^1r^x\omega(r)\dd r.$$
By \eqref{eq:weight-new} and the change of variables $t=\log\frac{e}{1-r}$,
\[
 \omega_x=\frac e4\int_0^{1-e^{-1}}r^x\dd r+\int_2^\infty\left(1-e^{1-t}\right)^x\frac{\dd t}{t^2}.
\]
Assume that $x\ge e^3$. If $t\ge1+\log x$, then $e^{1-t}\le x^{-1}$, so
\[
 \omega_x\ge \left(1-\frac1x\right)^x\int_{1+\log x}^\infty\frac{\dd t}{t^2}\ge \frac{c}{1+\log x}.
\]
For the reverse estimate, $1-s\le e^{-s}$ gives
\[
 \omega_x\le C(1-e^{-1})^x+\int_2^{1+\log x}e^{-e^{1+\log x-t}}\frac{\dd t}{t^2}+\frac1{1+\log x}.
\]
Splitting the integral at $(1+\log x)/2$, we have
\[
 \int_2^{1+\log x}e^{-e^{1+\log x-t}}\frac{\dd t}{t^2}
 \le Ce^{-e^{(1+\log x)/2}}+\frac{C}{1+\log x}
 \le \frac{C}{1+\log x}.
\]
Since $(1-e^{-1})^x\le C/(1+\log x)$, the upper bound follows. Adjusting the constants on $1\le x\le e^3$ completes the proof.
\end{proof}
\subsection{The construction}
Set $\lambda_1=16\log 2$, and for $j\ge 1$ define recursively
\begin{align}\label{eq:lambdaj}
\lambda_{j+1}=\frac{\lambda_j^2}{\log 2}.
\end{align}
We further define
\begin{align}\label{eq:ejfj}
 E_j=\left\{z:\,2\lambda_j\le\log\frac{e}{1-|z|}\le3\lambda_j\right\},\qquad
 F_j=\left\{z:\,7\lambda_j\le\log\frac{e}{1-|z|}\le8\lambda_j\right\}.
\end{align}
Let
\begin{equation}\label{eq:symbol-u}
u(z)=\sum_{j\ge1}\mathbf{1}_{E_j}(z).
\end{equation}
For each $j\geq 1$, set $l_j=e^{6\lambda_j}$ and
\begin{equation}\label{eq:symbol-vj}
v_j(re^{i\theta})=\mathbf{1}_{F_j}(r)e^{il_j\theta},\qquad z=re^{i\theta}\in\mathbb{D}.
\end{equation}
Then consider a function
\begin{equation}\label{eq:symbol-v}
 v(z)=
 v(re^{i\theta})=\sum_{j\ge1}v_j(re^{i\theta}):=\sum_{j\ge1}\mathbf{1}_{F_j}(r)e^{il_j\theta}.
\end{equation}
Since the sets $E_j$ and $F_j$ are pairwise disjoint, we have $0\le u\le1$ and $|v|\le1$. 

Recall the standard orthonormal basis of radial weighted Bergman space $A_\omega^2$ is
\[
 e_n(z)=\frac{z^n}{\sqrt{2\omega_{2n+1}}},\qquad n\ge0.
\]
Under this basis, for $n\ge0$ and $j\ge1$, we have
\begin{align}\label{eq:shft-vj}
T_{\omega,v_j}e_n=\alpha_{j,n}e_{n+l_j}
\end{align}
and
\begin{align}\label{eq:shft-u}
 T_{\omega,u}e_n=\beta_ne_n,
\end{align}
where
\begin{align}\label{eq:tvalphaj}
 \alpha_{j,n}=\frac1{\sqrt{\omega_{2n+1}\omega_{2(n+l_j)+1}}}
 \int_{F_j}r^{2n+l_j+1}\omega(r)\,dr
\end{align}
and
\begin{align}\label{eq:tubeta}
 \beta_n=\frac1{\omega_{2n+1}}\int_0^1 u(r)r^{2n+1}\omega(r)\,dr
\end{align}
Recall $S=T_{\omega,v}T_{\omega,u}$, for each $j\geq 1$, let $n_j=e^{\lambda_j}$, then
\[
 Se_{n_j}=\beta_{n_j}\sum_{k\ge1}\alpha_{k,n_j}e_{n_j+l_k}.
\]
Hence,
\begin{equation}\label{eq:norm-snj}
\| Se_{n_j}\|_{A_\omega^2}^2= \beta_{n_j}^2  \sum_{k\geq1}\abs{\alpha_{k,n_j}}^2.
\end{equation}
\textbf{Claim A.} There is a constant $c_0>0$ such that
\begin{equation}\label{eq:lowerbetaj}
  \beta_{n_j}^2\geq c_0,\qquad \forall j\geq 1.
\end{equation}
For $r\in E_j$, $1-r\le e^{1-2\lambda_j}$. Since $n_j=e^{\lambda_j}$, we have
\[
 r^{2n_j+1}\ge1-(2n_j+1)e^{1-2\lambda_j}
 =1-2e^{1-\lambda_j}-e^{1-2\lambda_j}>\frac12,
\]
By \eqref{eq:symbol-u} and \eqref{eq:tubeta},
\[
 \beta_{n_j}\ge\frac1{\omega_{2n_j+1}}\int_{E_j}r^{2n_j+1}\omega(r)\,dr\geq \frac{1}{2\omega_{2n_j+1}}\int_{E_j}\omega(r)dr.
\]
On the other hand,
\[
 \int_{E_j}\omega(r)\dd r = \int_{1-e^{1-2\lambda_j}}^{1-e^{1-3\lambda_j}} \frac{\dd r} {(1-r)\left(\log\frac{e}{1-r}\right)^2}.
\]
By using the change of variables
\[
 t=\log\frac{e}{1-r},
\] 
we have
\[
\int_{E_j}\omega(r)\dd r=\int_{2\lambda_j}^{3\lambda_j}\frac{\dd t}{t^2}=\frac{1}{6\lambda_j}.
\]
Therefore,
\[
\beta_{n_j}\ge \frac{1}{12\lambda_j\omega_{2n_j+1}}.
\]
Since $n_j=e^{\lambda_j}$, by Lemma \ref{lem:moments} and the monotonicity of $\omega_x$,  
\[
\log n_j \omega_{2n_j+1}\leq (1+\log n_j)\omega_{n_j}\leq C_2.
\]
Let $c_0=\frac{1}{144C_2^2}>0$, we verify the Claim A.

\noindent \textbf{Claim B.} There is a constant $c_1>0$ such that
\begin{align}\label{eq:lowalphaj}
  \sum_{k\geq1}\abs{\alpha_{k,n_j}}^2\geq c_1\qquad \forall j\geq 1.
\end{align}
First of all,
\[
\sum_{k\geq1}\abs{\alpha_{k,n_j}}^2\geq \abs{\alpha_{j,n_j}}^2.
\]
In addition, if $r\in F_j$, then $1-r\le e^{1-7\lambda_j}$, and hence
\[
 r^{2n_j+l_j+1} \ge1-(2n_j+l_j+1)e^{1-7\lambda_j} =1-2e^{1-6\lambda_j}-e^{1-\lambda_j}-e^{1-7\lambda_j}>\frac{1}{2}.
\]
By \eqref{eq:symbol-vj} and \eqref{eq:tvalphaj}, we have
\[
\alpha_{j,n_j}\geq \frac1{2\sqrt{\omega_{2n_j+1}\omega_{2(n_j+l_j)+1}}}\int_{F_j}\omega(r)\,dr.
\]
Observe that
\begin{align}\label{eq:omegafj}
 \int_{F_j}\omega(r)\dd r=\int_{7\lambda_j}^{8\lambda_j}\frac{\dd t}{t^2}=\frac1{56\lambda_j},
\end{align}
we have
\[
\alpha_{j,n_j}\geq \frac{1}{120\lambda_j\sqrt{\omega_{2n_j+1}\omega_{2(n_j+l_j)+1}}}.
\]
With the same arguments as the proof of Claim A, there is a constant $c_1$ such that
\[
\alpha_{j,n_j}^2>c_1,
\]
which completes the proof of Claim B.

By \eqref{eq:norm-snj}, Claim A and Claim B,
\[
\| Se_{n_j}\|_{A_\omega^2}^2\geq c_0c_1>0.
\]
Since $e_{n_j}$ weakly converges to $0$ in $A_\omega^2$, we have
\begin{lemma}\label{lem:not-compact}
The operator $S=T_{\omega,v}T_{\omega,u}$ is not compact on $A_\omega^2$.
\end{lemma}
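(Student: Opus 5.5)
The plan is the standard weak-null-sequence argument for noncompactness, built on the estimates in Claims A and B. The first step is to make the test vectors legitimate. As written, $n_j=e^{\lambda_j}$ need not be an integer, so I would replace it by $\lfloor e^{\lambda_j}\rfloor$ (or otherwise read $e_{n_j}$ with $n_j$ an integer comparable to $e^{\lambda_j}$). Then I check that Claims A and B survive this change.

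\emph{Claim A.} The estimate $r^{2n_j+1}>\tfrac12$ on $E_j$ uses only $n_j\le e^{\lambda_j}$, and the bound $(1+\log n_j)\,\omega_{n_j}\le C_2$ from Lemma~\ref{lem:moments} is unaffected. We still have $\log n_j\ge \lambda_j-1\ge c\lambda_j$, because $\lambda_j\ge 16\log 2$. Hence the product $\lambda_j\,\omega_{2n_j+1}$ stays bounded above, and $\beta_{n_j}$ stays bounded below.

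\emph{Claim B.} The same remarks apply. In addition, I need $\omega_{2(n_j+l_j)+1}\lesssim 1/\lambda_j$. This follows from Lemma~\ref{lem:moments}, since $\log(n_j+l_j)\asymp 6\lambda_j$.

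Next I use orthonormality. Since $\lambda_{j+1}=\lambda_j^2/\log 2>\lambda_j$ strictly, the integers $n_j$ are strictly increasing for large $j$. The vectors $\{e_{n_j}\}$ are therefore distinct members of the orthonormal basis $\{e_n\}$. By Bessel's inequality, $\langle f,e_{n_j}\rangle_{A^2_\omega}\to 0$ for every $f\in A^2_\omega$, so $e_{n_j}\to 0$ weakly.

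Now suppose, for contradiction, that $S=T_{\omega,v}T_{\omega,u}$ is compact. A compact operator maps weakly null sequences to norm null sequences, so $\|Se_{n_j}\|_{A^2_\omega}\to 0$. But \eqref{eq:norm-snj} combined with Claims A and B gives
\[
\|Se_{n_j}\|_{A^2_\omega}^2=\beta_{n_j}^2\sum_{k\ge1}|\alpha_{k,n_j}|^2\ge c_0c_1>0
\]
for all $j$. This is a contradiction, so $S$ is not compact.

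The derivation of \eqref{eq:norm-snj} also needs checking. It uses \eqref{eq:shft-vj}–\eqref{eq:shft-u}, and the identity $T_{\omega,v}=\sum_k T_{\omega,v_k}$ holds in the strong operator topology. The reason is that the $v_k$ have disjoint supports with $|v|\le1$, so the partial sums of the series for $v$ converge boundedly almost everywhere. Consequently $T_{\omega,v}(\beta e_{n_j})=\beta\sum_k\alpha_{k,n_j}e_{n_j+l_k}$. Because the $l_k$ are distinct, the vectors $e_{n_j+l_k}$ are orthonormal, and \eqref{eq:norm-snj} follows from Parseval's identity.

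The only real obstacle is the integer-rounding bookkeeping in Claims A and B, together with this strong-convergence justification. Both are routine.
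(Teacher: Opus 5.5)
Your proposal is correct and follows the paper's argument. The vectors $e_{n_j}$ tend to $0$ weakly, while \eqref{eq:norm-snj} together with Claims A and B gives $\|Se_{n_j}\|_{A^2_\omega}^2\ge c_0c_1$; your extra justifications (strong convergence of $\sum_k T_{\omega,v_k}$ and orthonormality of the $e_{n_j+l_k}$) are sound.

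Your rounding step, however, is unnecessary and the premise behind it is wrong. Since $\lambda_j=2^{2^{j+1}}\log 2$, both $n_j=e^{\lambda_j}=2^{2^{2^{j+1}}}$ and $l_j=e^{6\lambda_j}$ are already integers. That is exactly why the construction uses $\log 2$, and the integrality of $l_j$ is what makes the shift formula \eqref{eq:shft-vj} valid.
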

\subsection{The Berezin transform}
For any $j\geq 1$, let
\[
 S_j=T_{\omega,v_j}T_{\omega,u}.
\]
Then $\sum_{j=1}^nS_j$ strongly converges to the operator $S$. Hence, for every $z\in\D$,
\begin{equation}\label{eq:berezs}
 \widetilde S(z)=\sum_{j=1}^\infty\widetilde S_j(z).
\end{equation}
For each $j\geq 1$, direct computations show that
\begin{equation}\label{eq:Sj-action}
 S_je_n=\beta_n\alpha_{j,n}e_{n+l_j},\qquad n\ge0.
\end{equation}
Moreover, the Berezin transform of $S_j$ is
\begin{equation}\label{eq:Berezin-Sj}
 \widetilde S_j(z)  =\sum_{n=0}^\infty\beta_n\alpha_{j,n}
 \frac{z^{l_j}|z|^{2n}}{2B_z^\omega(z)\sqrt{\omega_{2n+1}\omega_{2(n+l_j)+1}}}.
\end{equation}
\textbf{Claim C}. For each $j\geq 1$
\[
\lim_{|z|\to 1^-}\widetilde S_j(z)=0.
\]
First of all, for any $z=re^{i\theta}\in\mathbb{D}$, we have
\begin{equation}\label{eq:abBerezin-Sj}
 |\widetilde S_j(z)| =\sum_{n=0}^\infty\beta_n\alpha_{j,n}\frac{r^{2n+l_j}}{2B_z^\omega(z)\sqrt{\omega_{2n+1}\omega_{2(n+l_j)+1}}}.
\end{equation}
\textbf{Subclaim} For any $j\geq 1$,
\begin{align}\label{eq:abn}
\lim_{n\to\infty}\beta_n\alpha_{j,n}=0.
\end{align}
By \eqref{eq:tubeta}, $0<\beta_n\leq 1$. It remains to show
\[
\lim_{n\to\infty}\alpha_{j,n}=0.
\]
Indeed, when $j\geq 1$ is fixed, by the definition of \eqref{eq:ejfj}, for any $r\in F_j$
\[
r\leq 1-e^{1-8\lambda_j}:=\rho_j<1.
\]
Hence,
\[
 \int_{F_j}r^{2n+l_j+1}\omega(r)\dd r\leq \rho_j^{2n+l_j+1} \int_{F_j}\omega(r)dr.
\]
By \eqref{eq:omegafj},
\[
\int_{F_j}r^{2n+l_j+1}\omega(r)\dd r\leq \frac{\rho_j^{2n+l_j+1}}{56\lambda_j}.
\] 
Using Lemma~\ref{lem:moments} and $l_j=e^{6\lambda_j}$,
\[
 \frac1{\sqrt{\omega_{2n+1}\omega_{2(n+l_j)+1}}} \le C_1\sqrt{\lambda_j}\,(1+\log(n+2)).
\]
Therefore,
\[
\alpha_{j,n}=\frac1{\sqrt{\omega_{2n+1}\omega_{2(n+l_j)+1}}}\int_{F_j}r^{2n+l_j+1}\omega(r)\dd r\leq C_1
\frac{\rho_j^{2n+l_j+1}}{56\lambda_j}\sqrt{\lambda_j}\,(1+\log(n+2)),
\]
and \eqref{eq:abn} holds. This proves the subclaim.

Hence, for any given $\epsilon>0$, we may pick a constant $N$ such that 
\[
|\beta_n\alpha_{j,n}|\leq \epsilon \qquad \forall n\geq N.
\]
Therefore,
\[
|\widetilde S_j(z)|\leq \sum_{n=0}^N \beta_n\alpha_{j,n}\frac{r^{2n+l_j}}{2B_z^\omega(z)\sqrt{\omega_{2n+1}\omega_{2(n+l_j)+1}}}+
\epsilon \sum_{n=N+1}^\infty \frac{r^{2n+l_j}}{2B_z^\omega(z)\sqrt{\omega_{2n+1}\omega_{2(n+l_j)+1}}}.
\]
By Cauchy--Schwarz,
\begin{equation}\label{eq:Berezin-Sj-CS}
 \sum_{n=0}^\infty
 \frac{r^{2n+l_j}}{2B_z^\omega(z)\sqrt{\omega_{2n+1}\omega_{2(n+l_j)+1}}} \le1.
\end{equation}
Then
\[
|\widetilde S_j(z)|\leq \sum_{n=0}^N \beta_n\alpha_{j,n}\frac{r^{2n+l_j}}{2B_z^\omega(z)\sqrt{\omega_{2n+1}\omega_{2(n+l_j)+1}}}+\epsilon.
\]
By Lemma \ref{lem:norm-estimates-for-reproducing-kernel},
\[
\lim_{|z|\to 1^-}B_z^\omega(z)=+\infty,
\]
and hence
\[
\lim_{|z|\to 1^-}|\widetilde S_j(z)|\leq \epsilon.
\]
This proves Claim C.

\noindent\textbf{Claim D.} We have the following estimate
\begin{equation}\label{eq:summable-bound}
 \sum_{j=1}^\infty\sup_{z\in\D}|\widetilde S_j(z)|<\infty.
\end{equation}

Recall \eqref{eq:abBerezin-Sj}, for any $j\geq 1$ and $z=re^{i\theta}$, 
\begin{equation}
 |\widetilde S_j(z)| =\sum_{n=0}^\infty\beta_n\alpha_{j,n}\frac{r^{2n+l_j}}{2B_z^\omega(z)\sqrt{\omega_{2n+1}\omega_{2(n+l_j)+1}}}.
\end{equation}
We write
\[
|\widetilde S_j(z)|=\bigg(\sum_{0\leq n\leq e^{4\lambda_j}}+\sum_{e^{4\lambda_j}<n<e^{9\lambda_j}}+\sum_{n\geq e^{9\lambda_j}}\bigg)
\beta_n\alpha_{j,n}\frac{r^{2n+l_j}}{2B_z^\omega(z)\sqrt{\omega_{2n+1}\omega_{2(n+l_j)+1}}}.
\]
We first observe the trivial lower bounds of $B_z^\omega(z)$
 \[
 B_z^\omega(z)=\sum_{n=0}^\infty\frac{|z|^{2n}}{2\omega_{2n+1}} \ge \frac{1}{10}\frac{1}{1-|z|}.
 \]
To estimate the left summation, since $n\leq e^{4\lambda_j}$ and $l_j=e^{6\lambda_j}$, by Lemma \ref{lem:moments},
\[
 \frac{1}{\sqrt{\omega_{2n+1}\omega_{2(n+l_j)+1}}} \leq C_1\sqrt{\bigl(1+\log(2n+1)\bigr)\bigl(1+\log(2(n+l_j)+1)\bigr)}\leq C_1\lambda_j.
\] 
Hence,
\begin{align*}
\sum_{0\leq n\leq e^{4\lambda_j}}\beta_n\alpha_{j,n}\frac{r^{2n+l_j}}{2B_z^\omega(z)\sqrt{\omega_{2n+1}\omega_{2(n+l_j)+1}}}
&\leq 5C_1\lambda_j(1-|z|)\sum_{0\leq n\leq e^{4\lambda_j}}\beta_n\alpha_{j,n}|z|^{2n+l_j}\\
&\leq C_1' \lambda_j e^{4\lambda_j}(1-|z|)|z|^{l_j}.
\end{align*}
In the last inequality, we also use the fact $\alpha_{j,n}\leq 1$ and $\beta_n\leq 1$. For any $z\in\mathbb{D}$,
\[
 (1-|z|)|z|^{l_j}\leq\frac1{l_j+1}\leq e^{-6\lambda_j},
\]
then
\begin{align}\label{eq:left-term}
\sum_{0\leq n\leq e^{4\lambda_j}}\beta_n\alpha_{j,n}\frac{r^{2n+l_j}}{2B_z^\omega(z)\sqrt{\omega_{2n+1}\omega_{2(n+l_j)+1}}}
\leq C_1'\lambda_j e^{-2\lambda_j}.
\end{align}

Next, we consider the middle range $e^{4\lambda_j}<n<e^{9\lambda_j}$. Observe that there is a constant $C_2>0$ which is independent of $j$ such that
\[
 \sum_{k\leq j} \int_{E_k}r^{2n+1}\omega(r)\,\dd r
 \leq
 C_2 e^{-2e\,e^{\lambda_j}}.
\]
By \eqref{eq:lambdaj}, for all $j\geq 1$
\[
\frac{1}{\lambda_{j+1}}=\frac{\log 2}{\lambda_j^2}.
\]
Using \(\int_{E_k}\omega(r)\,\dd r=\frac{1}{6\lambda_k}\), we obtain
 \begin{align*}
 \sum_{k\geq j+1}
 \int_{E_k}r^{2n+1}\omega(r)\,\dd \leq
 \frac16\sum_{k\geq j+1}\frac1{\lambda_k}  
 \leq
 \frac{C_2'}{\lambda_{j+1}}
 \end{align*}
Since \(n<e^{9\lambda_j}\), by Lemma~\ref{lem:moments} and recall \eqref{eq:tubeta}, there is a constant $C_2''$ which is also independent of $j$ such that
 \begin{align*}
 \beta_n
 \leq
 C_2''\left(
 \lambda_j e^{-2e\,e^{\lambda_j}}
 +\frac1{\lambda_j}
 \right).
 \end{align*}
By \eqref{eq:Berezin-Sj-CS} and $\alpha_{j,n}\leq 1$, there is a constant $C_2'''$ that is independent of $j$ such that
\begin{equation}\label{eq:middle-estimate}
\sum_{e^{4\lambda_j}<n<e^{9\lambda_j}}\beta_n\alpha_{j,n}\frac{r^{2n+l_j}}{2B_z^\omega(z)\sqrt{\omega_{2n+1}\omega_{2(n+l_j)+1}}}
 \leq
 C_2'''\left(
 \frac1{\lambda_j}
 +\lambda_j e^{-2e\,e^{\lambda_j}}
 \right).
\end{equation}

Finally, we estimate the term \(n\geq e^{9\lambda_j}\). To do so, we shall show that there is a constant $C_3>0$ which is independent of $j$ such that
\begin{equation}\label{eq:high-alpha-estimate}
 \sup_{n\geq e^{9\lambda_j}}\alpha_{j,n}\leq C_3 e^{-e^{\lambda_j}}.
\end{equation}
For $r\in F_j$, $\log\frac{e}{1-r}\leq8\lambda_j$, then
\[
 r^{2n+l_j+1} \leq \exp\bigl(-(2n+l_j+1)(1-r)\bigr) \leq \exp\bigl(-2e\,n e^{-8\lambda_j}\bigr).
\]
By \eqref{eq:omegafj},
\[
 \int_{F_j}s^{2n+l_j+1}\omega(r)\,\dd r
 \leq
 \frac1{56\lambda_j}
 \exp\bigl(-2e\,n e^{-8\lambda_j}\bigr).
\]
Recall \(l_j=e^{6\lambda_j}\leq n\), by Lemma~\ref{lem:moments}, we have
\[
 \frac1{\sqrt{\omega_{2n+1}\omega_{2(n+l_j)+1}}}  \leq C_3'\bigl(1+\log(n+2)\bigr)
\]
for some $C_3'>0$. Therefore, there is a constant $C_3''>0$ such that
\[
 \alpha_{j,n} \leq C_3''\frac{1+\log(n+2)}{\lambda_j}\exp\bigl(-2e\,n e^{-8\lambda_j}\bigr).
\]
Using another elementary fact that: 
\[
 \frac{1+\log(n+2)}{\lambda_j}\leq Cne^{-8\lambda_j}.
\]
Write $x=ne^{-8\lambda_j}$, we have
\[
\alpha_{j,n}\leq C_3'' xe^{-ex}e^{-ex}.
\]
Recall the following elementary fact that for every $a>0$ and $b>0$,
\begin{align}\label{eq:elest}
 \sup_{x>0}x^b e^{-ax}<\infty.
\end{align}
Since \(x\geq e^{\lambda_j}\), we have
\[
 \alpha_{j,n}
 \leq
 C\exp\bigl(-e\,e^{\lambda_j}\bigr),
 \qquad n\geq e^{9\lambda_j}.
\]
Therefore, we get the desired inequality \eqref{eq:high-alpha-estimate}.

By \eqref{eq:high-alpha-estimate}, \eqref{eq:Berezin-Sj-CS} and $\beta_{n}\leq 1$,
\begin{equation}\label{eq:right-estimate}
\sum_{n\geq e^{9\lambda_j}}\beta_n\alpha_{j,n}\frac{r^{2n+l_j}}{2B_z^\omega(z)\sqrt{\omega_{2n+1}\omega_{2(n+l_j)+1}}}
 \leq C_3 e^{-e^{\lambda_j}}.
\end{equation}
Then by \eqref{eq:left-term}, \eqref{eq:middle-estimate} and \eqref{eq:right-estimate}, there are positive constant $C_4>0$ such that 
\[
|\widetilde S_j(z)|\leq  C_4\left(
 \lambda_j e^{-2\lambda_j}
 +\frac1{\lambda_j}
 +\lambda_j e^{-2e\,e^{\lambda_j}}
 +e^{-e^{\lambda_j}}
 \right):=C_4\delta_j.
\]
Therefore,
\[
 \sum_{j=1}^\infty \sup_{z\in\D}|\widetilde S_j(z)| \leq C_4\sum_{j=1}^\infty\delta_j.
\]
We now verify that
\[
\sum_{j=1}^\infty\delta_j<\infty.
\]
By the construction \eqref{eq:lambdaj} of $\lambda_j$, we have
\[
 \frac{\lambda_{j+1}}{\lambda_j}
 =
 \frac{\lambda_j}{\log 2},
\]
and $\lambda_1/\log 2=16$. Moreover, for all $j\geq 1$,
\begin{equation}\label{eq:lambda-geometric-growth}
 \lambda_j\geq16^{j-1}\lambda_1.
\end{equation}

On the other hand, observe that
\[
\lambda_j e^{-2\lambda_j} = \frac{\lambda_j^2e^{-2\lambda_j}}{\lambda_j}, 
\]
\[
 \lambda_j e^{-2e\,e^{\lambda_j}} \leq \lambda_j e^{-2e\lambda_j}
\]
and
\[
 e^{-e^{\lambda_j}} \leq e^{-\lambda_j} = \frac{\lambda_j e^{-\lambda_j}}{\lambda_j},
\]
there is a constant $C_4'>0$ such that for all $j\geq 1$
\[
\delta_j \leq\frac{C_4'}{\lambda_j}.
\]
By \eqref{eq:lambda-geometric-growth}, we have
\[
\sum_{j\geq 1}\delta_j<\infty,
\]
which completes the proof of Claim D.

\begin{lemma}\label{lem:berezin-zero}
The Berezin transform $\widetilde{S}$ of the operator $S=T_{\omega,v}T_{\omega,u}$ satisfies
\[
\lim_{|z|\to 1^-}\widetilde{S}(z)=0.
\]
\end{lemma}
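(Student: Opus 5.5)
The plan is a dominated-convergence (Tannery-type) argument applied to the series \eqref{eq:berezs}, using Claims C and D as the two inputs.

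First, I would fix $z\in\D$ and confirm that \eqref{eq:berezs} holds, namely $\widetilde S(z)=\sum_{j\ge1}\widetilde S_j(z)$. The partial sums $\sum_{j\le n}S_j$ converge strongly to $S$, so pairing with $k_z^\omega$ gives pointwise convergence of the Berezin transforms. Claim D shows that this series in fact converges absolutely and uniformly in $z$.

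Second, I would fix $\epsilon>0$ and use Claim D to choose $J_0$ such that
\[
\sum_{j>J_0}\sup_{z\in\D}|\widetilde S_j(z)|<\epsilon .
\]
Then for every $z\in\D$,
\[
|\widetilde S(z)|\le \sum_{j=1}^{J_0}|\widetilde S_j(z)|+\epsilon .
\]
Claim C says that each of the finitely many terms $\widetilde S_j(z)$ with $j\le J_0$ tends to $0$ as $|z|\to1^-$. Hence $\limsup_{|z|\to1^-}|\widetilde S(z)|\le\epsilon$, and since $\epsilon>0$ is arbitrary, the limit is $0$.

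The main obstacle lies entirely in Claim D, the summable uniform bound on the individual Berezin transforms, which has already been established above. The only remaining point is to justify the strong convergence behind \eqref{eq:berezs}. For this I would note that $T_{\omega,v}=\sum_j T_{\omega,v_j}$ in the strong operator topology. The reason is that the $v_j$ have disjoint supports, and for each $f\in A^2_\omega$ the partial sums satisfy $\sum_{j\le n}v_j f\to vf$ in $L^2_\omega$ by dominated convergence. Composing with $P_\omega$ and then applying everything to $T_{\omega,u}k_z^\omega$ gives the needed convergence.

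Combining this with Lemma~\ref{lem:not-compact} then completes the proof of Proposition~\ref{prop:conuterexample}.
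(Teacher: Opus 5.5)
Your proposal is correct and follows essentially the same route as the paper: Claim D gives uniform convergence of $\sum_j \widetilde S_j$, which lets you cut off a small tail, and Claim C handles the finitely many remaining terms. Your added justification of \eqref{eq:berezs} is sound and fills in a step the paper only asserts. It shows the strong convergence $\sum_{j\le n}T_{\omega,v_j}\to T_{\omega,v}$ via disjoint supports and dominated convergence, then uses the boundedness of $P_\omega$ and of $T_{\omega,u}$.
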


\begin{proof}
Recall \eqref{eq:berezs}, for every $z\in\D$,
\begin{equation}\label{eq:berezs-new}
 \widetilde S(z)=\sum_{j=1}^\infty\widetilde S_j(z).
\end{equation}
It follows from Claim~D that
\[
 \sum_{j=1}^{\infty}
 \sup_{z\in\D}|\widetilde S_j(z)|<\infty.
\]
Therefore, the left series in \eqref{eq:berezs-new} converges uniformly on the unit disk \(\D\). For any given $\epsilon>0$, there is a constant $N>0$ such that
\[
 \sum_{j>N}\sup_{z\in\D}|\widetilde S_j(z)|<\frac{\epsilon}{2}.
\]
On the other hand, by Claim C, for any $1\leq j\leq N$
\[
\lim_{|z|\to 1^-}\widetilde S_j(z)=0.
\]
Then there is a constant $\rho_\epsilon \in(0,1)$ such that
\[
\sum_{j=1}^N |\widetilde S_j(z)| <\frac{\epsilon}{2}, \qquad \rho_\epsilon<|z|<1.
\]
Hence, for any $z\in\mathbb{D}$ with $|z|>\rho_\epsilon$, we have
 \begin{align*}
 |\widetilde S(z)| \leq \sum_{j=1}^{N}|\widetilde S_j(z)|+\sum_{j>N}|\widetilde S_j(z)| \leq
 \sum_{j=1}^{N}|\widetilde S_j(z)| +\sum_{j>N}\sup_{\zeta\in\D}|\widetilde S_j(\zeta)|<\epsilon.
 \end{align*}
This means
\[
\lim_{|z|\to 1^-}\widetilde S(z)=0,
\]
which completes the whole proof.
\end{proof}

\begin{proof}[Proof of Proposition \ref{prop:conuterexample}]
The proof then follows from Lemma \ref{lem:log-subharmonic}, Lemma \ref{lem:not-compact} and Lemma \ref{lem:berezin-zero}.
\end{proof}
\section{Appendix}\label{S:app-carleson}
In this appendix, we prove the Carleson embedding estimate used in Proposition~\ref{lem:shifted-Carleson}. 
Consider the weighted maximal operator
\begin{equation}\label{eq:tent-maximal}
 \mathcal M_\omega f(z)
 =
 \sup_{\substack{I\subset\mathbb T\\z\in T(I)}}
 \frac{1}{\omega(T(I))}
 \int_{T(I)}|f(\zeta)|\omega(\zeta)\,dA(\zeta).
\end{equation}
By \cite[Theorem~4]{PR-embed}, we have the following $L^2$ bounded estimate.

\begin{lemma}\label{thm:tent-maximal}
Let $\omega\in\widehat{\mathcal D}$. There exists a constant $C=C(\omega)>0$ such that
\begin{equation}\label{eq:tent-maximal-L2}
 \norm{\mathcal M_\omega f}_{L_\omega^2}
 \leq C\norm{f}_{L_\omega^2},
 \qquad f\in L_\omega^2.
\end{equation}
\end{lemma}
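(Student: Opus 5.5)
The plan is to reduce $\mathcal M_\omega$ to finitely many dyadic maximal operators and then apply the standard martingale-type argument for dyadic maximal functions, which needs no doubling of the measure $\omega\,dA$. The only point where $\omega\in\widehat{\mathcal D}$ enters is the comparison of tent masses under a bounded dilation of the arc.

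\emph{Step 1 (tent mass).} For an arc $I\subset\T$, take $T(I)=\{re^{i\theta}:e^{i\theta}\in I,\ 1-|I|/(2\pi)\le r<1\}$; for $|I|$ comparable to $2\pi$ it is essentially the whole disk. This is the natural extension of the dyadic tents from Section~\ref{sec:frame-estimate}. In polar coordinates one gets
\[
\omega(T(I))\asymp |I|\,\widehat\omega\bigl(1-|I|/(2\pi)\bigr).
\]
Iterating \eqref{eq:Dhat-intro} finitely often (exactly as in \eqref{eq:iterated-tail-bound}) then gives, for every fixed $\Lambda\ge1$ and every arc $J\supset I$ with $|J|\le\Lambda|I|$,
\[
\omega(T(J))\le C(\omega,\Lambda)\,\omega(T(I)).
\]
Arcs of length at least a fixed constant are handled trivially, since then $\omega(T(I))\asymp\omega(\D)$.

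\emph{Step 2 (dyadic reduction).} I would use the one-third trick. Take three translated dyadic grids $\mathcal D^{(1)},\mathcal D^{(2)},\mathcal D^{(3)}$ on $\T$, e.g.\ $\mathcal Q$ rotated by $0,2\pi/3,4\pi/3$, or more precisely the standard shifts adapted to the circle. Every arc $I$ with $|I|$ small is then contained in some $J\in\mathcal D^{(i)}$ with $|J|\le 6|I|$. Because $J\supset I$ and $J$ has larger radial depth, $T(I)\subset T(J)$, and Step~1 gives
\[
\frac{1}{\omega(T(I))}\int_{T(I)}|f|\,\omega\,dA\le \frac{C}{\omega(T(J))}\int_{T(J)}|f|\,\omega\,dA .
\]
Hence $\mathcal M_\omega f\le C\sum_{i=1}^3\mathcal M^{(i)}f+C\,\omega(\D)^{-1}\|f\|_{L^1_\omega}$, where $\mathcal M^{(i)}$ is the maximal function over tents of arcs in $\mathcal D^{(i)}$. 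The last term is bounded on $L^2_\omega$ by Cauchy--Schwarz, so it suffices to bound each $\mathcal M^{(i)}$.

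\emph{Step 3 (dyadic maximal bound).} Tents over a dyadic grid have the tree property: if $J'\subset J$ then $T(J')\subset T(J)$, and if $J,J'$ are disjoint then so are $T(J),T(J')$. Given $\lambda>0$, the set $\{\mathcal M^{(i)}f>\lambda\}$ is therefore the disjoint union of the tents over the maximal dyadic arcs $J$ with average of $|f|$ exceeding $\lambda$. Such maximal arcs exist because the averages are bounded by $\|f\|_{L^1_\omega}/\omega(T(J))$ and arc lengths are bounded above. This yields the weak type estimate
\[
\omega\bigl(\{\mathcal M^{(i)}f>\lambda\}\bigr)\le \lambda^{-1}\|f\|_{L^1_\omega}.
\]
Together with the trivial $L^\infty$ bound, Marcinkiewicz interpolation gives $L^2_\omega$ boundedness; equivalently one can use Doob's inequality, since the tents form a filtration-like tree. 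The constant depends only on $\omega$, through Step~1.

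The main obstacle is Step~2. One has to make sure that for every arc $I$ the covering dyadic arc $J$ has length comparable to $|I|$, which fails for a single grid and is why several shifted grids are needed. The resulting loss must also be controlled purely through the one-sided condition \eqref{eq:Dhat-intro}, which compares $\widehat\omega$ only at deeper points. This is exactly the direction needed here, since $J$ is larger and hence its tent reaches less deep, so $\widehat\omega(1-|J|/2\pi)\le C\widehat\omega(1-|I|/2\pi)$ follows from finitely many applications of \eqref{eq:Dhat-intro}.
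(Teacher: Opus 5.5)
Your argument is correct, but it differs from the paper, which gives no proof of this lemma. The paper simply imports the bound from \cite[Theorem~4]{PR-embed}, where the $L^2_\omega$ estimate is a special case of a more general theorem on mapping $\mathcal M_\omega$ from $L^p_\omega$ into $L^q(\mu)$ under Carleson-type conditions on tents.

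Your route is self-contained and isolates exactly what $\widehat{\mathcal D}$ contributes. That contribution is the single comparison $\omega(T(J))\le C(\omega,\Lambda)\,\omega(T(I))$ for $J\supset I$ with $|J|\le\Lambda|I|$, obtained by iterating \eqref{eq:Dhat-intro} in the deeper direction. After the one-third reduction, each dyadic tent maximal operator is of weak type $(1,1)$ with constant $1$, because the maximal tents are disjoint. So no doubling of the measure $\omega\,dA$ itself is needed, and the only weight-dependent loss is in that comparison and in the large-arc term. The citation buys generality (other measures $\mu$ and exponents), which the paper never uses.

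Two small remarks. First, your rotations of $\mathcal Q$ by $2\pi/3$ and $4\pi/3$ work as stated, with no further adjustment. Since $3\nmid 2^n$, the endpoints of the three rotated grids at level $n$ together form the equally spaced set $\frac{2\pi}{3\cdot 2^n}\Z$ modulo $2\pi$, each grid occupying one residue class mod $3$. Hence an arc of length less than $\frac{2\pi}{3\cdot 2^n}$ has at most one grid endpoint in its interior. It therefore lies in a single arc of length $\frac{2\pi}{2^n}$ of at least two of the grids, which gives your factor $6$.

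Second, your tents $\{1-|I|/(2\pi)\le r<1\}$ are not the extension of the dyadic tents of Section~\ref{sec:frame-estimate}, which have depth $2^{-j}=\frac4\pi|J|$. The tents implicitly used for $\mathcal M_\omega$ in the proof of Lemma~\ref{lem:low-frequency-localization} are $\{\max\{0,1-\frac4\pi|I|\}\le r<1\}$. Your Steps 1--3 go through verbatim for any fixed depth ratio, including the truncation at $r=0$, since $\int_\rho^1 r\omega(r)\,dr\ge\frac12\widehat\omega(\frac{1+\rho}{2})\ge\frac1{2C_\omega}\widehat\omega(\rho)$ for every $\rho\in[0,1)$. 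So this only changes constants, but you should run the argument with the paper's normalization.
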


Recall also that the operator $\mathcal C_s$ is defined by \eqref{eq:smooth-low-pass}:
\begin{equation*}
 \mathcal C_sf
 =
 \ip f{e_0}e_0
 +
 \sum_{n=1}^{\infty}
 \left(
  \sum_{k=0}^{\infty}\chi(2^{k-s}n)^2
 \right)
 \ip f{e_n}e_n 
\end{equation*}
and its equivalent form form \eqref{eq:smooth-low-pass-coefficients}
\begin{equation*}
 \mathcal C_mf(z)
 =
 c_0+
 \sum_{n=1}^{\infty}
 \left(
  \sum_{k=0}^{\infty}\chi(2^{k-m}n)^2
 \right)c_nz^n.
\end{equation*}
where $f(z)=\sum_{n=0}^\infty c_nz^n$.

\begin{lemma}\label{lem:low-frequency-localization}
Let $\omega\in\widehat{\mathcal D}$. There exists a constant $C=C(\omega,\chi)>0$ such that for $I\in\mathcal Q_s,s\geq 1$, $a\in \mathbb D$ with $a/|a|\in I$ and $|a|\geq r_s=1-2^{-s}$, we have 
\begin{equation}\label{eq:low-frequency-localization}
 |\mathcal C_sf(a)|
 \leq
 C\inf_{z\in T(I)}\mathcal M_\omega f(z)\qquad \forall f\in A_\omega^2.
\end{equation}
\end{lemma}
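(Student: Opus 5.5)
The plan is to write $\mathcal C_s$ as an integral operator against $\omega\,dA$ and show that its kernel at the point $a$ is dominated by an averaging kernel adapted to the tents containing $T(I)$. First, since $\mathcal C_s$ is a diagonal multiplier on the monomial basis, for $f\in A^2_\omega$ we have
\[
 \mathcal C_sf(a)=\int_\D f(\zeta)\overline{K_{s,a}(\zeta)}\,\omega(\zeta)\dd A(\zeta),
 \qquad
 K_{s,a}(\zeta)=\sum_{n\ge0}\mu_s(n)\frac{(\overline a\zeta)^n}{2\omega_{2n+1}},
\]
with $\mu_s(0)=1$ and $\mu_s(n)=\sum_{k\ge0}\chi(2^{k-s}n)^2$ for $n\ge1$. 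By \eqref{eq:low-pass-basic-properties}, $\mu_s(n)=1$ for $n\le2^s$ and $\mu_s(n)=0$ for $n\ge2^{s+1}$. Hence $K_{s,a}$ is a polynomial of degree less than $2^{s+1}$. Writing $\mu_s(x)=\sum_{k\ge0}\chi(2^{k-s}x)^2$ for real $x>0$, this is a smooth function that equals $1$ on $(0,2^s]$ and has derivatives bounded by $C_\ell 2^{-s\ell}$.

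Next I would estimate $K_{s,a}$ pointwise, following the proof of Lemma~\ref{lem:frame-element-localization}. Split $\mu_s$ dyadically as $\mu_s=\sum_{\ell\le s}\chi(2^{-\ell}\cdot)^2$ plus the two boundary terms, so that the $\ell$-th piece lives at frequency $n\asymp2^\ell$. Write $\zeta=\rho e^{i\theta}$ and $a=|a|e^{i\vartheta}$, and apply Lemma~\ref{lem:periodic-multiplier} to $x\mapsto\chi(2^{-\ell}x)^2(|a|\rho)^x/(2\omega_{2x+1})$. The derivative bounds come from \eqref{eq:power-moment-derivatives} and \eqref{eq:radial-multiplier-derivatives}, and \eqref{eq:moment-tent-comparison} gives $\omega_{2x+1}\asymp 2^\ell\widehat\omega(r_\ell)$ on the support. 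With $p=3$ this yields
\[
 |K_{s,a}(\zeta)|\le C\sum_{\ell=0}^{s+1}\frac{e^{-c2^\ell(1-\rho)}}{\widehat\omega(r_\ell)}
 \bigl(1+2^\ell d_\T(\theta,\vartheta)\bigr)^{-3}.
\]
The factor $|a|^x$ is harmless because $|a|\ge r_s$ and $x\lesssim2^{s+1}$, so $|a|^x\ge c$ and its derivatives stay under control.

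Then I would bound each term by an average over a tent. Fix $\ell\le s+1$, let $I_\ell\supseteq I$ be the dyadic ancestor of generation $\ell$ (taking $I_\ell=I$ when $\ell=s+1$), and for $m\ge0$ let $I_\ell^{(m)}$ be the arc of length comparable to $2^{m-\ell}$ concentric with $I_\ell$. On the set where $1-\rho\sim2^{m_1-\ell}$ and $d_\T(\theta,\vartheta)\sim2^{m_2-\ell}$, the $\ell$-th term is at most $C e^{-c2^{m_1}}2^{-3m_2}/\widehat\omega(r_\ell)$. This set is contained in a tent $T(I_\ell^{(m)})$ with $m=\max(m_1,m_2)+O(1)$. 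The $\widehat{\mathcal D}$ property \eqref{eq:iterated-tail-bound} together with \eqref{eq:tent-volume} gives
\[
 \omega\bigl(T(I_\ell^{(m)})\bigr)\le C\,2^{m}C_\omega^{m}\,2^{-\ell}\widehat\omega(r_\ell).
\]
Since every tent containing $T(I_\ell^{(m)})$ also contains $T(I)$, we get
\[
 \int_{T(I_\ell^{(m)})}|f|\,\omega\dd A\le \omega\bigl(T(I_\ell^{(m)})\bigr)\inf_{z\in T(I)}\mathcal M_\omega f(z).
\]
Combining, the contribution of level $\ell$ is bounded by
\[
 C\,2^{-\ell}\sum_{m\ge0}\bigl(e^{-c2^m}+2^{-3m}\bigr)2^mC_\omega^m\,\inf_{T(I)}\mathcal M_\omega f .
\]
Summing over $\ell$ then gives $C\inf_{T(I)}\mathcal M_\omega f$, which is the desired bound.

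The main obstacle is the polynomial angular decay against the geometric growth $C_\omega^m$ of tent masses. With a fixed power $p$, a factor $(1+2^\ell d_\T)^{-p}$ can fail to beat $C_\omega^m$. I would therefore take $p$ in Lemma~\ref{lem:periodic-multiplier} larger than $\log_2C_\omega+2$, which the lemma allows since $\chi$ is smooth. Radial decay is not a problem because it is doubly exponential. The other delicate point is to sum over $\ell$ carefully: the factor $2^{-\ell}$ together with $\widehat\omega(r_\ell)$ cancelling against the tent volume should make the level-$\ell$ contributions summable, and that cancellation needs to be verified.
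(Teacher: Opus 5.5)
There is a genuine gap, and it sits exactly at the cancellation you said still needed checking. The normalization $\omega_{2x+1}\asymp 2^\ell\widehat\omega(r_\ell)$ is off by a factor $2^\ell$. For $x\asymp2^\ell$, \eqref{eq:moment-tail-doubling} gives $\omega_{2x+1}\asymp\widehat\omega(r_\ell)$, which is $\asymp 2^\ell\omega(T(J))$ by \eqref{eq:moment-tent-comparison} and \eqref{eq:tent-volume}. So the $\ell$-th piece of $K_{s,a}$ has size $2^\ell/\widehat\omega(r_\ell)$, which is the size of $B_\omega(z,z)$ at $|z|=r_\ell$, as it must be. With this correction your tent bookkeeping shows that each level $\ell$ contributes $O(\inf_{T(I)}\mathcal M_\omega f)$ with no factor $2^{-\ell}$. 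Summing over $0\le\ell\le s+1$ then gives only $Cs\inf_{T(I)}\mathcal M_\omega f$.

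This loss cannot be fixed inside your framework: any argument that goes through $|\mathcal C_sf(a)|\le\int_\D|f|\,|K_{s,a}|\,\omega\dd A$ must fail. Take $\omega\equiv1$ and $f\equiv1$, so that $\mathcal M_\omega f\equiv1$. Then $K_{s,a}(\rho e^{it})=\sum_n\mu_s(n)(n+1)(\overline a\rho)^ne^{int}$. Hardy's inequality gives $\frac1{2\pi}\int_{-\pi}^{\pi}|K_{s,a}(\rho e^{it})|\dd t\ge\frac1\pi\sum_{n\le2^s}(|a|\rho)^n$. Integrating in $\rho$ yields $\int_\D|K_{s,a}|\dd A\ge\frac2\pi\sum_{n\le2^s}\frac{|a|^n}{n+2}\gtrsim s$, since $|a|\ge1-2^{-s}$. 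This is the familiar logarithmic growth of the $L^1$ norm of the Bergman kernel.

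The missing idea is to use the analyticity of $f$ to avoid the interior of the disk entirely, and to treat $\mu_s$ as one smooth bump at scale $2^s$ rather than $s$ dyadic pieces. For each $\rho\in[r_s,1)$, $\mathcal C_sf(a)$ is the convolution of $t\mapsto f(\rho e^{it})$ with the periodic kernel of the multiplier $m_{s,\rho}(x)=\mu_s(|x|)(|a|/\rho)^x$. Negative frequencies are irrelevant because $f$ is analytic. Three facts control this multiplier: $\mu_s\equiv1$ near $0$, the support lies in $|x|\le2^{s+1}$, and $|\log(|a|/\rho)|\lesssim2^{-s}$. Hence Lemma~\ref{lem:periodic-multiplier} bounds the kernel by $C2^s(1+2^s|t|)^{-p}$, whose $L^1(\T)$ norm is bounded uniformly in $s$.

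Averaging in $\rho$ over $[r_s,1)$ against $\rho\omega(\rho)\dd\rho$ then bounds $|\mathcal C_sf(a)|$ by $\sum_\ell 2^{-\ell(p-1)}$ times averages of $|f|$ over the outer pieces $E_\ell=\{\rho e^{it}:r_s\le\rho<1,\ e^{it}\in\Gamma_\ell\}$. Here $\Gamma_\ell$ is the arc of angular radius $\min\{\pi,2^{\ell-s}\}$ centered at $a/|a|$, and the tents satisfy $T(\Gamma_\ell)\supseteq T(I)$. The $\widehat{\mathcal D}$ condition gives $\omega(T(\Gamma_\ell))\le 2C_\omega^{\ell+2}\omega(E_\ell)$, and choosing $p=3+\lceil\log_2C_\omega\rceil$ makes the series in $\ell$ geometric. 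This is the paper's proof. Your instinct that $p$ must beat $\log_2 C_\omega$ is right, but it only closes the argument once the integration is restricted to the annulus $r_s\le|\zeta|<1$.
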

\begin{proof}
Write
$
 f(z)=\sum_{n=0}^\infty c_nz^n
$
and, for $n\geq1$, set
\[
 \eta_s(n) = \sum_{k=0}^\infty\chi(2^{k-s}n)^2.
\]
For $a\in \mathbb{D}$, write $a=\rho_ae^{i\theta_a}$ with $\rho_a=|a|,\theta_a\in[-\pi,\pi)$. By the equivalent form \eqref{eq:smooth-low-pass-coefficients} of $\mathcal C_s$, 
\[
 \mathcal C_sf(a)
 =
 c_0+\sum_{n=1}^\infty
 \eta_s(n)c_na^n
 =
 c_0+\sum_{n=1}^\infty
 \eta_s(n)c_n\rho_a^ne^{in\theta_a}.
\]
Fix $\rho\in[r_s,1)$ and define
\[
 m_{s,\rho}(x)
 =
 \begin{cases}
 \displaystyle
 \left(
  \sum_{k=0}^\infty\chi(2^{k-s}|x|)^2
 \right)
 \left(\dfrac{\rho_a}{\rho}\right)^x,
 &x\neq0,\\[8pt]
 1,&x=0.
 \end{cases}
\]
Then, we have 
\begin{equation}\label{eq:two-sided-low-pass-identity}
 \mathcal C_sf(a)
 =
 \frac{1}{2\pi}
 \int_{-\pi}^{\pi}
 \left[
  \sum_{n\in\mathbb Z}
  m_{s,\rho}(n)e^{in(\theta_a-t)}
 \right]
 f(\rho e^{it})\,dt.
\end{equation}
Since the support of $\chi$ is contained in the interval $(1/2,2)$ and \eqref{eq:chi-partition}, we have, for $|x|\geq2^{s+1}$, 
$
 \sum_{k=0}^\infty\chi(2^{k-s}|x|)^2=0.
$
Moreover, $\sum_{k=0}^\infty\chi(2^{k-s}|x|)^2=1$ for $0<|x|\leq2^{s-1}$. Therefore, $m_{s,\rho}\in C_c^\infty(\mathbb R)$. In particular,
\[
 \supp m_{s,\rho}\subseteq[-2^{s+1},2^{s+1}].
\]
Let $C_\omega>1$ be the constant given by \eqref{eq:Dhat-intro}.
By direct computations, there is a constant $C=C(\omega,\chi)$ such that
\[
 \max_{0\leq q\leq 3+\lceil\log_2C_\omega\rceil}
 2^{sq}
 \left\|
  \frac{d^q}{dx^q}m_{s,\rho}
 \right\|_\infty
 \leq C.
\]
Using Lemma~\ref{lem:periodic-multiplier} with the order $l=3+\lceil\log_2C_\omega\rceil$, we obtain there is a constant $C$ such that
\begin{equation}\label{eq:low-pass-kernel-decay}
 \left|
  \sum_{n\in\mathbb Z}m_{s,\rho}(n)e^{int}
 \right|
 \leq
 C2^s\bigl(1+2^s|t|\bigr)^{-3-\lceil\log_2C_\omega\rceil},
 \qquad |t|\leq\pi.
\end{equation}
For $\ell\geq0$, let $\Gamma_\ell$ be the arc centered at $e^{i\theta_a}$ with angular radius $\min\{\pi,2^{\ell-s}\}$.
Set $\Gamma_{-1}=\emptyset$, then
\[
 \mathbb T=\cup_{l\geq 0}(\Gamma_\ell\setminus\Gamma_{\ell-1})
\]
and
\[
 1+2^sd_{\mathbb T}(t,\theta_a)\geq 2^{\ell-1},  \qquad e^{it}\in \Gamma_\ell\setminus\Gamma_{\ell-1}.
\]
It follows from \eqref{eq:low-pass-kernel-decay} and \eqref{eq:two-sided-low-pass-identity} that
\begin{align}
 |\mathcal C_sf(a)|
 \leq
 C\sum_{\ell=0}^\infty
 2^{-\ell(2+\lceil\log_2C_\omega\rceil)}
 \frac{1}{|\Gamma_\ell|}
 \int_{\Gamma_\ell}|f(\rho e^{it})|\,dt.
 \label{eq:low-pass-angular-averages}
\end{align}
Then, we have
\begin{align}\label{eq:low-pass-radial-average}
 |\mathcal C_sf(a)|
 \int_{r_s}^1\rho\omega(\rho)\,d\rho 
 \leq
 C\sum_{\ell=0}^\infty
 2^{-\ell(2+\lceil\log_2C_\omega\rceil)}
 \frac{1}{|\Gamma_\ell|}
 \int_{r_s}^1\int_{\Gamma_\ell}
 |f(\rho e^{it})|\rho\omega(\rho)\,dt\,d\rho.
\end{align}
For $\ell\geq0$, set
$
 E_\ell
 =
 \left\{
 \rho e^{it}:r_s\leq\rho<1,\ e^{it}\in\Gamma_\ell
 \right\}.
$
Observe 
\[
 \int_{E_\ell}|f(\zeta)|\omega(\zeta)\,dA(\zeta)
 =
 \frac{1}{\pi}
 \int_{r_s}^1\int_{\Gamma_\ell}
 |f(\rho e^{it})|\rho\omega(\rho)\,dt\,d\rho,
\]
we have
\[
 |\mathcal C_sf(a)|
 \leq
 C\sum_{\ell=0}^\infty
 2^{-\ell(2+\lceil\log_2C_\omega\rceil)}
 \frac{1}{\omega(E_\ell)}
 \int_{E_\ell}|f(\zeta)|\omega(\zeta)\,dA(\zeta).
\]
Since $e^{i\theta_a}\in I$, for every $e^{it}\in I$ we have
$
 d_{\mathbb T}(t,\theta_a)  < 2^{-s}
$
and
$
 I\subseteq\Gamma_0\subseteq\Gamma_\ell
$ for $\ell\geq 0$.
Set
\[
 \rho_\ell
 =
 \max\left\{
 0,1-\frac4\pi|\Gamma_\ell|
 \right\}.
\]
Since
$
 |\Gamma_\ell|
 \geq
 |\Gamma_0|
 =
 2^{1-s},
$
we have
$
 \rho_\ell
 \leq
 1-2^{-s}
 =
 r_s
$, and
$$
 T(I)\cup E_\ell
 \subseteq
 T(\Gamma_\ell).
$$
This geometric configuration is illustrated in Figure~\ref{fig:low-frequency-tent-geometry}.
\begin{figure}[htbp]
\centering
\begin{tikzpicture}[scale=0.9,>=stealth]

\def\R{4}
\def\Rs{3.05}
\def\Rl{1.65}
\def\angleG{48}
\def\angleILeft{-6}
\def\angleIRight{16}

\fill[gray!4] (0,0) circle (\R);

\path[fill=blue!14]
 ({-\angleG}:\R)
 arc[start angle={-\angleG},
     end angle=\angleG,
     radius=\R]
 -- (\angleG:\Rl)
 arc[start angle=\angleG,
     end angle={-\angleG},
     radius=\Rl]
 -- cycle;

\path[fill=green!25]
 ({-\angleG}:\R)
 arc[start angle={-\angleG},
     end angle=\angleG,
     radius=\R]
 -- (\angleG:\Rs)
 arc[start angle=\angleG,
     end angle={-\angleG},
     radius=\Rs]
 -- cycle;

\path[fill=orange!50]
 (\angleILeft:\R)
 arc[start angle=\angleILeft,
     end angle=\angleIRight,
     radius=\R]
 -- (\angleIRight:\Rs)
 arc[start angle=\angleIRight,
     end angle=\angleILeft,
     radius=\Rs]
 -- cycle;

\draw[blue!65!black,line width=1pt]
 ({-\angleG}:\R)
 arc[start angle={-\angleG},
     end angle=\angleG,
     radius=\R]
 -- (\angleG:\Rl)
 arc[start angle=\angleG,
     end angle={-\angleG},
     radius=\Rl]
 -- cycle;

\draw[green!55!black,line width=1pt]
 ({-\angleG}:\R)
 arc[start angle={-\angleG},
     end angle=\angleG,
     radius=\R]
 -- (\angleG:\Rs)
 arc[start angle=\angleG,
     end angle={-\angleG},
     radius=\Rs]
 -- cycle;

\draw[orange!80!black,line width=1pt]
 (\angleILeft:\R)
 arc[start angle=\angleILeft,
     end angle=\angleIRight,
     radius=\R]
 -- (\angleIRight:\Rs)
 arc[start angle=\angleIRight,
     end angle=\angleILeft,
     radius=\Rs]
 -- cycle;

\draw[thick] (0,0) circle (\R);
\node at (-3.15,3.15) {$\mathbb T$};

\draw[dashed,gray!75] (0,0)--(\R,0);
\fill (0,0) circle (1.4pt);
\node[below left] at (0,0) {$0$};

\draw[densely dashed,gray!80]
 ({-\angleG}:\Rl)
 arc[start angle={-\angleG},
     end angle=\angleG,
     radius=\Rl];

\draw[densely dashed,gray!80]
 ({-\angleG}:\Rs)
 arc[start angle={-\angleG},
     end angle=\angleG,
     radius=\Rs];

\node[gray!80!black] at (-42:\Rl) {$\rho_\ell$};
\node[gray!80!black] at (-42:\Rs) {$r_s$};

\draw[blue!75!black,line width=3pt]
 ({-\angleG}:\R)
 arc[start angle={-\angleG},
     end angle=\angleG,
     radius=\R];

\draw[orange!90!black,line width=3.5pt]
 (\angleILeft:\R)
 arc[start angle=\angleILeft,
     end angle=\angleIRight,
     radius=\R];

\node[blue!75!black] at (37:4.38) {$\Gamma_\ell$};
\node[orange!90!black] at (13:4.38) {$I$};

\fill (\R,0) circle (2pt);
\node[right] at (\R,0) {$e^{i\theta_a}$};

\coordinate (A) at (0:3.63);
\coordinate (Z) at (10:3.23);

\fill (A) circle (2.2pt);
\node[below] at (A) {$a=\rho_ae^{i\theta_a}$};

\fill (Z) circle (2.2pt);
\node[above left] at (Z) {$z$};

\node[blue!70!black] at (0:2.25)
 {$T(\Gamma_\ell)$};

\node[green!45!black] at (-30:3.55)
 {$E_\ell$};

\node[orange!80!black] (TIlabel) at (4.85,-1.25)
 {$T(I)$};

\draw[->,orange!80!black]
 (TIlabel.west)--(-3:3.28);

\node[align=left] at (-3.75,-2.9)
 {$\rho_\ell\leq r_s$\\[2pt]
  $T(I)\cup E_\ell\subseteq T(\Gamma_\ell)$};

\end{tikzpicture}
\caption{The arc $\Gamma_\ell$, centered at $e^{i\theta_a}$, contains $I$.
The point $a$ satisfies $a/|a|=e^{i\theta_a}\in I$ and $|a|\geq r_s$, while
$z\in T(I)$.}
\label{fig:low-frequency-tent-geometry}
\end{figure}
Moreover,
\[
 1-2^{-\ell-2}(1-\rho_\ell)
 \geq
 1-2^{-s}
 =
 r_s.
\]
Therefore,
\begin{align}
 \widehat\omega(\rho_\ell)
 \leq
 C_\omega^{\ell+2}
 \widehat\omega
 \left(
 1-2^{-\ell-2}(1-\rho_\ell)
 \right) \leq
 C_\omega^{\ell+2}\widehat\omega(r_s).
 \label{eq:Gamma-tail-comparison}
\end{align}
Combining direct computations with \eqref{eq:Gamma-tail-comparison}, we obtain
\begin{equation}\label{eq:Gamma-tent-ratio}
 \omega(T(\Gamma_\ell))
 \leq
 2C_\omega^{\ell+2}\omega(E_\ell).
\end{equation}
Fix $z\in T(I)$. Since
$
 T(I)\cup E_\ell\subseteq T(\Gamma_\ell),
$
it follows from \eqref{eq:Gamma-tent-ratio} that
\begin{align*}
 \frac{1}{\omega(E_\ell)}
 \int_{E_\ell}|f(\zeta)|\omega(\zeta)\,dA(\zeta)
 &\leq
 \frac{\omega(T(\Gamma_\ell))}
      {\omega(E_\ell)}
 \frac{1}{\omega(T(\Gamma_\ell))}
 \int_{T(\Gamma_\ell)}
 |f(\zeta)|\omega(\zeta)\,dA(\zeta)\\
 &\leq
 2C_\omega^{\ell+2}M_\omega f(z).
\end{align*}
Then, we get for any $z\in T(I)$,
\[
 |\mathcal C_sf(a)|
 \leq
 C\sum_{\ell=0}^\infty
 \left(
 C_\omega
 2^{-2-\lceil\log_2C_\omega\rceil}
 \right)^\ell
 \mathcal{M}_\omega f(z).
\]
Since $C_\omega\leq 2^{\lceil\log_2C_\omega\rceil}$, we have $C_\omega2^{-2-\lceil\log_2C_\omega\rceil} \leq\frac{1}{4}$.
Hence
\[
 |\mathcal C_sf(a)|
 \leq
 2C \mathcal{M}_\omega f(z),
 \qquad z\in T(I).
\]
Taking the infimum over $z\in T(I)$, we complete the proof.
\end{proof}

For a nonnegative measurable function $g$ on $\mathbb D$, write
\[
 \langle g\rangle_{T(I),\omega}
 =
 \frac{1}{\omega(T(I))}
 \int_{T(I)}g(z)\omega(z)\,dA(z).
\]
We shall use the following version of the dyadic Carleson embedding theorem (see \cite[Theorem~3.1]{NTV03},
\cite[Theorem~5.8]{Tol14}, and \cite[Lemma~10]{FW15}).

\begin{lemma}\label{lem:classical-car-emb}
Let $(\gamma_I)_{I\in\mathcal Q}$ be a nonnegative sequence such that
\[
 \sum_{\substack{I\in\mathcal Q:I\subseteq K}}\gamma_I
 \leq C_1\omega(T(K)),
 \qquad K\in\mathcal Q.
\]
Then, for every nonnegative $g\in L_\omega^2$, 
\[
 \sum_{I\in\mathcal Q}
 \gamma_I|\langle g\rangle_{T(I),\omega}|^2
 \leq
 C C_1\|g\|_{L_\omega^2}^2.
\]
Here, $C>0$ is an absolute constant.
\end{lemma}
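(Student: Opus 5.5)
The plan is the standard "layer cake plus stopping time" proof of the dyadic Carleson embedding. It rests on one structural fact: the tents $\{T(I)\}_{I\in\mathcal Q}$ form a nested family.

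First I will verify that nestedness. If $I\in\mathcal Q_k$, $K\in\mathcal Q_j$ and $I\subseteq K$, then $k\ge j$ and $1-2^{-k}\ge 1-2^{-j}$, so $T(I)\subseteq T(K)$. If $I\cap K=\emptyset$, the tents lie over disjoint arcs and are therefore disjoint. For two dyadic arcs one of these always holds. Every $K$ has only finitely many dyadic ancestors (levels $0,\dots,j$). The two arcs of $\mathcal Q_0$ are treated formally in the same way, with their tents defined by the same formula at $j=0$.

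Next I introduce the dyadic tent maximal function
\[
\mathcal M^{d}_\omega g(z)=\sup_{I\in\mathcal Q:\,z\in T(I)}\langle |g|\rangle_{T(I),\omega}.
\]
Fix $\lambda>0$ and let $\mathcal K_\lambda$ be the set of arcs $K$ that are maximal with respect to inclusion among those satisfying $\langle |g|\rangle_{T(K),\omega}>\lambda$. Maximal elements exist because ancestor chains are finite. By nestedness, the tents $T(K)$ with $K\in\mathcal K_\lambda$ are pairwise disjoint, and
\[
\{\mathcal M^d_\omega g>\lambda\}=\bigcup_{K\in\mathcal K_\lambda}T(K).
\]
Hence
\[
\omega(\{\mathcal M^d_\omega g>\lambda\})\le\lambda^{-1}\int_{\{\mathcal M^d_\omega g>\lambda\}}|g|\,\omega\,dA .
\]
A standard layer-cake computation, applied to this refined weak-type bound, gives
\[
\|\mathcal M^d_\omega g\|_{L^2_\omega}\le 2\|g\|_{L^2_\omega}.
\]
One could instead invoke Lemma~\ref{thm:tent-maximal}, since $\mathcal M^d_\omega\le\mathcal M_\omega$, but that would introduce an $\omega$-dependent constant. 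The self-contained argument keeps the constant absolute, as the statement requires.

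Finally, for nonnegative $g$ I write
\[
\sum_{I}\gamma_I\langle g\rangle_{T(I),\omega}^2=\int_0^\infty 2\lambda\sum_{I:\,\langle g\rangle_{T(I),\omega}>\lambda}\gamma_I\,d\lambda .
\]
Each $I$ in the inner sum is contained in some $K\in\mathcal K_\lambda$. The Carleson hypothesis, applied on each such $K$, then bounds the inner sum by
\[
\sum_{K\in\mathcal K_\lambda}\;\sum_{I\subseteq K}\gamma_I\le C_1\sum_{K\in\mathcal K_\lambda}\omega(T(K))=C_1\,\omega(\{\mathcal M^d_\omega g>\lambda\}),
\]
where the last equality uses the disjointness of the maximal tents. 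Integrating in $\lambda$ gives $C_1\|\mathcal M^d_\omega g\|_{L^2_\omega}^2\le 4C_1\|g\|_{L^2_\omega}^2$.

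The only delicate point is the geometric step: checking that dyadic tents are genuinely nested or disjoint, and that maximal elements exist, including at the coarsest levels. Everything after that is the usual stopping-time argument.
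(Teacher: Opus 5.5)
Your proof is correct. The paper does not prove this lemma; it quotes the abstract dyadic Carleson embedding theorem from the references listed after the statement. That theorem holds for any finite measure and any countable family of sets that are pairwise nested or disjoint.

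Your argument is the standard dyadic maximal-function and stopping-time proof of that theorem, applied directly to the tents. What you make explicit, and the citation leaves implicit, is that $\{T(I)\}_{I\in\mathcal Q}$ really is such a family. In particular, the two arcs of $\mathcal Q_0$ lie in the same dyadic grid as the arcs of $\mathcal Q_j$, $j\ge1$. Also, every arc has finitely many ancestors, so maximal arcs exist with no decay assumption on the averages. You also obtain the explicit constant $C=4$, independent of $\omega$, which is what the statement asserts. As you note, going through Lemma~\ref{thm:tent-maximal} instead would lose this.

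Two small technical points should be tidied up:
\begin{enumerate}
\item In the layer-cake step you divide by $\|\mathcal M^{d}_{\omega}g\|_{L^2_\omega}$, so you need it finite a priori. Run the whole argument with $\mathcal Q$ replaced by the arcs of level at most $n$. The truncated maximal function is bounded, since $g\in L^1_\omega$ and $\omega\,dA$ is a finite measure. Then let $n\to\infty$ by monotone convergence, both in the maximal inequality and in the sum $\sum_I\gamma_I\langle g\rangle_{T(I),\omega}^2$.
\item The two level-$0$ tents, defined by your formula with $0\le r<1$, both contain the origin. So disjointness of tents over disjoint arcs holds only up to a null set. This is harmless for every measure computation, and irrelevant in the paper's application, where $\gamma_I=0$ below level $N\ge1$.
\end{enumerate}
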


Using the Carleosn condition \eqref{eq:shifted-Carleson-condition}, we have
\begin{equation}\label{eq:shifted-Carleson-condition-new}
 \sup_{\substack{k\geq1:I\in\mathcal Q_k}}\frac{1}{\omega(T(I))} \sum_{j\geq\max\{k,N\}+M} \sum_{\substack{J\in\mathcal Q_j:J\subseteq I}}|b_J|^2<\infty.
\end{equation}

\begin{proof}[Proof of Lemma \ref{lem:car-emb-baby}]
For $I\in\mathcal Q_k$ with $k\geq N$, set
\[
 \beta_I= \sum_{\substack{J\in\mathcal Q_{k+M}:J\subseteq I}}|b_J|^2,
\]
and set $\beta_I=0$ when $I\in\mathcal Q_k$ with $k<N$. Let $K\in\mathcal Q_q, q\geq1$. Since $I\subseteq K$, by condition \eqref{eq:shifted-Carleson-condition-new}, we have
\begin{align}
 \sum_{\substack{I\in\mathcal Q:I\subseteq K}}\beta_I
=
 \sum_{j\geq\max\{q,N\}+M}
 \sum_{\substack{J\in\mathcal Q_j:J\subseteq K}}
 |b_J|^2 \leq
 \norm{b}_{\mathrm C;N,M}^2\omega(T(K)).
 \label{eq:carleson-822}
\end{align}
For $j\geq N+M, J\in\mathcal Q_j$, let $I_J\in\mathcal Q_{j-M}$ be the unique dyadic arc containing $J$. Then $e^{i\theta_J}\in I_J$ and $|a_J|=r_j\geq r_{j-M}$. By Lemma~\ref{lem:low-frequency-localization} with $s=j-M$,
\[
 |\mathcal C_{j-M}f(a_J)|
 \leq
 C\inf_{z\in T(I_J)}\mathcal M_\omega f(z)
 \leq
 C\langle\mathcal M_\omega f\rangle_{T(I_J),\omega}.
\]
Grouping the arcs $J$ according to their $M$-th dyadic ancestors and applying Lemma~\ref{lem:classical-car-emb} to
$(\beta_I)_{I\in\mathcal Q}$, we obtain
\[
\begin{aligned}
 \sum_{j\geq N+M}\sum_{J\in\mathcal Q_j}
 |b_J|^2|\mathcal C_{j-M}f(a_J)|^2&\leq
 C\sum_{k\geq N}\sum_{I\in\mathcal Q_k}
 \beta_I
 |\langle\mathcal M_\omega f\rangle_{T(I),\omega}|^2\\
 &\leq
 C\norm{b}_{\mathrm C;N,M}^2
 \|\mathcal M_\omega f\|_{L_\omega^2}^2\\
 &\leq
 C\norm{b}_{\mathrm C;N,M}^2
 \|f\|_{A_\omega^2}^2,
\end{aligned}
\]
where the last inequality follows from Lemma~\ref{thm:tent-maximal}. The constant $C=C(\omega,\chi)>0$ is independent of $N$ and $M$. This completes the whole proof.
\end{proof}

\end{document}